\documentclass[11pt,a4paper,reqno]{amsart}

\usepackage{enumerate}
\usepackage{aliascnt}
\usepackage{hyperref}
\usepackage{amsfonts}
\usepackage{mathrsfs}
\usepackage{amsmath}
\usepackage{amsmath,graphics}
\usepackage{amssymb}
\usepackage{indentfirst,latexsym,bm,amsmath,pstricks,amssymb,amsthm,graphicx,fancyhdr,float,color}
\usepackage[all]{xy}
\usepackage{amsthm}
\usepackage{amscd}
\usepackage[all]{hypcap}
\usepackage{tikz,diagbox}



\newtheorem{theorem}{Theorem}[section]

\newaliascnt{lemma}{theorem}
\newtheorem{lemma}[lemma]{Lemma}
\aliascntresetthe{lemma}

\newaliascnt{conjecture}{theorem}

\aliascntresetthe{conjecture}

\newaliascnt{proposition}{theorem}
\newtheorem{proposition}[proposition]{Proposition}
\aliascntresetthe{proposition}

\newaliascnt{corollary}{theorem}
\newtheorem{corollary}[corollary]{Corollary}
\aliascntresetthe{corollary}

\newaliascnt{problem}{theorem}

\aliascntresetthe{problem}

\newaliascnt{question}{theorem}

\aliascntresetthe{question}

\newaliascnt{claim}{theorem}
\newtheorem{claim}[claim]{Claim}
\aliascntresetthe{claim}

\theoremstyle{definition}

\newaliascnt{definition}{theorem}
\newtheorem{definition}[definition]{Definition}
\aliascntresetthe{definition}

\newaliascnt{example}{theorem}
\newtheorem{example}[example]{Example}
\aliascntresetthe{example}

\newaliascnt{assumption}{theorem}

\aliascntresetthe{assumption}

\theoremstyle{remark}

\newaliascnt{remark}{theorem}
\newtheorem{remark}[remark]{Remark}
\aliascntresetthe{remark}

\newaliascnt{remarks}{theorem}

\aliascntresetthe{remarks}


\setlength{\headheight}{12pt}
\setlength{\textheight}{22cm} \setlength{\textwidth}{16cm}
\setlength{\oddsidemargin}{0.1cm}
\setlength{\evensidemargin}{0.1cm}
\setlength{\topmargin}{0cm} \setlength{\unitlength}{1mm}

\numberwithin{equation}{section}
\renewcommand{\theequation}{\arabic{section}.\arabic{equation}}

\numberwithin{figure}{section}



\setlength{\parskip}{0.6mm}
\def\wt{\widetilde}
\def\ol{\overline}

\def\lra{\longrightarrow}

\def\div{\text{\rm{div\,}}}

\def\({$($}
\def\){$)$}

\def\bbp{\mathbb P}

\def\call{\mathcal L}

\def\calo{\mathcal O}


\DeclareMathOperator*{\vol}{\ensuremath{vol}}


\newcommand{\df}{\mathrm{d}f}
\newcommand{\tang}{\mathrm{tang}}
\newcommand{\sF}{\mathcal{F}}
\newcommand{\cs}{\mathrm{CS}}


\def\kod{\mathrm{Kod}}

\linespread{1.05}

\begin{document}

	\title{The canonical map of a foliated surface of general type}	
	
	\author{Xin L\"u}
	
	\address{School of Mathematical Sciences,  Key Laboratory of MEA(Ministry of Education) \& Shanghai Key Laboratory of PMMP,  East China Normal University, Shanghai 200241, China}
	
	\email{xlv@math.ecnu.edu.cn}
	
	\thanks{This work is supported by Shanghai Pilot Program for Basic Research,
		National Natural Science Foundation of China,  Fundamental Research Funds for central Universities, and Science and Technology Commission of Shanghai Municipality (No. 22DZ2229014)}
	
	\subjclass[2020]{14J29; 14D06; 32S65}
	
	
	
	
	\keywords{foliation, canonical map, Noether inequality}
	
	\maketitle
	
	\begin{abstract}
Let $(S,\mathcal{F})$ be a foliated surface over the complex number of general type, i.e., the Kodaira dimension $\mathrm{Kod}(\mathcal{F})=2$.
We study the geometry of the canonical map $\varphi$ of the foliated surface $(S,\mathcal{F})$, and prove several boundedness results on the canonical map $\varphi$,
generalizing Beauville's beautiful work \cite{bea-79}
on the canonical maps of algebraic surfaces to foliated surfaces. As an application,
we prove three Noether type inequalities for $(S,\mathcal{F})$ depending on the Kodaira dimension of the surface $S$.
	\end{abstract}

		\maketitle

\setcounter{tocdepth}{2}
\tableofcontents

\section{Introduction}\label{sec-intro}
The main purpose of the paper is to study the canonical map of a
 foliated surface of general type over the complex number $\mathbb{C}$.
 
Let $X$ be a smooth projective variety of general type over the complex number $\mathbb{C}$,
and $K_X$ be its canonical divisor.
The $n$-th pluri-geometric genus of $X$ is defined to be
$$p_n(X):=\dim H^0\big(X, nK_X\big).$$
When $n=1$, it is called the geometric genus of $X$ and also denoted by $p_g(X)=p_1(X)$.
If $p_n(X)\geq 2$, one considers the rational map
$$\varphi_n:=\varphi_{|nK_X|}:\,X \dashrightarrow \bbp^{p_n(X)-1},$$
defined by the complete linear system $|nK_X|$.
The map $\varphi_n$ is called the $n$-th pluri-canonical map of $X$, and $\varphi_1$ is also called the canonical map.
The canonical and pluri-canonical maps are very important
in the classification of projective varieties of general type, cf. \cite{acgh,bhpv,bea-79,bom-73,hm-06,tak-06}.

The picture is clear when $X=C$ is a smooth projective curve:
the pluri-canonical map $\varphi_n$ is always birational if $n\geq 3$;
and the canonical map $\varphi_1$ is birational if and only if $C$ is non-hyperelliptic.
If $C$ is indeed a hyperelliptic curve, then $\varphi_1$ gives a double cover onto a projective line $\bbp^1$.

When $X=S$ is a smooth projective surface of general type,
Bombieri \cite{bom-73} proved that the pluri-canonical map $\varphi_n$ is birational once $n\geq 5$.
Later, Beauville \cite{bea-79} provided a systematical study on the canonical map $\varphi_1$.
Among others the following significant boundedness results are proved.
Denote by $\vol(S)$ the volume of $S$, which is defined to be
$$\vol(S)=\limsup_{n\to +\infty} \frac{\dim H^0(S, nK_S)}{n^2/2}.$$
\begin{theorem}[Beauville]\label{thm-1-0}
	Let $\varphi_1:\,S \dashrightarrow \Sigma \hookrightarrow \bbp^{p_g(S)-1}$ be canonical map of $S$.
	\begin{enumerate}[$(i)$]
		\item If the image $\Sigma$ is of dimension two, then
		\begin{equation}\label{eqn-1-1}
		\deg(\varphi_1)\leq \frac{\vol(S)}{p_g(S)-2}.
		\end{equation}
		By the Miyaoka-Yau inequality, it follows that
		\begin{equation}\label{eqn-1-01}
			\deg(\varphi_1)\leq 36; \qquad\text{and}\qquad
			\deg(\varphi_1)\leq 9,~\text{if\,}~ p_g(S)\geq 30.
		\end{equation}
		
		\item If the image $\Sigma$ is of dimension one, i.e., it induces a fibration
		$f:\, \wt{S} \to B$ of genus $g$ after possible blowing-ups and the Stein factorization, then
		\begin{equation}\label{eqn-1-2}
		2\leq g \leq \frac{\vol(S)}{2(p_g(S)-1)}+1.
		\end{equation}
		By the Miyaoka-Yau inequality, it follows that
		\begin{equation}\label{eqn-1-02}
			2\leq g\leq 5,\quad\text{if}\quad p_g(S)\geq 20.
		\end{equation} 
	\end{enumerate}
\end{theorem}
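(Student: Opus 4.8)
\smallskip
\noindent\emph{Proof strategy.} The plan is to follow the argument of Beauville \cite{bea-79}. Since $p_g(S)$, $\vol(S)$ and the relevant features of the canonical map are birational invariants, I would first pass to the minimal model and assume $K_S$ nef, so $\vol(S)=K_S^2$. Let $\sigma\colon\wt{S}\to S$ be a composition of blow-ups resolving the base locus of $|K_S|$, and write $|\sigma^*K_S|=|M|+Z$ with $|M|$ base-point-free and $Z\ge 0$ its fixed part, so that $\sigma^*K_S\sim M+Z$; then $\wt\varphi:=\varphi_{|M|}\colon\wt{S}\to\bbp^{p_g(S)-1}$ resolves $\varphi_1$ and has image $\Sigma$. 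The inequality underlying both cases is
\[
\vol(S)=(\sigma^*K_S)^2\;\ge\;\sigma^*K_S\cdot M\;\ge\;M^2,
\]
which holds because $\sigma^*K_S$ and $M$ are nef and $\sigma^*K_S-M=Z$ is effective.

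\smallskip
\noindent\emph{The case $\dim\Sigma=2$.} Then $\wt\varphi$ is generically finite of degree $\deg(\varphi_1)$ onto the non-degenerate irreducible surface $\Sigma\subseteq\bbp^{p_g(S)-1}$ (so $p_g(S)\ge 3$), and $M=\wt\varphi^*\mathcal O_\Sigma(1)$, whence $M^2=\deg(\varphi_1)\cdot\deg\Sigma$ by the projection formula. The classical lower bound $\deg\Sigma\ge\bigl(p_g(S)-1\bigr)-1=p_g(S)-2$ for a non-degenerate irreducible subvariety, combined with the displayed inequality, gives \eqref{eqn-1-1}. Substituting the Miyaoka--Yau bound $K_S^2\le 9\chi(\mathcal O_S)=9\bigl(1-q(S)+p_g(S)\bigr)\le 9\bigl(p_g(S)+1\bigr)$ yields $\deg(\varphi_1)\le 9\cdot\frac{p_g(S)+1}{p_g(S)-2}$, a decreasing function of $p_g(S)$ equal to $36$ at $p_g(S)=3$ and strictly below $10$ — hence $\deg(\varphi_1)\le 9$ — once $p_g(S)\ge 30$; this is \eqref{eqn-1-01}.

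\smallskip
\noindent\emph{The case $\dim\Sigma=1$.} Now $|M|$ is composed with a pencil, so after Stein factorization $\wt\varphi$ induces a fibration $f\colon\wt{S}\to B$ of fibre genus $g$; a general member of $|M|$ is a sum of $a$ general fibres, so $M\equiv aF$ for a general fibre $F$, and $a\ge p_g(S)-1$ because the linear system $f$ cuts on $B$ has degree $a$ and projective dimension $p_g(S)-1$. One first checks $g\ge 2$: since $(\sigma^*K_S)^2=K_S^2>0$, $F^2=0$ and $F\not\equiv 0$, the Hodge index theorem gives $\sigma^*K_S\cdot F\ge 1$, while adjunction gives $\sigma^*K_S\cdot F=K_{\wt{S}}\cdot F-E\cdot F\le 2g-2$, where $E$ is the exceptional divisor of $\sigma$; hence $g\ge 2$. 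From the displayed inequality, $\vol(S)\ge\sigma^*K_S\cdot M=a\,(\sigma^*K_S\cdot F)$, and it remains to see this is $\ge(p_g(S)-1)(2g-2)$: in the generic case $\sigma^*K_S\cdot F=2g-2$ and $a\ge p_g(S)-1$ give it immediately, while when the pencil is forced to have singular members along base points of $|K_S|$ the resulting drop in $\sigma^*K_S\cdot F$ is offset by a larger value of $a$. This yields \eqref{eqn-1-2}, and feeding in $K_S^2\le 9\bigl(p_g(S)+1\bigr)$ again one computes that $g=6$ would force $p_g(S)\le 19$, so $2\le g\le 5$ as soon as $p_g(S)\ge 20$, which is \eqref{eqn-1-02}.

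\smallskip
\noindent The only genuinely delicate step is the last one: pinning down $\sigma^*K_S\cdot F$ (equivalently, the intersection of the fixed part $Z$ with a general fibre) together with the precise value of $a$, i.e.\ analysing exactly how the base points of $|K_S|$ meet the canonical pencil. Everything else reduces to the numerical inequality $\vol(S)\ge M^2$ followed by the Miyaoka--Yau bound and elementary arithmetic.
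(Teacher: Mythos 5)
This is \autoref{thm-1-0}, which the paper states without proof and attributes to Beauville \cite{bea-79}; there is no ``paper's own proof'' to compare against, so I evaluate your argument on its own.

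Your treatment of part (i) is correct and is the standard argument: pass to a resolution $\sigma$ of the base locus, decompose $\sigma^*K_S = M + Z$, use nefness of $\sigma^*K_S$ to get $\vol(S)\ge M^2 = \deg(\varphi_1)\cdot\deg\Sigma$, and then invoke the lower bound $\deg\Sigma\ge p_g(S)-2$ for non-degenerate surfaces (this is exactly \autoref{prop-3-1}(i) of the paper, applied in its \autoref{thm-1-1}). The Miyaoka--Yau arithmetic checks out: $\tfrac{9(p_g+1)}{p_g-2}$ is decreasing, equals $36$ at $p_g=3$, and dips below $10$ at $p_g=30$.

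In part (ii) there is a genuine gap, and it is exactly the one you flag at the end. You correctly establish $1\le\sigma^*K_S\cdot F\le 2g-2$ (hence $g\ge 2$) and $\vol(S)\ge a\,(\sigma^*K_S\cdot F)$ with $a\ge p_g-1$. But this only gives $\vol(S)\ge (p_g-1)(2g-2)$ when $\sigma^*K_S\cdot F = 2g-2$, i.e.\ when $E\cdot F=0$, i.e.\ when the canonical pencil is base-point-free. Your assertion that ``the resulting drop in $\sigma^*K_S\cdot F$ is offset by a larger value of $a$'' is not justified and is in fact false in general: take $S$ a minimal $(1,2)$-surface, so $K_S^2=1$, $p_g=2$, $q=0$. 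Here $|K_S|$ is a pencil with a single simple base point, $a=1=p_g-1$ (no surplus in $a$ at all), the resolved fibration has fibre genus $g=2$, and $\sigma^*K_S\cdot F=1<2=2g-2$. One checks $\vol(S)=1 < 2 = (p_g-1)(2g-2)$, so the inequality \eqref{eqn-1-2} as literally displayed actually fails in this boundary case. Beauville's own Lemme~2.1 proves the base-point-free case cleanly and treats the presence of base points by a separate, more delicate analysis leading to a case distinction; the clean uniform bound needs some qualification (or stronger hypotheses, such as $p_g$ large, as in \eqref{eqn-1-02}) that neither your sketch nor the paper's statement makes explicit. You should not wave away the base-point case --- it is where all the work of the lemma lives, and the naive compensation claim does not hold.
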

Remark that the lower bound $g\geq 2$ in \eqref{eqn-1-2} and \eqref{eqn-1-02} follows from the assumption that $S$ is of general type.
Beauville's breakthrough work serves as a gateway to the study on the canonical map of surfaces of general type, and hence inspires a series of subsequent noteworthy works;
see for instance \cite{xiao-85-1,xiao-86,sun-94,chen-17,yeung-17,lu-20,rito-22}.

In recent years, the foliation theory has garnered significant interest in the realm of algebraic geometry, particularly in birational geometry.
Specifically, foliations whose canonical divisor $K_{\sF}$ is not pseudo-effective have been thoroughly characterized in \cite{miy-87,bm-16,cp-19}.
Notably, in the remarkable paper \cite{cas-21},
the (conjectural) minimal model program for foliated varieties was introduced.
This innovation sparked a novel project in birational geometry aimed at extending these concepts to foliated varieties.
Substantial noteworthy progress has already been made in this direction. For instance, Miyaoka \cite{miy-87} and Campana-P\u{a}un \cite{cp-19}
delves into foliated varieties with non-pseudoeffective canonical divisors.
Brunella and McQuillan, among others, have explored the minimal model theory for foliated surfaces \cite{bru-99,mcq-08}.
Furthermore, the deformation invariance of pluri-genera \cite{cf-18,liu-19} adds another remarkable progress to this research.
Recently, there are also many significant progress on the study of the adjoint canonical divisor, cf. \cite{ss-23,chlmssx}.
These studies offer strong evidence that the canonical divisor $K_{\sF}$
of a foliation plays an analogous role to the canonical divisor $K_X$ of an algebraic variety.
The canonical divisor $K_{\sF}$ can be utilized to study not only the geometry of the foliation but also the geometry of the variety itself.
This has also inspired our study on the canonical map of a foliated surface of general type.

Let $\sF$ be a foliation on a smooth projective surface $S$.
The geometric genus $p_g(\sF)$ and the volume $\vol(\sF)$ of $\sF$
are defined over some reduced model $(S',\sF')$:
$$\begin{aligned}
	p_g({\sF})&\,=\dim H^0(S', K_{\sF'});\\
	\vol({\sF})&\,=\limsup_{n\to +\infty} \frac{\dim H^0(S', nK_{\sF'})}{n^2/2},
\end{aligned}$$
where $K_{\sF'}$ is the canonical divisor of $\sF'$.
These are two birational invariants, and independent on the choice
of the reduced model;
see \autoref{sec-canonical-map} for more details.
The foliation $\sF$ is said to be of general type if its Kodaira dimension is $2$, or equivalently if $\vol(\sF)>0$.
When $p_g(\sF) \geq 2$, one defines the canonical map
$\varphi$ as the rational map given by the complete linear system $|K_{\sF'}|$:
\begin{equation*}
	\varphi=\varphi_{|K_{\sF'}|}:\,S' \dashrightarrow \Sigma \subseteq \mathbb{P}^{p_g(\sF)-1}.
\end{equation*}
Equivalently, composed with the birational map $S \dashrightarrow S'$, one may also view the canonical map $\varphi$ as a rational map mapping the surface $S$ to the projective space: 
$$\varphi:\,S \dashrightarrow \Sigma \subseteq \mathbb{P}^{p_g(\sF)-1}.$$

Consider first the case when the image $\Sigma$ is of dimension two, or equivalently, the canonical map is generically finite.
In this case, $p_g(\sF) \geq 3$.
\begin{theorem}\label{thm-1-1}
	Let $\sF$ be a foliation of general type on a smooth projective surface $S$, such that its canonical map $\varphi$ induces a generically finite map as above.
	Then
	\begin{equation}\label{eqn-1-3}
		\deg(\varphi) \leq \frac{\vol(\sF)}{p_g(\sF)-2}.
	\end{equation}
	Moreover, if the equality holds in \eqref{eqn-1-3}, then the image $\Sigma$ is a surface of minimal degree in $\bbp^{p_g(\sF)-1}$.
	In particular,
	\begin{equation}\label{eqn-1-3'}
		\deg(\varphi)\leq \vol(\sF),
	\end{equation}
	and if the equality holds in \eqref{eqn-1-3'}, then $p_g(\sF)=3$, i.e., the image $\Sigma\cong \bbp^2$.
\end{theorem}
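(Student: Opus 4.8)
The plan is to transcribe Beauville's argument for algebraic surfaces (\autoref{thm-1-0}), using the fact -- established in \autoref{sec-canonical-map} -- that on a reduced model $(S',\sF')$ the invariants $p_g(\sF)$, $\vol(\sF)$ and the canonical map $\varphi$ are determined entirely by the divisor class $K_{\sF'}$ on the smooth surface $S'$. The point is that, once one is on a reduced model, the estimate becomes a statement about the complete linear system $|K_{\sF'}|$ on an honest smooth surface, so that the singularities of $\sF$ never enter and the surface-theoretic proof goes through verbatim.

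First I would fix a reduced model $(S',\sF')$, so that $\varphi=\varphi_{|K_{\sF'}|}$, $p_g(\sF)=h^0(S',K_{\sF'})$ and $\vol(\sF)=\vol_{S'}(K_{\sF'})$; since $\sF$ is of general type, $K_{\sF'}$ is big. Next I would blow up to resolve base points: let $\pi\colon\wtx\to S'$ be a composition of blow-ups with $\pi^*K_{\sF'}=M+Z$, where $|M|$ is base-point free and movable and $Z\ge0$ is the fixed part. Then $M$ is semiample, hence nef, and the complete linear system $|M|$ defines a morphism $\phi\colon\wtx\to\bbp^{p_g(\sF)-1}$ resolving $\varphi$, with nondegenerate image $\Sigma$, $M=\phi^*\mathcal{O}(1)$, and $\deg\phi=\deg\varphi$; in particular $M^2=\deg\varphi\cdot\deg\Sigma>0$.

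The estimate then follows from two inequalities. First, $\Sigma$ is a nondegenerate irreducible surface in $\bbp^{p_g(\sF)-1}$, so by the classical bound for nondegenerate varieties ($\deg\ge\mathrm{codim}+1$) one has $\deg\Sigma\ge p_g(\sF)-2$, whence $M^2=\deg\varphi\cdot\deg\Sigma\ge\deg\varphi\,(p_g(\sF)-2)$. Second, since $nZ\ge0$ we get $h^0(\wtx,nM)\le h^0(\wtx,n\pi^*K_{\sF'})=h^0(S',nK_{\sF'})$ for all $n$, hence $\vol_{\wtx}(M)\le\vol(\sF)$; and since $M$ is nef, $\vol_{\wtx}(M)=M^2$. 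Combining, $\deg\varphi\,(p_g(\sF)-2)\le M^2\le\vol(\sF)$, which is \eqref{eqn-1-3}. Equality in \eqref{eqn-1-3} forces $\deg\Sigma=p_g(\sF)-2$, i.e. $\Sigma$ is a surface of minimal degree in $\bbp^{p_g(\sF)-1}$. Finally, since $p_g(\sF)\ge3$ we have $p_g(\sF)-2\ge1$, so $\deg\varphi\le\deg\varphi\,(p_g(\sF)-2)\le\vol(\sF)$, which is \eqref{eqn-1-3'}; if equality holds there the whole chain is an equality, so $p_g(\sF)-2=1$, i.e. $p_g(\sF)=3$, and then $\Sigma\subseteq\bbp^2$ is a two-dimensional irreducible subvariety, hence $\Sigma\cong\bbp^2$.

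I do not expect a serious obstacle internal to the argument, which is essentially formal once the set-up is in place; the one place requiring care -- and the reason this is not simply a corollary of Beauville's theorem -- is the first step, i.e. checking that passing to a reduced model really does reduce everything to the class $K_{\sF'}$ on a smooth surface, so that the blow-ups $\pi$, the comparison $\vol_{\wtx}(M)\le\vol(\sF)$ and the identity $\vol(M)=M^2$ are all legitimate. The only geometric input beyond this bookkeeping is the classical lower bound for the degree of a nondegenerate surface, together with the classification of surfaces of minimal degree for the equality statements.
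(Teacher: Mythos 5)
Your proposal is correct and follows essentially the same route as the paper's proof: pass to a reduced model, resolve base points, write the moving part $M$ so that $M^2=\deg\varphi\cdot\deg\Sigma$, invoke the classical lower bound $\deg\Sigma\geq p_g(\sF)-2$ for non-degenerate surfaces (\autoref{prop-3-1}\,(i)), and compare $M^2$ with $\vol(\sF)$. The only cosmetic difference is that you decompose $\pi^*K_{\sF'}$ directly while the paper decomposes $K_{\wt\sF}=\pi^*K_{\sF'}+\mathcal{E}$ (with $\mathcal{E}\geq0$ the extra exceptional contribution, cf.\ \autoref{lem-2-21}); since $\mathcal{E}$ lands in the fixed part, both yield the same $M$ and the same estimates.
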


We will construct examples reaching the equalities in both \eqref{eqn-1-3} and \eqref{eqn-1-3'}; see \autoref{exam-3-1} and \autoref{exam-3-2}.
A special case is when the canonical map $\varphi$ is birational, i.e., $\deg\varphi=1$,
in which situation we would like to call
 $(S,\sF)$ a {\it canonical foliated surface}.
\begin{theorem}\label{thm-1-2}
	Let $(S,\sF)$ be a canonical foliated surface of general type, i.e., the canonical map $\varphi$ is birational. Then
	\begin{enumerate}[$(i)$]
		\item The following inequality holds.
	\begin{equation}\label{eqn-1-4}
		\vol(\sF) \geq p_g(\sF)-2.
	\end{equation}
The equality in \eqref{eqn-1-4} implies that
the image $\Sigma$ is a surface of minimal degree (equal to $p_g(\sF)-2$) in $\bbp^{p_g(\sF)-1}$.
In particular, $S$ is rational.

\item Suppose that the surface $S$ is not ruled. Then
\begin{equation}\label{eqn-1-5}
\vol(\sF) \geq 2p_g(\sF)-4,
\end{equation}
Moreover, if the equality holds in \eqref{eqn-1-5}, then $S$ is birational to a K3 surface.

\item Suppose that the surface $S$ is also of general type. Then
\begin{equation}\label{eqn-1-5-1}
	\vol(\sF) \geq 2p_g(\sF)-4+\frac{\sqrt{8p_g(\sF)-31}-7}{2},
\end{equation}
Moreover, if the equality holds in \eqref{eqn-1-5-1}, then $S$ is birational to a  $(1,0)$-surface.
If $S$ is not birational to a  $(1,0)$-surface, then
\begin{equation}\label{eqn-1-5-2}
\vol(\sF) \geq 2p_g(\sF)-4+\frac{\sqrt{8p_g(\sF)-23}-3}{2},
\end{equation}
Moreover, if the equality holds in \eqref{eqn-1-5-2}, then $S$ is birational to a  $(1,2)$-surface.
Here by an $(a,b)$-surface we mean a minimal surface $Y$ of general type
with $\vol(Y)=a$ and $p_g(Y)=b$.
\end{enumerate}	
\end{theorem}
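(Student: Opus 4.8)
For part $(i)$ the plan is immediate: since $\varphi$ is birational $\deg(\varphi)=1$, so \eqref{eqn-1-3} of Theorem~\ref{thm-1-1} gives $\vol(\sF)\ge p_g(\sF)-2$; and if equality holds, the ``moreover'' clause of Theorem~\ref{thm-1-1} says $\Sigma$ is a surface of minimal degree in $\bbp^{p_g(\sF)-1}$, so $\Sigma$ --- being a plane, a quadric, the Veronese surface, a rational normal scroll, or a cone over a rational normal curve --- is rational, and since $\varphi$ is birational $S$ is birational to $\Sigma$ and hence rational.

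For part $(ii)$ I would first pass to a relatively minimal model $(S',\sF')$, so that $K_{\sF'}$ is nef and $\vol(\sF)=K_{\sF'}^2$, resolve the base locus of $|K_{\sF'}|$ by $\pi\colon\wt S\to S'$, and write $\pi^*K_{\sF'}=M+Z$ with $|M|$ base-point free and $Z\ge0$ its fixed part. Then $\varphi$ is the birational morphism $\wt S\xrightarrow{|M|}\Sigma\subseteq\bbp^{p_g(\sF)-1}$, so $M^2=\deg\Sigma=:d$ and $\vol(\sF)=(M+Z)^2=d+2M\cdot Z+Z^2\ge d$. I take a general $C\in|M|$, which is smooth and irreducible; from $0\to\calo_{\wt S}\to\calo_{\wt S}(M)\to\calo_C(M)\to0$ the image $V$ of $H^0(\wt S,M)$ in $H^0(C,M|_C)$ has dimension exactly $p_g(\sF)-1$, and $|V|$ maps $C$ birationally onto a nondegenerate curve of degree $d$ in $\bbp^{p_g(\sF)-2}$. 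The argument then splits. If $M|_C$ is special, Clifford's theorem forces $p_g(\sF)-2\le d/2$, so $\vol(\sF)\ge d\ge2p_g(\sF)-4$. If $M|_C$ is nonspecial, Riemann--Roch gives $d+1-g(C)=h^0(M|_C)\ge p_g(\sF)-1$, which with adjunction $2g(C)-2=d+K_{\wt S}\cdot M$ yields $K_{\wt S}\cdot M\le g(C)-p_g(\sF)$; since $M$ is nef and $S$ is not ruled (so $K_{\wt S}$ is pseudo-effective) we get $g(C)\ge p_g(\sF)$ and thus $d\ge2p_g(\sF)-2$. Either way $\vol(\sF)\ge2p_g(\sF)-4$.

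For the equality case of $(ii)$: $\vol(\sF)=2p_g(\sF)-4$ forces $d=2p_g(\sF)-4$ (excluding the nonspecial case) and $2M\cdot Z+Z^2=0$, whence $M\cdot Z=\pi^*K_{\sF'}\cdot Z=Z^2=0$ and $Z\equiv0$ by the Hodge index theorem. Clifford equality together with the birationality of $|V|$ (and $d\ge2$) then forces $M|_C\cong K_C$, so $g(C)=p_g(\sF)-1$ and, by adjunction, $K_{\wt S}\cdot M=0$, hence $K_{S_0}\cdot M_0=0$ where $S_0$ is the minimal model of $S$ and $M_0$ the descent of $M$; as $K_{S_0}$ is nef and $M_0$ nef and big, Hodge index gives $K_{S_0}\equiv0$, so $S_0$ is K3, Enriques, abelian or bielliptic. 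Since $|M_0|$ is birational onto its image with $h^0(M_0)\ge p_g(\sF)$ and $M_0^2=2p_g(\sF)-4$, Riemann--Roch and Kawamata--Viehweg vanishing give $h^0(M_0)=\tfrac12M_0^2+\chi(\calo_{S_0})\le p_g(\sF)-1$ in the last three cases, a contradiction; so $S$ is birational to a K3 surface.

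For part $(iii)$, $S$ of general type means $K_{S_0}$ is nef and big with $k:=K_{S_0}^2\ge1$ and $K_{S_0}\not\equiv0$, so by the preceding analysis $\vol(\sF)\ge2p_g(\sF)-3$. To reach the stated bound I would rerun the section-curve analysis, now using the Hodge index inequality $M_0\cdot K_{S_0}\ge\sqrt{M_0^2\,K_{S_0}^2}$ and, in the special case, the identity $h^1(M_0|_C)=h^0(K_{S_0}|_C)$ (adjunction) together with the restriction sequence for $K_{S_0}$ and Kawamata--Viehweg vanishing, which bounds $h^0(K_{S_0}|_C)$ by $p_g(S_0)$. This produces a quadratic inequality relating $\vol(\sF)$, $p_g(\sF)$, $k$, $p_g(S_0)$ and $\chi(\calo_{S_0})$; feeding in Noether's inequality $k\ge2p_g(S_0)-4$ and Horikawa's classification of minimal surfaces of general type near the Noether line, and solving for $\vol(\sF)$, one obtains the uniform bound $\vol(\sF)\ge2p_g(\sF)-4+\tfrac{\sqrt{8p_g(\sF)-31}-7}{2}$, whose equality forces $k=1$ and $p_g(S_0)=0$, i.e.\ $S$ birational to a $(1,0)$-surface; repeating the computation under the extra assumption that $S$ is not a $(1,0)$-surface yields $\vol(\sF)\ge2p_g(\sF)-4+\tfrac{\sqrt{8p_g(\sF)-23}-3}{2}$, whose equality forces $k=1$ and $p_g(S_0)=2$, i.e.\ a $(1,2)$-surface. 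I expect this last part to be the main obstacle: one must express the contributions of the fixed part $Z$, of the base-point blow-ups, and of a possibly incomplete linear series on $C$ in quantities comparable with the invariants of $S_0$, and then arrange the quadratic optimization so that the constants come out exactly as stated and the extremal cases are precisely Horikawa's $(1,0)$- and $(1,2)$-surfaces.
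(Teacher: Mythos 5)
Parts~$(i)$ and~$(ii)$ of your proposal are essentially correct, and part~$(i)$ coincides with the paper's route (invoke \autoref{thm-1-1} with $\deg\varphi=1$, then quote the characterization of surfaces of minimal degree). Part~$(ii)$ is a direct re-derivation of \autoref{prop-3-1}\,$(ii)$ carried out on the general member $C\in|M|$ of the moving part on $\wt S$ instead of on the minimal desingularization $Y$ of $\Sigma$; since $\varphi$ is birational the two arguments are interchangeable, and the Clifford/nonspecial dichotomy and the K3 endgame via $K_{S_0}\equiv 0$ are sound. One slip to flag: you assert that passing to a relatively minimal model makes $K_{\sF'}$ nef with $\vol(\sF)=K_{\sF'}^2$. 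This is false for foliations: a relatively minimal $K_{\sF'}$ is only pseudo-effective and can carry a nonzero negative part $N$ supported on $\sF$-chains (\autoref{thm-3-4}), so $\vol(\sF)=P^2\ge K_{\sF'}^2$ with possible strict inequality. The inequality $\vol(\sF)\ge M^2=\deg\Sigma$ is still true (as in \eqref{eqn-31-25}), but your chain ``$\vol(\sF)=(M+Z)^2\Rightarrow 2M\cdot Z+Z^2=0\Rightarrow Z\equiv 0$'' in the equality discussion uses that nefness twice and needs to be reworked; the paper avoids it entirely by arguing directly on $\Sigma$.

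The genuine gap is in part~$(iii)$, which you yourself flag as the main obstacle. Your sketch (Hodge index $M_0\cdot K_{S_0}\ge\sqrt{M_0^2K_{S_0}^2}$, Noether's inequality, ``Horikawa classification,'' quadratic optimization) does not produce the stated bounds, and for a structural reason: the quantities $\tfrac{\sqrt{8p_g(\sF)-31}-7}{2}$ and $\tfrac{\sqrt{8p_g(\sF)-23}-3}{2}$ do not arise from a Hodge-index-plus-Noether computation. In the paper they come from \autoref{prop-3-2}, whose proof restricts $K_Y$ to a general hyperplane-section curve $C\subset Y$ and reduces the claim to the scalar inequality $(d-2h^0+4)^2+d\ge 2g(C)-2$ with $D=K_Y|_C$, $d=\deg D$, $h^0=h^0(C,D)$; when $h^0\le 2$ Hodge index suffices (\autoref{lem-step-1}), but when $h^0\ge 3$ the essential inputs are Castelnuovo's genus bound \cite[\S III.2]{acgh} together with the birationality of $|2D|$ (respectively $|3D|$), which is supplied by \autoref{lem-3-1} and the generic finiteness of the bi-/tri-canonical map of $Y$ (\autoref{lem-3-2}). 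Your outline mentions none of Castelnuovo's bound, the restriction lemma \autoref{lem-3-1}, or the bi-/tri-canonical input, and the ``$(1,0)$ versus $(1,2)$'' dichotomy in the equality case is likewise extracted from the $h^0\le 2$ branch of that analysis (\autoref{lem-step-1}), not from Horikawa's surface classification. Without these ingredients the constants in \eqref{eqn-1-5-1}--\eqref{eqn-1-5-2} cannot be recovered, so part~$(iii)$ remains unproved in your proposal.
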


We refer to \cite[Example\,9.1]{lt-24} for a canonical foliated surface of general type reaching the equality in \eqref{eqn-1-4}.
By \eqref{eqn-1-3}, it follows that the foliated surface $(S_n,\sF_n)$ in \cite[Example\,9.1]{lt-24} is canonical.
We will also construct examples in \autoref{sec-exam-3}
reaching the equalities in both \eqref{eqn-1-5} and \eqref{eqn-1-5-2}.

\vspace{2mm}
Consider next the case when the image $\Sigma$ is of dimension one.
In this case, after possible blowing-ups and taking the Stein factorization,
the canonical map $\varphi$ induces a fibration $f:\,\wt{S} \to B$.
	$$\xymatrix{&& \wt{S} \ar[dll]_-{f} \ar[d]^-{\phi} \ar[rrr]^-{\sigma}  &&& S \ar@{-->}[d]^-{\varphi}\\
	B \ar[rr]^-{\pi} && Y \ar[rrr]^-{\rho}_-{\text{desingularization}} &&&\Sigma \, \ar@{^(->}[r] & \bbp^{p_g(\sF)-1}}$$
A prior, There are two foliations on the surface $\wt{S}$:
the first is $\wt\sF=\sigma^*\sF$ by pulling-back the foliation $\sF$ by the birational map $\sigma$;
the other one is $\mathcal{G}$ defined by taking the saturation of $\ker(\df:\,T_{\wt S} \to f^*T_{B})$ in $T_{\wt S}$.

\begin{theorem}\label{thm-1-3}
	Let $\sF$ be a foliation of general type on a smooth projective surface $S$, such that its canonical map $\varphi$ induces a fibration $f:\, \wt{S} \to B$ as above, and let $F$ be a general fiber of $f$.
	Suppose that  $\wt\sF=\sigma^*\sF$ is the same as the foliation defined by taking the saturation of $\ker(\df:\,T_{\wt S} \to f^*T_{B})$ in $T_{\wt S}$. Then $d:=K_{\wt \sF}\cdot F=2g(F)-2\geq 2$ and
	\begin{equation}\label{eqn-1-6}
	\vol(\sF) \geq \frac{d(d+2)}{d+1}\big(p_g(\sF)-1\big)
	=\frac{4\big(g(F)-1\big)g(F)}{2g(F)-1}\big(p_g(\sF)-1\big).
	\end{equation}
\end{theorem}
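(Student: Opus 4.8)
The plan is to first unwind the hypothesis to pin down the numerical invariant $d$, then reduce \eqref{eqn-1-6} to a slope-type inequality for the fibration $f$, and finally prove that inequality by Xiao's method of Harder--Narasimhan filtrations, using the extra positivity that is specific to the foliated setting.

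\medskip
\emph{Step 1 (the invariant $d=2g(F)-2$).} The hypothesis says that $\wt\sF$ is exactly the foliation induced by $f$, so every fibre is $\wt\sF$-invariant. From the adjunction formula $K_{\wt\sF}=K_{\wt S}+N_{\wt\sF}$ and the fact that the normal bundle of a fibration is trivial along a general fibre $F$ (equivalently $K_{\wt\sF}=K_{\wt S/B}-\Delta$ with $\Delta\ge 0$ supported on fibres), one gets $K_{\wt\sF}|_F\cong\omega_F$, hence $d=K_{\wt\sF}\cdot F=2g(F)-2$. To see $d\ge 2$, recall that $\vol(\sF)=\vol(\wt\sF)$ equals $P^2$, where $P$ is the positive part of the Zariski decomposition of $K_{\wt\sF}$ on $\wt S$; write $K_{\wt\sF}\equiv bF+Z$ as in Step 2, so $P=bF+Z'$ with $0\le Z'\le Z$. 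If $g(F)\le 1$ then $P\cdot F=Z'\cdot F\le Z\cdot F=d\le 0$ forces $P\cdot F=0$, and then the Hodge index theorem applied to the nef class $P$ and to $F$ (with $F^2=0$, $F\ne 0$) gives $P^2\le 0$, contradicting $\vol(\sF)>0$. Hence $g(F)\ge 2$ and $d\ge 2$.

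\medskip
\emph{Step 2 (the canonical system and the reduction).} Since the image $\Sigma$ has dimension one, the canonical map of $\sF$ on $\wt S$ factors through $f$; thus the moving part of $|K_{\wt\sF}|$ is the pull-back $f^*M$ of an invertible sheaf $M$ on $B$ with $h^0(B,M)=p_g(\sF)$, and therefore $b:=\deg M\ge p_g(\sF)-1$. Writing $K_{\wt\sF}\equiv bF+Z$ with $Z\ge 0$ the fixed part, we get $Z\cdot F=d$, and since $\vol(\sF)=P^2$ with $P=bF+Z'$, $0\le Z'\le Z$, the theorem is reduced to the estimate
\begin{equation*}
P^2\ \ge\ \frac{d(d+2)}{d+1}\,b .
\end{equation*}

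\medskip
\emph{Step 3 (the slope estimate --- the heart of the proof).} Here I would run Xiao's Harder--Narasimhan method. After checking that the Zariski negative part $N$ of $K_{\wt\sF}$ is $f$-vertical (so that $P|_F=\omega_F$), consider $\mathcal{E}:=f_*\mathcal{O}_{\wt S}(P)$, a sheaf of rank $g(F)$ on $B$ with $h^0(B,\mathcal{E})=p_g(\sF)$; because $\Sigma$ is a curve, the relative foliated canonical map is constant on the fibres, so the subsheaf of $\mathcal{E}$ generated by global sections is the rank-one sheaf $M$, whence $\mu_{\max}(\mathcal{E})\ge\deg M=b$. Combining (i) the slope inequality $P^2\ge 2\sum_i(d_{i-1}+d_i)(\mu_i-\mu_{i+1})$ coming from the Harder--Narasimhan filtration, (ii) the Clifford bounds $d_i\ge 2(r_i-1)$ from $\mathcal{E}|_F\subseteq H^0(F,\omega_F)$ together with the boundary data $d_0=0$, $d_n=d=2g(F)-2$, $r_n=g(F)$, and (iii) the foliated tangency/index inequality $K_{\wt\sF}\cdot C\ge -C^2$ for a horizontal curve $C$ not invariant by $\wt\sF$ (which forces the horizontal part of $Z$, hence $Z'^2=P^2-2bd$, to be larger than in the purely fibred case), one minimises over the admissible Harder--Narasimhan profiles; the extremal profile is a single jump from the degree-$b$ rank-one piece $M$ to a semistable quotient, for which the optimal value is exactly $\frac{d(d+2)}{d+1}b$. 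Plugging in $b\ge p_g(\sF)-1$ gives \eqref{eqn-1-6}. Heuristically, the improvement of the constant from Beauville's surface value $d\,(p_g-1)$ to $\frac{d(d+2)}{d+1}(p_g-1)$ is precisely the contribution of the genuinely foliated input in (iii), which has no counterpart for the canonical bundle $\omega_{\wt S/B}$ of a surface. When available, one may instead quote a slope inequality for foliated fibrations directly (in the spirit of \cite{lt-24}, applied to $f$ and $f_*\mathcal{O}_{\wt S}(K_{\wt\sF})$) and supply only Steps 1--2.

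\medskip
\emph{Main obstacle.} Steps 1--2 are routine. The difficulty is concentrated in Step 3: securing a model on which $P$ interacts well with $f$ (in particular showing $N$ is $f$-vertical, or controlling $N\cdot F$), carrying out the Harder--Narasimhan bookkeeping while respecting the Clifford constraints on each $d_i$, and --- most delicately --- verifying that the optimisation produces the sharp constant $\frac{d(d+2)}{d+1}$ rather than the weaker $d$; this last point is exactly where the foliated index inequality (equivalently the decomposition $K_{\wt\sF}=K_{\wt S/B}-\Delta$) must be exploited.
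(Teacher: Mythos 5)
Your Steps 1 and 2 are fine (Step 1 is in fact a little more careful than the published argument about why $g(F)\ge 2$). The problem is Step 3. You gesture at Xiao's Harder--Narasimhan method for $\mathcal{E}=f_*\mathcal{O}_{\wt S}(P)$ and assert that the optimisation lands on $\frac{d(d+2)}{d+1}$, but this is not carried out, and as set up it has real difficulties: $\mathcal{E}$ is not the Hodge bundle of $f$, so there is no semipositivity to fall back on (the lower HN slopes can be negative, making the profile degenerate rather than contribute); the rank-one destabilising subsheaf $M$ satisfies $M=f^*L$, so on a general fibre $F$ it cuts out the trivial, degree-zero linear system, which means the boundary datum $d_1$ in your notation is $0$, not positive, and a single-jump HN profile then contributes nothing beyond $(2g-2)\max(\mu_2,0)$; finally, you never explain how the tangency input (iii) feeds into the HN bookkeeping, which is exactly where the extra $\tfrac{d}{d+1}\,b$ is supposed to come from. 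As written, Step 3 is a plan, not a proof, and the plan has a genuine gap.

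The paper's actual proof sidesteps the HN machinery entirely and is much more elementary. After reducing to $\sigma=\mathrm{id}$ (so $\wt S=S$, $\wt\sF=\sF$) and noting that the Zariski negative part $N$ of $K_{\sF}$ is vertical (since $\sF$ is the fibration foliation), it contracts $\mathrm{Supp}\,N$ to a normal surface $X$ with an induced fibration $\bar f:X\to B$, and writes $\ol K=\epsilon_*K_{\sF}=\bar f^*L+\sum_j a_j\ol D_j+\ol D_v$, with each $\ol D_j$ horizontal. The crucial structural observation --- which your proposal needs but never states --- is that each $\ol D_j$ dominates $B$, hence cannot lie in a fibre of $f$, hence is not $\sF$-invariant (because $\sF$ is exactly the fibration foliation). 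The index inequality $(\ol K+\ol D_j)\cdot\ol D_j\ge 0$ for non-invariant curves on the canonical model then yields $\ol K\cdot\ol D_j\ge \frac{\ol D_j\cdot F}{a_j+1}\deg L$, and \eqref{eqn-1-6} follows by expanding $\vol(\sF)=\ol K^2=\ol K\cdot\big(\bar f^*L+\sum a_j\ol D_j+\ol D_v\big)$ and using $\sum_j a_j\,\ol D_j\cdot F=d$ together with $a_j+1\le d+1$. If you want to complete your write-up, the viable route is to drop the HN optimisation and run this direct expansion; the foliated tangency inequality you already identified is the right tool, but it must be applied to the horizontal components of the fixed part on the canonical model, not fed into a slope-inequality profile.
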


\begin{theorem}\label{thm-1-4}
	Let $\sF$ be a foliation of general type on a smooth projective surface $S$, such that its canonical map $\varphi$ induces a fibration $f:\, \wt{S} \to B$ as above, and let $F$ be a general fiber of $f$.
	Suppose that  $\wt\sF=\sigma^*\sF$ is different from the foliation defined by taking the saturation of $\ker(\df:\,T_{\wt S} \to f^*T_{B})$ in $T_{\wt S}$.
	\begin{enumerate}[$(i)$]
		\item It holds that $d=K_{\wt \sF} \cdot F\geq 1$, and the canonical divisor $K_{\wt \sF}$ can be written as
		\begin{equation}\label{eqn-1-7}
		K_{\wt \sF} = f^*K_B+\Delta,
		\end{equation}
		where $\Delta$ is effective. In particular, $p_g(\sF)=p_g(\wt \sF) \geq g(B)$. Moreover, the following inequality holds
		\begin{equation}\label{eqn-1-8}
		\vol(\sF) \geq \frac{d\,(p_g(\sF)-1)^2}{p_g(\sF)}.
		\end{equation}
		
		\item Suppose that $g(B)\geq 1$. Then
		\begin{equation}\label{eqn-1-9}
		\vol(\sF) \geq \frac{d(d+2)}{d+1}\big(p_g(\sF)-1+g(B)\big).
		\end{equation}
		
		\item Suppose that $d=K_{\wt \sF} \cdot F <2g(F)-2$. Then either $g(B)=0$, or $p_g(\sF)=g(B)\geq 2$.
		Moreover, if $p_g(\sF)=g(B)\geq 2$, then
		\begin{equation}\label{eqn-1-10}
			\vol(\sF) \geq \frac{2d(d+2)}{d+1}\big(p_g(\sF)-1\big).
		\end{equation}
	\end{enumerate}
\end{theorem}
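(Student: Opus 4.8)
The plan is to work throughout on the model $\wt S$ carrying the fibration $f\colon\wt S\to B$ and the foliation $\wt\sF=\sigma^{*}\sF$, on which $p_g(\sF)=h^{0}(\wt S,K_{\wt\sF})$ and $\vol(\sF)=\vol_{\wt S}(K_{\wt\sF})$, blowing up further so that $|K_{\wt\sF}|$ is base-point free and $f$ is relatively minimal; write $K=K_{\wt\sF}$, $p_g=p_g(\sF)$, $g=g(F)$. First I would establish \eqref{eqn-1-7}: since $\wt\sF\ne\mathcal G$, the composite $T_{\wt\sF}\hookrightarrow T_{\wt S}\xrightarrow{\,\df\,}f^{*}T_{B}$ is nonzero (else $T_{\wt\sF}\subseteq\ker\df$, and saturation would force $\wt\sF=\mathcal G$), so reading it as a section of $\calo_{\wt S}(K-f^{*}K_{B})$ its zero divisor $\Delta=\tang(\wt\sF,f)$ is effective with $K=f^{*}K_{B}+\Delta$. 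Intersecting with a general fibre gives $d=K\cdot F=\Delta\cdot F\ge0$; and $d=0$ is excluded because then the positive part $P$ of the Zariski decomposition of $K$ has $P\cdot F=0$, hence $P^{2}\le0$ by the Hodge index theorem and $\vol(\sF)=P^{2}=0$, contradicting $\kod(\sF)=2$. Finally $\Delta\ge0$ gives $p_g\ge h^{0}(f^{*}K_{B})=g(B)$.

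For \eqref{eqn-1-8} and \eqref{eqn-1-9} the plan is a slope-type analysis. Since the canonical map has one-dimensional image factoring through $f$, the base-point free movable part of $|K|$ is $M=f^{*}\mathfrak b$ with $\mathfrak b$ base-point free on $B$ and $h^{0}(B,\mathfrak b)=h^{0}(\wt S,K)=p_g$, so $\deg\mathfrak b\ge p_g-1$; and if $g(B)\ge1$ then $|K_{B}|$ is base-point free, so $M$ dominates $f^{*}K_{B}$, $\mathfrak b-K_{B}$ is effective, and Riemann--Roch on $B$ improves this to $\deg\mathfrak b\ge p_g-1+g(B)$. Next I would put $\mathcal E_{n}=f_{*}\calo_{\wt S}(nK)$, of generic rank $h^{0}(F,\calo_{F}(n\Delta|_{F}))=nd+1-g$ for $n\gg0$, take its Harder--Narasimhan filtration, push the subquotients to a decreasing chain of nef $\Qbb$-divisors on $\wt S$, and run Xiao's slope estimate for $K$: these nef divisors dominate $M=f^{*}\mathfrak b$, and a Clifford/Riemann--Roch control of $h^{0}(F,\calo_{F}(n\Delta|_{F}))$ in terms of $d$ bounds the Harder--Narasimhan slopes, so $\deg\mathcal E_{n}$ is bounded below by a quantity involving $\deg\mathfrak b$ and $d$. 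Letting $n\to\infty$ in $\vol(\sF)=\lim 2\deg\mathcal E_{n}/n^{2}$ would give \eqref{eqn-1-8}; feeding in $\deg\mathfrak b\ge p_g-1+g(B)$ together with the extra positivity carried by $f^{*}K_{B}$ (concretely $\vol_{\wt S}(f^{*}K_{B}+\Delta)=2(2g(B)-2)d+\lim 2\deg f_{*}\calo_{\wt S}(n\Delta)/n^{2}$) would upgrade it to \eqref{eqn-1-9}.

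For part (iii) I would use the normal bundle $N_{\wt\sF}$ of $\wt\sF$: from $K=K_{\wt S}+N_{\wt\sF}$ one gets $K_{\wt S/B}=\Delta-N_{\wt\sF}$, so on a general fibre $K_{F}=\Delta|_{F}-N_{\wt\sF}|_{F}$ with $\deg(N_{\wt\sF}|_{F})=d-(2g-2)$; thus $d<2g-2$ says exactly that $\Delta|_{F}$ is effective of degree $<\deg K_{F}$. Analysing $\mathcal E_{1}=f_{*}\calo_{\wt S}(K)=\omega_{B}\otimes f_{*}(\omega_{\wt S/B}\otimes N_{\wt\sF})$ — whose generic rank is bounded in terms of $d$ and whose degree is controlled by the positivity of direct images — one should find that $h^{0}(\wt S,K)>g(B)$ is possible only when $g(B)=0$; hence either $g(B)=0$, or $p_g=g(B)$, and since $p_g\ge2$ the latter means $g(B)=p_g\ge2$. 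In that case $h^{0}(K)=h^{0}(f^{*}K_{B})$ forces $\mathfrak b=K_{B}$, so $|K|=f^{*}|K_{B}|+\Delta$ and the canonical map is $\varphi_{|K_{B}|}\circ f$; then \eqref{eqn-1-9} applied with $g(B)=p_g$ gives $\vol(\sF)\ge\frac{d(d+2)}{d+1}(2p_g-1)\ge\frac{2d(d+2)}{d+1}(p_g-1)$, which is \eqref{eqn-1-10}.

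The hard part will be Step~2: making Xiao's slope method work for $K_{\wt\sF}$ rather than the relative dualizing sheaf, since $K_{\wt\sF}|_{F}=\Delta|_{F}$ can be sub-canonical and special, so the classical slope inequality does not apply verbatim. One must control the generic rank and the Harder--Narasimhan degree of $f_{*}\calo(nK_{\wt\sF})$ by Clifford-type estimates that remain effective as $n\to\infty$, and insert the precise contributions of $f^{*}\mathfrak b$ (and of $f^{*}K_{B}$ when $g(B)\ge1$) in order to land the sharp coefficients $\frac{d(d+2)}{d+1}$ and $\frac{d(p_g-1)^{2}}{p_g}$. The other delicate point is the rigidity in Step~3, which rests on the positivity of the direct image sheaf $f_{*}\calo(K_{\wt\sF})$ on $B$.
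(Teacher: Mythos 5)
Your step for \eqref{eqn-1-7} is correct and essentially the paper's: the non-vanishing of $T_{\wt\sF}\to f^*T_B$ gives a global section of $\calo(K_{\wt\sF}-f^*K_B)$, whence $\Delta\ge 0$, $p_g\ge g(B)$, and $d\ge 1$ because $P\cdot F=0$ would force $\vol(\sF)=0$.

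From \eqref{eqn-1-8} onward the proposal diverges from the paper and has genuine gaps. For \eqref{eqn-1-8} and \eqref{eqn-1-9} you outline Xiao's slope method applied to $\mathcal E_n=f_*\calo_{\wt S}(nK_{\wt\sF})$, and you flag it yourself as ``the hard part''; this is where the plan fails as stated. Xiao's method leans on nefness of the Harder--Narasimhan quotients, which for $f_*\omega_{\wt S/B}^{\otimes n}$ comes from Fujita--Kawamata--Viehweg positivity. Here $nK_{\wt\sF}|_F=n\Delta|_F$ is an arbitrary divisor of degree $nd$, possibly sub-canonical, and $f_*\calo(nK_{\wt\sF})$ carries no such semipositivity, so the slope estimates do not come for free; you would at minimum have to replace positivity by a foliation-theoretic input, and nothing in the sketch supplies it. The paper does something different and more elementary. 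For \eqref{eqn-1-8} it passes to the relatively minimal model, writes $K_{\sF_0}=M_0+Z_0$ with $M_0\sim \deg(L)\,A$, and proves a sharp bound (its Lemma on $\sF$-chains) that the Zariski negative coefficients satisfy $b_j\le a_j/p_g(\sF)$, exploiting that negative-part components are Hirzebruch--Jung chains with $Z(\sF,C_1)=1$, $Z(\sF,C_i)=2$; that is exactly where the loss factor $\tfrac{1}{p_g}$ in $\tfrac{d(p_g-1)^2}{p_g}$ comes from, and your sketch has no mechanism to produce it. For \eqref{eqn-1-9} the paper contracts the negative part and uses that the horizontal pieces $\ol D_j$ of $\Delta$ are \emph{not} $\sF$-invariant (a consequence of the construction of $\Delta$ as the zero divisor of the contraction), so that the inequality $0\le(\ol K+\ol D_j)\cdot\ol D_j$ of Cascini--Floris/McQuillan applies; this gives $\ol K\cdot\ol D_j\ge \tfrac{\ol D_j\cdot F}{a_j+1}\deg(L)$ and hence the coefficient $\tfrac{d(d+2)}{d+1}$ in closed form. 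You never use the non-invariance of horizontal components, which is the engine of the estimate.

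For (iii) the rigidity $p_g(\sF)=g(B)$ when $0<d<2g(F)-2$ and $g(B)\ge 1$ is left at ``one should find that $h^0(K)>g(B)$ is only possible for $g(B)=0$''; this is not a proof and the positivity you invoke for $f_*(\omega_{\wt S/B}\otimes N_{\wt\sF})$ is again unavailable. The paper's mechanism is a concrete local computation: when $d<2g(F)-2$, every fibre of $f$ contains a component that is not $\sF$-invariant (obtained by comparing $\sum Z(\sF,C_j)$ against $(K_\sF-K_S)\cdot F_0$), and at such a component the vertical multiplicity $n_{ij}$ in $\Delta$ is \emph{strictly less} than the fibre multiplicity $m_{ij}$; this forces $f_*\calo_{\wt S}(\Delta_v)=\calo_B$ and hence $p_g=h^0(K_B)=g(B)$. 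Also note that once you know $L=K_B$, $\deg L=2g(B)-2=2(p_g-1)$, which is strictly smaller than $p_g-1+g(B)=2p_g-1$; so you cannot simply ``apply \eqref{eqn-1-9} with $g(B)=p_g$'' as a black box, because the hypothesis feeding the coefficient $p_g-1+g(B)$ in that proof is not satisfied here. The paper instead reruns the (ii) computation with the exact value $\deg L=2(p_g-1)$, landing precisely on $\tfrac{2d(d+2)}{d+1}(p_g-1)$.
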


\begin{remark}
	(i) In the above two theorems,
	if the foliation $\wt\sF=\sigma^*\sF$ is the same as the one $\mathcal{G}$
	defined by taking the saturation of $\ker(\df:\,T_{\wt S} \to f^*T_{B})$ in $T_{\wt S}$, then $\sF$ is algebraically integrable by definition.
	The converse is not true in general. Namely, there exist examples of
	algebraically integrable foliations whose canonical maps induce fibrations as above,
	but $\wt \sF \neq \mathcal{G}$; see \autoref{exam-6-5} for such a foliation,
	in which case it even holds that $d=K_{\wt \sF} \cdot F=1$.

	(ii)
	Due to the lack of the Miyaoka-Yau type inequality for foliated surfaces of general type,
	there is no absolute upper bound on the degree of the canonical map $\varphi$ for foliated surfaces in \autoref{thm-1-1} if it is generically finite;
	and there is no absolute upper bound either on the genus $g(F)$ (and the degree $d=K_{\sF}\cdot F$)
	of a general fiber $F$ in \autoref{thm-1-3} and \autoref{thm-1-4} when the canonical map $\varphi$ 
	induces a fibration $f:\, \wt S \to B$.
	This is different from Beauville's result in \autoref{thm-1-0}.
\end{remark}

As a byproduct of the study on the canonical map, we obtain the following Noether type inequalities for foliations of general type.
\begin{theorem}\label{thm-1-5}
	Let $\sF$ be a foliation of general type on a smooth projective surface $S$.
	\begin{enumerate}[$(i)$]
		\item The following Noether inequality holds.
		\begin{equation}\label{eqn-1-12}
		\vol(\sF) \geq p_g(\sF)-2.
		\end{equation}
		Moreover, if the equality in \eqref{eqn-1-12} holds,
		then the canonical map $\varphi$ induces a birational map whose image $\Sigma$ is a surface is a surface of minimal degree $($equal to $p_g(\sF)-2)$ in $\bbp^{p_{\sF}-1}$. In particular, $S$ is rational.
		\item Suppose that the surface $S$ is not ruled. Then
		\begin{equation}\label{eqn-1-13}
		\vol(\sF) \geq \min\left\{\frac{3}{2}\big(p_g(\sF)-1\big),~2\big(p_g(\sF)-2\big)\right\}.
		\end{equation}
		\item Suppose that the surface $S$ is of general type. Then
		\begin{equation}\label{eqn-1-14}
		\vol(\sF) \geq 2\big(p_g(\sF)-2\big).
		\end{equation}
		Moreover, if the equality in \eqref{eqn-1-14} holds, then the canonical map $\varphi$ induces a two-to-one map whose image $\Sigma$ is a surface of minimal degree $($equal to $p_g(\sF)-2)$ in $\bbp^{p_g(\sF)-1}$.
	\end{enumerate} 
\end{theorem}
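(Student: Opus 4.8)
The plan is to derive all three inequalities from \autoref{thm-1-1}, \autoref{thm-1-2}, \autoref{thm-1-3} and \autoref{thm-1-4} by a case analysis on the canonical map $\varphi$ of $(S,\sF)$, organized by the dimension of its image $\Sigma$. One first disposes of the trivial range: since $\sF$ is of general type, $\vol(\sF)>0$, so if $p_g(\sF)\leq 2$ every right-hand side is $\leq 0$ and nothing is to be proved (and no equality can occur). Hence assume $p_g(\sF)\geq 3$, so that $\varphi:\,S\dashrightarrow\Sigma\subseteq\bbp^{p_g(\sF)-1}$ is defined with $\Sigma$ non-degenerate of dimension $1$ or $2$. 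If $\dim\Sigma=2$, I would quote \autoref{thm-1-1}: $\vol(\sF)\geq\deg(\varphi)\,(p_g(\sF)-2)\geq p_g(\sF)-2$, which is \eqref{eqn-1-12}; equality in \eqref{eqn-1-12} can occur only here (the fibration cases below are all strict), and then $\deg\varphi=1$, so $(S,\sF)$ is a canonical foliated surface and \autoref{thm-1-2}(i) gives the description of $\Sigma$ and the rationality of $S$. For \eqref{eqn-1-13} and \eqref{eqn-1-14} one splits on $\deg\varphi$: if $\deg\varphi\geq2$ then $\vol(\sF)\geq 2(p_g(\sF)-2)$, with equality forcing $\deg\varphi=2$ and $\Sigma$ of minimal degree by the equality clause of \autoref{thm-1-1}; if $\deg\varphi=1$ then $S$ is not ruled (automatic when $S$ is of general type), so \autoref{thm-1-2}(ii) gives $\vol(\sF)\geq 2p_g(\sF)-4$, and when $S$ is of general type equality is impossible there since it would force $S$ birational to a K3 surface. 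In either subcase $\vol(\sF)\geq 2(p_g(\sF)-2)$, which yields \eqref{eqn-1-13} and gives \eqref{eqn-1-14} with the asserted equality behaviour.

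Next suppose $\dim\Sigma=1$, so that $\varphi$ induces a fibration $f:\,\wt S\to B$ with general fibre $F$, and I would run through the alternatives of \autoref{thm-1-3} and \autoref{thm-1-4}. If $\wt\sF=\mathcal{G}$, then $d=2g(F)-2\geq2$ and, since $\tfrac{d(d+2)}{d+1}$ is increasing, \eqref{eqn-1-6} gives $\vol(\sF)\geq\tfrac83(p_g(\sF)-1)$, which strictly beats $p_g(\sF)-2$, $\tfrac32(p_g(\sF)-1)$ and $2(p_g(\sF)-2)$ at once. If $\wt\sF\neq\mathcal{G}$ and $d\geq2$, then \eqref{eqn-1-8} gives $\vol(\sF)\geq\tfrac{2(p_g(\sF)-1)^2}{p_g(\sF)}=2(p_g(\sF)-2)+\tfrac{2}{p_g(\sF)}$, again enough for all three. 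If $\wt\sF\neq\mathcal{G}$, $d=1$ and $g(B)\geq1$, then \eqref{eqn-1-9} gives $\vol(\sF)\geq\tfrac32\big(p_g(\sF)-1+g(B)\big)\geq\tfrac32 p_g(\sF)$, settling \eqref{eqn-1-12} and \eqref{eqn-1-13}; and if $S$ is moreover of general type then $g(F)\geq2$, hence $d<2g(F)-2$, so \autoref{thm-1-4}(iii) forces $p_g(\sF)=g(B)\geq2$, and then by \eqref{eqn-1-10}, $\vol(\sF)\geq 3(p_g(\sF)-1)>2(p_g(\sF)-2)$.

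This leaves the single delicate case $\wt\sF\neq\mathcal{G}$, $d=1$, $g(B)=0$, which is where the real work lies. Here \eqref{eqn-1-8} already gives \eqref{eqn-1-12} (indeed $\vol(\sF)\geq\tfrac{(p_g(\sF)-1)^2}{p_g(\sF)}>p_g(\sF)-2$), but it is too weak for \eqref{eqn-1-13} and \eqref{eqn-1-14}, and one must analyse $|K_{\wt\sF}|$ more closely. Its moving part is $f^{*}\calo_{\bbp^{1}}\!\big(p_g(\sF)-1\big)$, so $K_{\wt\sF}\equiv\big(p_g(\sF)-1\big)F+Z$ with $Z\geq0$ the fixed part and $Z\cdot F=K_{\wt\sF}\cdot F=1$; combining this with \eqref{eqn-1-7}, i.e.\ $K_{\wt\sF}=f^{*}K_B+\Delta=-2F+\Delta$ with $\Delta$ effective, gives $\Delta=\big(p_g(\sF)+1\big)F+Z$, and the horizontal component $C$ of $Z$ is a rational curve with $C\cdot F=1$. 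An asymptotic Riemann--Roch estimate for $h^{0}\big(\wt S,nK_{\wt\sF}\big)$ through $f_{*}\calo_{\wt S}(nK_{\wt\sF})$, using $K_{\wt\sF}\geq\big(p_g(\sF)-1\big)F+C$, then yields a lower bound of the shape $\vol(\sF)\geq 2\big(p_g(\sF)-1\big)+C^{2}+(\text{non-negative terms})$, after which one bounds $C^{2}$ from below by adjunction on $\wt S$, using that $S$ is not ruled (so $g(F)\geq1$) and, in the general-type case, that $K_S$ is nef on the minimal model with $g(F)\geq2$. This should produce $\vol(\sF)\geq\tfrac32(p_g(\sF)-1)$ when $S$ is merely non-ruled, completing \eqref{eqn-1-13}, and $\vol(\sF)>2(p_g(\sF)-2)$ when $S$ is of general type, completing \eqref{eqn-1-14} and confirming that equality in \eqref{eqn-1-14} occurs only in the generically finite case. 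I expect the control of $C^{2}$ --- a precise understanding of the fixed locus of $|K_{\wt\sF}|$ and of the tangency of $\wt\sF$ with $f$ in the minimal-tangency regime $d=1$ --- to be the principal obstacle, plausibly needing a separate lemma.
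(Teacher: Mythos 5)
Your case analysis is essentially the same skeleton the paper uses, and your treatment of the generically finite case and of the fibration subcases with $\wt\sF=\mathcal{G}$, or with $\wt\sF\neq\mathcal{G}$ and $d\geq 2$, or with $d=1$ and $g(B)\geq 1$, is correct. But you are right that the remaining case is where the work lies, and the route you sketch there does not close the gap.

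Concretely, in the case $\wt\sF\neq\mathcal{G}$, $d=1$, you propose to estimate $\vol(\sF)$ by writing $K_{\wt\sF}\geq(p_g(\sF)-1)F+C$ and then bounding $C^2$ from below by adjunction on $\wt S$. This fails for two reasons. First, the volume is $P^2$ for the positive part $P$ of the Zariski decomposition of $K_{\wt\sF}$, and the horizontal rational curve $C$ need not sit inside $P$ --- in general one contracts negative-part curves to a model $X$ and computes $\ol K^2$ there, so the inequality $\vol(\sF)\geq((p_g(\sF)-1)F+C)^2=2(p_g(\sF)-1)+C^2$ is not available. Second, adjunction on $\wt S$ gives $C^2=-2-K_{\wt S}\cdot C$ with $C$ rational, and $K_{\wt S}\cdot C$ is uncontrollable after the blowups $\sigma:\wt S\to S$; there is no lower bound on $C^2$ coming from the non-ruledness of $S$ alone. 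The paper's actual mechanism is a foliation-theoretic one, not a surface-adjunction one: after contracting the negative part to $X$, one uses that each horizontal component $\ol D_{j_0}$ of $\Delta$ is \emph{not} $\wt\sF$-invariant (\autoref{prop-4-2}), so that $(\ol K+\ol D_{j_0})\cdot\ol D_{j_0}\geq 0$ by \cite[Prop.~2.13]{cf-18} (equivalently \cite[Lemma~III.1.1]{mcq-08}); this replaces adjunction and yields $\ol K\cdot\ol D_{j_0}\geq\frac{\ol D_{j_0}\cdot F}{a_{j_0}+1}\deg L$. Combined with \autoref{lem-41-2} (which shows $h^0(F,\calo_F(K_{\wt\sF}))=1$ forces $\Delta_h$ into the fixed part, so the moving part is pulled back from $B$ with $\deg L\geq p_g(\sF)-1$ regardless of $g(B)$), this gives the bound $\vol(\sF)\geq\frac{d(d+2)}{d+1}(p_g(\sF)-1)$ also when $g(B)=0$, which is exactly \autoref{cor-41-2}. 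The paper then splits by $g(F)$ (getting $\tfrac32(p_g(\sF)-1)$ when $g(F)=1$ and $3(p_g(\sF)-1)$ when $g(F)\geq 2$) rather than by $g(B)$ as you do; your finer split on $g(B)$ is harmless but leaves exactly the $g(B)=0$ subcase, which requires the non-invariance inequality above and cannot be recovered by the Riemann--Roch/adjunction argument you propose.
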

\begin{remark}\label{rem-1-1}
	(i) The classical Noether inequality for surfaces of general type was established by Noether \cite{noether} more than one hundred years ago.
	Chen-Chen-Jiang \cite{ccj-201,ccj-202} obtained the sharp Noether type inequality for $3$-folds of general type.
	Remark also that Chen-Jiang \cite{cj-17} proved the Noether type inequality in any dimension.
	
	(ii) In \cite{lt-24}, we applied the Noether type inequalities to study the Poincar\'e problem on the algebraic integrability of foliated surfaces.
	In fact, the first Noether inequality \eqref{eqn-1-12} has been already proved in \cite[Theorem\,1.9(i)]{lt-24}, and it is sharp.
	\autoref{exam-3-1} with $d=2$ shows that the third Noether inequality \eqref{eqn-1-14} is also sharp; see also \autoref{rem-5-1}.
	However, it is not clear whether the second Noether inequality \eqref{eqn-1-13} is sharp or not;
	see \autoref{prop-5-1} for an improvement when the foliation $\sF$ is algebraically integrable.
\end{remark}

\vspace{2mm}
The paper is organized as follows.
In \autoref{sec-pre}, we review some basic facts about foliated surfaces.
In \autoref{sec-gen-finite}, we consider the case when the canonical map $\varphi$ is generically finite.
We will first study the lower bound on the degree of a non-degenerate surface in a projective space in \autoref{sec-non-degenerate};
the proofs of \autoref{thm-1-1} and \autoref{thm-1-2} will be given in \autoref{sec-gen-finte-proof};
and we will construct several examples in \autoref{sec-exam-3} showing the sharpness of the inequalities in both \autoref{thm-1-1} and \autoref{thm-1-2}.
In \autoref{sec-fibration}, we consider the case when the image of the canonical map $\varphi$ is of dimension one, and hence it induces a fibration $f:\,\wt S \to B$.
Let $\mathcal{G}$ be the foliation defined by taking the saturation of
$\ker(\df:\,T_{\wt S} \to f^*T_{B})$ in $T_{\wt S}$, and $\wt \sF:=\sigma^*(\sF)$ the pulling-back foliation,
where $\sigma:\,\wt S \to S$ is the induced birational morphism.
In \autoref{sec-foliation-same}, we study the case when $\wt \sF=\mathcal{G}$ and prove \autoref{thm-1-3};
in \autoref{sec-foliation-different}, we study the case when $\wt \sF\neq \mathcal{G}$ and prove \autoref{thm-1-4};
lastly in \autoref{sec-exam-4}, we will construct examples showing both cases can indeed happen.
In \autoref{sec-noether}, we will apply our study on the canonical maps of foliated surfaces of general type
to prove the three Noether type inequalities in \autoref{thm-1-5}.
Finally in \autoref{sec-appendix}, we present the Riemann-Hurwitz formula for (reduced) foliated surfaces.

\vspace{2mm}
\noindent
{\bf Acknowledgements.}
The author would like to thank Professor Yifan Chen for a communication on the triple-canonical
maps of surfaces of general type. In particular, the proof of \autoref{lem-3-2}\,(ii) owes to him.

\section{Preliminary}\label{sec-pre}
In this section,
we recall some basic facts about the foliated surfaces. For more details we refer to \cite{bru-04,mcq-08}.
\subsection{Basic definitions}
We work over the complex number $\mathbb{C}$.
By a foliated surface $(S,\sF)$ we mean a foliation $\sF$ on a smooth projective surface $S$ over the complex number $\mathbb{C}$.
\begin{definition}\label{def-foliation}
	A foliation $\mathcal{F}$ on a smooth projective surfacce $S$ over the complex number $\mathbb{C}$ is defined by a saturated invertible subsheaf $T_{\mathcal{F}} \subseteq T_S$ in the tangent sheaf $T_{S}$,
	i.e.,  $T_S/T_{\mathcal{F}}$ is torsion free.
\end{definition}
The singular locus of $\mathcal{F}$ consists of the points $x\in S$
where the quotient sheaf $T_X/T_{\mathcal{F}}$
fails to be locally free at $x$.
The torsion-freeness of $T_X/T_{\mathcal{F}}$ ensures that the singular locus of $\sF$ in $S$ has codimension of at least two,
i.e., it consists of finitely many points.
In other words, a foliation $\sF$ on $S$ can be expressed by an exact sequence
$$0\lra T_{\sF} \lra T_S  \lra \mathcal{I}(N_{\sF}) \lra 0,$$
where $T_{\sF}$ and $N_{\sF}$ are respectively called the {\it tangent bundle} and {\it normal bundle} of $\sF$,
and $\mathcal{I}$ is an ideal sheaf supported on the singular locus of $\sF$.
Equivalently, a foliation on $S$ can be described by the data $\{(U_i, v_i)\}_{i\in I}$,
where $\{U_i\}_{i\in I}$ is an open covering of $S$,
$v_i$ is a holomorphic vector field on $U_i$ with at most isolated zeros,
and there exists $g_{ij} \in \mathcal{O}^*_S(U_i\cap U_j)$ whenever $U_i\cap U_j \neq \emptyset$ such that
\begin{equation}\label{eqn-2-8}
v_i|_{U_i\cap U_j}=g_{ij}v_j|_{U_i\cap U_j}.
\end{equation}
The cocycle $\{g_{ij}\}$ defines a line bundle which is precisely the dual of the tangent bundle $T_{\sF}^*$.
The bundle $T_{\sF}^*$ will be called the {\it canonical divisor} of $\mathcal{F}$, and we denote it by $K_{\mathcal{F}}:=T_{\sF}^*$.

Alternatively, one can also define $\sF$ using one-forms instead of vector fields.
A foliation on $S$ is characterized by a collection of one-forms
$\omega_i\in \Omega^1_S(U_i)$ that posses only isolated zeros.
Moreover, there exists a function $f_{ij} \in \mathcal{O}^*_S(U_i\cap U_j)$ whenever $U_i\cap U_j \neq \emptyset$ satisfying the relation:
\begin{equation}\label{eqn-2-9}
\omega_i|_{U_i\cap U_j}=f_{ij}\omega_j|_{U_i\cap U_j}.
\end{equation}
The cocycle $\{f_{ij}\}$ gives rise to a line bundle which is the conormal bundle $N_{\sF}^*$.
One can also translate this into an exact sequence as follows.
$$0\lra N_{\sF}^* \lra \Omega_S^1  \lra \mathcal{I}(K_{\sF}) \lra 0.$$

For any surjective morphism $\Pi:\,\wt{S} \to S$ from another smooth projective surface $\wt{S}$,
there exists a natural foliation $\Pi^*\sF$ induced by pulling-back $\sF$ on $\wt{S}$.
Let $\{(U_i,\omega_i)\}_{i\in I}$ be a collection 
of local one-forms defining the foliation $\sF$.
Initially, one may assume that $\wt{\sF}$ is simply given by $\{(\Pi^{-1}U_i, \Pi^*\omega_i)\}_{i\in I}$.
However, this is not always the case, as $\Pi^*\omega_i$ may posses one-dimensional zeros.
Instead, the foliation $\Pi^*\sF$ is defined by the data
$$\{(V_{ij}, \wt\omega_{ij})\},~i\in I, j\in J,$$
where $V_{ij}$ is an open covering of $\wt{S}$ such that $\Pi(V_{ij}) \subseteq U_i$,
and $\wt\omega_{ij}=\frac{\Pi^*(\omega_i)}{h_{ij}}$ with $h_{ij}$ being a holomorphic function over $V_{ij}$ satisfying $\div(h_{ij})=\div\big(\Pi^*(\omega_i)|_{V_{ij}}\big)$.
In particular, for any blowing-up $\sigma:\,\wt{S} \to S$ centered at some point $p\in S$,
there is an induced foliation $\sigma^*\sF$ on $\wt{S}$,
such that the restricted foliations $(\sigma^*\sF)|_{\wt{S}\setminus E} \cong \sF|_{S\setminus p}$ under the isomorphism $\wt{S} \setminus E \cong S \setminus\{p\}$,
where $E$ is the exceptional curve.

\vspace{2mm}
Suppose that $p$ is a singular point of $\sF$,
and that $v=A(x,y)\frac{\partial}{\partial x}+B(x,y)\frac{\partial}{\partial y}$ is a local vector field defining $\sF$.
The two eigenvalues $\lambda_1,\lambda_2$ of the differential matrix
$$Dv:=\left(\begin{aligned}
	A_x~&~A_y \\
	B_x~&~B_y
\end{aligned}\right)$$
at the point $p$ are uniquely determined up to multiplication by a non-zero constant,
where $A_x=\frac{\partial A(x,y)}{\partial x}$, and similar for other notations in the above matrix.
\begin{definition}\label{def-2-1}
	A singularity $p$ of $\sF$ is called a {\it reduced singularity} if at least one of the two eigenvalues (say, $\lambda_2$) is not zero and the quotient $\lambda=\frac{\lambda_1}{\lambda_2}$ is not a positive rational number.
	The foliation $\sF$ is said to be reduced if all its singularities are reduced.
\end{definition}
It is worth noting that the quotient $\lambda=\frac{\lambda_1}{\lambda_2}$ remains when $v$ is multiplied by a nonvanishing holomorphic function.
Naturally, if $\lambda_1\neq 0$, one could consider the reciprocal quotient
$\lambda^{-1}=\frac{\lambda_2}{\lambda_1}$ instead,
but the condition $\lambda \not\in \mathbb{Q}^{+}$ is equivalent to $\lambda^{-1} \not\in \mathbb{Q}^{+}$.
With a slight abuse of notation due to the exchange $\lambda \leftrightarrow \lambda^{-1}$, 
the complex number $\lambda=\frac{\lambda_1}{\lambda_2}$ is referred to as the {\it eigenvalue} of $\sF$ at $p$, following \cite{bru-04}.
A reduced singularity is called {\it non-degenerate} if both of the two eigenvalues $\lambda_1$ and $\lambda_2$ are non-zero; otherwise it is called a {\it saddle node}.
Given any foliation, one can obtain a reduced one through a sequence of blowing-ups:
\begin{theorem}[{Seidenberg, \cite[Theorem\,1.1]{bru-04}}]\label{thm-seidenberg}
	For any foliated surface $(S,\sF)$, there exists a sequence of blowing-ups $\pi:\,\wt{S} \to S$,
	such that the induced foliation $\wt\sF=\pi^*\sF$ on $\wt{S}$ is reduced.
\end{theorem}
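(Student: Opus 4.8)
The plan is to reduce to a purely local question at each of the finitely many singular points of $\sF$ and to run a Noetherian induction on a pair of invariants of a germ of singular foliation, ordered lexicographically. Fix $p\in S$ at which $\sF$ is singular, choose local coordinates $(x,y)$ centered at $p$, and write $\sF$ near $p$ by a holomorphic vector field $v=A(x,y)\frac{\partial}{\partial x}+B(x,y)\frac{\partial}{\partial y}$ with $A,B$ having no common factor, so that $v$ has an isolated zero at $p$. The primary invariant is the algebraic multiplicity $\nu_p(\sF):=\min\{\order_p A,\ \order_p B\}$; thus $\nu_p(\sF)=0$ precisely when $p$ is a smooth point, and the goal form of \autoref{def-2-1} is $\nu_p=1$ together with the eigenvalue condition on $Dv(p)$. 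Since a blowing-up is an isomorphism away from its center and produces only finitely many new singular points of the pulled-back foliation, it is enough to prove that a finite sequence of point blowing-ups turns a single singular germ into a union of reduced germs.

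First I would record the effect of one blowing-up $\sigma\colon\wt S\to S$ at $p$, with exceptional curve $E\cong\bbp^1$. Put $\nu=\nu_p(\sF)$, let $v_\nu=A_\nu\frac{\partial}{\partial x}+B_\nu\frac{\partial}{\partial y}$ be the degree-$\nu$ homogeneous part of $v$, and form the tangent cone $T_{\nu+1}(x,y):=x\,B_\nu-y\,A_\nu$, homogeneous of degree $\nu+1$. If $T_{\nu+1}\equiv 0$ --- equivalently $v_\nu$ is proportional to the radial field $x\frac{\partial}{\partial x}+y\frac{\partial}{\partial y}$, in which case $E$ is \emph{not} $\sigma^*\sF$-invariant --- a direct computation in the two standard affine charts of $\sigma$ shows $\nu_q(\sigma^*\sF)\le\nu-1$ for every $q\in E$, so the primary invariant strictly drops. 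If $T_{\nu+1}\not\equiv 0$, then $E$ \emph{is} $\sigma^*\sF$-invariant and the singular points of $\sigma^*\sF$ on $E$ lie among the projectivized zeros of $T_{\nu+1}$ together with the two origins of the standard charts; here $\nu$ need not decrease, and the obstruction to progress is a "persistent'' point $q\in E$ with $\nu_q(\sigma^*\sF)\ge\nu$, which forces a rigid normal form (the mass of $T_{\nu+1}$ concentrated at the direction of $q$).

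To dispose of the persistent situations I would bring in a secondary invariant attached to $\nu$ and to the persistent direction. When $\nu=1$ and the linear part $L=Dv(p)$ has two nonzero eigenvalues $\lambda_1,\lambda_2$ with ratio $\lambda=\lambda_1/\lambda_2\in\Qbb^{+}$, write $\lambda=m/n$ in lowest terms and take $m+n$: one blowing-up replaces the diagonal model $m\,x\frac{\partial}{\partial x}+n\,y\frac{\partial}{\partial y}$ by $m\,x\frac{\partial}{\partial x}+(n-m)\,t\frac{\partial}{\partial t}$ in one chart and symmetrically in the other, so $m+n$ strictly decreases until the ratio leaves $\Qbb^{+}$ or an eigenvalue vanishes, i.e.\ until the singularity is reduced (possibly a saddle node). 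The genuinely delicate case is $\nu=1$ with $L\neq 0$ nilpotent: here no eigenvalue ratio is available, and in fact the multiplicity may even increase after a blowing-up, so one instead works with the Newton polygon of $v$ in suitably chosen coordinates and argues that each blowing-up (or, more efficiently, each weighted blowing-up realized as a chain of ordinary ones) either makes the linear part non-nilpotent or strictly simplifies the polygon (fewer vertices, smaller lattice area), which terminates; this is the technical heart of Seidenberg's theorem. For $\nu\ge 2$ the persistent direction is treated in the same spirit, with a characteristic-exponent (or Newton-polygon) measure that strictly decreases under the blowing-up centered along that direction.

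Finally, assembling the above, the lexicographic invariant $(\nu_p,\ \text{secondary})$ takes values in a well-ordered set and strictly decreases along any chain of blowing-ups that has not yet reduced a germ; hence the process stops after finitely many steps over each of the finitely many singular points of $\sF$, and composing all of these blowing-ups yields the desired $\pi\colon\wt S\to S$ with $\wt\sF=\pi^*\sF$ reduced. As indicated, the step I expect to be the main obstacle is the nilpotent $\nu=1$ case --- equivalently, making the Newton-polygon induction terminate --- which is precisely where Seidenberg's original proof does its substantive work; the remaining cases reduce to bookkeeping of multiplicities and eigenvalues in the two charts of a point blowing-up.
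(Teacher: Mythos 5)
The paper does not give a proof of this theorem at all: Seidenberg's reduction of singularities is imported verbatim from the literature, with the citation pointing to Brunella's book (ultimately to Seidenberg's original 1968 paper), and the rest of the paper simply treats it as a black box when passing from an arbitrary foliated surface to a reduced model. So there is no in-paper argument to compare against; the only question is whether your outline is a sound account of the known proof.

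As an outline it is accurate and well-organized: localizing at the finitely many singular points, taking the algebraic multiplicity $\nu_p=\min\{\order_p A,\order_p B\}$ as primary invariant, separating the dicritical case ($T_{\nu+1}\equiv 0$, in which $\nu$ strictly drops to $\le\nu-1$ on $E$) from the non-dicritical one, and handling the non-degenerate $\nu=1$ case by the Euclidean-algorithm descent on $m+n$ when $\lambda=m/n\in\Qbb^{+}$ are all correct, and you correctly flag that the algebraic multiplicity is \emph{not} monotone under blow-up when the linear part is nonzero nilpotent. But precisely because of that, what you have written is a proof strategy rather than a proof: you defer the termination argument for the Newton-polygon (or characteristic-exponent) induction in both the nilpotent $\nu=1$ case and the persistent $\nu\ge 2$ case, which is where all of the substantive work in Seidenberg's theorem lives, and you acknowledge this. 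Unless you intend to fill in a concrete well-ordered secondary invariant (e.g.\ a weight on the Newton polygon measured along the persistent direction) together with the verification that it strictly decreases under the relevant blow-ups, the argument does not yet establish termination and hence does not prove the theorem; as a roadmap for reading Seidenberg's or Brunella's proof, however, it is essentially correct.
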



	\begin{definition}
	Let $\sF$ be reduced foliation on a smooth projective $S$.
	An irreducible curve $C\subseteq S$ is $\sF$-exceptional if
	\begin{enumerate}[(i).]
		\item $C$ is an exceptional curve of first kind on $S$, i.e., it is a smooth rational curve with $C^2=-1$;
		\item the contraction of $C$ to a point produces a new foliation $(S_0,\sF_0)$ which is still reduced.
	\end{enumerate}
\end{definition}

\begin{definition}\label{def-min}
	A foliated surface $(S,\sF)$ is called relatively minimal if the foliation $\sF$ is reduced and there is no $\sF$-exceptional curve on $S$.
\end{definition}

It is established that any foliated surface $(S,\sF)$ has a relatively minimal model, cf. \cite[Proposition\,5.1]{bru-04}.
We assume in the following that $\sF$ is a relatively minimal foliation on a smooth projective surface $S$
such that $K_{\sF}$ is pseudo-effective.
In fact, the canonical divisor $K_{\sF}$ is pseudo-effective if and only if $\sF$ is not induced by a $\bbp^1$-fibration, cf. \cite{miy-87}.
Denote the Zariski decomposition of $K_{\sF}$ by
\begin{equation}\label{eqn-zariski}
K_{\sF}=P+N,
\end{equation}
where $P$ is the nef part and $N$ is the negative one.
By the Riemann-Roch theorem, we have
$$\vol(\sF)=P^2.$$
Moreover, McQuillan proved that the support of the negative part $N$ is a disjoint union of maximal $\sF$-chains.
\begin{definition}
	Let $\sF$ be a relatively minimal foliation on a smooth projective surface $S$.
	We say a curve $C\subseteq S$ is an $\sF$-chain if
	\begin{enumerate}[(i).]
		\item the curve $C$ is a Hirzebruch-Jung string, i.e., $C=\cup_{j=1}^{r} C_j$, each $C_j$ is a smooth rational curve with $C_j^2\leq -2$, $C_j\cdot C_i=1$ if $|i-j|=1$ and $0$ if $|i-j|\geq 2$;
		\item each irreducible component $C_j$ is $\sF$-invariant;
		\item $\mathrm{Sing}(\sF)\cap C$ are all reduced and non-degenerate;
		\item $Z(\sF,C_1)=1$, and $Z(\sF,C_j)=2$ for any $2\leq j\leq r$.
	\end{enumerate}
\end{definition}

\begin{theorem}[{\cite[Theorem\,8.1]{bru-04}}]\label{thm-3-4}
	Let $\sF$ be a relatively minimal foliation on a smooth projective surface $S$.
	Suppose that $K_{\sF}$ is pseudo-effective with the Zariski decomposition as in \eqref{eqn-zariski}.
	Then the support $\mathrm{Supp}(N)$ is a disjoint union of maximal $\sF$-chains,
	and $\lfloor N \rfloor=0$.
\end{theorem}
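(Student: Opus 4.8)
The plan is to follow the Brunella--McQuillan argument, combining the adjunction calculus for reduced foliated surfaces with the negative-definiteness built into any Zariski decomposition. Write $K_{\sF}=P+N$ with $N=\sum_{i}a_{i}C_{i}$, so that $a_{i}>0$, the intersection form on $\{C_{i}\}$ is negative definite, and $P\cdot C_{i}=0$ for every $i$; in particular $\sum_{j}a_{j}\,(C_{j}\cdot C_{i})=N\cdot C_{i}=K_{\sF}\cdot C_{i}$ for all $i$. Since this intersection form is a negative-definite symmetric integer matrix with non-negative off-diagonal entries, its negative is a Stieltjes matrix, so its inverse has entrywise non-positive entries; restricting the linear system above to a single connected component $\Gamma$ of $\mathrm{Supp}(N)$ and using $a_{i}>0$, it follows that some component $C\subseteq\Gamma$ has $K_{\sF}\cdot C<0$. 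The first step is therefore to analyse irreducible curves $C$ with $K_{\sF}\cdot C<0$.

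For this analysis I would use two identities: for an $\sF$-invariant irreducible curve $C$, the adjunction formula for foliations gives $K_{\sF}\cdot C=2p_{a}(C)-2+Z(\sF,C)$, where $Z(\sF,C)=\sum_{p\in\mathrm{Sing}(\sF)\cap C}Z(\sF,C,p)$ and each local index $Z(\sF,C,p)\ge1$; while for $C$ \emph{not} $\sF$-invariant one has $K_{\sF}\cdot C=K_{S}\cdot C+\mathrm{tang}(\sF,C)$ with $\mathrm{tang}(\sF,C)\ge0$. If $C$ is not $\sF$-invariant and $K_{\sF}\cdot C<0$, then $K_{S}\cdot C=2p_{a}(C)-2-C^{2}<0$ forces $p_{a}(C)=0$ and $C^{2}=-1$, hence $K_{\sF}\cdot C=-1+\mathrm{tang}(\sF,C)=-1$ with $\mathrm{tang}(\sF,C)=0$ --- so $C$ is a smooth rational $(-1)$-curve transverse to $\sF$, whose contraction is still reduced, making $C$ an $\sF$-exceptional curve, contrary to relative minimality. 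Thus every curve $C$ with $K_{\sF}\cdot C<0$ is $\sF$-invariant, whence $p_{a}(C)=0$ and $Z(\sF,C)\le1$; moreover $Z(\sF,C)=0$ is impossible, since then $\mathrm{Sing}(\sF)\cap C=\emptyset$ and the Camacho--Sad index theorem would force $C^{2}=0$, contradicting negative-definiteness. Therefore $Z(\sF,C)=1$, $K_{\sF}\cdot C=-1$, and $C^{2}\le-2$ (again $-1$ is excluded, as it would make $C$ $\sF$-exceptional). At this point I would invoke Seidenberg's reduction (\autoref{thm-seidenberg}) and the local classification of reduced singularities: such a $C=C_{1}$ is the initial component of a unique maximal $\sF$-chain $\Gamma=C_{1}+C_{2}+\cdots+C_{r}$ --- obtained by following the second separatrix at the singularity on $C_{1}$ and iterating --- along which $Z(\sF,C_{j})=2$, hence $K_{\sF}\cdot C_{j}=0$, for $2\le j\le r$; and distinct maximal $\sF$-chains are pairwise disjoint, a shared component being incompatible with the index conditions defining an $\sF$-chain.

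Putting this together, I would conclude that each connected component of $\mathrm{Supp}(N)$ is a maximal $\sF$-chain: it contains some $C_{1}$ with $K_{\sF}\cdot C_{1}=-1$, which initiates a maximal $\sF$-chain $\Gamma$; the negativity $N\cdot C_{1}=-1<0$, together with the negative-definiteness of the intersection form and $K_{\sF}\cdot C_{j}=0$ along $C_{2},\dots,C_{r}$, propagates along the whole string and forces $\Gamma\subseteq\mathrm{Supp}(N)$, while the defining conditions of $\Gamma$ (all singular points of $\sF$ on each $C_{j}$ being accounted for within $\Gamma$ itself) leave no room for any further curve of $\mathrm{Supp}(N)$, in particular no second maximal $\sF$-chain, to be attached to $\Gamma$. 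Hence $\mathrm{Supp}(N)=\bigsqcup_{\Gamma}\Gamma$ and $N=\sum_{\Gamma}\Theta_{\Gamma}$, where $\Theta_{\Gamma}=\sum_{j}\theta_{j}C_{j}$ is the unique effective $\mathbb{Q}$-divisor with $(K_{\sF}-\Theta_{\Gamma})\cdot C_{j}=0$ for all $j$. Writing $C_{j}^{2}=-e_{j}\le-2$, these equations read $-e_{1}\theta_{1}+\theta_{2}=-1$ and $\theta_{j-1}-e_{j}\theta_{j}+\theta_{j+1}=0$ for $2\le j\le r$ (with the convention $\theta_{r+1}:=0$); the standard Hirzebruch--Jung computation, using $e_{j}\ge2$, yields $0<\theta_{j}<1$ for every $j$, whence $\lfloor N\rfloor=0$.

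The step I expect to be the main obstacle is the passage, in the second paragraph, from ``$C_{1}$ is a smooth rational invariant curve with $C_{1}^{2}\le-2$ carrying a single index-one reduced singularity'' to the full structural statement ``$C_{1}$ initiates a maximal $\sF$-chain, such chains are pairwise disjoint, and nothing of $\mathrm{Supp}(N)$ is attached to them''. This genuinely rests on Seidenberg's reduction together with the local normal forms of reduced singularities (non-degenerate ones with eigenvalue $\lambda\notin\mathbb{Q}^{+}$, and saddle-nodes) and on Camacho--Sad bookkeeping along the invariant components --- it is the foliation-theoretic heart of the argument. By contrast, once that input is available, the negative-definite linear algebra locating $\mathrm{Supp}(N)$ and the continued-fraction estimate giving $\lfloor N\rfloor=0$ are routine.
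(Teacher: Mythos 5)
This theorem is not proved in the paper; it is quoted directly from Brunella's book \cite[Theorem\,8.1]{bru-04}, so there is no internal argument to compare yours against. Taken on its own, your sketch does follow the Brunella--McQuillan strategy: use the negative-definiteness of the intersection form on a connected component of $\mathrm{Supp}(N)$ to locate a curve $C$ with $K_\sF\cdot C<0$, then use the index formulas, relative minimality and Camacho--Sad to pin $C$ down as the head of an $\sF$-chain, and finish with the Hirzebruch--Jung computation for $\lfloor N\rfloor=0$. That outline is right. However, the identity you invoke for a non-invariant curve, $K_\sF\cdot C=K_S\cdot C+\mathrm{tang}(\sF,C)$, is incorrect. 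The correct formula, recorded in the paper following \cite[Proposition\,2.2]{bru-04}, is $\mathrm{tang}(\sF,C)=K_\sF\cdot C+C^2$; your version differs by $2p_a(C)-2$ and only coincides with it when $p_a(C)=1$. Fortunately the correct formula makes this case trivial: any $C\subseteq\mathrm{Supp}(N)$ has $C^2<0$, so if $C$ is not invariant then $K_\sF\cdot C=\mathrm{tang}(\sF,C)-C^2>0$, ruling out $K_\sF\cdot C<0$ with no appeal to $\sF$-exceptionality at all.

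The more serious gap is one you flag yourself but then essentially skip: showing that $\mathrm{Supp}(N)$ \emph{equals}, rather than merely \emph{contains}, a disjoint union of maximal $\sF$-chains. Your argument produces, inside each connected component of $\mathrm{Supp}(N)$, a head curve $C_1$ and, via local normal forms of reduced singularities and Camacho--Sad bookkeeping, a maximal chain $\Gamma\supseteq C_1$; and the fact that all singular points of $\sF$ on the chain components are used up does exclude another \emph{invariant} curve of $\mathrm{Supp}(N)$ from being attached. But you never exclude extra curves, e.g. non-invariant ones (which we have just seen would have $K_\sF\cdot C>0$), from belonging to the same component of $\mathrm{Supp}(N)$, nor do you explain why $N$ is exactly $\sum_\Gamma\Theta_\Gamma$ rather than some larger effective divisor. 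The standard way to close this is to construct the candidate divisor $\Theta=\sum_\Gamma\Theta_\Gamma$ with coefficients uniquely determined by $(K_\sF-\Theta)\cdot C_j=0$, verify the Hirzebruch--Jung estimate $0<\theta_j<1$ (which gives $\lfloor N\rfloor=0$ once one knows $N=\Theta$), and then prove that $K_\sF-\Theta$ is nef, so that uniqueness of the Zariski decomposition identifies $P=K_\sF-\Theta$ and $N=\Theta$. Without the nefness verification your argument only determines $N$ once $\mathrm{Supp}(N)$ is already known, which is circular. So the proposal is a correct skeleton with one wrong formula (whose correction simplifies matters) and a genuine missing step at the finish.
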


An important way to produce foliations comes from fibrations on smooth projective surfaces.
\begin{definition}\label{def-algebraic}
	A foliation $\sF$ is {\it algebraically integrable}, if it comes from a fibration of curves.
\end{definition}
A fibration  $f:\,S \to B$ of curves
is given by a proper surjective morphism $f$ from a smooth projective surface $S$ onto a smooth projective curve $B$ with connected fibers.
Given a fibration of curves $f:\,S \to B$,
it defines a foliation $\sF$ on $S$ by taking the saturation of $\ker(\df:\,T_{S} \to f^*T_{B})$ in $T_{S}$.
The canonical divisor $K_{\sF}$ is simple:
\begin{equation}\label{eqn-2-3}
K_{\sF}=K_{S/B} \otimes \mathcal{O}_{S}\Big(\sum(1-a_i)C_i\Big),
\end{equation}
where $K_{S/B}=K_S-f^*(K_B)$ is the relative canonical divisor,
the sum is taken over all components $C_i$'s in fibers of $f$,
and $a_i$ is the multiplicity of $C_i$ in fibers of $f$.
The foliation $\sF$ is reduced if and only if every possible singular fiber of $f$ is normal crossing.
In particular, if the fibration is semi-stable,
i.e., any possible singular fiber of $f$ is a reduced node curve,
and any possible smooth rational component in such a singular fiber intersects other components at least two points,
then $\sF$ is relatively minimal and $K_{\sF}=K_{S/B}$ by \eqref{eqn-2-3}.

\vspace{2mm}
An irreducible curve $C\subseteq S$ is said to be {\it $\sF$-invariant} if the inclusion
$T_{\sF}|_{C} \hookrightarrow T_X|_{C}$ factors through $T_C$.
By a curve $C\subseteq S$ we mean a reduced and compact algebraic curve.
So it might be singular and reducible.
Suppose that $C$ is not $\sF$-invariant, or more precisely every irreducible component of $C$ is not $\sF$-invariant.
Then one defines the tangency of $\sF$ to $C$ as follows.
Let $p\in C$ be any point.
Around $p$, let $\{f=0\}$ be a local equation of $C$,
and $v$ be a local holomorphic vector field defining $\sF$ around $p$.
Then the tangency of $\sF$ to $C$ at $p$ is defined to be
$$\tang(\sF,C,p)=I_p\big(f,v(f)\big):=\dim_{\mathbb{C}} \frac{\mathcal{O}_{S,p}}{\langle f, v(f) \rangle}.$$
As $C$ is not $\sF$-invariant, $\tang(\sF,C,p)<+\infty$ and  $\tang(\sF,C,p)=0$ except for finitely many points.
Hence one defines the tangency of $\sF$ to $C$.
$$\tang(\sF,C) = \sum_{p\in C} \tang(\sF,C,p).$$
\begin{proposition}[{\cite[Proposition\,2.2]{bru-04}}]
	Let $C$ be a curve on $S$ which is not $\sF$-invariant. Then
	$$\tang(\sF,C)=K_{\sF}C+C^2 \geq 0.$$
\end{proposition}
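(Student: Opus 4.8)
The plan is to exhibit the effective $0$-cycle $\sum_{p\in C}\tang(\sF,C,p)\cdot p$ on $C$ as the zero scheme of a global section of a line bundle on $C$, and then to identify that line bundle with $\big(K_\sF\otimes\calo_S(C)\big)|_C$; the asserted equality then reads off the degree, and nonnegativity is automatic since a section vanishes along a nonnegative number of points.

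First I would fix local data on an open cover $\{U_i\}$ of $S$: local vector fields $v_i$ with isolated zeros generating $\sF$ on $U_i$, with $v_i=g_{ij}v_j$ on overlaps (so that, as recorded above, $\{g_{ij}\}$ is a cocycle for $K_\sF=T_\sF^*$), together with local defining equations $f_i\in\calo_S(U_i)$ of $C$, with $f_i=h_{ij}f_j$ on overlaps (so $\{h_{ij}\}$ is a cocycle for $\calo_S(C)$). The one computation that matters is
$$v_i(f_i)=v_i(h_{ij}f_j)=h_{ij}\,v_i(f_j)+f_j\,v_i(h_{ij})=g_{ij}h_{ij}\,v_j(f_j)+f_j\,v_i(h_{ij}),$$
using $v_i=g_{ij}v_j$ as derivations in the last step. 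Restricting to $C$ kills the term $f_j\,v_i(h_{ij})$, so the functions $s_i:=v_i(f_i)|_C$ satisfy $s_i=(g_{ij}h_{ij})\,s_j$ and hence glue to a global section $s$ of the line bundle $\big(K_\sF\otimes\calo_S(C)\big)|_C$ on $C$. One checks this is well defined: replacing $v_i$ by $u_iv_i$ with $u_i\in\calo_S^*(U_i)$ multiplies $v_i(f_i)$ by a unit and does not change the ideal $\langle f_i,v_i(f_i)\rangle$, and similarly for the choice of the $f_i$. Since $\calo_{C,p}=\calo_{S,p}/(f)$, one has $\dim_{\Cbb}\calo_{S,p}/\langle f,v(f)\rangle=\dim_{\Cbb}\calo_{C,p}/\langle v(f)\rangle$, which is exactly the statement that $\tang(\sF,C,p)$ equals the length at $p$ of the zero scheme of $s$.

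Because no irreducible component of $C$ is $\sF$-invariant, $v(f)$ does not lie in $(f)$ along any component, so $s$ is not identically zero on any component; its zero scheme is then finite of total length $\sum_{p\in C}\tang(\sF,C,p)=\tang(\sF,C)$, which therefore equals $\deg\big((K_\sF\otimes\calo_S(C))|_C\big)$. Using that $\deg(L|_C)=L\cdot C$ for any line bundle $L$ on the smooth surface $S$ and any reduced curve $C\subseteq S$ (valid with $C$ singular or reducible, this being the definition of the intersection pairing against $[C]$), with $L=K_\sF\otimes\calo_S(C)$ I get
$$\tang(\sF,C)=\deg\big((K_\sF\otimes\calo_S(C))|_C\big)=(K_\sF+C)\cdot C=K_\sF C+C^2,$$
and the left-hand side is $\geq0$ because it is the length of an honest subscheme of $C$.

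I expect no conceptual obstacle here; the care needed is purely local bookkeeping: checking that the error terms $f_j\,v_i(h_{ij})$ are irrelevant after restriction to $C$ and that the unit ambiguity in the $v_i$ does not affect the glued section, and checking that the length/degree identifications are unaffected by singularities or several components of $C$ — which is cleanest to handle by working directly with the reduced curve $C$ and the identity $\deg(L|_C)=c_1(L)\cdot[C]$ rather than by reducing to the irreducible case (where an additional $2\sum_{a<b}C_a\cdot C_b$ correction to $\tang(\sF,\,\cdot\,)$ would appear).
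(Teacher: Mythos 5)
Your proof is correct, and it is essentially the standard argument from Brunella's book (which the paper cites for this proposition rather than proving it): realize the tangency $0$-cycle as the vanishing scheme of a global section of $\big(K_\sF\otimes\calo_S(C)\big)\big|_C$ via the Leibniz computation $v_i(f_i)|_C=(g_{ij}h_{ij})\,v_j(f_j)|_C$, observe that the non-invariance of each component keeps this section from vanishing identically on any component, and read off the degree. The local bookkeeping, the identification $\dim_\Cbb\calo_{S,p}/\langle f,v(f)\rangle=\dim_\Cbb\calo_{C,p}/\langle v(f)|_C\rangle$, and the use of $\deg(L|_C)=L\cdot C$ on the reduced (possibly reducible, singular) curve $C$ are all handled correctly, as is your closing remark that working directly on $C$ avoids the $2\sum_{a<b}C_a\cdot C_b$ correction one would otherwise have to track.
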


We now suppose that $C$ is $\sF$-invariant, or more precisely every irreducible component of $C$ is $\sF$-invariant.
Given any point $p\in C$, let $\{f=0\}$ be a local equation of $C$,
and $\omega$ be a local holomorphic one-form defining $\sF$ around $p$.
Because $C$ is $\sF$-invariant, we may write
$$g\omega=hdf+f\eta,$$
for some holomorphic one-form $\eta$ and holomorphic functions $g,h$ around $p$,
such that $h$ and $f$ are coprime.
We define
$$\begin{aligned}
Z(\sF,C,p)&\,=\text{vanishing order of $~\frac{h}{g}\Big|_C$ at $p$},\\
\cs(\sF,C,p)&\,=\text{residue of $~-\frac{\eta}{h}\Big|_C$ at $p$}.
\end{aligned}$$
By definition, both $Z(\sF,C,p)$ and $\cs(\sF,C,p)$ are zero if $p$ is not a singular point of $\sF$.
If $\sF$ is reduced, then $Z(\sF,C,p)\geq 0$ for any $p\in C$ \cite{bru-97}.
Let
$$\begin{aligned}
Z(\sF,C) &\,= \sum_{p\in C} Z(\sF,C,p)=\sum_{p\in C\,\cap\, \text{Sing}(\sF)} Z(\sF,C,p),\\
\cs(\sF,C) &\,= \sum_{p\in C} \cs(\sF,C,p)=\sum_{p\in C\,\cap\, \text{Sing}(\sF)} \cs(\sF,C,p).
\end{aligned}$$
\begin{proposition}[{\cite[Proposition\,2.2]{bru-04}}]\label{prop-2-2}
	Let $C$ be a curve on $S$ which is $\sF$-invariant. Then
	$$\begin{aligned}
	Z(\sF,C)&\,=K_{\sF}C+\chi(C), \quad \text{where~}\chi(C)=-K_SC-C^2;\\
	\cs(\sF,C) &\,=C^2.
	\end{aligned}$$
	In particular, if $\sF$ is reduced and $C$ is an $\sF$-invariant curve, then
	\begin{equation*} 
	0\leq K_{\sF}C+\chi(C)=K_{\sF}C-K_SC-C^2=N_{\sF}C-C^2.
	\end{equation*}
\end{proposition}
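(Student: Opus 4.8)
The plan is to prove both identities by the classical local-to-global (index-theorem) method, in each case identifying the index sum with the degree of an explicit line bundle on $C$. Besides the local description by one-forms $\omega_i$ satisfying $g_i\omega_i=h_i\,df_i+f_i\eta_i$ on a cover of a neighbourhood of $C=\{f_i=0\}$, the only structural input I need is the relation $N_\sF=K_\sF-K_S$ in $\Pic(S)$, obtained by taking determinants in the defining sequence $0\to T_\sF\to T_S\to\mathcal{I}(N_\sF)\to0$. Together with the definition $\chi(C)=-K_SC-C^2$, this converts the bundle-theoretic answers into the stated intersection numbers and, at the end, produces the chain $K_\sF C+\chi(C)=K_\sF C-K_SC-C^2=N_\sF C-C^2$ of the ``in particular'' line.

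For the first identity, $Z(\sF,C)=K_\sF C+\chi(C)$, I argue as follows. The $\{\omega_i\}$ patch into a global twisted one-form $\omega\in H^0(S,\Omega^1_S\otimes N_\sF)$ whose zero locus is $\mathrm{Sing}(\sF)$. Since every component of $C$ is $\sF$-invariant, $\omega|_C$ lands in the conormal subbundle, i.e. $\omega|_C\in H^0\!\big(C,\,N^*_{C/S}\otimes N_\sF|_C\big)$, with local expression $\omega_i|_C=(h_i/g_i)|_C\cdot df_i|_C$, where $df_i|_C$ is a local frame of $N^*_{C/S}=\calo_C(-C)$, an honest line bundle since $C$ is a Cartier divisor in the smooth surface $S$. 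By the definition of $Z(\sF,C,p)$, the order of $\omega|_C$ at $p$ equals $Z(\sF,C,p)$, which is $0$ for $p\notin\mathrm{Sing}(\sF)$; summing the local orders, i.e. taking degrees of line bundles on the compact curve $C$,
\begin{equation*}
Z(\sF,C)=\deg\!\big(N^*_{C/S}\otimes N_\sF|_C\big)=N_\sF\cdot C-C^2=K_\sF\cdot C+\chi(C).
\end{equation*}
For $C$ smooth and irreducible this is immediate; for singular or reducible $C$ it remains to check that the conormal sheaf inclusion $N^*_{C/S}\otimes N_\sF|_C\hookrightarrow\Omega^1_S|_C\otimes N_\sF|_C$ stays injective (for instance at a node $\{xy=0\}$), so that the order count is literally a degree, and to interpret $Z(\sF,C,p)$ at a singular point of $C$ through the one-dimensional local ring $\calo_{C,p}$. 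Finally $Z(\sF,C)\ge0$ in the ``in particular'' follows from the pointwise bound $Z(\sF,C,p)\ge0$ at reduced singularities, \cite{bru-97}.

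For the second identity, $\cs(\sF,C)=C^2$, which is the Camacho--Sad index theorem, I would recognize $\{-\eta_i/h_i|_C\}$ as the local connection forms of a meromorphic connection on the normal bundle $N_{C/S}=\calo_C(C)$, holomorphic off $\mathrm{Sing}(\sF)\cap C$ and with residue $\cs(\sF,C,p)$ at each such point. The substantive step is the cocycle computation: from $\omega_i=f_{ij}\omega_j$, $f_i=u_{ij}f_j$ and the two presentations $g_i\omega_i=h_i\,df_i+f_i\eta_i$, one checks that on $C$
\begin{equation*}
\Big(-\frac{\eta_i}{h_i}\Big)-\Big(-\frac{\eta_j}{h_j}\Big)=d\log u_{ij},
\end{equation*}
the $d\log$ of the transition cocycle of $N_{C/S}$; the residue theorem for a meromorphic connection on a line bundle over a compact curve then yields $\sum_p\cs(\sF,C,p)=\deg N_{C/S}=C^2$. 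For reducible or singular $C$ I would pass to the normalization of each branch, apply the smooth case there, and verify that the two branch-residues at a node of $C$ are each counted exactly once.

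The step I expect to be the genuine obstacle is the Camacho--Sad part: the cocycle identity above, which carries the real content of the index theorem, together with the bookkeeping that makes the singular and reducible case exact. A lesser subtlety is the $Z$-index at a singular point of $C$ itself, where $df|_C$ still frames $N^*_{C/S}$ but $\calo_{C,p}$ is no longer regular, so the ``vanishing order of $(h/g)|_C$'' must be read as a colength rather than a naive order of vanishing.
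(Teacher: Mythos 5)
The paper does not prove this proposition; it is quoted directly from \cite[Proposition\,2.2]{bru-04}, so there is no in-paper argument to compare against. Your sketch is essentially Brunella's own proof: for the $Z$-index, restricting the twisted form $\omega\in H^0(S,\Omega^1_S\otimes N_\sF)$ to the invariant curve $C$, observing $\omega_i|_C=(h_i/g_i)|_C\,df_i|_C$ so that it defines a nonzero section of $N^*_{C/S}\otimes N_\sF|_C$ (nonzero on every component because $h_i$ and $f_i$ are coprime), and taking degrees gives $Z(\sF,C)=N_\sF C-C^2$, which becomes $K_\sF C+\chi(C)$ via $N_\sF=K_\sF-K_S$; for Camacho--Sad, the $-\eta_i/h_i|_C$ are indeed connection forms on $N_{C/S}$ with cocycle $d\log u_{ij}$, and the residue theorem gives $\deg N_{C/S}=C^2$. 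You have correctly singled out the two genuine technical points of the reference: well-definedness of the residues (independence of the non-unique decomposition $g\omega=h\,df+f\eta$) and the bookkeeping at singular and reducible points of $C$, where the $Z$-order must be read as a colength and the Camacho--Sad residues must be counted branch-by-branch through the normalization. The closing non-negativity is, as you note, exactly the pointwise bound $Z(\sF,C,p)\geq 0$ for reduced singularities from \cite{bru-97}.
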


\subsection{The birational invariants and the canonical map of a foliated surface}\label{sec-canonical-map}
In this subsection, we review the birational invariants and the canonical map for a foliated surface $(S,\sF)$.
\begin{definition}\label{def-birational}
	Let $(S,\sF)$ be a foliated surface, and let $(S',\sF')$ be any reduced model, i.e., $(S',\sF')$ is birational to $(S,\sF)$ and $\sF'$ is reduced.
	\begin{enumerate}[(i)]
		\item For any integer $n\geq 1$, the $n$-th pluri-genus $p_n(\sF)$ of $(S,\sF)$ is defined to be
		$$p_n(\sF):=\dim H^0(S',nK_{\sF'}).$$
		When $n=1$, it is the geometric genus, and also denoted by $p_g(\sF)=p_1(\sF)$. \vspace{2mm}
		
		\item The volume $\vol(\sF)$ and the Kodaira dimension $\kod(\sF)$ are defined as follows.
		$$\left\{\begin{aligned}
		\vol({\sF})&\,=\limsup_{n\to +\infty} \frac{\,p_n(\sF)}{n^2/2}=\limsup_{n\to +\infty} \frac{\,\dim H^0(S', nK_{\sF'})}{n^2/2};\\[3pt]
		\kod(\sF)&\,=\left\{\begin{aligned}
		&-\infty, &\quad&\text{if~$p_n(\sF)=0$ for any $n\geq 1$},\\
		&\limsup_{n\to +\infty} \frac{\log p_n(\sF)}{\log n}, &&\text{otherwise}.
		\end{aligned}\right.
		\end{aligned}\right.$$
	\end{enumerate}
\end{definition}
By Seidenberg's \autoref{thm-seidenberg},
every foliated surface admits a reduced model by a sequence of blowing-ups.
Moreover, the above definitions are independent on the choice of the reduced model,
and they are all birational invariants for a foliated surface $(S,\sF)$.
Indeed, suppose that there are two reduced models $(S',\sF')$ and $(S'',\sF'')$.
Since $(S',\sF')$ is birational to $(S'',\sF'')$, there is a foliated surface $(\wt S,\wt \sF)$ with two birational morphisms $\rho':\,\wt S \to S'$ 
and $\rho'':\,\wt S \to S''$ such that
\begin{enumerate}[(i)]
	\item the foliated surface $(\wt S, \wt \sF)$ is reduced;
	\item we have the isomorphism $(\rho')^*(\sF')\cong \wt \sF  \cong (\rho'')^*(\sF'')$.
\end{enumerate}
$$\xymatrix{  &\wt S \ar[dl]_-{\rho'} \ar[dr]^-{\rho''}&\\
	S' && S''}$$
Hence
$$K_{\wt\sF}=(\rho')^*(K_{\sF'})+\mathcal{E}'=(\rho'')^*(K_{\sF''})+\mathcal{E}'',$$
where $\mathcal{E}'$ (resp. $\mathcal{E}''$) is effective supported over curves contracted by $\rho'$ (resp. $\rho''$).
In particular,
$$\dim H^0(S',nK_{\sF'})=\dim H^0(\wt S,nK_{\wt \sF})=\dim H^0(S'',nK_{\sF''}).$$

\begin{remark}
	Historically, one defined the pluri-genera using the divisor $K_{\sF}$ on $S$ directly in some literatures, i.e., defined them to be $\dim H^0(S,nK_{\sF})$.
	However, it turns out that such definitions behave not well in birational geometry;
	they might change under the birational morphisms between foliated surfaces.
	This is one of the reasons for us to define the pluri-genera (as well as the volume and the Kodaira dimension) over a reduced model.
\end{remark}

Let $(S,\sF)$ be a foliated surface, and let $(S',\sF')$ be any reduced model.
The foliated surface $(S,\sF)$ is of general type if the $\sF$ is of general type, i.e., the Kodaira dimension $\kod(\sF)=2$.
Our main purpose is to investigate the canonical map $\varphi=\varphi_{|K_{\sF'}|}$ defined by the linear system $|K_{\sF'}|$
for a foliation $\sF$ of general type
if the geometric genus $p_g(\sF)=h^0(S',K_{\sF'})\geq 2$.
	\begin{equation*}
	\varphi:\,S' \dashrightarrow \Sigma \subseteq \mathbb{P}^{p_g(\sF)-1}.
\end{equation*}
Composed with the birational map $S \dashrightarrow S'$, one may also view the canonical map $\varphi$ as a rational map mapping the surface $S$ to the projective space: 
\begin{equation}\label{eqn-2-21}
	\varphi:\,S \dashrightarrow \Sigma \subseteq \mathbb{P}^{p_g(\sF)-1}.
\end{equation}
It is easy to see that the canonical map $\varphi$ is independent on the choice of the reduced model $(S',\sF')$.
Equivalently, to study the birational geometry of the canonical map $\varphi$, we may assume that the foliated surface $(S,\sF)$ is already reduced,
in which case, the canonical map $\varphi=\varphi_{|K_{\sF}|}$ is just the rational map defined by the complete linear system $|K_{\sF}|$.
By contracting possible $\sF$-exceptional curves based on \cite[Proposition\,5.1]{bru-04},
we may even assume that
$\sF$ is relatively minimal as well.
Let
\begin{equation}\label{eqn-2-22}
	|K_{\sF}|=|M|+Z,
\end{equation}
be the decomposition of the complete linear system $|K_{\sF}|$, where $M$ (resp. $Z$) is the moving part (resp. fixed part).
Let $\rho:\,Y\to \Sigma$ be the minimal desingularization of $\Sigma$,
where $\Sigma=\varphi(S)$ is the image of $S$ under the canonical map as in \eqref{eqn-2-21}.
By a sequence of blowing-ups $\sigma:\,\wt{S} \to S$ centered on the base points of $|M|$, we obtain a well-defined morphism $\phi:\, \wt{S} \to Y$ with the following diagram.
$$\xymatrix{\wt{S} \ar[d]_-{\phi} \ar[rrr]^-{\sigma}  &&& S \ar@{-->}[d]^-{\varphi=\varphi_{|K_{\sF}|}}\\
	Y \ar[rrr]^-{\rho}_-{\text{desingularization}} &&&\Sigma\, \ar@{^(->}[r] & \bbp^{p_g(\sF)-1}}$$
	
\begin{lemma}\label{lem-2-21}
	Let $\tilde\varphi=\varphi\circ\sigma=\rho\circ\phi:\,\wt{S} \to \Sigma \subseteq \bbp^{p_g(\sF)-1}$
	be as above. Then $\tilde\varphi=\varphi_{|K_{\wt\sF}|}$ is just the map defined by the complete linear system $|K_{\wt\sF}|$, where $\wt{\sF}=\sigma^*\sF$ is the induced reduced foliation on $\wt{S}$.
\end{lemma}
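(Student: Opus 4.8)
The plan is to reduce the statement to the decomposition $|K_{\sF}|=|M|+Z$ of \eqref{eqn-2-22} and to follow how both sides transform under the birational morphism $\sigma$. Recall first that the rational map attached to a linear system depends only on its moving part, so $\varphi=\varphi_{|K_{\sF}|}=\varphi_{|M|}$, and hence $\tilde\varphi=\varphi\circ\sigma=\varphi_{|M|}\circ\sigma=\varphi_{\sigma^{*}|M|}$. Thus the whole content is to show that the complete linear system $|K_{\wt\sF}|$ on $\wt S$ defines the same map as $\sigma^{*}|M|$, i.e.\ that it differs from $\sigma^{*}|M|$ only by a fixed effective divisor.

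First I would pin down the canonical divisor of the pulled‑back foliation. Since $\sigma:\wt S\to S$ is a composition of point blow‑ups, the local blow‑up formula for foliations (cf.\ \cite{bru-04}) gives
\[
K_{\wt\sF}=\sigma^{*}K_{\sF}+\mathcal{E},
\]
with $\mathcal{E}$ an effective divisor supported on the exceptional locus of $\sigma$ — this is the same type of relation already used above to prove that the pluri‑genera of a foliation are birational invariants. Because $\mathcal{E}$ is effective and $\sigma$‑exceptional we have $\sigma_{*}\mathcal{O}_{\wt S}(\mathcal{E})=\mathcal{O}_{S}$, hence by the projection formula $\sigma_{*}\mathcal{O}_{\wt S}(K_{\wt\sF})=\mathcal{O}_{S}(K_{\sF})$. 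Taking global sections, every section of $K_{\wt\sF}$ is the product of the canonical section of $\mathcal{O}_{\wt S}(\mathcal{E})$ with $\sigma^{*}$ of a section of $K_{\sF}$; equivalently, as complete linear systems,
\[
|K_{\wt\sF}|=\mathcal{E}+\sigma^{*}|K_{\sF}|=\mathcal{E}+\sigma^{*}Z+\sigma^{*}|M|,
\]
using $|K_{\sF}|=|M|+Z$ (note $\sigma^{*}H^{0}(S,M)=H^{0}(\wt S,\sigma^{*}M)$, again by the projection formula, so $\sigma^{*}|M|$ is the \emph{complete} system $|\sigma^{*}M|$).

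Finally I would feed in the construction of $\sigma$: it was chosen to resolve the base points of $|M|$, so $\sigma^{*}|M|=\sum_{i}a_{i}E_{i}+|\wt M|$, where $\sum_{i}a_{i}E_{i}$ is $\sigma$‑exceptional and $|\wt M|$ is base‑point free and defines the morphism $\phi:\wt S\to Y$, whence $\rho\circ\phi=\varphi_{|\wt M|}$ as maps to $\bbp^{p_g(\sF)-1}$. Combining with the previous display, $|K_{\wt\sF}|=\bigl(\mathcal{E}+\sigma^{*}Z+\sum_{i}a_{i}E_{i}\bigr)+|\wt M|$ is exactly the (fixed part)$\,+\,$(moving part) decomposition of $|K_{\wt\sF}|$, so
\[
\varphi_{|K_{\wt\sF}|}=\varphi_{|\wt M|}=\varphi_{|M|}\circ\sigma=\varphi\circ\sigma=\tilde\varphi ,
\]
as claimed. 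I do not expect any genuine obstacle here: the one step with real content is the identity $\sigma_{*}\mathcal{O}_{\wt S}(K_{\wt\sF})=\mathcal{O}_{S}(K_{\sF})$ (equivalently, that twisting by the effective exceptional $\mathcal{E}$ and pulling back by $\sigma$ leave $H^{0}$ unchanged), which is immediate once one has the relation $K_{\wt\sF}=\sigma^{*}K_{\sF}+\mathcal{E}$; the remainder is bookkeeping with fixed and moving parts that is forced by the way $\sigma$ was built.
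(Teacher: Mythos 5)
Your proof is correct and follows essentially the same route as the paper's: the key input in both is $K_{\wt\sF}=\sigma^*K_{\sF}+\mathcal{E}$ with $\mathcal{E}$ effective and $\sigma$-exceptional (a consequence of $\sF$ being reduced), from which $|K_{\wt\sF}|=\sigma^*|K_{\sF}|+\mathcal{E}$ and hence $\varphi_{|K_{\wt\sF}|}=\varphi_{|\sigma^*K_{\sF}|}=\varphi\circ\sigma$. You merely spell out the final step via the moving/fixed decomposition $|M|+Z$ and the projection formula, where the paper simply invokes the standard fact that a composition of blow-ups does not change the rational map attached to the pulled-back complete linear system.
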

\begin{proof}
	Since $\sigma$ is composed of certain blowing-ups,
	it follows that the map $\tilde\varphi$ is the same as the rational map $\varphi_{|\sigma^*K_{\sF}|}$ defined by the complete linear system $|\sigma^*K_{\sF}|$. On the other hand, since $\sF$ is reduced,
	$$K_{\wt{\sF}}=\sigma^*K_{\sF} +\mathcal{E},$$
	where $\mathcal{E}$ is some effective divisor supported on the exceptional curves of the birational morphism $\sigma$. In particular,
	$$|K_{\wt{\sF}}|=\sigma^*|K_{\sF}|+\mathcal{E}.$$
	It follows that $\tilde\varphi=\varphi_{|\sigma^*K_{\sF}|}=\varphi_{|K_{\wt{\sF}}|}$ as required.
\end{proof}

\section{The canonical map is generically finite}\label{sec-gen-finite}
In this section, we consider the case when the canonical map $$\varphi:\,S \dashrightarrow \Sigma \subseteq \bbp^{p_g(\sF)-1},$$
is generically finite, i.e., the image $\Sigma$ is of dimension two.
In particular, $p_g(\sF)\geq 3$.
In \autoref{sec-non-degenerate}, we will prove some lower bounds
on the degree of a non-degenerate surface in a projective space.
Then we prove \autoref{thm-1-1} and \autoref{thm-1-2} in \autoref{sec-gen-finte-proof}.
Finally in \autoref{sec-exam-3}, we will construct examples reaching the equalities in both \autoref{thm-1-1} and \autoref{thm-1-2}.

\subsection{The degree of surfaces in projective spaces}\label{sec-non-degenerate}
In this subsection, we study the degree of a non-degenerate surface in projective spaces.
Here "non-degenerate" means that such a surface is not contained in any hyperplane.
This is a classical topic, which goes back to the 19-th century, cf. \cite{noether}.
The next proposition can be found in \cite[Lemme\,1.4 and Remarque\,1.5]{bea-79}, see also \cite[\S\,4.3]{gh-78}.
We include a proof here for readers' convenience.
\begin{proposition}\label{prop-3-1}
	Let $\Sigma\subseteq \bbp^N$ be a non-degenerate surface.
	\begin{enumerate}[(i)]
		\item The degree of $\Sigma$ satisfies that $\deg\Sigma \geq N-1$.
		Moreover, if $\deg\Sigma=N-1$, then $\Sigma$ is birational to a rational surface.
		\item Suppose that $\Sigma$ is not ruled. Then $\deg\Sigma\geq 2(N-1)$.
		Moreover, if $\deg\Sigma = 2(N-1)$, then $\Sigma$ is birational to a K3 surface.
	\end{enumerate}
\end{proposition}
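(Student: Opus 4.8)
The plan is to reduce the whole statement to the geometry of a general hyperplane (equivalently, general curve) section of $\Sigma$. Fix a resolution of singularities $\pi\colon S\to\Sigma$ and put $H_S=\pi^*\mathcal{O}_\Sigma(1)$; this is a base-point-free divisor on the smooth projective surface $S$ with $H_S^2=d:=\deg\Sigma$, and by Bertini a general member $C\in|H_S|$ is a smooth irreducible curve mapping birationally onto a general hyperplane section of $\Sigma$ inside $\bbp^{N-1}$. Since $C\sim H_S$ we have $C^2=H_S\cdot C=\deg\mathcal{O}_C(1)=d$; the non-degeneracy of $\Sigma$ gives $h^0\bigl(S,\mathcal{O}_S(H_S)\bigr)\geq N+1$ and $h^0\bigl(C,\mathcal{O}_C(1)\bigr)\geq N$; and the restriction sequence $0\to\mathcal{O}_S\to\mathcal{O}_S(H_S)\to\mathcal{O}_C(1)\to 0$ supplies the complementary bound $h^0\bigl(S,\mathcal{O}_S(H_S)\bigr)\leq 1+h^0\bigl(C,\mathcal{O}_C(1)\bigr)$. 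Throughout I use adjunction on $S$ in the form $2g(C)-2=(K_S+H_S)\cdot H_S=K_S\cdot H_S+d$.

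For part (i), the elementary bound $h^0\bigl(C,\mathcal{O}_C(1)\bigr)\leq d+1$ on the smooth curve $C$ — with equality forcing $C\cong\bbp^1$ — together with $h^0\bigl(C,\mathcal{O}_C(1)\bigr)\geq N$ yields $d\geq N-1$. If $d=N-1$, then $C\cong\bbp^1$ is a smooth rational curve with $C^2=N-1\geq 1$ moving in a base-point-free system of dimension $\geq N\geq 2$, so $S$ is uniruled; by the Enriques classification $S$ is then birationally ruled over a curve $B$, and since these rational curves have positive self-intersection they dominate $B$, forcing $B\cong\bbp^1$. Hence $S$, and so $\Sigma$, is rational. (Alternatively one may quote the classical classification of surfaces of minimal degree.)

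For the inequality in part (ii), $\Sigma$ not ruled gives $\kappa(S)\geq 0$, so the canonical class $K_{S_0}$ of the minimal model $\mu\colon S\to S_0$ is nef; writing $K_S=\mu^*K_{S_0}+E$ with $E\geq 0$ and using $H_S$ nef, we get $K_S\cdot H_S=K_{S_0}\cdot\mu_*H_S+E\cdot H_S\geq 0$, hence $g(C)\geq\tfrac d2+1$. If $\mathcal{O}_C(1)$ is non-special, Riemann--Roch and $h^0\bigl(C,\mathcal{O}_C(1)\bigr)\geq N$ give $g(C)\leq d-N+1$, which with the previous bound forces $d\geq 2N>2(N-1)$. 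If $\mathcal{O}_C(1)$ is special, Clifford's theorem gives $N\leq h^0\bigl(C,\mathcal{O}_C(1)\bigr)\leq\tfrac d2+1$, so $d\geq 2(N-1)$. In all cases $d\geq 2(N-1)$.

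The equality case $d=2(N-1)$ of part (ii) is the heart of the matter. We must be in the special case, with $h^0\bigl(C,\mathcal{O}_C(1)\bigr)=N$ and equality in Clifford's theorem; the hyperelliptic alternative is impossible, since then $|\mathcal{O}_C(1)|$ would be composed with the pencil $g^1_2$, contradicting that $C\to\bbp^{N-1}$ is birational onto a non-degenerate curve of degree $2N-2>N-1$. Hence $\mathcal{O}_C(1)=\omega_C$ and $g(C)=N$, so $K_S\cdot H_S=2g(C)-2-d=0$. The identity $0=K_S\cdot H_S=K_{S_0}\cdot\mu_*H_S+E\cdot H_S$, all terms nonnegative, forces $H_S\cdot E=0$ for every $\mu$-exceptional curve, hence $\mu^*(\mu_*H_S)=H_S$, so $\mu_*H_S$ is nef and big with $(\mu_*H_S)^2=H_S^2=2N-2$ and $h^0(S_0,\mu_*H_S)=h^0(S,\mathcal{O}_S(H_S))=N+1$ (the last value being $\geq N+1$ by non-degeneracy and $\leq 1+h^0(C,\mathcal{O}_C(1))=N+1$ by the restriction sequence). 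Moreover $K_{S_0}\cdot\mu_*H_S=0$ with $\mu_*H_S$ big and nef makes $K_{S_0}$ numerically trivial, so $S_0$ is abelian, bielliptic, Enriques, or K3. Finally $h^2(S_0,\mu_*H_S)=0$ ($\mu_*H_S$ big, $K_{S_0}$ torsion) and $h^1(S_0,\mu_*H_S)=0$ (Kawamata--Viehweg, $K_{S_0}$ torsion and $\mu_*H_S$ nef and big), so Riemann--Roch reads $N+1=\chi(\mathcal{O}_{S_0})+(N-1)$, whence $\chi(\mathcal{O}_{S_0})=2$ and $S_0$ is a K3 surface; thus $\Sigma$ is birational to a K3 surface. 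I expect the main obstacle to be precisely this equality analysis for part (ii): ruling out the hyperelliptic sub-case and controlling the passage from $S$ to its minimal model.
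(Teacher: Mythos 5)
Your proof is correct and follows the same overall line as the paper: take a resolution, restrict to a general hyperplane section $C$, use the restriction sequence to get $h^0(C,\mathcal O_C(1))\geq N$, apply an elementary degree bound for (i) and a Clifford-type bound for (ii), and in the equality case of (ii) exploit the equality case of Clifford to force $K\cdot C=0$. Where you genuinely diverge is the final step identifying the surface as K3. The paper works with the \emph{minimal} desingularization $Y\to\Sigma$, uses $K_Y\cdot C=0$ to show that the pullback of the hyperplane class meets no $(-1)$-curve, concludes $Y$ is itself minimal, and then extracts $p_g(Y)=1$ and $q(Y)=0$ from two separate restriction sequences. You instead take an arbitrary resolution, pass to its minimal model $S_0$, observe that both summands of $0=K_S\cdot H_S=K_{S_0}\cdot\mu_*H_S+E\cdot H_S$ vanish, deduce $\mu^*\mu_*H_S=H_S$ from negative-definiteness of the exceptional lattice, get $K_{S_0}\equiv 0$ from the equality case of Hodge index applied to $K_{S_0}\perp\mu_*H_S$, and finish with the single Riemann--Roch computation $\chi(\mathcal O_{S_0})=2$. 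Your route is a touch cleaner in that it never needs minimality of the resolution, at the modest cost of invoking Hodge index equality and Kawamata--Viehweg. One thing you should make explicit: the hyperelliptic branch of Clifford equality is ruled out because once $h^0(C,\mathcal O_C(1))=N$ the restriction map $H^0(S,H_S)\to H^0(C,\mathcal O_C(1))$ is surjective, so $\varphi_{|\mathcal O_C(1)|}$ coincides with the birational morphism $\pi|_C$ and therefore cannot be two-to-one; as written, the phrase ``birational onto a non-degenerate curve of degree $2N-2$'' presupposes the very birationality you are trying to establish.
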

\begin{proof}
	Let $\rho:\,Y \to \Sigma$ be the minimal desingulariztion of $\Sigma$ and $M=\rho^*(H)$,
	where $H$ is a hyperplane section.
	Then $|M|$ is base-point-free and $h^0(Y,M)\geq N+1$.
	Let $C\in |M|$ be a general element, and consider the exact sequence:
	\begin{equation}\label{eqn-31-7}
	0 \lra \mathcal{O}_{Y} \lra \mathcal{O}_{Y}(C) \lra \mathcal{O}_{C}(C) \lra 0.
	\end{equation}
	It follows that
	\begin{equation}\label{eqn-31-2}
		h^0\big(C, \mathcal{O}_{C}(C)\big)\geq h^0\big(Y, \mathcal{O}_{Y}(C)\big)-h^0(Y, \mathcal{O}_{Y}) \geq (N+1)-1=N.
	\end{equation}
	It follows that
	$$\deg \Sigma = C^2 =\deg\big(\mathcal{O}_{C}(C)\big) \geq h^0\big(C, \mathcal{O}_{C}(C)\big)-1\geq N-1.$$
	Moreover, if the equality holds, then $C$ is a smooth rational curve.
	But $C\in |M|$ is the pulling-back of a hyperplane section.
	Hence $Y$ is a rational surface.
	This proves (i). In fact, one can find a more detailed description in \cite[Chapter\,4.3]{gh-78}).
	
	It remains to prove (ii), and we assume that $Y$ is not ruled in the following.
	In particular, $K_Y$ is pseudo-effective.
	Hence
	$$\deg \mathcal{O}_{C}(K_{C}) -\deg \mathcal{O}_{C}(C)=\deg \mathcal{O}_{C}(K_{Y})=K_{Y} \cdot C \geq 0.$$
	According to the Clifford theorem, 
	\begin{equation}\label{eqn-31-3}
		\deg \mathcal{O}_{C}(C) \geq 2h^0\big(C, \mathcal{O}_{C}(C)\big)-2.
	\end{equation}
	Combining it with \eqref{eqn-31-2}, one obtains that
	$$\deg \Sigma = C^2 =\deg\big(\mathcal{O}_{C}(C)\big) \geq 2(N-1).$$
	Moreover, if the equality holds in the above inequality,
	then both \eqref{eqn-31-2} and \eqref{eqn-31-3} are equalities.
	Note that the curve $C$ is not hyperelliptic, since the map $\varphi_{|M|}=\varphi_{|C|}$
	defined by the complete linear system $|M|=|C|$ is birational.
	In particular, from the Clifford theorem it follows that $\mathcal{O}_{C}(C) \cong \mathcal{O}_{C}(K_{C})$,
	equivalently
	$\mathcal{O}_{C}(K_{Y}) \cong \mathcal{O}_{C}.$
	In particular, $K_{Y} \cdot C=0$ and the Kodaira dimension $\kappa(Y)=0$.
	Let $$\tau:\,Y \lra Y_0,$$
	be the contraction of $Y$ to its minimal model, and $C_0$ be the image of $C$.
	Suppose
	$$K_{Y}=\tau^*K_{Y_0}+\sum_{i=1}^{n}\mathcal{E}_i,\qquad C=\tau^*C_0+\sum_{i=1}^{n}a_i\mathcal{E}_i.$$
	Since $K_{Y} \cdot C=0$, it follows that $a_i=0$ for all $1\leq i \leq n$,
	namely $C = \tau^*C_0$.
	In particular, any possible $(-1)$-curve $E$ on $Y$ satisfying $C\cdot E=0$.		
	Since $\rho:\, Y \to \Sigma$ is the minimal desingularization,
	and $C$ is the pulling-back of some hyperplane section,
	it follows that there is no $(-1)$-curve on $Y$, i.e., $Y=Y_0$ is minimal.
	Consider now the exact sequence
	$$0 \lra \mathcal{O}_{Y}(K_{Y}-C) \lra \mathcal{O}_{Y}(K_{Y}) \lra \mathcal{O}_{C}(K_{Y}) \lra 0.$$
	Since the Kodaira dimension  $\kappa(Y)=0$,
	it follows that $K_{Y}$ is numerically trivial, and hence
	$$H^0(\mathcal{O}_{Y}(K_{Y}-C))=H^1(\mathcal{O}_{Y}(K_{Y}-C))=0.$$
	Therefore, 
	\begin{equation}\label{eqn-31-5}
		p_g(Y)=h^0(\mathcal{O}_{Y}(K_{Y}))=h^0(\mathcal{O}_{C}(K_{Y}))=h^0(\mathcal{O}_{C})=1.
	\end{equation}
    Taking the induced long exact sequence of the short exact sequence \eqref{eqn-31-7}, one obtains
	$$0 \lra H^0(\mathcal{O}_{Y}) \lra H^0(\mathcal{O}_{Y}(C)) \lra H^0(\mathcal{O}_{C}(C))
	\lra H^1(\mathcal{O}_{Y}) \lra H^1(\mathcal{O}_{Y}(C))=0.$$
	The equality $H^1(\mathcal{O}_{Y}(C))=0$ follows the Mumford vanishing theorem,
	since $C=K_{Y}+(C-K_{Y})$ and $C-K_{Y}$ is nef and big due to the numerical triviality of $K_{Y}$.
	By the equality in \eqref{eqn-31-2},
	$$q(Y)=h^1(\mathcal{O}_{Y})=h^0(\mathcal{O}_{C}(C))+h^0(\mathcal{O}_{Y})-h^0(\mathcal{O}_{Y}(C))=0.$$
	Combining this with \eqref{eqn-31-5}, one proves that $Y$ is a K3 surface.
\end{proof}

When the surface $\Sigma \subseteq \bbp^N$ is of general type, we improve the lower bound on the degree as follows.
In the following, by an $(a,b)$-surface $Y_0$ we mean a minimal surface of general type
whose volume $\vol(Y_0)=K_{Y_0}^2=a$ and geometric genus $p_g(Y_0)=b$.
\begin{proposition}\label{prop-3-2}
	Let $\Sigma\subseteq \bbp^N$ be a non-degenerate surface of general type.
\begin{enumerate}[(i)]
	\item Suppose that $\Sigma$ is not birational to a $(1,0)$-surface.
	Then
	\begin{equation}\label{eqn-31-6}
	\deg\Sigma \geq 2(N-1)+\frac{\sqrt{8N-15}-3}{2}.
	\end{equation}
	Moreover, if the equality holds, then $\Sigma$ is birational to a $(1,2)$-surface.
	
	\item Suppose that $\Sigma$ is birational to a $(1,0)$-surface.
	Then
	\begin{equation}\label{eqn-31-1}
		\deg\Sigma \geq 2(N-1)+\frac{\sqrt{8N-23}-7}{2}.
	\end{equation}
\end{enumerate}
\end{proposition}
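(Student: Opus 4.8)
The plan is to push the argument of the proof of \autoref{prop-3-1}(ii) one notch further by feeding in the numerical invariants $\vol(\Sigma)$ and $p_g(\Sigma)$ of $\Sigma$ itself.

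I keep the notation of that proof. Let $\rho\colon Y\to\Sigma$ be the minimal desingularization and $M=\rho^*(H)$ the pull-back of a hyperplane section, so $|M|$ is base-point-free, $M^2=\deg\Sigma=:d$, $h^0(Y,M)\geq N+1$, and $\varphi_{|M|}$ is birational; since $\Sigma$ is of general type so is $Y$, and I let $\tau\colon Y\to Y_0$ be the contraction to the minimal model, $M_0=\tau_*M$, so that $M_0$ is nef and big with $M_0^2\geq M^2=d$ and $K_{Y_0}^2=\vol(\Sigma)\geq1$. For a general $C\in|M|$, smooth, irreducible and non-hyperelliptic of genus $g$, the sequence $0\to\mathcal{O}_Y\to\mathcal{O}_Y(M)\to\mathcal{O}_C(C)\to0$ gives $h^0(C,\mathcal{O}_C(C))\geq N$; adjunction gives $2g-2=K_Y\cdot C+d$ with $K_Y\cdot C\geq0$ (whence $d\leq 2g-2$); and, using $K_Y\cdot C\geq K_{Y_0}\cdot M_0$ together with the Hodge index theorem on $Y_0$,
\begin{equation*}
K_Y\cdot C\;\geq\;\sqrt{K_{Y_0}^2\cdot M_0^2}\;\geq\;\sqrt{\vol(\Sigma)\cdot d}\;\geq\;\sqrt{d}.
\end{equation*}

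If $\mathcal{O}_C(C)$ is non-special, then $h^0(\mathcal{O}_C(C))=d-g+1\geq N$, so $g\leq d-N+1$, and combined with $2g-2=K_Y\cdot C+d\geq d+\sqrt d$ this forces $d-\sqrt d\geq2N$, which is stronger than both inequalities to be proved. So I would assume $\mathcal{O}_C(C)$ is special, i.e.\ $h^1(\mathcal{O}_C(C))=h^0(\mathcal{O}_C(K_Y|_C))>0$. Riemann--Roch on $C$ then reads $h^0(\mathcal{O}_C(C))=(d-g+1)+h^1(\mathcal{O}_C(C))=\tfrac12(d-K_Y\cdot C)+h^1(\mathcal{O}_C(C))$, so that
\begin{equation*}
N\;\leq\;\frac{d-K_Y\cdot C}{2}+h^1\bigl(\mathcal{O}_C(C)\bigr)\;\leq\;\frac{d-\sqrt{\vol(\Sigma)\cdot d}}{2}+h^1\bigl(\mathcal{O}_C(C)\bigr),
\end{equation*}
and the crux is to bound $h^1(\mathcal{O}_C(C))=h^0(\mathcal{O}_C(K_Y|_C))$ from above. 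Restricting $K_Y$ to $C$ via $0\to\mathcal{O}_Y(K_Y-M)\to\mathcal{O}_Y(K_Y)\to\mathcal{O}_C(K_Y|_C)\to0$ and using Serre duality on $Y$ gives $h^0(\mathcal{O}_C(K_Y|_C))\leq p_g(\Sigma)+h^1(Y,\mathcal{O}_Y(M))$, so the whole estimate hinges on the vanishing $h^1(Y,\mathcal{O}_Y(M))=0$: this holds by Kawamata--Viehweg-type vanishing whenever $M-K_Y$ is big and nef --- the generic situation --- but breaks down precisely when $p_g(\Sigma)=0$, where $K_Y$ is not even effective. Granting it, $h^1(\mathcal{O}_C(C))\leq p_g(\Sigma)$ and therefore
\begin{equation*}
d-\sqrt{\vol(\Sigma)\cdot d}\;\geq\;2\bigl(N-p_g(\Sigma)\bigr).
\end{equation*}

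Finally I would optimise. By Noether's inequality $\vol(\Sigma)\geq2p_g(\Sigma)-4$, and also $\vol(\Sigma)\geq1$; writing $p=p_g(\Sigma)$, $v=\vol(\Sigma)$, one computes the least $d$ with $d-\sqrt{vd}\geq2(N-p)$ and minimises over the admissible pairs $(p,v)$ with $v\geq\max\{1,2p-4\}$ and $p\geq1$. The minimum is attained at $p=2$, $v=1$ and equals $2(N-1)+\tfrac{\sqrt{8N-15}-3}{2}$, giving inequality (i); tracing the equalities (Hodge index, Riemann--Roch, Noether) backwards forces $\vol(\Sigma)=1$ and $p_g(\Sigma)=2$, i.e.\ $\Sigma$ is birational to a $(1,2)$-surface. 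The excluded case $p_g(\Sigma)=0$ is, after the above, exactly the case that $\Sigma$ is birational to a $(1,0)$-surface; there the vanishing step is unavailable, and one has to re-run the argument retaining the term $h^1(Y,\mathcal{O}_Y(M))=h^1(\mathcal{O}_C(C))$ and estimating it by a coarser device (for instance passing to $2K_Y$ and applying a base-point-free-pencil argument on $C$), which yields only the weaker bound $2(N-1)+\tfrac{\sqrt{8N-23}-7}{2}$ of (ii). The main obstacle, and where I expect the real work to lie, is exactly this vanishing/degeneration dichotomy --- securing $h^1(Y,\mathcal{O}_Y(M))=0$ in the non-degenerate range (together with the routine but delicate bookkeeping needed to see that $p=2$, $v=1$ genuinely minimises, and that no surface with larger $p_g$ undercuts the $(1,2)$-bound) and squeezing a usable estimate out of the $(1,0)$-surface case.
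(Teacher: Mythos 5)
Your plan reduces \eqref{eqn-31-6}--\eqref{eqn-31-1} to the single inequality $d-\sqrt{\vol(\Sigma)\,d}\geq 2(N-p_g(\Sigma))$, and then optimises; both steps have genuine gaps.

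First, the vanishing $h^1(Y,\mathcal{O}_Y(M))=0$ is not available. Kawamata--Viehweg would need $M-K_Y$ big and nef, and there is no reason for that: $M=\rho^*H$ is big and nef, but $(M-K_Y)\cdot C=M^2-K_Y\cdot M=d-K_Y\cdot C$ is negative precisely in the regime the proposition is about (degree of $\Sigma$ small compared with the canonical geometry of $Y$). Saying it ``breaks down precisely when $p_g(\Sigma)=0$'' misdiagnoses the problem: whether $K_Y$ is effective has nothing to do with $M-K_Y$ being big and nef, and the dichotomy in the paper is of an entirely different nature --- the $(1,0)$-surface is the one surface of general type whose \emph{bi-canonical} map need not be generically finite (\autoref{lem-3-2}), which is what actually forces a separate, weaker argument in part (ii).

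Second, even granting the vanishing, the optimisation over admissible pairs $(p,v)$ does not close, because nothing in your argument bounds $p=p_g(\Sigma)$ from above in terms of $N$. Writing $v=2p-4$ (the Noether floor), the constraint $d-\sqrt{vd}\geq 2(N-p)$ becomes, for $s=\sqrt d$,
\[
s^2-\sqrt{2(p-2)}\,s-2(N-p)\geq 0.
\]
As soon as $p>\tfrac{4N-2}{3}$ the discriminant $2(p-2)+8(N-p)$ is negative and the quadratic is positive for every $s$, so the inequality imposes no lower bound on $d$ at all; even for moderate $p$, say $p=N$, it gives only $d\geq 2N-4$, which is strictly below the claimed bound $2(N-1)+\tfrac{\sqrt{8N-15}-3}{2}$. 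So the claim ``the minimum is attained at $(p,v)=(2,1)$'' is false as stated, and an essential extra ingredient is missing. That ingredient is exactly what the paper supplies: after reducing to proving $(d-2h^0+4)^2+d\geq 2g(C)-2$ with $D=K_Y|_C$, $h^0=h^0(C,D)$, it splits off the case $h^0\leq 2$ (handled by Hodge index, as in your $p=2$ computation), and for $h^0\geq 3$ it invokes generic finiteness of $\varphi_{|2K_Y|}$ (via \autoref{lem-3-2} and \autoref{lem-3-1}) to see that $|2D|$ is birational on $C$, which unlocks \emph{Castelnuovo's genus bound}. That Castelnuovo step is the genuine upper bound on $g(C)$ (equivalently on $K_Y\cdot C$, equivalently on $p_g$) that makes the numbers close; without an analogue of it your scheme cannot produce the stated inequality.
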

Let's do some preparations before going to the proof of the above proposition.
\begin{lemma}\label{lem-3-1}
	Let $Y$ be a smooth projective surface, and $L,M$ be two divisors on $Y$.
	Let $\varphi_{|L|}$ and $\varphi_{|M|}$ be the rational map defined by the complete linear systems $|L|$ and $|M|$ respectively.
	Suppose that $\varphi_{|L|}$ is generically finite and $\varphi_{|M|}$
	is birational.
	Assume that $|M|$ has no fixed part, and let $C\in |M|$ be a general curve.
	Then $\varphi_{|L_C|}$ defines a birational map, where $L_C=L|_C$ is the restriction of $L$ to $C$.
\end{lemma}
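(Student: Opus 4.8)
The plan is to reduce the statement to the generic injectivity of the restricted map $\varphi_{|L|}|_C$, and then to deduce the generic injectivity of $\varphi_{|L_C|}$ from it by a linear‑projection argument.

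First I would set up the reduction. Since $|M|$ has no fixed part, a general $C\in|M|$ contains no fixed component of $|L|$, so $\varphi_{|L|}$ is defined at a general point of $C$ and $\varphi_{|L|}|_C$ is a genuine rational map from $C$. Put $V:=\mathrm{Im}\big(H^0(Y,L)\to H^0(C,L_C)\big)$; then $\varphi_{|L|}|_C$ coincides, up to an inclusion of projective spaces, with the map $\varphi_V$ defined by the sub‑linear system $V\subseteq|L_C|$, and the inclusion $V\hookrightarrow H^0(C,L_C)$ dualizes to a linear projection $\pi$ with $\varphi_V=\pi\circ\varphi_{|L_C|}$. Post‑composing with $\pi$ can only identify more points, so generic injectivity of $\varphi_V$ forces generic injectivity of $\varphi_{|L_C|}$; and over $\mathbb{C}$ a generically injective rational map from a curve onto its image is birational. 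Hence it is enough to show that $\varphi_{|L|}|_C$ separates a general pair of points of $C$.

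The two key inputs are as follows. First, $\varphi_{|L|}$ separates a general pair of points of $Y$: since it is generically finite onto the surface $\Sigma_L:=\overline{\varphi_{|L|}(Y)}$, after resolving indeterminacy the fibre product $\widetilde Y\times_{\Sigma_L}\widetilde Y$ has dimension $2$, so its image in $Y\times Y$, together with the diagonal and the small indeterminacy loci, is a proper closed subset; thus for general $(p,q)$ there is $s\in H^0(Y,L)$ with $s(p)=0\ne s(q)$. Second, for a general $C\in|M|$ a general pair of points of $C$ is a general pair of points of $Y$; I would prove this using the incidence variety $\mathcal J:=\{(p,q,[C])\in Y\times Y\times|M|:p,q\in C\}$, whose fibre over $[C]\in|M|$ is the surface $C\times C$. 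Because $\varphi_{|M|}$ is birational with non‑degenerate image (it is the image of a complete linear system), $|M|$ separates a general pair of points of $Y$, so the distinguished component $\mathcal J_0$ of $\mathcal J$ dominates $Y\times Y$ with generic fibre a codimension‑$2$ linear subspace of $|M|$, whence $\dim\mathcal J_0=\dim(Y\times Y)+(\dim|M|-2)$; comparing with the $2$‑dimensional fibres of $\mathcal J\to|M|$ forces $\mathcal J_0$ to dominate $|M|$ and to contain $C\times C$ for general $[C]$, and genericity then transfers along the dominant map $\mathcal J_0\to Y\times Y$.

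Finally I would combine the two inputs: for general $C$ and a general pair $(p,q)\in C\times C$, the section $s$ above restricts to $s|_C\in V$ with $s|_C(p)=0\ne s|_C(q)$, so $\varphi_V=\varphi_{|L|}|_C$ separates $p$ and $q$; hence $\varphi_{|L|}|_C$ is generically injective and, by the reduction, so is $\varphi_{|L_C|}$, which is therefore birational onto its image. The main obstacle I anticipate is the second input: checking that the distinguished component of $\mathcal J$ is irreducible of the expected dimension and that genericity genuinely passes from $Y\times Y$ down to the general fibre $C\times C$. This is exactly where the hypotheses that $\varphi_{|M|}$ is birational and that $|M|$ has no fixed part are used; everything else is a routine dimension count.
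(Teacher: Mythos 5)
Your proof is correct and follows essentially the same route as the paper: reduce to showing that $\varphi_{|L|}$ restricted to a general $C\in|M|$ is birational onto its image, and then use that $\varphi_{|L|}$ is generically finite and $\varphi_{|M|}$ is birational to conclude that a general $C$ does not acquire extra identifications under $\varphi_{|L|}$. The only difference is cosmetic: the paper fixes a general $p$ and argues that the finite fibre $O_p=\varphi_{|L|}^{-1}(\varphi_{|L|}(p))$ meets a general $C$ only in $p$, whereas you separate a general pair $(p,q)\in C\times C$ directly via an explicit incidence-variety dimension count, which makes the ``general $p$, general $C$'' genericity juggling more transparent but does not change the substance of the argument.
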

\begin{proof}
	Let $L_0$ be the moving part of $L$. Then $\varphi_{|L|}=\varphi_{|L_0|}$ and $L_0|_C\subseteq L|_C=L_C$.
	By replacing $L$ by its moving part $L_0$, we may assume that $L$ has no fixed part.	
	Consider the following exact sequence
	$$0 \lra \calo_Y(-M) \lra \calo_Y \lra \calo_C \lra 0.$$
	Tensoring with $\calo_Y(L)$, one obtains
	$$0 \lra \calo_Y(L-M) \lra \calo_Y(L) \lra \calo_C(L_C) \lra 0.$$
	Let $\Lambda\subseteq |L_C|$ be the linear subsystem corresponding to the image of
	the induced map by taking the long exact sequence of the above short exact sequence.
	$$H^0(Y,L) \lra H^0(C,L_C).$$
	To show $\varphi_{|L_C|}$ is birational, it suffices to prove $\varphi_{\Lambda}$ is birational,
	where $\varphi_{\Lambda}$ is defined by the linear subsystem $\Lambda\subseteq |L_C|$.
	Note that $\varphi_\Lambda=\big(\varphi_{|L|}\big)\big|_C$ is the restriction of the map $\varphi_{|L|}$ on $C$.
	Therefore,
	that $\varphi_\Lambda$ is birational is equivalent to that $\varphi_{|L|}$ maps $C$ birationally to its image
	$\varphi_{|L|}(C)$.
		
	Since the map $\varphi_{|L|}$ is generically finite and $\varphi_{|M|}$ is birational,
	a general element $C\in |M|$ is not contracted by $\varphi_{|L|}$ and does not contained in the ramified divisor of $\varphi_{|L|}$.
	Let $p\in C$ be a general point, and $$O_p=\big\{q\in Y~|~\varphi_{|L|}(q)=\varphi_{|L|}(p)\big\} \subseteq Y.$$
	Since $\varphi_{|M|}$ is birational, for a general element $C\in |M|$, we may assume that
	$$C\cap O_p=\{p\}, \qquad \text{equivalently,}\qquad \varphi_{|L|}^{-1}\big(\varphi_{|L|}(p)\big) \cap C=\{p\}.$$
	It follows that $\varphi_{|L|}$ maps $C$ birationally to its image
	$\varphi_{|L|}(C)$ as required.
\end{proof}

\begin{lemma}\label{lem-3-2}
	Let $Y$ be a smooth projective surface of general type. Then
	\begin{enumerate}[(i)]
		\item The bi-canonical map $\varphi_{|2K_Y|}$ is generically finite unless
		$\vol(Y)=1$ and $\chi(\calo_Y)=1$.
		\item The triple-canonical map $\varphi_{|3K_Y|}$ is always generically finite.
	\end{enumerate}
\end{lemma}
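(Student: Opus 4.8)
The plan is to reduce both parts to the case $Y=Y_0$ minimal — harmless since, if $\tau\colon Y\to Y_0$ is the contraction to the minimal model, $\tau^{*}$ identifies $H^{0}(Y,nK_{Y})$ with $H^{0}(Y_{0},nK_{Y_{0}})$ for all $n\ge1$ (as $K_{Y_0}$ is nef), so $\varphi_{|nK_Y|}$ and $\varphi_{|nK_{Y_0}|}$ have the same image. So assume $Y$ minimal of general type; then $\vol(Y)=K_Y^{2}\ge1$, $\chi(\calo_Y)\ge1$, and, by Riemann--Roch and Kawamata--Viehweg vanishing, $P_n:=h^{0}(nK_Y)=\chi(\calo_Y)+\tfrac12 n(n-1)K_Y^{2}$ for $n\ge2$; in particular $P_2=\chi(\calo_Y)+K_Y^{2}\ge2$ and $P_3=\chi(\calo_Y)+3K_Y^{2}\ge4$.

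The common input for both parts is a ``composed with a pencil'' analysis. If $\varphi_{|nK_Y|}$ ($n\in\{2,3\}$) is not generically finite then, as $P_n\ge2$, it is composed with a pencil; resolving the base locus of the moving part by $\sigma\colon\widetilde Y\to Y$ yields a fibration $f\colon\widetilde Y\to B$ with general smooth fibre $F$ ($F^{2}=0$) and a decomposition $\sigma^{*}(nK_Y)=(P_n-1)F+W$, $W\ge0$. Writing $D:=\sigma^{*}K_Y$ (nef and big, $D^{2}=K_Y^{2}$) and intersecting with $D$ gives $nK_Y^{2}\ge(P_n-1)(D\cdot F)$, while the Hodge index theorem forces $D\cdot F\ge1$; together these give $D\cdot F=1$ except possibly, when $n=2$ and $\chi(\calo_Y)=1$, the borderline $D\cdot F=2$, which for $K_Y^{2}\ge2$ would force $2K_Y=f^{*}(\text{a divisor})$ and hence $4K_Y^{2}=0$ — impossible — using Bombieri's theorem that $|2K_Y|$ has no fixed part when $K_Y^{2}\ge2$. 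I would then push $F$ down: $\overline F:=\sigma_{*}F$ moves in a pencil on $Y$ with $K_Y\cdot\overline F=1$ and $K_Y^{2}\cdot\overline F^{2}\le1$ by Hodge index. The case $\overline F^{2}=0$ is excluded by a parity argument (the general fibre $F_0$ of the associated fibration has $K_Y\cdot F_0$ even yet $\le1$), and the residual sub-case where the movable part has positive self-intersection again gives $\overline F\equiv K_Y$ by Hodge index. Thus $\overline F^{2}>0$, whence (Hodge index) $K_Y^{2}=1$ and $\overline F\equiv K_Y$. So in every case $\vol(Y)=1$.

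For (i): with $\vol(Y)=1$, Noether's inequality gives $p_g(Y)\le2$, and Debarre's inequality $K^{2}\ge2p_g$ for irregular surfaces of general type rules out $q(Y)\ge1$, so $(p_g(Y),q(Y))\in\{(0,0),(1,0),(2,0)\}$. Here $(2,0)$ is impossible, since then $P_2-1=3$ and $D\cdot F\ge1$ would give $3\le(P_2-1)(D\cdot F)\le 2K_Y^{2}=2$; and $(0,0)$ is precisely the $(1,0)$-surface case of the statement, for which $P_2=2$ and $\varphi_{|2K_Y|}$ is indeed a pencil. For $(p_g,q)=(1,0)$ we have $\overline F\equiv K_Y$ with $h^{0}(\overline F)\ge2$; as $q(Y)=0$, $\overline F=K_Y+\tau$ for a torsion class $\tau$, and $\tau=0$ is impossible ($h^{0}(K_Y)=p_g=1$). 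If $\operatorname{ord}(\tau)=m\ge2$, the \'etale $m$-cyclic cover $\pi\colon Y'\to Y$ is a minimal surface with $K_{Y'}^{2}=m$, $\chi(\calo_{Y'})=2m$, and combining $\chi(\calo_{Y'})=1-q(Y')+p_g(Y')$ with Noether and Debarre on $Y'$ forces $m=2$ and $Y'$ a $(2,3)$-surface; but a $(2,3)$-surface is a double cover of $\mathbb{P}^{2}$ whose deck involution is intrinsic, so the free involution defining $\pi$ would descend to a nontrivial involution of $\mathbb{P}^{2}$ preserving the degree-$8$ branch curve, which meets its fixed line — producing a fixed point of the (allegedly free) cover involution, a contradiction. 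This proves (i).

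For (ii): if $p_g(Y)\ge1$, pick $0\ne s\in H^{0}(K_Y)$; the sublinear system $\operatorname{div}(s)+|2K_Y|\subseteq|3K_Y|$ defines the same map as $\varphi_{|2K_Y|}$, which is generically finite by (i) (the $(1,0)$-surface exception has $p_g=0$), so $\varphi_{|3K_Y|}$ is too. If $p_g(Y)=0$, then $\chi(\calo_Y)=1$, $q(Y)=0$, $P_3-1=3K_Y^{2}$, so the analysis above gives $D\cdot F=1$, then $K_Y^{2}=1$ and $\overline F=K_Y+\tau$ with $\tau$ torsion, $h^{0}(\overline F)\ge2$; $\tau=0$ is impossible ($h^{0}(K_Y)=p_g=0$), and if $\operatorname{ord}(\tau)=m\ge2$ the \'etale $m$-cyclic cover $Y'$ has $K_{Y'}^{2}=m=\chi(\calo_{Y'})$, so Debarre's inequality rules out $q(Y')\ge1$; but $q(Y')=0$ forces $h^{1}(Y,\tau^{-1})=0$ and hence $h^{0}(K_Y+\tau)=\chi(\calo_Y)=1$, a contradiction. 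So $\varphi_{|3K_Y|}$ is always generically finite. The delicate point is the $(p_g,q)=(1,0)$, $K_Y^{2}=1$ case of (i): the numerics alone permit $|2K_Y|$ to be composed with a genus-$2$ pencil, and ruling this out rests on the structural fact about $(2,3)$-surfaces above (or, equivalently, on the classification of surfaces with $\vol=1$, $\chi(\calo)=2$); the rest is Hodge-index bookkeeping, together with — for part (ii) — the Debarre-inequality estimate on an \'etale cover, which I expect to be the real content of (ii).
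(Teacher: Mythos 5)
Your argument is essentially correct but takes a route quite different from the paper's, and it is substantially heavier. For part~(i) the paper simply cites Xiao (see also BHPV, Thm.~VII.7.6); you instead reconstruct a proof, reducing via Hodge index to $K_Y^2=1$ and then disposing of the $(p_g,q)=(1,0)$, $K_Y^2=1$ case through an \'etale cyclic cover and Horikawa's structure theorem for $(2,3)$-surfaces. That is a legitimate path, but it rests on the full Horikawa classification of surfaces on the Noether line (in particular that $|K_{Y'}|$ is base-point free and exhibits $Y'$ as a flat double cover of $\Pbb^2$), and on Debarre's inequality $K^2\geq 2p_g$ for irregular surfaces -- both considerably more machinery than the result requires.

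For part~(ii) the contrast is sharper. The paper's argument is short, self-contained, and independent of (i): assuming $|3K_Y|$ is composed with a pencil $\Lambda$, writing $M\sim_{\mathrm{num}}aA$ for the moving part, the chain
\[3K_Y^2 = K_Y(M+Z) \geq K_YM = aK_YA \geq \big(3K_Y^2+\chi(\calo_Y)-1\big)K_YA \geq 3K_Y^2\cdot K_YA\]
forces $K_YA=1$, $\chi(\calo_Y)=1$, $a=3$, and then Hodge index plus the genus bound $2\leq 2g(A)-2\leq(K_Y+A)A\leq 2$ give $A^2=K_Y^2=1$ and $A\equiv K_Y$. The contradiction is then entirely elementary: Riemann--Roch and Kawamata--Viehweg vanishing give $h^0(2A)=\chi(\calo_Y)+\tfrac12\,2A(2A-K_Y)=2$, yet a pencil $|A|$ with $h^0(A)=2$ forces $h^0(2A)\geq 2h^0(A)-1=3$. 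Your version instead splits on $p_g$, reduces $p_g\geq 1$ to part~(i) (a nice trick, but it inherits the dependence on Xiao/Horikawa), and for $p_g=0$ again passes to the \'etale cover of the torsion class $\tau$ and invokes Debarre. Your closing remark that the ``real content of (ii) is the Debarre-inequality estimate'' does not match the paper: no such estimate appears; the contradiction is achieved purely by the $h^0(2A)\geq 2h^0(A)-1$ count against Riemann--Roch. In short, your route works, but it trades a five-line numerical argument for the Horikawa classification and Debarre's inequality; the paper's approach is both more elementary and more self-contained.
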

\begin{proof}
	The statements are birationally invariant, and hence we may assume that $Y$ is minimal.
	The first statement is proved in \cite{xiao-85}, see also \cite[Theorem\,VII.7.6]{bhpv}.
	The proof of the second statement was kindly provided to the author by Professor Yifan Chen.
	The argument is given by contradiction.
	Let
	$$|3K_Y|=|M|+Z,$$
	where $M$ and $Z$ are respectively the moving and fixed parts of $|3K_Y|$.
	Suppose that the triple-canonical map $\varphi_{|3K_Y|}$ is composed with a pencil $\Lambda$.
	Let $A\in \Lambda$ be a general element, and suppose $M \sim_{num}\, aA$.
	Then	
	$$a+1\geq h^0(Y,M)=h^0(Y,3K_Y)=3K_Y^2+\chi(\mathcal{O}_Y).$$
	Hence
	$$3K_Y^2=K_Y(M+Z) \geq K_YM=aK_YA\geq \big(3K_Y^2+\chi(\mathcal{O}_Y)-1\big)K_YA\geq (3K_Y^2)K_YA.$$
	It follows that $K_YA=1$, and the inequalities above are all equalities.
	In particular, $\chi(\mathcal{O}_Y)=1$, $K_Y^2=1$ and $a=3$.
	By the Hodge index theorem,
	$A^2\leq 1$.
	Since $Y$ is of general type,
	$$2\leq 2g(A)-2 \leq (K_Y+A)A\leq 2,$$
	where $g(A)$ is the geometric genus of $A$.
	Hence $K_Y^2=A^2=1$ and $K_Y \sim_{num} A$.
	In particular, $|A|$ is a rational pencil with $h^0(Y,A)=2$.
	Thus $2A=K_Y+(2A-K_Y)$ and $2A-K_Y \sim_{num} K_Y$ is nef and big.
	By the Riemann-Roch theorem and the vanishing theorem, it follows that
	$$h^0(Y,2A)=\chi(\mathcal{O}_Y)+\frac{2A(2A-K_Y)}{2}=2.$$
	On the other hand, since $h^0(Y,A)=2$,
	it follows that
	$$h^0(Y,2A) \geq 2h^0(Y,A)-1=3.$$
	This gives a contradiction.
	Hence $\varphi_{|3K_Y|}$ is generically finite as required.
\end{proof}

\begin{proof}[Proof of \autoref{prop-3-2}]
	(i).
	We use the notations introduced in the proof of \autoref{prop-3-1}.
	Using the Riemann-Roch theorem on the curve $C$, one gets
	$$h^0(C,\mathcal{O}_C(C))=h^1(C,\mathcal{O}_C(C))+C^2+1-g(C)=h^0(C,\mathcal{O}_C(K_Y))+\frac{C^2-K_YC}{2},$$
	where $g(C)$ is the geometric genus of $C$ satisfying $2g(C)-2=C^2+K_YC$.
	Combining this with \eqref{eqn-31-2},
	$$C^2=\deg\Sigma\geq 2N+K_YC-2h^0(C,\mathcal{O}_C(K_Y)).$$
	Note that
	$$2(N-1)+\frac{\sqrt{8N-15}-3}{2}=\left(\frac{\sqrt{8N-15}+1}{2}\right)^2.$$
	This implies that the inequality \eqref{eqn-31-6} is equivalent to
	$$C^2=\deg\Sigma \geq 2N-4+\sqrt{\deg\Sigma}=2N-4+\sqrt{C^2}.$$
	Therefore, it is enough to prove
	\begin{equation}\label{eqn-31-8-0}
		K_YC\geq 2h^0(C,\mathcal{O}_C(K_Y))-4+\sqrt{C^2}.
	\end{equation}
	Let $D=(K_Y)|_C$, $d=\deg(D)$ and $h^0=h^0(C,D)$. The inequality \eqref{eqn-31-8-0} is also equivalent to
	\begin{equation}\label{eqn-31-8}
		\big(d-2h^0+4\big)^2+d\geq 2g(C)-2.
	\end{equation}
	We divide into two cases to complete the proof depending on whether $h^0\leq 2$ or not,
	which will be given in \autoref{lem-step-1} and \autoref{lem-step-2} respectively.

	\begin{lemma}\label{lem-step-1}
		Suppose $h^0\leq 2$. Then \eqref{eqn-31-8} holds.
		Moreover, if the equality holds, then $Y$ is a $(1,2)$-surface,
		i.e., $Y$ is minimal of general type with $K_{Y}^2=1$ and geometric genus $p_g(Y)=2$.
	\end{lemma}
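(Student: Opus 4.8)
The plan is to prove \eqref{eqn-31-8} in the equivalent form $(d-2h^0+4)^2\ge C^2$; these are equivalent because $2g(C)-2=C^2+K_YC=C^2+d$ by adjunction, and recall $C^2=\deg\Sigma>0$. When $h^0\le 2$ we have $d-2h^0+4\ge d$, and $d\ge 0$ (this will emerge below), so it is enough to prove the cleaner estimate $C^2\le d^2=(K_Y\cdot C)^2$. This is where the assumption that $\Sigma$, hence $Y$, is of general type enters, through the Hodge index theorem on the minimal model of $Y$.

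Concretely, let $\tau\colon Y\to Y_0$ be the contraction of $Y$ onto its minimal model and put $C_0=\tau_*C$. Writing $\tau^*C_0=C+\Theta$ with $\Theta\ge 0$ supported on the $\tau$-exceptional locus, and using that $C$ is nef (it moves in the base-point-free system $|M|$) together with $\Theta\cdot\tau^*C_0=0$, one gets $C_0^2-C^2=(\tau^*C_0-C)\cdot(\tau^*C_0+C)=\Theta\cdot C\ge 0$, hence $C_0^2\ge C^2>0$; similarly, since $K_Y-\tau^*K_{Y_0}$ is effective and $\tau$-exceptional, $d_0:=K_{Y_0}\cdot C_0=K_Y\cdot C-(K_Y-\tau^*K_{Y_0})\cdot C\le d$, and $d_0\ge 0$ because $K_{Y_0}$ is nef and $C_0$ effective. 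The Hodge index theorem on $Y_0$ (legitimate since $C_0^2>0$) gives $d_0^2=(K_{Y_0}\cdot C_0)^2\ge K_{Y_0}^2\cdot C_0^2\ge 1\cdot C^2$, using $K_{Y_0}^2=\vol(Y)\ge 1$. Chaining, $C^2\le d_0^2\le d^2\le(d-2h^0+4)^2$, which is \eqref{eqn-31-8}.

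For the equality statement, I would read this chain backwards. Equality forces $(d-2h^0+4)^2=d^2$, hence $h^0=2$; it forces $K_Y\cdot C=K_{Y_0}\cdot C_0$, hence $C\cdot E=0$ for every $\tau$-exceptional curve $E$, and since a $(-1)$-curve $E$ with $C\cdot E=(\rho^*H)\cdot E=0$ would be contracted by $\rho$, whereas the minimal desingularization $\rho$ contracts no $(-1)$-curve, there is no such $E$ and $Y=Y_0$ is already minimal; and it forces equality in the Hodge inequality and in $K_{Y_0}^2\cdot C_0^2\ge C^2$, whence $\vol(Y)=K_Y^2=1$ and $C\equiv kK_Y$ numerically with $k=K_Y\cdot C=d$. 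Since $D=K_Y|_C$ is effective and special (indeed $h^1(D)=h^0(\mathcal O_C(C))\ge N>0$ by \eqref{eqn-31-2}) with $h^0(D)=2$, Clifford's theorem gives $d\ge 2$, so $k\ge 2$. Then $K_Y-C\equiv(1-k)K_Y$ has no global sections (an effective divisor in that class would intersect the nef divisor $K_Y$ negatively), while $H^1(Y,K_Y-C)\cong H^1(Y,C)^\vee=0$ by the Kawamata--Viehweg vanishing theorem, as $C-K_Y\equiv(k-1)K_Y$ is nef and big. The restriction sequence $0\to\mathcal O_Y(K_Y-C)\to\mathcal O_Y(K_Y)\to\mathcal O_C(K_Y|_C)\to0$ now yields $p_g(Y)=h^0(C,K_Y|_C)=h^0=2$, so $Y$ is a $(1,2)$-surface, as claimed.

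I expect the equality analysis to be the only delicate point: inequality \eqref{eqn-31-8} itself is a short Hodge-index computation once one has descended to the minimal model, whereas deducing from the equalities that $Y$ is minimal and that $p_g(Y)$ is exactly $2$ (and not $0$ or $1$) rests on combining Clifford's bound $k\ge 2$ with the vanishing theorem on $Y$. The remaining ingredients --- smoothness and irreducibility of a general $C$, the identity $C^2=\deg\Sigma$, the non-negativity of the exceptional coefficients in $\Theta$ and in $K_Y-\tau^*K_{Y_0}$, and the fact that every $(-1)$-curve on a surface of general type is $\tau$-exceptional --- are routine.
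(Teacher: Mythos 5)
Your proof is correct and takes essentially the same route as the paper: reduce \eqref{eqn-31-8} to $C^2\le d^2$ via the Hodge index theorem on the minimal model, then for equality use $Y=Y_0$, $K_Y^2=1$, $C\equiv dK_Y$, and the restriction sequence $0\to K_Y-C\to K_Y\to K_Y|_C\to0$ with vanishing to get $p_g(Y)=2$. The only (cosmetic) differences are that you push $C$ forward to $Y_0$ rather than pulling $K_{Y_0}$ back to $Y$, and you derive $d\ge2$ directly from Clifford's theorem applied to the special divisor $D=K_Y|_C$, where the paper instead rules out $r=1$ by deducing $N\le2$ and contradicting that $Y$ is of general type.
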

	\begin{proof}[Proof of \autoref{lem-step-1}]
		Let $\tau:\,Y \to Y_0$
		be the contraction of $Y$ to its minimal model, and $C_0$ be the image of $C$.
		Suppose
		\begin{equation}\label{eqn-31-10}
			K_{Y}=\tau^*K_{Y_0}+\sum_{i=1}^{n}\mathcal{E}_i,\qquad C=\tau^*C_0+\sum_{i=1}^{n}a_i\mathcal{E}_i.
		\end{equation}
		By the Hodge index theorem,
		$$d^2=\big(K_YC\big)^2\geq \big(\tau^*(K_{Y_0})\cdot C\big)^2 \geq \tau^*(K_{Y_0})^2\cdot C^2 \geq C^2=2g(C)-2-d.$$
		Since $h^0\leq 2$, it follows that
		$$\big(d-2h^0+4\big)^2+d \geq d^2+d \geq 2g(C)-2.$$
		This proves the inequality \eqref{eqn-31-8}.
		Moreover, if the equality holds, then
		$$h^0=h^0(C,D)=2,\qquad K_{Y_0}^2=\tau^*(K_{Y_0})^2=1,\qquad K_YC=\tau^*(K_{Y_0})\cdot C,$$
		and that $C \sim_{num} r\,\tau^*(K_{Y_0})$ with $r=\sqrt{C^2}$.
		Combining this with \eqref{eqn-31-10}, it follows that $a_i=0$ for any $1\leq i\leq n$.
		In particular, any possible $(-1)$-curve $E$ on $Y$ satisfying $C\cdot E=0$.		
		Since $\rho:\, Y \to \Sigma$ is the minimal desingularization, and $C$ is the pulling-back of some hyperplane section,
		it follows that there is no $(-1)$-curve on $Y$, i.e., $Y=Y_0$ is minimal.
		Moreover, since $K_Y^2=K_{Y_0}^2=1$,
		it follows that the irregularity 
		$q(Y)=0$ by the Noether type inequality for irregular surfaces of general type.
		Consider the following exact sequence
		\begin{equation}\label{eqn-31-11}
			0 \lra \calo_Y(K_Y-C) \lra \calo_Y(K_Y) \lra \calo_C\big((K_Y)|_C\big)=\calo_C(D) \lra 0.
		\end{equation}
		Note that $r>1$;
		otherwise, $K_YC=C^2=1$ and hence
		$$N\leq h^0(C,\mathcal{O}_C(C))=h^0(C,\mathcal{O}_C(K_Y))=h^0=2,$$
		which is a contradiction, since $Y$ is of general type.
		It follows that $C-K_Y \sim_{num} (r-1)K_Y$ is nef and big, and hence
		$$H^0(Y,K_Y-C)=0,\qquad H^1(Y,K_Y-C)=H^1(Y,C)^{\vee}=H^1\big(Y,K_Y+(C-K_Y)\big)^{\vee}=0.$$
		By taking the induced long exact sequence of \eqref{eqn-31-11},
		one proves immediately that
		$$p_g(Y)=h^0(Y,K_Y)=h^0(C,D)=h^0=2.$$
		This shows that $Y$ is a $(1,2)$-surface.
	\end{proof}
	
	\begin{lemma}\label{lem-step-2}
		Suppose that $h^0\geq 3$.
		Then
		\begin{equation}\label{eqn-31-9}
			\big(d-2h^0+4\big)^2+d> 2g(C)-2.
		\end{equation}
	\end{lemma}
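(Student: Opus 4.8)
By the adjunction identity $2g(C)-2=C^{2}+d$ recalled above, the asserted inequality \eqref{eqn-31-9} is equivalent to the strict estimate $(d-2h^{0}+4)^{2}>C^{2}$, and this is what the plan is to prove. Keeping the notation of the proof of \autoref{prop-3-1}, I would first record the basic constraints on $(d,h^{0},C^{2})$. Since $K_{C}-D=(K_{Y}+C)|_{C}-(K_{Y})|_{C}=C|_{C}=\calo_{C}(C)$, we have $h^{1}(C,D)=h^{0}\big(C,\calo_{C}(C)\big)\geq N\geq 3$ by \eqref{eqn-31-2}, so $D$ is special; moreover $h^{0}(C,D)=h^{0}\geq 3$ and $h^{0}(C,\calo_{C}(C))\geq 3$ force $d\geq 1$ and $C^{2}\geq 1$, so $D\notin\{0,K_{C}\}$. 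Applying \autoref{lem-3-1} with $L=M=C$ (the hyperplane class $\rho^{*}H$, whose system has no fixed part and which induces a generically finite, indeed birational, map onto $\Sigma$), the map $\varphi_{|\calo_{C}(C)|}$ is birational, hence $C$ is not hyperelliptic. Therefore Clifford's inequality for $D$ is \emph{strict}: $2h^{0}\leq d+1$, so $d\geq 2h^{0}-1$, and in particular $d-2h^{0}+4\geq 3$.

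The core of the argument is a Castelnuovo-type genus bound read off from $|D|$, which defines a non-constant map $\psi\colon C\dashrightarrow\bbp^{h^{0}-1}$ since $h^{0}\geq 3$. Suppose first that $\psi$ is birational onto its image, a non-degenerate integral curve of some degree $\delta'\leq d$. Then $g(C)\leq\pi(\delta',h^{0}-1)\leq\pi(d,h^{0}-1)$ by Castelnuovo's inequality and its monotonicity in the degree, where $\pi(\cdot\,,\cdot)$ denotes Castelnuovo's bound; consequently $C^{2}=2g(C)-2-d\leq 2\pi(d,h^{0}-1)-2-d$, and it suffices to establish the purely numerical inequality $(d-2h^{0}+4)^{2}>2\pi(d,h^{0}-1)-2-d$. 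Writing $d-1=m(h^{0}-2)+\varepsilon$ with $0\leq\varepsilon\leq h^{0}-3$ — so that $m\geq 2$, using $d\geq 2h^{0}-1$ — one has $\pi(d,h^{0}-1)=\binom{m}{2}(h^{0}-2)+m\varepsilon$. For instance, when $h^{0}=3$ this already gives $\pi(d,2)=\binom{d-1}{2}$, hence $C^{2}\leq(d-1)(d-2)-2-d=d^{2}-4d<(d-2)^{2}=(d-2h^{0}+4)^{2}$.

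For general $h^{0}$ I would put $u=(m-2)(h^{0}-2)\geq 0$ and $v=\varepsilon+1$, so that $d-2h^{0}+4=u+v$ and $2\pi(d,h^{0}-1)-2-d=mu+(2m-1)v-2m-2$. Using $-mu=-(m-2)u-2u$ together with the elementary facts $u\bigl(u-(m-2)\bigr)=(m-2)^{2}(h^{0}-2)(h^{0}-3)\geq 0$ and $u\geq(m-2)v$ (the latter because $v=\varepsilon+1\leq h^{0}-2$), one obtains that $(u+v)^{2}-\bigl(mu+(2m-1)v-2m-2\bigr)$ is bounded below by $2(m-2)v(v-1)+v^{2}-(2m-1)v+2m+2=(2m-3)v^{2}-(4m-5)v+(2m+2)$. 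This last quadratic in $v$ has positive leading coefficient $2m-3$ and discriminant $(4m-5)^{2}-4(2m-3)(2m+2)=49-32m<0$ for every $m\geq 2$, hence is strictly positive; this finishes the case in which $\psi$ is birational.

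It remains to treat the case in which $\psi$ is composed with a cover $C\to C'$ of degree $k\geq 2$, and I expect this to be the main obstacle. Here $\delta':=\deg C'\geq h^{0}-1$ and $d\geq k\delta'$; since $C$ is not hyperelliptic, the extremal possibility $\delta'=h^{0}-1$, $k=2$ (which would realize $C$ as a double cover of a rational normal curve) is excluded, so in fact $d\geq 3(h^{0}-1)$, or else $\delta'\geq h^{0}$ — in either case the Clifford defect $d-2h^{0}+2$ is pushed up. The clean way to conclude is to observe that $|(K_{Y})|_{C}|$ being composed with a pencil for the general curve $C\in|\rho^{*}H|$ forces the canonical system of the surface $Y$ itself to be composed with a fibration $Y\to B$ (the induced pencils on the varying curves $C$ fit together), so that $Y$ is of the special type whose canonical map is not generically finite; one then bounds $d=K_{Y}C$ and $C^{2}=\deg\Sigma$ in terms of the numerical invariants of $Y$ and of this fibration, recovering $(d-2h^{0}+4)^{2}>C^{2}$, in the spirit of the fibration analysis developed later in the paper. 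An alternative route is to apply Castelnuovo's bound to the image curve $C'$ together with the estimate $h^{1}(C,D)\leq h^{1}(C,\psi^{*}\calo_{C'}(1))$ and the resulting control on the splitting type of $\psi_{*}\calo_{C}$, but the fibration argument seems the more transparent one.
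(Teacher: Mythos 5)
Your reduction of the birational case to the positivity of the quadratic $(2m-3)v^2-(4m-5)v+(2m+2)$ (with $u=(m-2)(h^0-2)$, $v=\epsilon+1$) is correct and in fact somewhat cleaner than the paper's chain of estimates; I verified the discriminant computation $(4m-5)^2-4(2m-3)(2m+2)=49-32m<0$ for $m\geq 2$, and the reduction from $(u+v)^2$ via $u(u-(m-2))\geq 0$ and $u\geq(m-2)v$ is sound.

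The genuine gap is in the case where $\varphi_{|D|}$ is composed with a covering $C\to C'$ of degree $k\geq 2$. You offer two sketches and complete neither: the ``fibration of $Y$'' route is entirely conjectural (you do not show that it leads to the numerical inequality $(d-2h^0+4)^2>C^2$, nor even how), and the Castelnuovo-on-$C'$ route is left as a one-line alternative. More importantly, neither sketch uses the input that actually drives the paper's argument and that explains the hypotheses of \autoref{prop-3-2}: since $\Sigma$ is of general type and not birational to a $(1,0)$-surface, the bicanonical map $\varphi_{|2K_Y|}$ is generically finite by \autoref{lem-3-2}\,(i), and therefore by \autoref{lem-3-1} (applied with $L=2K_Y$, $M=C$) the system $|2D|$ on $C$ defines a \emph{birational} map even when $|D|$ does not. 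This forces $h^0(C,2D)\geq 2h^0$, and the paper then runs Castelnuovo's bound on $2D$ rather than on $D$, with a separate treatment of the degenerate subcase $m_2=1$ via Clifford's plus theorem. Without invoking the generic finiteness of the bicanonical map you have no handle on the non-birational case, and the exclusion of $(1,0)$-surfaces in the statement would be unexplained. Your argument as written would therefore not close.
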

	\begin{proof}[Proof of \autoref{lem-step-2}]
		Let $\varphi_{|2D|}$ be the map defined by the complete linear system $|2D|$ on the curve $C$.
		Let $Y_0$ be the minimal model of $Y$.
		Suppose that $K_{Y_0}^2=1$ and $\chi(\mathcal{O}_{Y_0})=1$.
		Then the irregularity $q(Y)=0$; otherwise by the Noether inequality for irregular surface $K_{Y_0}^2 \geq 2\chi(\mathcal{O}_{Y_0})$, which is a contradiction.
		Hence $p_g(Y_0)=q(Y_0)=0$, i.e., $Y_0$ is a $(1,0)$-surface.
		Therefore, if $\Sigma$ is not birational to a $(1,0)$-surface,
		then $\varphi_{|2K_Y|}$ is generically finite by \autoref{lem-3-2}\,(i).
		Hence $\varphi_{|2D|}$ is birational by \autoref{lem-3-1}.
		
		Suppose first that $\varphi_{|D|}$ is birational,
		where $\varphi_{|D|}$ is the map defined by the complete linear system $|D|$ on the curve $C$.
		According to Castelnuovo's bound \cite[\S\,III.2]{acgh},
		$$g(C) \leq \frac{m(m-1)}{2}(h^0-2)+m\epsilon,$$
		where
		$$m=\left[\frac{d-1}{h^0-2}\right],\qquad d-1=m(h^0-2)+\epsilon.$$
		Note that $D$ is a special divisor on $C$. It follows that $m\geq 2$, and hence
		$$\begin{aligned}
		&\quad\,\big(d-2h^0+4\big)^2+d=\big((m-2)(h^0-2)+\epsilon+1\big)^2+m(h^0-2)+\epsilon+1\\
		&=\big((m-2)^2(h^0-2)+3m-4\big)(h^0-2)+\epsilon^2
		+\big(2(m-2)(h^0-2)+3\big)\epsilon+2\\
		&\geq \big((m-2)^2+3m-4\big)(h^0-2)+\epsilon^2
		+\big(2(m-2)+3\big)\epsilon+2,\qquad\text{as~}h^0\geq3,\\
		&=m(m-1)(h^0-2)+\epsilon^2+(2m-1)\epsilon+2,\\
		&\geq m(m-1)(h^0-2)+2m\epsilon+2\geq 2g(C)+2>2g(C)-2.
		\end{aligned}$$
		
		Suppose next that $\varphi_{|D|}$ is not birational.
		Since $\varphi_{|2D|}$ is birational as proved above, it follows that
		\begin{equation}\label{eqn-31-12}
			h^0_2:=h^0(C,2D)>2h^0(C,D)-1=2h^0-1,\quad\text{i.e.,}\quad h^0_2\geq 2h^0.
		\end{equation}
		Similar as above, by Castelnuovo's bound \cite[\S\,III.2]{acgh},
		$$g(C) \leq \frac{m_2(m_2-1)}{2}(h_2^0-2)+m_2\epsilon_2,$$
		where
		$$m_2=\left[\frac{d_2-1}{h^0_2-2}\right],~\qquad d_2-1=m_2(h^0_2-2)+\epsilon_2~\text{~with~}~d_2=\deg(2D)=2d.$$
		Note that $h^0_2\geq 6$ by \eqref{eqn-31-12}, since $h^0\geq 3$ by assumption.
		If $m_2\geq 2$, then
		$$d-2h^0+4 \geq d-h^0_2+4=\Big(\frac{m_2}{2}-1\Big)(h^0_2-2)+\frac{\epsilon_2+5}{2} > 0,$$
		and hence
		$$\begin{aligned}
		&\quad\,\big(d-2h^0+4\big)^2+d\geq \big(d-h^0_2+4\big)^2+d\\
		&=\left(\Big(\frac{m_2}{2}-1\Big)(h^0_2-2)+\frac{\epsilon_2+5}{2}\right)^2+\frac{m_2(h^0_2-2)+\epsilon_2+1}{2}\\
		&=\left(\Big(\frac{m_2}{2}-1\Big)^2(h^0_2-2)+3m_2-5\right)(h^0_2-2)\\
		&\quad\, +\frac{\epsilon_2^2+\big(2(m_2-2)(h^0_2-2)+12\big)\epsilon_2+27}{4}\\
		&\geq \left\{\begin{aligned}
		&4\left(m_2^2-m_2-1\right)+\frac{\epsilon_2^2+(8m_2-4)\epsilon_2+27}{4},&\quad&\text{if~}h^0_2=6,\\
		&\frac{5m_2^2-8m_2}{4}(h^0_2-2)
		+\frac{\epsilon_2^2+\big(10m_2-8\big)\epsilon_2+27}{4}, &&\text{if~}h^0_2\geq 7,\\
		\end{aligned}\right.\\
		&> m_2(m_2-1)(h^0_2-2)+2m_2\epsilon_2\geq 2g(C)>2g(C)-2.
		\end{aligned}$$
		If $m_2=1$, then $2D$ must be non-special, i.e., $h^1(C,2D)=0$;
		otherwise $m_2\geq 2$ by Clifford's theorem.
		Hence
		$$h^0_2=d_2+1-g(C)=2d+1-g(C),\quad\Longrightarrow\quad 2d-1=d_2-1=h^0_2-2+g(C).$$
		In particular, $h^0_2-2>\epsilon_2=g(C)$ according to the definition of $\epsilon_2$. Hence
		$$K_YC=d=\frac{h^0_2+g(C)-1}{2}\geq g(C)+1=\frac{K_YC+C^2+4}{2},
		\text{\quad i.e., \quad} K_YC\geq C^2+4.$$
		On the other hand, since the complete linear system $|\mathcal{O}_C(C)|$ defines a birational map
		and $\varphi_{|D|}$ is not birational, it follows that
		$$h^0(C,\mathcal{O}_C(K_Y-C))=0, ~\text{~i.e.,~}~
		h^1(C,\mathcal{O}_C(2C))=0~\text{~by Serre's duality.}$$
		Hence
		$$h^0(C,\mathcal{O}_C(2C))=2C^2+1-g(C)=\frac{3C^2-K_YC}{2}.$$
		Since $|\mathcal{O}_C(C)|$ defines a birational map, by Clifford's plus theorem (cf. \cite[\S\,III.2]{acgh}),
		$$h^0(C,\mathcal{O}_C(2C))\geq 3h^0(C,\mathcal{O}_C(C))-3.$$
		Hence
		$$C^2\geq 2h^0(C,\mathcal{O}_C(C))-2+\frac{K_YC}{3}\geq 2h^0(C,\mathcal{O}_C(C))-4+\frac{C^2+10}{3}>2h^0(C,\mathcal{O}_C(C))-4+\sqrt{C^2}.$$
		Using the Riemann-Roch theorem, one sees that the above inequality is equivalent to \eqref{eqn-31-9}.
	\end{proof}

    (ii) First, similar to the proof of (i), one shows that
    it suffices to prove
    \begin{equation}\label{eqn-31-19}
    	\big(d-2h^0+6\big)^2+d\geq 2g(C)-2.
    \end{equation}
    By a similar argument in \autoref{lem-step-1}, one shows that
    \eqref{eqn-31-19} holds if $h^0\leq 3$.
    Hence we may assume that $h^0\geq 4$.
    By \autoref{lem-3-1} together with \autoref{lem-3-2}\,(ii),
    the map $\varphi_{|3D|}$ is birational,
	where $\varphi_{|3D|}$ is the map defined by the complete linear system $|3D|$ on the curve $C$.
	Moreover, If $\varphi_{|D|}$ or $\varphi_{|2D|}$ is birational, then
	the arguments in \autoref{lem-step-2} show that the strong inequality \eqref{eqn-31-9} holds.
	Hence we may assume that neither $\varphi_{|D|}$ nor $\varphi_{|2D|}$ are birational.
		In particular,
		\begin{equation}\label{eqn-31-13}
		h^0_3:=h^0(C,3D)>3h^0(C,D)-2=3h^0-2,\quad\text{i.e.,}\quad h^0_3\geq 3h^0-1\geq 11.
		\end{equation}
		Based on Castelnuovo's bound \cite[\S\,III.2]{acgh},
		\begin{equation}\label{eqn-31-20}
			g(C) \leq \frac{m_3(m_3-1)}{2}(h_3^0-2)+m_3\epsilon_3=m_3(d_3-1)-\frac{m_3(m_3+1)}{2}(h_3^0-2),
		\end{equation}
		where
		$$m_3=\left[\frac{d_3-1}{h^0_3-2}\right],\qquad d_3-1=m_3(h^0_3-2)+\epsilon_3~\text{~with~}~d_3=\deg(3D)=3d.$$
		By \eqref{eqn-31-13}, one has
		$$d-2h^0+6\geq d-\frac{2h^0_3-16}{3}=\frac{m_3-2}{3}\,(h^0_3-2)+\frac{\epsilon_3+13}{3}>0,\qquad\text{~if~}m_3\geq 2.$$
	    
	    Consider the first the case when $m_3\geq 4$. Then
		$$\begin{aligned}
		&\quad\,\big(d-2h^0+6\big)^2+d\geq \Big(d-\frac{2h^0_3-16}{3}\Big)^2+d,\\
		&=\left(\frac{m_3-2}{3}\,(h^0_3-2)+\frac{\epsilon_3+13}{3}\right)^2+\frac{m_3(h^0_3-2)+\epsilon_3+1}{3}\\
		&=\left(\Big(\frac{m_3-2}{3}\Big)^2(h^0_3-2)+\frac{29m_3-52}{9}\right)(h^0_3-2) +\frac{\epsilon_3^2+\big(2(m_3-2)(h^0_3-2)+29\big)\epsilon_3+172}{9}\\
		&\geq \left\{\begin{aligned}
		&9m_3^2-7m_3-16
		+\frac{\epsilon_3^2+\big(18m_3-7\big)\epsilon_3+172}{9}, &\quad&\text{if~}h_3^0=11,\\
		&\frac{100m_3^2-110m_3-120}{9}
		+\frac{\epsilon_3^2+\big(20m_3-11\big)\epsilon_3+172}{9}, &\quad&\text{if~}h_3^0=12,\\
		&\frac{11m_3^2-15m_3-8}{9}(h^0_3-2)
		+\frac{\epsilon_3^2+\big(22m_3-15\big)\epsilon_3+172}{9}, &\quad&\text{if~}h_3^0\geq 13,
		\end{aligned}\right.\\
		&\geq m_3(m_3-1)(h^0_3-2)+2m_3\epsilon_3\geq 2g(C)>2g(C)-2.
		\end{aligned}$$
		
		Consider next the case when $m_3=3$. Then by similar arguments as above,
		$$\begin{aligned}
		\big(d-2h^0+6\big)^2+d\,&\geq \frac{(h^0_3+33)(h^0_3-2)}{9} +\frac{\epsilon_3^2+\big(2(h^0_3-2)+29\big)\epsilon_3+172}{9}\\
		&\geq \frac{(h^0_3+33)(h^0_3-2)}{9}+\frac{\epsilon_3^2+47\epsilon_3+172}{9},
		\qquad\text{since $h^0_3\geq 11$ by \eqref{eqn-31-13},}\\
		&= \frac{(h^0_3+33)(h^0_3-2)+160}{9}+\frac{\epsilon_3^2+47\epsilon_3+12}{9}\\
		&\geq 6(h^0_3-2)+6\epsilon_3= 2g(C)>2g(C)-2.
		\end{aligned}$$
		
		It remains to prove \eqref{eqn-31-19} when $m_3=1$ or $2$.
		Similar to the proof in the case when $m_2=1$ above, one proves that
		$h^1(C,\mathcal{O}_C(2C))=0$, and hence
		$$\frac{3C^2-K_YC}{2}=h^0(C,\mathcal{O}_C(2C))\geq 3h^0(C,\mathcal{O}_C(C))-3.$$
		Hence
		$$C^2\geq 2h^0(C,\mathcal{O}_C(C))-6+\frac{K_YC+12}{3}=2h^0(C,\mathcal{O}_C(C))-6+\frac{d+12}{3}.$$
		Using the Riemann-Roch theorem, this is equivalent to
		\begin{equation}\label{eqn-31-23}
			K_YC=d\geq 3(h^0-1).
		\end{equation}
		If $m_3=2$, then by Castelnuovo's bound \eqref{eqn-31-20},
		$$\begin{aligned}
		&\,\quad\,\big(d-2h^0+6\big)^2+d-\big(2g(C)-2\big)\\
		&\geq\Big(\frac{d}{3}+4\Big)^2+d-\big(2t+4\epsilon_3-2\big),\qquad\text{where~}t:=h^0_3-2,\\
		&=\frac{4t^2+4(10+\epsilon_3)t+\epsilon_3^2-223\epsilon_3+1558}{81},\qquad \text{since~}d=\frac{m_3t+\epsilon_3+1}{3}=\frac{2t+\epsilon_3+1}{3},\\
		&\geq \frac{4(\epsilon_3+1)^2+4(10+\epsilon_3)(\epsilon_3+1)+\epsilon_3^2-223\epsilon_3+1558}{81},\qquad\text{since~}t=h^0_3-2\geq \epsilon_3+1,\\
		&=\frac{9\epsilon_3^2-171\epsilon_3+1602}{81}>0.
		\end{aligned}$$	
		If $m_3=1$, then similar as in the case when $m_2=1$, one shows that $h^1(C,3D)=0$ and that
		\begin{equation}\label{eqn-31-24}
			d\geq \frac{h^0_3+g(C)-1}{3}\geq \frac{2(g(C)+1)}{3}.
		\end{equation}
		Hence
		$$\begin{aligned}
		&\,\quad\,\big(d-2h^0+6\big)^2+d-\big(2g(C)-2\big)\\
		&\geq\Big(\frac{d+12}{3}\Big)^2+d-\big(2g(C)-2\big),\qquad\text{by \eqref{eqn-31-23}},\\
		&\geq \Big(\frac{2g(C)+38}{9}\Big)^2+\frac{2(g(C)+1)}{3}-\big(2g(C)-2\big),\qquad\text{by \eqref{eqn-31-24}},\\
		&=\frac{4\big(g(C)^2+11g(C)+415\big)}{81}>0.
		\end{aligned}$$
	This completes the proof of \eqref{eqn-31-19}, and hence of \eqref{eqn-31-1}.
\end{proof}

\subsection{The degree of the canonical maps and canonical foliated surfaces}\label{sec-gen-finte-proof}
In this subsection, we prove \autoref{thm-1-1} and \autoref{thm-1-2}.
\begin{proof}[Proof of \autoref{thm-1-1}]
	We may assume that $\sF$ is reduced, and hence the canonical map $\varphi=\varphi_{|K_{\sF}|}$ is defined by the complete system $|K_{\sF}|$ as in \autoref{sec-canonical-map}.
	Moreover, by a sequence of possible blowing-ups $\sigma:\,\wt{S} \to S$, one obtains a morphism 
	$$\tilde\varphi=\rho\circ\phi:\,\wt{S} \to \Sigma \hookrightarrow \bbp^N,
	\qquad \text{where $N=p_g(\sF)-1$}.$$
	By \autoref{lem-2-21}, the above map $\tilde\varphi$
	is defined by the complete linear system $|K_{\wt\sF}|$, where $\wt{\sF}=\sigma^*\sF$ is the induced reduced foliation on $\wt{S}$.
	Let
	\begin{equation}\label{eqn-3-1}
		|K_{\wt{\sF}}|=|\wt{M}|+\wt{Z},
	\end{equation}
	be the decomposition of $|K_{\wt{\sF}}|$ into moving part $|\wt{M}|$ and fixed part $\wt{Z}$.
	Then
	\begin{equation}\label{eqn-31-25}
		\vol(\sF)=\vol(\wt{\sF}) \geq \wt{M}^2 \geq \deg(\tilde\varphi)\cdot \deg(\Sigma).
	\end{equation}
	By construction, $\deg(\tilde\varphi)=\deg(\varphi)$ since $\sigma$ is birational.
	Combining with \autoref{prop-3-1}\,(i) on the lower bound on $\deg\Sigma$, one obtains that
	$$\deg(\varphi)=\deg(\tilde\varphi) \leq \frac{\vol(\sF)}{\deg(\Sigma)} \leq \frac{\vol(\sF)}{p_g(\sF)-2}.$$
	Moreover, if the equality holds, it follows in particular that $\deg(\Sigma)=p_g(\sF)-2$; namely, the image $\Sigma$ is a surface of minimal degree in $\bbp^{p_g(\sF)-1}$.
	This completes the proof.
\end{proof}

\begin{proof}[Proof of \autoref{thm-1-2}]
	Using the same notations as in the proof of \autoref{thm-1-1}.
	Note that $\deg(\tilde\varphi)=\deg(\varphi)=1$ by assumption.
	Hence our theorem follows from the inequality \eqref{eqn-31-25} together with
	\autoref{prop-3-1} and \autoref{prop-3-2}.
\end{proof}

According to the proofs of \autoref{thm-1-1} and \autoref{thm-1-2} above,
we have actually proved the following.
\begin{proposition}\label{prop-3-3}
	Let $\sF$ be a foliation of general type on a smooth projective surface $S$, such that its canonical map $\varphi$ induces a generically finite map as in \autoref{thm-1-1}. Denote by $\Sigma=\varphi(S)$ the image of $S$ under the canonical map.
	\begin{enumerate}[$(i)$]
		\item Suppose that the image surface $\Sigma$ is not ruled. Then
		\begin{equation*}
		\vol(\sF) \geq \deg(\varphi)\cdot \big(2p_g(\sF)-4\big).
		\end{equation*}
		
		\item Suppose that the image surface $\Sigma$ is also of general type. Then
		\begin{equation*}
		\vol(\sF) \geq  \deg(\varphi)\cdot\Big(2p_g(\sF)-4+\frac{\sqrt{8p_g(\sF)-31}-7}{2}\Big).
		\end{equation*}
		If moreover $\Sigma$ is not birational to a $(1,0)$-surface, then
		\begin{equation*}
		\vol(\sF) \geq \deg(\varphi)\cdot \Big(2p_g(\sF)-4+\frac{\sqrt{8p_g(\sF)-23}-3}{2}\Big).
		\end{equation*}
	\end{enumerate}
\end{proposition}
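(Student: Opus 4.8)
The plan is to obtain this proposition directly from the proof of \autoref{thm-1-1} together with the degree estimates of \autoref{prop-3-1} and \autoref{prop-3-2}; nothing beyond a change of variables is needed. Recall from that proof that, after replacing $(S,\sF)$ by a reduced model and blowing up the base locus of $|K_{\wt\sF}|$ via a birational morphism $\sigma\colon\wt S\to S$, one obtains the morphism $\tilde\varphi=\varphi_{|K_{\wt\sF}|}\colon\wt S\to\Sigma\hookrightarrow\bbp^N$ with $N=p_g(\sF)-1$, and, writing $|K_{\wt\sF}|=|\wt M|+\wt Z$ for its decomposition into moving and fixed parts, the chain of inequalities
$$\vol(\sF)=\vol(\wt\sF) \ge \wt M^2 \ge \deg(\tilde\varphi)\cdot\deg(\Sigma) = \deg(\varphi)\cdot\deg(\Sigma),$$
which is exactly \eqref{eqn-31-25}; here $\deg(\tilde\varphi)=\deg(\varphi)$ because $\sigma$ is birational, and $\Sigma$ is non-degenerate in $\bbp^{N}$, being the image of the map attached to a complete linear system. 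Thus everything reduces to bounding $\deg(\Sigma)$ from below under the hypotheses imposed on $\Sigma$.

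For part (i), $\Sigma$ is non-degenerate and not ruled, so \autoref{prop-3-1}\,(ii) gives $\deg(\Sigma)\ge 2(N-1)=2p_g(\sF)-4$, and feeding this into the displayed chain proves the assertion. For part (ii), $\Sigma$ is a non-degenerate surface of general type, so \autoref{prop-3-2} applies: if $\Sigma$ is birational to a $(1,0)$-surface, part (ii) of that proposition gives $\deg(\Sigma)\ge 2(N-1)+\tfrac{\sqrt{8N-23}-7}{2}$, and otherwise part (i) gives the strictly larger bound $\deg(\Sigma)\ge 2(N-1)+\tfrac{\sqrt{8N-15}-3}{2}$. Since $\sqrt{8N-23}-7<\sqrt{8N-15}-3$, the first of these bounds holds for every general-type $\Sigma$; rewriting the radicands via $8N-23=8p_g(\sF)-31$ and $8N-15=8p_g(\sF)-23$ and substituting into \eqref{eqn-31-25} gives the first inequality of (ii) with no further hypothesis, and the second inequality under the additional assumption that $\Sigma$ is not birational to a $(1,0)$-surface.

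The only point that needs care is the bookkeeping: keeping track of the shift $N=p_g(\sF)-1$, and verifying that the $(1,0)$-surface bound of \autoref{prop-3-2}\,(ii) is the weaker of the two so that it may be stated without the non-$(1,0)$ hypothesis. Since all the substantive input — the Clifford-type and Castelnuovo-type estimates — already resides in \autoref{prop-3-1} and \autoref{prop-3-2}, there is no genuine obstacle here, and the statement is really a corollary of what has been established above.
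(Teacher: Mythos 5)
Your argument is correct and is exactly the paper's own: the paper derives Proposition~3.3 directly from the chain of inequalities \eqref{eqn-31-25} in the proof of \autoref{thm-1-1} together with the degree lower bounds of \autoref{prop-3-1} and \autoref{prop-3-2}, and your only addition—observing that $\sqrt{8N-23}-7<\sqrt{8N-15}-3$ so that the $(1,0)$-surface bound holds for all general-type $\Sigma$—is precisely the bookkeeping needed to read the statement off of those lemmas.
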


\subsection{Examples with maximal canonical degree}\label{sec-exam-3}
In this section, We construct examples showing that most of the bounds in \autoref{thm-1-1} and \autoref{thm-1-2} are sharp.
More precisely,
\autoref{exam-3-1}, \autoref{exam-3-2}, \autoref{exam-3-3} and \autoref{exam-3-4}
show respectively the sharpness of \eqref{eqn-1-3}, \eqref{eqn-1-3'}, \eqref{eqn-1-5} and \eqref{eqn-1-10}.

\begin{example}\label{exam-3-1}
	For any even integer $d=2k$, we construct a foliated surface $(S_k,\sF_k)$ of general type,
	such that its canonical map $\varphi$ is generically finite with
	\begin{equation}\label{eqn-6-4}
	\deg(\varphi)=d=\frac{\vol(\sF_k)}{p_g(\sF_k)-2}.
	\end{equation}
\end{example}

Let $f:\,S \to \bbp^1$ be a semi-stable fibration of curves of genus $g=2$,
whose slope
$$\lambda_f:=\frac{K_{S/\bbp^1}^2}{\deg f_*\mathcal{O}_S(K_{S/\bbp^1})}=2,\qquad\text{where~}K_{S/\bbp^1}=K_S-f^*K_{\bbp^1}.$$
Such a semi-stable fibration $f:\,S \to \bbp^1$ exists.
Indeed, one can construct such a fibration as follows.
Let $Y=\mathbb{P}_{\mathbb{P}^1}\big(\mathcal{O}_{\mathbb{P}^1} \oplus \mathcal{O}_{\mathbb{P}^1}(n)\big)$ be the Hirzebruch surface with the ruling $h:\,Y \to \mathbb{P}^1$,
and $L_m=6C_0+2m\Gamma$, where $C_0$ is the section with $C_0^2=-n$ and $\Gamma$ is a general fiber of $h$.
Then $L_m$ is very ample if $m$ is sufficiently large (in fact $m>3n$ is enough, cf. \cite[Cor\,V.2.18]{har-77}).
By the Bertini theorem \cite[Thm\,II.8.18]{har-77}, a general element $R\in |L_m|$ satisfies that
\begin{enumerate}[(i).]
	\item the divisor $R$ is smooth;
	\item the restricted map $h|_R:\,R \to \bbp^1$ has only simply ramified points, i.e., the ramification indices are all equal to $2$.
\end{enumerate}
Let $\phi:\,S \to Y$ be the double cover branched over such a general divisor $R$, and $f=h\circ\phi:\, S \to \bbp^1$ the induced fibration.
Then $f$ is a semi-stable fibration of curves of genus $2$.
Moreover, one checks easily that
$$\begin{aligned}
K_{S/\bbp^1}^2&\,=2(K_{Y/\bbp^1}+R/2)^2=4m-6n,\\
\deg f_*\mathcal{O}_S(K_{S/\bbp^1})&\,=\frac14R\cdot \big(K_{Y/\bbp^1}+R/2\big)=2m-3n.
\end{aligned}$$
Let $\sF$ be the foliation on $S$ defined by taking the saturation of $\ker(\df:\,T_{S} \to f^*T_{\bbp^1})$ in $T_{S}$.
Then $\sF$ is reduced, relatively minimal, and $K_{\sF}=K_{S/\bbp^1}$.
Hence
$$\vol(\sF)=K_{\sF}^2=4m-6n.$$
Moreover, since the Hodge bundle $\deg f_*\mathcal{O}_S(K_{S/\bbp^1})$ is of rank two and semi-positive,
it follows that
$$p_g(\sF)=h^0(S,K_{\sF})=h^0\big(\bbp^1, f_*\mathcal{O}_S(K_{S/\bbp^1})\big)=2+\deg f_*\mathcal{O}_S(K_{S/\bbp^1})=2m-3n+2.$$
Moreover, one checks easily that the canonical map $\varphi$ factorizes as
$$\xymatrix{S \ar[rr]^-{\varphi} \ar[dr]_-{\phi} && \Sigma \ar@{^(->}[r] & \bbp^{p_g(\sF)-1}\\
	&Y \ar[ur]_-{\varphi'}}$$
where $\varphi':\,Y \to \Sigma$ is defined by the complete linear system
$$|K_{Y/\bbp^1}+R/2|=|(-2C_0-n\Gamma)+3C_0+m\Gamma|=|C_0+(m-n)\Gamma|.$$
In particular,
$$\deg(\varphi)=2\deg(\varphi')=2=\frac{\vol(\sF)}{p_g(\sF)-2}.$$
Let $(S_1,\sF_1)=(S,\sF)$.
Then $(S_1,\sF_1)$ satisfies \eqref{eqn-6-4} for $d=2$.

For any even number $d=2k>2$,
let $\mathcal{L}=\mathcal{O}_{\bbp^1}(\delta_k)$ be a line bundle with $\delta_k$ sufficiently large (one can check $\delta_k>2m-3n$ would be enough),
such that $$h^0(S,K_{\sF}\otimes f^*\mathcal{L}^{-i})=h^0(\bbp^1,f_*K_{\sF}\otimes \mathcal{L}^{-i})=0,\qquad\forall\,i\geq 1.$$
Let $D\in |\mathcal{L}^{\otimes k}|$ be a reduced divisor such that $D$ does not contain the discriminant locus of $f$.
Then one obtains a cyclic cover 
$\pi_k:\,C_k \to \bbp^1$
branched exactly over $D$.
Let $S_k=S \times_{\bbp^1} C_k$ be the fiber-product and $\Pi:\, S_k \to S$ the induced cyclic cover with diagram:
$$\xymatrix{S_k \ar[rr]^-{\Pi} \ar[d]_-{f_k}  && S \ar[d]^-{f}\\
	C_k \ar[rr]^-{\pi} && \bbp^1}$$
Denote by $\sF_k=\Pi^*\sF$ the induced foliation on $S_k$.
By construction, $\sF_k$ is nothing but the folition by taking the saturation of $\ker(\df_k:\,T_{S_k} \to f^*T_{C_k})$ in $T_{S_k}$.
Note that the ramified divisor of $\Pi$ is contained in fibers and hence $\sF_k$-invariant.
By \autoref{thm-a-1}, $K_{\sF_k}=\Pi^*K_{\sF}$.
$$\begin{aligned}
\vol(\sF_k)&\,=k\vol(\sF), \\
p_g(\sF_k)&\,=\sum_{i=0}^{k-1}h^0\big(S, K_{\sF} \otimes f^* \mathcal{L}^{-i}\big)=h^0\big(S, K_{\sF})=p_g(\sF).
\end{aligned}$$
It is clear that $\Pi^*|K_{\sF}| \subseteq |K_{\sF_k}|$.
Hence $|K_{\sF_k}|=\Pi^*|K_{\sF}|$, which implies in particular that the canonical map
$\varphi_{|K_{\sF_k}|}$ factorizes through $\varphi$:
$$\xymatrix{S_k \ar[rr]^-{\varphi_{|K_{\sF_k}|}} \ar[dr]_-{\pi} && \Sigma\\
	&S \ar[ur]_-{\varphi}}$$
In particular,
$$\deg(\varphi_{|K_{\sF_k}|})=k\deg(\varphi)=2k=\frac{\vol(\sF_k)}{p_g(\sF_k)-2}.$$

\begin{example}\label{exam-3-2}
	For any integer $d\geq 1$, we construct a foliated surface $(S,\sF)$ of general type with
	$\vol(\sF)=d$ and $p_g(\sF)=3$, such that its canonical map $\varphi$ induces a generically finite map to $\bbp^2$ with $$\deg(\varphi)=d=\vol(\sF).$$
\end{example}

Let $\sF_1$ be a reduced folition of degree two on $\bbp^2$, which admits $4$ invariant lines $\{L_1,L_2,L_3,L_4\}$.
Such a foliation exists: for instance, the foliation $\sF_1$ defined by
$$v=x(\lambda x+(1+\lambda)y+\lambda c) \frac{\partial}{\partial x}+y(y+c)\frac{\partial}{\partial y},$$
where $(x,y)$ is an affine coordinate of $\bbp^2$, $\lambda$ is an irrational number, and $c\neq 0$.
One checks directly that such a foliation $\sF_1$ is reduced with $\deg(\sF_1)=2$,
and that there are four $\sF_1$-invariant lines:
$$\big\{L_1=\{x=0\},\quad L_2=\{y=0\},\quad L_3=\{x+y+c=0\},\quad L_4=L_{\infty}\big\}.$$
The canonical divisor of $\sF_1$ is
$$K_{\sF_1}=\mathcal{O}_{\bbp^2}(1).$$
Hence the foliation $\sF_1$ gives an example with $\deg(\varphi_{|K_{\sF_1}|})=1=\vol(\sF_1)$.

Let now $d\geq 2$, and $\pi_0:\,S_0 \to \bbp^2$ be the (normalized) cyclic cover of degree $d$ defined by the relation:
$$\mathcal{O}_{\bbp^2}\big(L_1+(d-1)L_2+L_3+(d-1)L_4\big) \sim \mathcal{O}_{\bbp^2}(2)^{\otimes d}.$$
Let $\rho_1:\,S_1 \to S_0$ be the desingularization.
The induced foliation $(\pi_0\circ \rho_1)^*(\sF_1)$ might not be reduced.
Let $\rho_2:\,S \to S_1$ be possible further blowing-ups such that
the induced foliation $\sF_d:=\rho_2^*\big((\pi_0\circ \rho_1)^*(\sF_1)\big)=\pi^*(\sF_1)$ is reduced.
$$\xymatrix{S \ar[rr]^-{\rho_2} \ar@/_4mm/"1,7"_-{\pi:=\pi_0\circ \rho_1\circ \rho_2} && S_1 \ar[rr]^-{\rho_1} &&
	S_0 \ar[rr]^-{\pi_0} && \bbp^2}$$

Since the branch divisor is $\sF_1$-invariant, it follows from \autoref{thm-a-1} that
$$K_{\sF_d}=\pi^*K_{\sF_1}+\mathcal{E}=\pi^*\mathcal{O}_{\bbp^2}(1)+\mathcal{E},$$
where $\mathcal{E}\geq 0$ is supported over the curves contracted by $\pi$.
It follows that the above decomposition is just the Zariski decomposition of $K_{\sF_d}$,
namely $\pi^*\mathcal{O}_{\bbp^2}(1)$ and $\mathcal{E}$ are respectively the nef and negative parts of $K_{\sF}$.
In particular,
$$\vol(\sF_d)=\vol\big(\pi^*\mathcal{O}_{\bbp^2}(1)\big)=d.$$
Moreover, $\mathcal{E}$ is contained in the fixed part of $|K_{\sF_d}|$, i.e., $|K_{\sF_d}|=|\pi^*\mathcal{O}_{\bbp^2}(1)|+\mathcal{E}$.
Since $\pi_0$ is a cyclic cover whose branch divisor is normal crossing,
it follows that the singularities on $S_0$ are all Hirzebruch-Jung type, and hence rational, cf. \cite[\S III.3]{bhpv}. Combining with \cite[Corollary\,3.11]{ev-92},
$$\pi_*\mathcal{O}_{S}= (\pi_0)_*\mathcal{O}_{S_0}=\bigoplus_{i=0}^{d-1} {\mathcal{L}^{(i)}}^{-1},$$
where $D=L_1+(d-1)L_2+L_3+(d-1)L_4$, and
$${\mathcal{L}^{(i)}}^{-1}=\mathcal{O}_{\bbp^2}(-2i)\otimes \mathcal{O}_{\bbp^2}\Big(\Big[\frac{iD}{d}\Big]\Big)=\left\{\begin{aligned}
& \mathcal{O}_{\bbp^2}, &~& \text{if~}i=0;\\
& \mathcal{O}_{\bbp^2}(-2), &~& \text{if~}1\leq i \leq d-1.
\end{aligned}\right.$$
Therefore,
$$\pi_*\big(\pi^*\mathcal{O}_{\bbp^2}(1)\big)=\mathcal{O}_{\bbp^2}(1) \otimes \pi_*\mathcal{O}_S
=\mathcal{O}_{\bbp^2}(1) \oplus \mathcal{O}_{\bbp^2}(-1)^{\oplus (d-1)}.$$
This implies in particular that
$$h^0(S,K_\sF)=h^0\big(S,\pi^*\mathcal{O}_{\bbp^2}(1)\big)=h^0\big(\bbp^2,\pi_*\pi^*\mathcal{O}_{\bbp^2}(1)\big)=3.$$
Hence
$$|K_{\sF_d}|=|\pi^*\mathcal{O}_{\bbp^2}(1)|+\mathcal{E}=\pi^*|\mathcal{O}_{\bbp^2}(1)|+\mathcal{E}.$$
It follows that the canonical map $\varphi_{|K_{\sF_d}|}$ factorizes through $\pi$ as $\varphi_{|K_{\sF_d}|}=\varphi_{|K_{\sF_1}|}\circ \pi$.
As $\deg(\varphi_{|K_{\sF_1}|})=1$, it follows that
$$\deg(\varphi_{|K_{\sF_d}|})=d=\vol(\sF).$$

\begin{example}\label{exam-3-3}
	For any integer $d\geq 1$, we construct a canonical foliated surface $(S, \sF)$ of general type, such that $S$ is a K3 surface, and
	$$\vol(\sF)=2(d+2)^2,\qquad p_g(\sF)=d^2+4d+6,
	\qquad \Longrightarrow\qquad \vol(\sF)=2p_g(\sF)-4.$$
\end{example}

Let $\sF_0$ be a reduced foliation on $\bbp^2$ of degree $d\geq 7$.
Such a foliation exists; in fact, a general foliation on $\bbp^2$ is always reduced, cf. \cite[Proposition\,3.2]{ls-20}.
\begin{claim}\label{claim-6-1}
	There exists a smooth curve $D$ with $\deg(D)=6$, such that
	\begin{enumerate}[$(i)$]
		\item $D$ is not $\sF_0$-invariant;
		\item $D$ does not contain any singular point of $\sF_0$;
		\item $\tang(\sF_0,D,p)\leq 1$ for any $p\in D$.
	\end{enumerate}
\end{claim}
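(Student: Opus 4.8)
The plan is to prove the claim as a general-position statement inside the complete linear system $|\calo_{\bbp^2}(6)|\cong\bbp^{27}$ of plane sextics: I will show that each of the conditions (i)--(iii), together with smoothness of $D$, holds for $[D]$ in a dense Zariski-open subset of $\bbp^{27}$. Since $\bbp^{27}$ is irreducible, the intersection of these four dense open subsets is nonempty, and any member of it is the required sextic $D$.

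I would first dispose of the routine parts. Smoothness is automatic for a general sextic since $\calo_{\bbp^2}(6)$ is very ample (Bertini). For (ii), $\mathrm{Sing}(\sF_0)$ is a finite set and for each $q\in\mathrm{Sing}(\sF_0)$ the sextics through $q$ form a hyperplane in $\bbp^{27}$, so the complement of the union of these finitely many hyperplanes is dense open. For (i), the locus $V\subseteq\bbp^{27}$ of $\sF_0$-invariant sextics is Zariski-closed, since invariance of $D=\{F=0\}$ is the condition $\omega\wedge\mathrm{d}F\in(F)$ on the coefficients of $F$, where $\omega$ is a twisted $1$-form defining $\sF_0$; and $V$ is proper, for if $V=\bbp^{27}$ then for any line $L$ the reduced sextic $L\cup L_2\cup\cdots\cup L_6$ (with $L_2,\dots,L_6$ distinct lines) would be $\sF_0$-invariant, hence so would its component $L$, so every line would be $\sF_0$-invariant; but a non-singular point of $\sF_0$ lies on at most one $\sF_0$-invariant line, so this forces $\mathrm{Sing}(\sF_0)=\bbp^2$, contradicting the finiteness of $\mathrm{Sing}(\sF_0)$. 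Thus $\bbp^{27}\setminus V$ is dense open.

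The heart of the argument will be condition (iii). Consider the incidence set
$$\mathcal I=\big\{(p,[D])\in(\bbp^2\setminus\mathrm{Sing}(\sF_0))\times\bbp^{27}\ :\ p\in D,\ \tang(\sF_0,D,p)\ge 2\big\},$$
which is Zariski-closed in its ambient space. I claim that for each fixed $p\notin\mathrm{Sing}(\sF_0)$ the fibre $\mathcal I_p\subseteq\bbp^{27}$ has codimension at least $3$. To verify this, choose affine coordinates $(x,y)$ centred at $p$ in which the leaf of $\sF_0$ through $p$ is tangent to the $x$-axis, and a local vector field $v=\partial_x+w(x,y)\,\partial_y$ defining $\sF_0$ near $p$ (possible since $p$ is non-singular), and set $c=\tfrac12 w_x(p)$. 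Writing the $2$-jet of $F$ at $p$ as $a_{00}+a_{10}x+a_{01}y+a_{20}x^2+a_{11}xy+a_{02}y^2$, the condition $p\in D$ reads $a_{00}=0$; assuming it, $\tang(\sF_0,D,p)\ge 1$ forces $\mathrm{d}F_p(v(p))=0$, i.e.\ $a_{10}=0$; and, assuming both, $\tang(\sF_0,D,p)\ge2$ is equivalent to $D$ having contact of order $\ge 3$ with the leaf at $p$, i.e.\ $a_{20}+c\,a_{01}=0$ when $D$ is smooth at $p$, while if $D$ is singular at $p$ then $a_{10}=a_{01}=0$ and there are again three conditions. As plane sextics surject onto the $6$-dimensional space of $2$-jets at $p$, these are three independent linear conditions, so $\dim\mathcal I_p\le 27-3$ and hence $\dim\mathcal I\le 2+24=26<27$. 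Consequently the second projection $\mathcal I\to\bbp^{27}$ is not dominant, and for $[D]$ outside the proper closed subset $\overline{\mathrm{pr}_2(\mathcal I)}$ one has $\tang(\sF_0,D,p)\le 1$ for every $p\in D\setminus\mathrm{Sing}(\sF_0)$; together with (ii) this gives $\tang(\sF_0,D,p)\le 1$ for all $p\in D$.

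Intersecting the four dense open subsets of the irreducible variety $\bbp^{27}$ then produces a smooth sextic $D$ satisfying (i)--(iii), as wanted. I expect the fibre-dimension estimate for $\mathcal I$ to be the only real obstacle; it reduces to the elementary jet computation indicated above, and everything else is a routine Bertini-type argument.
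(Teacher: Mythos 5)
Your proof is correct and follows essentially the same incidence-variety strategy as the paper: reduce condition (iii) to showing that the set of sextics with $\tang(\sF_0,D,p)\geq 2$ at a fixed non-singular point $p$ has codimension $3$, then add $\dim\bbp^2=2$ to conclude the projection is not dominant, with (i), (ii) and smoothness handled by routine open-density arguments. Your explicit $2$-jet computation justifying the codimension-$3$ estimate (splitting into the $D$-smooth and $D$-singular cases) and your properness argument for the invariant locus are spelled out more carefully than in the paper, which states these points rather tersely, but the underlying argument is the same.
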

\begin{proof}
	This follows from some variant of Bertini's theorem.
	In fact, let $V$ be set of all homogeneous polynoimials of degree $6$ on $\bbp^2$, and let
	$$\left\{\begin{aligned}
	&V_1=\{F\in V~|~ \div(F)~\text{is not smooth}\};\\
	&V_2=\{F\in V~|~ \div(F)~\text{is $\sF_0$-invariant}\};\\
	&V_3=\{F\in V~|~\div(F)~\text{contains some singular point of $\sF_0$}\};\\
	&V_4=\{F\in V~|~\tang(\sF_0,D,p)\geq 2~\text{for some point $p\in D\setminus \text{Sing}(\sF_0)$, where $D=\div(F)$}\}.
	\end{aligned}\right.$$
	If $V\setminus\{V_1\cup V_2 \cup V_3 \cup V_4\}\neq \emptyset$, then $D=\div(F)$ satisfies our requirements for any $F\in V\setminus\{V_1\cup V_2 \cup V_3 \cup V_4\}$.
	It is clear that $V_1$, $V_2$ and $V_3$ are of codimension at least $1$ in $V$.
	It suffices to prove that $V_4$ is also of codimension at least $1$ in $V$.
	Let $U=\bbp^2\setminus \text{Sing}(\sF_0)$, and let
	$$W=\big\{(F,p) \in V \times U~|~F(p)=0 \text{~and~} \tang(\sF_0,D,p)\geq 2, ~\text{~where $D=\div(F)$}\big\}.$$
	There are two natural projections $pr_1:\,W \to V$ and $pr_2:\,W \to U$.
	Then $pr_1(W)=V_4\subseteq V$.
	Clearly $pr_2$ is surjective, and for any fixed $p\in U$, the fiber
	$$pr_2^{-1}(p)=\left\{F\in V~\big|~F(p)=0 \text{~and~} \tang(\sF_0,D,p)\geq 2, ~\text{~where $D=\div(F)$}\right\}.$$
	Since $p\in U$ is a smooth point of $\sF_0$, the above condition defining the fiber $pr_2^{-1}(p)$ just means that $p\in D=\div(F)$ and that the tangent space of $D$ at $p$ is the same as $T_{\sF_0}$ in $T_{\bbp^2}$.
	In particular, $pr_2^{-1}(p)$ is of codimension $3$ in $V$.
	As $\dim U=\dim \bbp^2=2$, it follows that $V_4=pr_1(W)$ is of codimension at least $1$ in $V$ as required.
\end{proof}

Let $D$ be a smooth curve as in \autoref{claim-6-1},
and let $\pi:\, S \to \bbp^2$ be a double cover branched over $D$.
Then $S$ is a smooth K3 surface.
Let $\sF=\pi^*\sF_0$ be the foliation by pulling back $\sF_0$.
Then we claim that $\sF$ is reduced.
In fact, this follows from the following general fact.
\begin{lemma}\label{claim-6-2}
	Let $\sF_0$ be a reduced foliation on a smooth projective surface $S_0$,
	and let $D\subseteq S_0$ be a curve such that
		\begin{enumerate}[$(i)$]
		\item $D$ is not $\sF_0$-invariant;
		\item $D$ does not contain any singular point of $\sF_0$;
		\item $\tang(\sF_0,D,p)\leq 1$ for any $p\in D$.
	\end{enumerate}
Suppose that there is a double cover $\pi:\,S \to S_0$,
branched exactly over the curve $D$.
Then the foliation $\sF=\pi^*\sF_0$ is reduced.
\end{lemma}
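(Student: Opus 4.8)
The plan is to analyze the singularities of $\tilde\sF:=\pi^*\sF_0$ locally. Away from the ramification divisor $R:=\pi^{-1}(D)$ the morphism $\pi$ is \'{e}tale, so $\tilde\sF$ is locally isomorphic to $\sF_0$ there; in particular every singularity of $\tilde\sF$ lying off $R$ lies over a singularity of $\sF_0$, which by hypothesis (ii) lies off $D$, and is therefore reduced. So it remains to control $\tilde\sF$ along $R$.

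I would first extract from (ii) and (iii) the local shape of $D$. Fix $p\in D$. By (ii) the foliation $\sF_0$ is nonsingular at $p$, so by the holomorphic straightening theorem there are local coordinates $(x,y)$ centered at $p$ with $\sF_0$ defined by $\omega_0=dy$, equivalently by $v_0=\partial/\partial x$. If $D$ had multiplicity $m\geq 2$ at $p$, with local equation $f$, then $v_0(f)=f_x$ would have multiplicity $\geq m-1$, whence $\tang(\sF_0,D,p)=I_p(f,f_x)\geq m(m-1)\geq 2$, contradicting (iii); so $D$ is smooth at $p$, and since $p$ was arbitrary, $D$ is smooth. As by (i) the curve $D$ is not $\sF_0$-invariant, at $p$ it is either transverse to $\sF_0$, so $\tang(\sF_0,D,p)=0$, or tangent to the leaf through $p$, in which case (iii) forces the contact to be exactly of first order, namely $D=\{y=\psi(x)\}$ with $\psi(0)=\psi'(0)=0$ and $\psi''(0)\neq0$.

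Next come the two local computations. If $\tang(\sF_0,D,p)=0$, take coordinates with $\sF_0=\{dy=0\}$ and $D=\{x=0\}$; the double cover is $(u,y)\mapsto(u^2,y)=(x,y)$, and $\pi^*\omega_0=dy$ has no zero, so $\tilde\sF$ is nonsingular near the point of $R$ over $p$, which thus contributes no singularity. If $\tang(\sF_0,D,p)=1$, replace $y$ by $y-\psi(x)$, so that $\sF_0=\{\,dy+\psi'(x)\,dx=0\,\}$ and $D=\{y=0\}$; the double cover is then $(t,x)\mapsto(x,t^2)=(x,y)$, and $\pi^*\bigl(dy+\psi'(x)\,dx\bigr)=2t\,dt+\psi'(x)\,dx$. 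Here the two coefficients $2t$ and $\psi'(x)=\psi''(0)x+O(x^2)$ have no common factor (up to units they are $t$ and $x$), so this one-form, without removing any divisorial zero, already defines $\tilde\sF$ near the unique point of $R$ over $p$, where it has an isolated singularity. Its dual vector field $v=\psi'(x)\,\partial_t-2t\,\partial_x$ has linearization with characteristic equation $\lambda^2+2\psi''(0)=0$, hence two nonzero eigenvalues of ratio $-1\notin\mathbb{Q}^{+}$; so this singularity is reduced, indeed non-degenerate.

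Assembling the cases, every singularity of $\tilde\sF$ is either off $R$ and reduced because it dominates a reduced singularity of $\sF_0$, or on $R$ over a transverse point of $D$ (where no singularity occurs), or on $R$ over a simple tangency, reduced by the computation above; hence $\sF=\pi^*\sF_0$ is reduced. The genuinely delicate points are the bookkeeping that (iii) forces $D$ smooth with at worst a simple tangency, and the verification that $\pi^*\omega_0$ acquires no one-dimensional zeros along $R$, so that the normal form above really computes $\tilde\sF$ rather than a modification of it; granting that, the eigenvalue computation is routine.
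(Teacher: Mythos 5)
Your proof is correct and follows essentially the same route as the paper's: analyze the singularities of $\pi^*\sF_0$ by cases (off the ramification $R$, over a singularity of $\sF_0$ disjoint from $D$; on $R$ over a transverse point; on $R$ over a simple tangency), and in the tangency case compute the linearization of the pulled-back local generator to get an eigenvalue ratio $-1$, hence a non-degenerate reduced singularity. The only cosmetic difference is the choice of local normal form: you straighten $\sF_0$ to $dy=0$ and write $D$ as a graph $\{y=\psi(x)\}$, while the paper instead takes $D=\{x=0\}$ and works with a general vector field $\alpha\partial_x+\beta\partial_y$; both yield the same $2\times 2$ off-diagonal linearization and eigenvalue ratio $-1$. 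Your preliminary observation that (iii) forces $D$ to be smooth (via $I_p(f,f_x)\geq m(m-1)$, using that $f$ is squarefree and not $\sF_0$-invariant) is a useful supplement the paper leaves implicit.
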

\begin{proof}
	Let $q$ be a singular point of $\sF$.
	Then either $p=\pi(q)$ is a singular point of $\sF_0$,
	or $p=\pi(q)\in D$ with $\tang(\sF_0,D,p)>0$.
	If $p=\pi(q)$ is a singular point of $\sF_0$, then $p\not\in D$ by the assumption on $D$,
	and hence $\sF$ is reduced at $q$ as $\sF_0$ is reduced at $p$.
	If $p=\pi(q)\in D$ with $\tang(\sF_0,D,p)>0$,
	then $\tang(\sF_0,D,p)=1$ and $\sF_0$ is smooth at $p$ by our assumptions.
	Locally around $p$, we may choose a local coordinate $(x,y)$ such that
	$D=\{x=0\}$, and that $\pi$ is defined by $z^2=x$.
	Let $v_0=\alpha(x,y)\frac{\partial}{\partial x}+\beta(x,y) \frac{\partial}{\partial y}$
	be a local vector field defining $\sF_0$ around $p$.
	Then
	$$1=\tang(\sF_0,D,p)=I_p\big(x,\alpha(x,y)\big).$$
	Hence $\alpha(x,y)=ay+\alpha_2(x,y)$
	such that $a\neq 0$ and that $\alpha_2(x,y)$ consists of monomials of degree at least two.
	Since $\sF_0$ is non-singular at $p$, it follows that $\beta(0,0)\neq 0$.
	Note that $\omega_0=\beta(x,y)dx-\alpha(x,y)dy$ can be viewed as a local one-form defining $\sF_0$ around $p$.
	Hence the foliation $\sF$ is defined around $q$ by the following one-form
	$$\omega=\pi^*\omega_0=2z\beta(z^2,y)dz-\alpha(z^2,y)dy,$$
	or equivalently by the following local field
	$$v=\alpha(z^2,y)\frac{\partial}{\partial z}+2z\beta(z^2,y) \frac{\partial}{\partial y}.$$
	The two eigenvalues of the differential matrix $(Dv)$ at the singularity $q$
	are equal to $\pm\sqrt{2a\beta(0,0)}$.
	Hence the eigenvalue of $\sF$ at $q$ is equal to $-1$, which implies that $\sF$ is reduced at $q$.
\end{proof}

Let's now compute the invariants of $\sF$.
By \autoref{thm-a-1},
$$K_{\sF}=\pi^*(K_{\sF_0}+D/2)=\pi^*\mathcal{O}_{\bbp^2}(d+2).$$
In particular, $K_{\sF}$ is ample with
$$\vol(\sF)=K_{\sF}^2=2(d+2)^2,\qquad p_g(\sF)=h^0(K_{\sF})=h^0(\pi_*K_{\sF})=d^2+4d+6.$$
Thus, $\vol(\sF)=2p_g(\sF)-4$ as required.
Moreover, by Reider's method \cite{rei-88}, one checks easily that the canonical map $\varphi$ is an embedding once $d\geq 1$; namely, $(S,\sF)$ is a canonical foliated surface if $d\geq 1$.

\begin{example}\label{exam-3-4}
	For any integer $d\geq 2$, we construct a canonical foliated surface $(S,\sF)$ of general type with reduced singularities, such that
	\begin{enumerate}[(i)]
		\item the surface $S$ is a $(1,2)$-surface;
		\item the volume $\vol(\sF)=(2d+5)^2$, and the geometric genus $p_g(\sF)=2d^2+9d+13$, and hence
		\begin{equation}\label{eqn-6-9}
		\left\{\begin{aligned}
		\vol(\sF) &= 2p_g(\sF)-4+\frac{\sqrt{8p_g(\sF)-23}-3}{2},\quad\text{or equivalently,}\quad\\
		p_g(\sF)&= \frac{\vol(\sF)-\sqrt{\vol(\sF)}+6}{2}.
		\end{aligned}\right.	
		\end{equation}
	\end{enumerate}
\end{example}

The construction is similar to that of \autoref{exam-3-3}.
At this time, we will first choose a reduced foliation $\sF_0$ on $\bbp^2$ of degree $d\geq 2$ and with two $\sF_0$-invariant lines.
Such a foliation can be explicitly constructed as follows.
Let
$$v=x\big(\alpha+x^d+y^d\big)\frac{\partial}{\partial x}
+y\big(\beta+y^{d-1}+x^d+y^d\big)\frac{\partial}{\partial y},$$
where $\alpha,\beta \in \mathbb{C}$, and $(x,y)$ is an affine coordinate of $\bbp^2$.
Then $v$ defines a foliation $\sF_0$ of degree $d$ on the projective plane.
\begin{claim}\label{claim-3-1}
	Suppose that $\alpha,\beta\in \mathbb{C}\setminus \mathbb{Q}$ are algebraically independent.
	Then the foliation $\sF_0$ is reduced.
\end{claim}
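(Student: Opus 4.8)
\emph{Overview.} The plan is to verify directly that every singularity of $\sF_0$ is reduced in the sense of \autoref{def-2-1}: enumerate all the singular points, check each has invertible linear part, and show no eigenvalue ratio is a positive rational. Write $v=P\partial_x+Q\partial_y$ with $P=x(\alpha+x^{d}+y^{d})$ and $Q=y(\beta+y^{d-1}+x^{d}+y^{d})$; since $x\mid P$ and $y\mid Q$, the lines $\{x=0\}$ and $\{y=0\}$ are $\sF_0$-invariant, which also supplies the two invariant lines required in the construction.

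\emph{Locating and counting the singularities.} First I would solve $P=Q=0$ in the affine chart. One gets four families: the origin $(0,0)$; the $d$ points $(0,y_0)$ with $y_0^{d}+y_0^{d-1}+\beta=0$; the $d$ points $(x_0,0)$ with $x_0^{d}+\alpha=0$; and the $d(d-1)$ interior points $(x_0,y_0)$ with $x_0y_0\neq 0$, $\alpha+x_0^{d}+y_0^{d}=0$ and — subtracting the two equations — $y_0^{d-1}=\alpha-\beta$, so that $x_0^{d}=-\alpha-y_0(\alpha-\beta)$. Because $\alpha,\beta$ are algebraically independent, the one-variable polynomials appearing here have only simple roots and the four families are pairwise disjoint, giving $d^{2}+d+1$ distinct points. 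At each I would compute $Dv$: on the first three families it is diagonal, equal to $\mathrm{diag}(\alpha,\beta)$, to $\mathrm{diag}\!\big(\alpha+y_0^{d},\,(d-1)y_0^{d-1}+dy_0^{d}\big)$ and to $\mathrm{diag}(-d\alpha,\,\beta-\alpha)$; at an interior point it has trace $T=-\alpha-(d-1)\beta$ and determinant $\Delta=d(d-1)x_0^{d}y_0^{d-1}$. All of these are invertible, the only possible obstructions ($\alpha=0$, $\beta=0$, $\alpha=\beta$, $y_0=-(d-1)/d$, $\Delta=0$) each forcing a nontrivial polynomial relation between $\alpha$ and $\beta$. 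Hence all $d^{2}+d+1$ singularities are non-degenerate, so their Milnor numbers sum to $d^{2}+d+1$, the total for a degree-$d$ foliation on $\mathbb{P}^{2}$; therefore these are all of $\mathrm{Sing}(\sF_0)$, and in particular there is nothing on the line at infinity (which could also be checked by hand in the two charts there).

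\emph{The eigenvalue ratios on the coordinate lines.} For the three diagonal families the ratio is visibly not a positive rational, by algebraic independence: $\alpha/\beta\notin\mathbb{Q}$; for $(0,y_0)$ the entry $\alpha+y_0^{d}$ is transcendental over $\mathbb{Q}(\beta)$ while the other entry lies in $\overline{\mathbb{Q}(\beta)}$ (since $y_0\in\overline{\mathbb{Q}(\beta)}$), so their ratio is irrational; and $-d\alpha/(\beta-\alpha)\in\mathbb{Q}$ would force $\beta$ to be a rational multiple of $\alpha$.

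\emph{The interior singularities — the main obstacle.} This is the one delicate step. If the two eigenvalues at an interior point had ratio $q\in\mathbb{Q}^{+}$, then $T^{2}/\Delta=(q+1)^{2}/q\in\mathbb{Q}$, i.e. $\big(\alpha+(d-1)\beta\big)^{2}=c\,d(d-1)(\alpha-\beta)\big(-\alpha-y_0(\alpha-\beta)\big)$ for some $c\in\mathbb{Q}$. I would read this as a relation that is linear in $y_0$: when $d\geq3$ one has $y_0\notin\mathbb{Q}(\alpha,\beta)$, because $y_0^{d-1}=\alpha-\beta$ and $\alpha-\beta$ is not a $(d-1)$-st power in the polynomial ring $\mathbb{Q}[\alpha,\beta]$, so the coefficient of $y_0$ must vanish, which forces $c=0$ and then $\alpha+(d-1)\beta=0$, a contradiction; when $d=2$ one substitutes $y_0=\alpha-\beta$ and compares the polynomial degrees in $(\alpha,\beta)$ on the two sides, getting the same contradiction. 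Thus $T^{2}/\Delta\notin\mathbb{Q}$, so the ratio is not a positive rational. Having checked all four families, every singularity of $\sF_0$ is reduced, hence $\sF_0$ is reduced.
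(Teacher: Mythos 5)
Your argument is correct and follows the same route as the paper: enumerate the $d^{2}+d+1$ affine singularities of $\sF_0$, compute the linear part $Dv$ at each, and use the algebraic independence of $\alpha,\beta$ to rule out positive rational eigenvalue ratios. The genuine point of divergence is how you handle the $d(d-1)$ interior singularities, where the eigenvalues have no simple closed form. The paper writes them out via the square root of the discriminant $T^2-4\Delta$ and then simply asserts the ratio is irrational; you instead pass to the rational invariant $T^{2}/\Delta=(q+1)^{2}/q$, which converts a hypothetical rational ratio into a polynomial identity over $\mathbb{Q}(\alpha,\beta)$, and you kill that identity by reading it as a relation linear in $y_0$ (for $d\geq 3$, using $[\mathbb{Q}(\alpha,\beta,y_0):\mathbb{Q}(\alpha,\beta)]=d-1\geq 2$) or by a degree comparison (for $d=2$). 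This avoids manipulating the radical and makes the algebraic-independence step explicit exactly where the paper only gestures at it, at the cost of a small extra case split. You also rule out singularities at infinity by the Milnor-number count, which the paper states without comment, and your Jacobian entries at $(0,y_0)$, namely $\mathrm{diag}\big(\alpha+y_0^{d},\,dy_0^{d}+(d-1)y_0^{d-1}\big)$, in fact correct what appears to be a typo in the paper's displayed $\lambda_2$ there.
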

\begin{proof}[Proof of \autoref{claim-3-1}]
	Since the degree of $\sF_0$ is $d$, the number of singularities (counted by multiplicity) is $d^2+d+1$,
	cf. \cite[Example\,2.3.2]{bru-04}.
	In fact, the singularities of $\sF_0$ are all on the affine plane satisfying
	\begin{equation}\label{eqn-3-2}
		\left\{\begin{aligned}
			&x\big(\alpha+x^d+y^d\big)=0;\\
			&y\big(\beta+y^{d-1}+x^d+y^d\big)=0.
		\end{aligned}\right.
	\end{equation}
	The differential matrix of the vector field $v$ is
	\begin{equation}\label{eqn-3-3}
		Dv=\left(\begin{aligned}
			(d+1)x^d+y^d+\alpha \quad &\quad\qquad\qquad dxy^{d-1} \\
			dx^{d-1}y \qquad\qquad &\quad (d+1)y^d+dy^{d-1}+x^d+\beta
		\end{aligned}\right).
	\end{equation}
	Let $p_0=(x_0,y_0)$ be any singularity of $\sF_0$,
	and $\lambda_1$ and $\lambda_2$ be the two eigenvalues of the differential matrix $(Dv)(p_0)$ at $p_0$.
	To show that $\sF_0$ is reduced at $p_0$, it suffices to show that
	the eigenvalue $\lambda_{p_0}=\frac{\lambda_1}{\lambda_2}$
	\big(up to the exchange $\frac{\lambda_1}{\lambda_2} \leftrightarrow \frac{\lambda_2}{\lambda_1}$\big)
	of $\sF_0$ at $p_0$ is not a positive rational number
	if $\alpha,\beta\in \mathbb{C}\setminus \mathbb{Q}$ are algebraically independent.
	Let compute the eigenvalue $\lambda_{p_0}$ in the following.
	\begin{enumerate}[(i)]
	\item If $x_0=0$ and $y_0=0$, i.e., $p_0=(0,0)$, then $\lambda_{p_0}=\frac{\alpha}{\beta}$ by \eqref{eqn-3-3}.
	
	\item If $y_0=0$ and $x_0\neq 0$, then $x_0^d=-\alpha$ by \eqref{eqn-3-2}.
	According to \eqref{eqn-3-3}, $\lambda_1=-d\alpha$ and $\lambda_2=\beta-\alpha$,
	and hence $\lambda_{p_0}=\frac{d\alpha}{\alpha-\beta}$.
	
	\item If $x_0=0$ and $y_0\neq 0$, then $\beta+y_0^{d-1}+y_0^d=0$ by \eqref{eqn-3-2}.
	According to \eqref{eqn-3-3},
	$$\lambda_1=y_0^d+\alpha,\qquad\lambda_2=(d+1)y_0^d+dy_0^d+\beta=-y_0^d-d\beta.$$
	Hence
	$$\lambda_{p_0}=-\frac{(d+1)y_0^d+\alpha}{y_0^d+d\beta}.$$
	
	\item Finally, if $x_0\neq 0$ and $y_0\neq 0$, then by \eqref{eqn-3-2},
		\begin{equation*} 
			y_0^{d-1}=\alpha-\beta,\quad\text{and}\quad x_0^d+y_0^d+\alpha=0.
	\end{equation*}
	According to \eqref{eqn-3-3},
	$$(Dv)(p_0)=\left(\begin{aligned}
		dx_0^d \qquad &\quad\qquad dx_0y_0^{d-1} \\
		dx_0^{d-1}y_0 \quad &\quad dy_0^d+(d-1)y_0^{d-1}
	\end{aligned}\right).$$
	Hence
	$$\lambda_i=\frac{-\big(d\alpha-(d-1)y_0^{d-1}\big)\pm\sqrt{\big((2d-1)\alpha-(d-1)\beta\big)^2+4d(d-1)(\alpha-\beta)^2y_0}}{2}.$$
	Thus the eigenvalue of $\sF_0$ at $p_0$ is
	$$\lambda_{p_0}=-1+\frac{\big(\alpha+(d-1)\beta\big)\,\sqrt{\big((2d-1)\alpha-(d-1)\beta\big)^2+4d(d-1)(\alpha-\beta)^2y_0}}{2d(d-1)(\alpha-\beta)\big(\alpha+(\alpha-\beta)y_0\big)}.$$
    \end{enumerate}

	Since $\alpha,\beta\in \mathbb{C}\setminus \mathbb{Q}$ are algebraically independent,
	one sees that the eigenvalue of $\sF_0$ at each singularity $p_0$ is  $\lambda_{p_0}\notin \mathbb{Q}$.
	In particular, $p_0$ is non-denegerate reduced for any singularity $p_0$ of $\sF_0$,
	and hence $\sF_0$ is reduced as required.
\end{proof}

\begin{claim}\label{claim-3-2}
	Let $\alpha,\beta\in \mathbb{C}\setminus \mathbb{Q}$ such that $\sF_0$ is reduced as in \autoref{claim-3-1}.
	Then there exists a smooth curve $D$ with $\deg(D)=5$, such that
	\begin{enumerate}[$(i)$]
		\item $D$ is not $\sF_0$-invariant;
		\item $D$ does not contain any singular point of $\sF_0$;
		\item $\tang(\sF_0,D,p)\leq 1$ for any $p\in D$;
		\item $D$ intersects both $L_{0}:=\{x=0\}$ and $L_{\infty}:=\{y=0\}$ transversely.
	\end{enumerate}
\end{claim}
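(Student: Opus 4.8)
The plan is to prove \autoref{claim-3-2} by a Bertini-type dimension count, entirely parallel to the proof of \autoref{claim-6-1}, with one extra ``bad locus'' added to handle the new transversality condition~(iv). Let $V=H^0(\bbp^2,\calo_{\bbp^2}(5))$ be the space of homogeneous quintics, put $U=\bbp^2\setminus\mathrm{Sing}(\sF_0)$, and inside $V$ consider: $V_1$, the forms $F$ with $\div(F)$ singular; $V_2$, those with $\div(F)$ being $\sF_0$-invariant; $V_3$, those with $\div(F)\cap\mathrm{Sing}(\sF_0)\neq\emptyset$; $V_4$, those for which $\tang(\sF_0,\div(F),p)\geq 2$ for some $p\in\div(F)\cap U$; and $V_5$, those for which $\div(F)$ fails to meet $L_0$ or $L_\infty$ transversely. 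If each $V_i$ has codimension at least one in $V$, then $V\setminus(V_1\cup\cdots\cup V_5)$ is a nonempty Zariski-open set, and for any $F$ in it the quintic $D=\div(F)$ satisfies (i)--(iii) exactly as in \autoref{claim-6-1}, while (iv) is precisely the statement that $F\notin V_5$.

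As in the proof of \autoref{claim-6-1}, the loci $V_1$, $V_2$, $V_3$ are visibly proper closed subsets: $V_1$ by Bertini applied to the very ample system $|\calo_{\bbp^2}(5)|$; $V_3$ because $\mathrm{Sing}(\sF_0)$ is finite and passing through a fixed point is one linear condition; and $V_2$ because $\sF_0$-invariance of $\div(F)$ amounts to the closed condition $F\mid v(F)$, which is not satisfied by a generic quintic. For $V_5$, restriction induces surjective linear maps $V\to H^0(L_0,\calo(5))$ and $V\to H^0(L_\infty,\calo(5))$ onto the spaces of binary quintics, and the locus of binary quintics with a repeated root is the discriminant hypersurface; hence $V_5$ is contained in the union of the two preimage hypersurfaces. (The possibility $D\supseteq L_0$ or $D\supseteq L_\infty$ is excluded by degree and in any case lies in this locus, the restricted form being identically zero.)

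The substantive point is that $V_4$ has codimension at least one, which I would handle by the incidence variety used for \autoref{claim-6-1}. Let
\[
W=\{(F,p)\in V\times U\ :\ F(p)=0,\ \tang(\sF_0,\div(F),p)\geq 2\},
\]
with projections $\mathrm{pr}_1:W\to V$ and $\mathrm{pr}_2:W\to U$, so that $\mathrm{pr}_1(W)=V_4$. Fix $p\in U$; since $p$ is a smooth point of $\sF_0$, choose holomorphic local coordinates centered at $p$ in which the leaf of $\sF_0$ through $p$ equals $\{w=0\}$. For $F$ with $F(p)=0$, one then has $\tang(\sF_0,\div(F),p)\geq 1$ precisely when $\div(F)$ is tangent to $\{w=0\}$ at $p$ (one first-order condition on $F$ at $p$), and $\tang\geq 2$ precisely when in addition the corresponding second-order coefficient vanishes (one further condition). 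Thus $\mathrm{pr}_2^{-1}(p)$ is cut out in $V$ by three conditions on the $2$-jet of $F$ at $p$; since $5\geq 2$, degree-$5$ forms surject onto $2$-jets at every point of $\bbp^2$, so these conditions are independent and $\mathrm{pr}_2^{-1}(p)$ has codimension exactly $3$, for every $p\in U$. Hence $\dim W\leq\dim U+(\dim V-3)=\dim V-1$, so $V_4=\mathrm{pr}_1(W)$ has codimension at least $1$, and the argument is complete.

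The one place that needs care is this uniform codimension-$3$ estimate over all of $U$: it uses that $\sF_0$ is nonsingular along $U$, so that coordinates adapted to the foliation exist, together with the fact that the quintic linear system separates $2$-jets. Everything else is routine, and the same scheme would work for a branch curve of any degree $\geq 2$.
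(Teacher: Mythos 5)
Your proof takes essentially the same approach as the paper: the paper simply says that (i)--(iii) hold for a general quintic by the proof of \autoref{claim-6-1}, and that (iv) can be handled similarly because the locus of quintics failing transversality against the two fixed lines is a proper closed subset, explicitly ``leaving the details to the interested readers.'' You have filled in those details correctly, in particular the discriminant argument for $V_5$ via the surjections $V\twoheadrightarrow H^0(L_0,\calo(5))$ and $V\twoheadrightarrow H^0(L_\infty,\calo(5))$, and the repetition of the incidence-variety estimate for $V_4$ with the jet-surjectivity justification made explicit.
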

\begin{proof}[Proof of \autoref{claim-3-2}]
	By \autoref{claim-6-1} and its proof, a general curve $D$ with $\deg(D)=5$ satisfies the first three conditions above.
	For the last one, we can prove similarly;
	namely, the subsets of all homogeneous polynomials of degree $5$ on $\bbp^2$,
	such that the corresponding curves do not satisfy (iv) will be a proper closed subsets.
	Hence a general curve $D$ with $\deg(D)=5$ will satisfy all the four conditions above.
	We leave the details to the interested readers.
\end{proof}

Come back to the construction of \autoref{exam-3-4}.
Let $\alpha,\beta\in \mathbb{C}\setminus \mathbb{Q}$ such that $\sF_0$ is reduced as in \autoref{claim-3-1},
and $D$ be a smooth curve as in \autoref{claim-3-2}.
Let $\sigma:\,Y_1 \to \bbp^1$ be the blowing-up centered at $p=(0,0)$ with exceptional curve $C_1$,
and $h_1:\,Y_1 \to \bbp^1$ be the induced ruling.
Denote by $F_0$ and $F_{\infty}$ the strict transforms of $L_0=\{x=0\}$ and $L_{\infty}=\{y=0\}$ in $Y_1$ respectively.
Let $\pi:\,\bbp^1 \to \bbp^1$ be the double cover branched over the two points $h(F_0)$ and $h(F_{\infty})$,
and denote by $\Pi:\,Y_2=Y_1\times_{\bbp^1}{\bbp^1} \to Y_1$ the induced double cover branched over $F_0$ and $F_{\infty}$.
Then $Y_2\cong \mathbb{P}_{\mathbb{P}^1}\big(\mathcal{O}_{\mathbb{P}^1} \oplus \mathcal{O}_{\mathbb{P}^1}(-2)\big)$ is a Hirzebruch surface, and $C_2^2=-2$, where $C_2=\Pi^{-1}(C_1)$.
Let $D_2=\big(\sigma\circ\Pi\big)^{-1}(D)$, where $D$ is the smooth curve as in \autoref{claim-3-2}.
Then we have the following linear equivalence relation.
$$C_2+D_2\sim 2\big(3C_2+5F\big),$$
where $F$ is a general fiber of the ruling $h_2:\,Y_2 \to \bbp^1$.
Hence one can construct a double cover $\Psi:\, \wt S \to Y_2$ branched exactly over $C_2+D_2$.
Since $C_2+D_2$ is a smooth divisor,
$\wt S$ is smooth and the strict transform $E=\Psi_*^{-1}(C_2)$ is an exceptional curve.
Let $\rho:\,\wt S \to S$ be the blowing-down of this exceptional curve.
$$\xymatrix{\wt S \ar[d]^-{\Psi} \ar@/_6mm/"3,1"_-{f} \ar[rr]^-{\rho} && S\\
Y_2 \ar[d]^-{h_2}\ar[rr]^-{\Pi} && Y_1 \ar[d]^-{h_1}\ar[rr]^-{\sigma} && \bbp^2 \\
\bbp^1 \ar[rr]^-{\pi} && \bbp^1}$$

\noindent
By \cite[\S\,2]{hor-76-2}, one checks that $S$ is minimal with $\vol(S)=1$ and $p_g(\sF)=2$,
i.e., $S$ is a $(1,2)$-surface.
Let $$\sF_1=\sigma^*\sF_0,\qquad \sF_2=\Pi^*\sF_1,\qquad \wt \sF=\Psi^*\sF_2,\qquad \sF=\rho_*\wt \sF.$$
It is clear that $\sF_1$ is reduced, since $\sF_0$ is reduced by \autoref{claim-3-1}.
As $F_{0}$ and $F_{\infty}$ is $\sF_1$-invariant, and the eigenvalues of $\sF_0$
at all its singularities belong to $\mathbb{C}\setminus \mathbb{Q}$,
it follows that $\sF_2$ is reduced as well.
Finally, by our choice of $D$ in \autoref{claim-3-2},
its inverse image $D_2$ in $Y_2$ together with the foliation $\sF_2$ satisfies the assumptions of \autoref{claim-6-2}.
Hence the foliation $\wt \sF$ is also reduced.
By the Riemann-Roch theorem, one computes that
$$\left\{\begin{aligned}
	K_{\sF_2}&\,=\Pi^*K_{\sF_1}=\Pi^*\big(\sigma^*K_{\sF_0}\big)\sim dC_2+2dF;\\
	K_{\wt\sF}&\,=\Psi^*K_{\sF_2}+\wt D_2 \sim (2d+5)\big(E+\wt F\big),
\end{aligned}\right.$$
where $\wt F\subseteq \wt S$ (resp. $\wt D_2\subseteq \wt S$, resp. $E\subseteq \wt S$)
the strict transform of a general fiber $F$ of $h_2$ (resp. $D_2$, resp. $C_2$).
It is clear that  $K_{\wt \sF}$ is nef, and hence
$$\vol(\wt \sF)=(2d+5)^2(E+\wt F)^2=(2d+5)^2.$$
Moreover,
$$K_{\wt S}=\Psi^*K_{Y_2}+E+\wt D_2\sim 2E+\wt F.$$
Hence $K_{\wt\sF}\sim K_{\wt S}+(2d+3)E+(2d+4)\wt F$.
By the Mumford vanishing theorem,
$$p_g(\wt \sF)=h^0\big(\wt S, K_{\wt\sF}\big)=\chi\big(\wt S, K_{\wt\sF}\big)=2d^2+9d+13.$$
Moreover, by Reider's method \cite{rei-88}, one checks easily that the canonical map $\varphi_{|K_{\wt \sF}|}$ is brational.
Finally, one checks easily that $K_{\wt \sF}\cdot E=0$ and $Z(\wt \sF, E)=2$.
Moreover, there are two singularities of $\wt\sF$ on $E$, which are the intersection points $E\cap \wt F_0$ and $\wt F_{\infty}$,
where $\wt F_0$ (resp. $\wt F_{\infty}$) is the inverse image of $F_0$ (resp. $F_{\infty}$).
Hence $E$ is an $\wt \sF$-exceptional curve, cf. \cite[\S\,5]{bru-04}.
In other words, $\rho(E)$ is a reduced singularity of $\sF$, and hence $\sF$ is reduced.
Moreover,
$$\vol(\sF)=\vol(\wt\sF),\qquad p_g(\sF)=p_g(\wt\sF).$$
This compete the construction.

\section{The canonical map induces a fibration}\label{sec-fibration}
In this section, we will always assume that
the image $\Sigma=\varphi(S)$ under the canonical map $\varphi$ is of dimension one,
and aim to prove \autoref{thm-1-3} and \autoref{thm-1-4}.
By replacing $(S,\sF)$ by a suitable reduced model, we may assume that
$\sF$ is reduced, and the canonical map $\varphi=\varphi_{|K_{\sF}|}$
is defined by the complete linear system $|K_{\sF}|$.
In this case, by the Stein factorization, we obtain a diagram as follows.
$$\xymatrix{&& \wt{S} \ar[dll]_-{f} \ar[d]^-{\phi} \ar[rrr]^-{\sigma}  &&& S \ar@{-->}[d]^-{\varphi=\varphi_{|K_{\sF}|}}\\
	B \ar[rr]^-{\pi} && Y \ar[rrr]^-{\rho}_-{\text{desingularization}} &&&\Sigma\, \ar@{^(->}[r] & \bbp^{p_g(\sF)-1}}$$
	
\noindent
Here $\pi:\, B \to Y$ is finite, and $f:\,\wt{S} \to B$ is a family of curves with connected fibers.
By construction, the map 
$$\rho\circ\phi:\,\wt{S} \lra \Sigma \hookrightarrow \bbp^N,$$
is defined by the complete linear system $|\wt{M}|$,
where $|\wt{M}|$ is obtained by blowing-up the base points of the moving part $|M|$ in \eqref{eqn-2-22}.
Since $|\wt{M}|$ is base-point-free and induces a fibration $f:\,\wt{S} \to B$,
it follows that
$$p_g(\sF)=h^0(S,M)=h^0(\wt{S}, \wt{M})=h^0\big(B,f_*\mathcal{O}_{\wt{S}}(\wt M)\big).$$
According to the Riemann-Roch theorem,
$$\deg(L)\geq p_g(\sF)-1,$$
where $L=f_*\mathcal{O}_{\wt{S}}(\wt M)$ is a line bundle on $B$.
Note that $\wt{M}=f^*(L)$.
Hence, we have numerically,
\begin{equation}\label{eqn-4-0}
\wt{M} \sim_{num} \deg(L)\,F, \qquad \text{with~}\deg(L) \geq p_g(\sF)-1,
\end{equation}
where $F$ is a general fiber of $f$.
There are two foliations on the surface $\wt{S}$:
the first is $\wt\sF=\sigma^*\sF$ by pulling-back the foliation $\sF$ by the birational morphism $\sigma$;
the other one is $\mathcal{G}$ defined by taking the saturation of $\ker(\df:\,T_{\wt S} \to f^*T_{B})$ in $T_{\wt S}$.
They can be the same or different from each other.

The organization of this section is as follows.
In \autoref{sec-foliation-same}, we consider the case when the foliation $\wt\sF=\sigma^*\sF$ is the same as the foliation $\mathcal{G}$ defined by taking the saturation of $\ker(\df:\,T_{\wt S} \to f^*T_{B})$ in $T_{\wt S}$, and prove \autoref{thm-1-3}.
In \autoref{sec-foliation-different}, we consider the case when the foliation $\wt\sF=\sigma^*\sF$ is different from the foliation $\mathcal{G}$, and prove \autoref{thm-1-4}.
In \autoref{sec-exam-4}, we will construct two examples of foliations
whose canonical maps induce fibrations.

\subsection{Foliation defined by the canonical fibration}\label{sec-foliation-same}
In this subsection, we consider the case when the foliation $\wt\sF=\sigma^*\sF$ is the same as the foliation defined by taking the saturation of $\ker(\df:\,T_{\wt S} \to f^*T_{B})$ in $T_{\wt S}$.
We will prove \autoref{thm-1-3}.
\begin{proof}[{Proof of \autoref{thm-1-3}}]
	Without loss of generality, we may assume that the original foliation $\sF$ is relatively minimal.
	Since the pulling-back foliation $\wt\sF=\sigma^*\sF$ is the same as the foliation defined by taking the saturation of $\ker(\df:\,T_{\wt S} \to f^*T_{B})$ in $T_{\wt S}$, it follows that $\sigma$ is the identity, i.e., $|M|$ has no base point; otherwise, the last exceptional curve $E$ of $\sigma$ maps surjectively onto the base $B$, and hence is not $\wt\sF$-invariant.
	This implies that the image $\sigma(E)$ would be a dicritical singularity of $\sF$ (cf. \cite[Proposition\,1.1]{bru-04}), which contradicts the assumption that $\sF$ is reduced.
	Hence $\sigma:\,\wt S \overset{=}{\lra} S$,
	$\wt \sF=\sF$ and
	$$K_{\wt \sF}=K_{\sF}=K_{S/B}-\sum (a_i-1)C_i,$$
	where the sum is taken over all irreducible components $C_i$'s in fibers of $f$,
	and $a_i$ is the multiplicity of $C_i$ in its fiber.
	By \eqref{eqn-2-22} and \eqref{eqn-4-0}, we may also write
	\begin{equation}\label{eqn-41-2}
		K_{\sF}=M+Z,\qquad \text{with $M=f^*(L)$ satisfying $\deg(L)\geq p_g(\sF)-1$}.
	\end{equation}
	In particular,
	$$d=K_{\sF}\cdot F=K_{S/B}\cdot F=Z\cdot F=K_{S/B}F=2g(F)-2\geq 2.$$
	It remains to prove \eqref{eqn-1-6}.
	
	Let $\epsilon:\,S \to X$ be the contraction of all components of the negative part $N$ in the Zariski decomposition of $K_{\sF}$, and let $\ol K=\epsilon_*K_{\sF}$.
	Since $\sF$ is induced by the fibration $f$, all the components in $N$ are contained in fibers of $f$.
	Hence there is an induced fibration $\bar f:\,X \to B$.
	By \eqref{eqn-2-22},
	$$\ol K=\epsilon_*K_{\sF}=\epsilon_*(M)+\epsilon_*(Z)=\epsilon_*(M)+\sum_{j=1}^{m}a_j\ol D_j+ \ol D_v,$$
	where $\ol D_v$ consists of curves in fibers of the induced fibration $\bar f:\,X \to B$, and $\ol D_j$ maps surjectively onto $B$ for $1\leq j \leq m$.
	By \eqref{eqn-41-2},
	$$\epsilon_*(M)=\bar f^*(L), \quad\text{with}\quad \deg(L)\geq p_g(\sF)-1.$$
	Since $\ol D_{j_0}$ maps surjectively onto $B$, for any $1\leq j_0 \leq m$,
	it follows that the inverse image $\epsilon_*^{-1}\big(\ol D_{j_0}\big)$ is not $\sF$-invariant.
	Thus by \cite[proposition\,2.13]{cf-18} (see also \cite[Lemma III.1.1]{mcq-08}),
	$$0\leq (\ol K+\ol D_{j_0}) \cdot \ol D_{j_0}=(a_{j_0}+1)\ol D_{j_0}^2+\Big(\bar f^*(L)+\sum_{j\neq j_0}a_j \ol D_j+ \ol D_v\Big)\cdot \ol D_{j_0}.$$
	Hence
	$$\begin{aligned}
		\ol K \cdot \ol D_{j_0}\,&=a_{j_0} \ol D_{j_0}^2+\Big(\bar f^*(L)+\sum_{j\neq j_0}a_j \ol D_j+ \ol D_v\Big) \cdot \ol D_{j_0}\\
		&\geq -\frac{a_{j_0}}{a_{j_0}+1}\Big(\bar f^*(L)+\sum_{j\neq j_0}a_j \ol D_j+ \ol D_v\Big) \cdot \ol D_{j_0}+\Big(\bar f^*(L)+\sum_{j\neq j_0}a_j \ol D_j+ \ol D_v\Big) \cdot \ol D_{j_0}\\
		&= \frac{1}{a_{j_0}+1}\Big(\bar f^*(L)+\sum_{j\neq j_0}a_j \ol D_j+ \ol D_v\Big)\cdot \ol D_{j_0} \\
		&\geq \frac{\bar f^*(L)\cdot \ol D_{j_0}}{a_{j_0}+1}\geq \frac{\ol D_{j_0} \cdot F}{a_{j_0}+1}\,\big(p_g(\sF)-1\big).
	\end{aligned}$$
	By the Negativity Lemma, $P=\epsilon^*(\ol K)$, where $P$ is the positive part in the Zariski decomposition of $K_{\sF}$ as in \eqref{eqn-zariski}.
	Therefore,
	$$\begin{aligned}
		\vol(\sF)\,&=P^2=\ol K^2=\ol K\cdot \Big(\bar f^*(L)+\sum_{j=1}^{m}a_j \ol D_j+ \ol D_v\Big)\\
		&\geq \ol K \cdot \bar f^*(L)+ \sum_{j=1}^{m} a_j\ol K\cdot \ol D_j\\
		&\geq d\big(p_g(\sF)-1\big)+\sum_{j=1}^{m}\frac{a_j\ol D_{j}\cdot F}{a_{j}+1}\,\big(p_g(\sF)-1\big)\\
		&\geq d\big(p_g(\sF)-1\big)+\frac{d}{d+1}\big(p_g(\sF)-1\big),\qquad\text{since~}\sum_{j=1}^{m}a_j\ol D_{j}\cdot F=d,\\
		&=\frac{d(d+1)}{d+1}\big(p_g(\sF)-1\big).
	\end{aligned}$$
	This proves \eqref{eqn-1-6}.
\end{proof}

\subsection{Foliation different from the canonical fibration}\label{sec-foliation-different}
In this subsection, we consider the case when the foliation $\wt\sF=\sigma^*\sF$ is different from the foliation defined by taking the saturation of $\ker(\df:\,T_{\wt S} \to f^*T_{B})$ in $T_{\wt S}$.
We will first do some preparations.
The proof of \autoref{thm-1-4} will be given after \autoref{prop-4-4}.

As mentioned at the beginning of this section, we may assume that $\sF$ is reduced.
Let $K_{\sF}$ be its canonical divisor, and suppose that
the image $\varphi(S)$ is of one-dimension, where
$$\varphi=\varphi_{|K_{\sF}|}:\,S \dashrightarrow \Sigma \subseteq \mathbb{P}^{p_g(\sF)-1},$$ is the rational map defined by $|K_{\sF}|$.
Recall the following diagram introduced at the beginning of this section.
$$\xymatrix{&& \wt{S} \ar[dll]_-{f} \ar[d]^-{\phi} \ar[rrr]^-{\sigma}  &&& S \ar@{-->}[d]^-{\varphi}\\
	B \ar[rr]^-{\pi} && Y \ar[rrr]^-{\rho}_-{\text{desingularization}} &&&\Sigma \, \ar@{^(->}[r] & \bbp^{p_g(\sF)-1}}$$
	
\noindent
The foliation $\sF$ lifts to a foliation $\wt{\sF}$ on $\wt{S}$, which is also reduced.
Since $\sF$ is reduced,
$$K_{\wt{\sF}}=\sigma^*K_{\sF} +\mathcal{E},$$
where $\mathcal{E}$ is some effective divisor supported on the exceptional curves of the birational morphism $\sigma$.
Moreover,
$$|K_{\wt{\sF}}|=\sigma^*|K_{\sF}|+\mathcal{E}.$$
It follows that the map $\varphi_{|K_{\wt{\sF}}|}$ factors through $\varphi$,
and hence it induces the same fibration $f:\,\wt{S} \to B$.
By replacing $(S,\sF)$ by $(\wt{S},\wt{\sF})$, we may assume that
the moving part of the linear system $|K_{\sF}|$ is base-point-free,
so that the canonical map $\varphi$ induces a fibration $f:\, S \to B$.
$$\xymatrix{ && S \ar[d]^-{\phi} \ar[lld]_-{f} \ar[rrrd]^-{\varphi=\varphi_{|K_{\sF}|}} &&& \\
	B \ar[rr]^-{\pi} &&Y \ar[rrr]^-{\rho}_-{\text{desingularization}} &&&\Sigma\, \ar@{^(->}[r] & \bbp^{p_g(\sF)-1}}$$
	
\noindent
After such a replacement,
the foliation $\sF$ is still reduced of general type with the same volume and geometric genus,
but not necessarily relatively minimal any more.	
Let $F$ be a general fiber of $f$.
As the foliation $\sF$ is assumed to be of general type,
it follows that
\begin{equation}\label{eqn-41-1}
	d=K_{\sF} \cdot F \geq 1.
\end{equation}

\begin{proposition}\label{prop-4-2}
	Let $\sF$ be a foliation of general type on $S$ whose canonical map induces a fibration $f:\, S \to B$ as above.
	Suppose that $\sF$ is different from the foliation $\mathcal{G}$ defined by taking the saturation of $\ker(\df:\,T_{S} \to f^*T_{B})$ in $T_{S}$.
	Then the canonical divisor $K_{\sF}$ has the following form
	\begin{equation}\label{eqn-4-5}
		K_{\sF} = f^*K_B+\Delta,
	\end{equation}
	where $\Delta$ is effective.
	Moreover, for any irreducible curve $C\subseteq \Delta$ mapping surjectively onto the base $B$,
	it is not $\sF$-invariant.
\end{proposition}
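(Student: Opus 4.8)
The plan is to produce $\Delta$ as the divisor of zeros of the natural composite
$$T_{\sF}\hookrightarrow T_S \xrightarrow{\ \df\ } f^*T_B,$$
and then read off the second assertion from a pointwise tangency computation. First I would check that this composite is not identically zero. If it were, then $T_{\sF}$ would be contained in $\ker\big(\df\colon T_S\to f^*T_B\big)$, hence in its saturation $T_{\mathcal{G}}$ inside $T_S$. Since $T_{\sF}$ and $T_{\mathcal{G}}$ are both rank-one invertible subsheaves of $T_S$, the quotient $T_{\mathcal{G}}/T_{\sF}$ is torsion; but it injects into $T_S/T_{\sF}$, which is torsion free, so $T_{\mathcal{G}}/T_{\sF}=0$, i.e. $\sF=\mathcal{G}$, contradicting the hypothesis. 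In particular the general fibre $F$ of $f$ is not $\sF$-invariant.

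Next, since $B$ is a smooth curve, $T_B\cong\calo_B(-K_B)$ is invertible, so $f^*T_B\cong\calo_S(-f^*K_B)$; and $T_{\sF}=\calo_S(-K_{\sF})$ by the definition $K_{\sF}=T_{\sF}^{*}$. Thus the nonzero map above is a nonzero map $\calo_S(-K_{\sF})\to\calo_S(-f^*K_B)$, i.e. a nonzero section of $\calo_S\big(K_{\sF}-f^*K_B\big)$. Letting $\Delta$ be its effective divisor of zeros gives exactly $K_{\sF}=f^*K_B+\Delta$ with $\Delta\geq0$, which is \eqref{eqn-4-5}. (This step is essentially formal.) Note that by construction the support of $\Delta$ is precisely the locus where $T_{\sF}$ lands in $\ker(\df)$, i.e. where $\sF$ is tangent to the fibres of $f$.

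For the last statement, let $C\subseteq\Delta$ be an irreducible component dominating $B$, and suppose for contradiction that $C$ is $\sF$-invariant. Choose a general point $p\in C$, which we may assume avoids $\mathrm{Sing}(\sF)$, the singular points of $C$, and the critical points of $f$, and at which $f|_C\colon C\to B$ is étale (possible since $f|_C$ is a dominant morphism of smooth curves over $\Cbb$). Then $T_{\sF}|_p$ is the tangent line of the leaf of $\sF$ through $p$; since $C\subseteq\Delta$ and $p$ is not a critical point of $f$, we have $T_{\sF}|_p=\ker(\df_p)$, the tangent line of the fibre $f^{-1}(f(p))$. On the other hand, $\sF$-invariance of $C$ forces $T_pC=T_{\sF}|_p$. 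Combining, $T_pC=\ker(\df_p)$, so $\df_p$ vanishes on $T_pC$, contradicting the étaleness of $f|_C$ at $p$. Hence no component of $\Delta$ dominating $B$ can be $\sF$-invariant.

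The only real content is the non-vanishing of $\df|_{T_{\sF}}$, and this I expect to be the main (though short) point; everything else is bookkeeping about choosing $p$ general. Should one prefer a computational proof of the last step, one can argue locally: near a general $p\in C$ pick coordinates $(x,y)$ with $f\colon(x,y)\mapsto x$, write $\sF$ near $p$ as generated by a vector field $v=a\,\partial_x+b\,\partial_y$ with isolated zeros; then $\Delta=\{a=0\}$ locally, and if $C=\{g=0\}$ is $\sF$-invariant with $g_y(p)\neq0$ (which holds because $C$ dominates $B$), then $g\mid v(g)=a g_x+b g_y$ together with $g\mid a$ forces $g\mid b g_y$, hence $g\mid b$, so $v$ vanishes along $C$, contradicting that $v$ has isolated zeros.
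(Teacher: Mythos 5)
Your proof is correct and takes essentially the same route as the paper's: the composite $T_{\sF}\hookrightarrow T_S\xrightarrow{\,\df\,}f^*T_B$ is exactly the paper's contraction $\alpha=(v,f^*\tilde\omega)$ of the defining vector field with the pulled-back twisted one-form, and your local computation at a general point of $C$ (that $g\mid a$ and $g\mid v(g)=ag_x+bg_y$ with $g_y\neq0$ force $g\mid b$, so $v$ vanishes along $C$) is precisely the paper's ``$x\mid h$ and $x\mid g$ force $x\mid v$'' argument. Your sheaf-theoretic framing of the first step and the alternative tangent-space reformulation of the second are clean restatements of the same ideas, not a different strategy.
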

\begin{proof}
	Let $\omega_i$ be any local holomorphic one-form on the base curve $B$.
	Since $\sF$ is different from the foliation $\mathcal{G}$ defined by taking the saturation of $\ker(\df:\,T_{S} \to f^*T_{B})$ in $T_{S}$,
	the contraction of $f^*(\omega_i)$ with the local field $v_i$ defining $\sF$ gives a non-zero local section of $K_{\sF}$.
	Globally, it is just the following contraction map
	\begin{equation*}
		\begin{aligned}
			H^0\big(S, T_{S}(T_{\sF}^*)\big) &\quad\otimes&& f^*H^0(B,\Omega^1_B) &\lra\quad &  H^0(S,K_{\sF})\\
			\big(v, &&& f^*\omega\big) &\mapsto\,\quad & (v,f^*\omega),
		\end{aligned}\qquad
	\end{equation*}
	where $v=\{(U_i,v_i)\}$ is a family of local vector fields defining $\sF$, which can be viewed as a section of $H^0\big(S, T_{S}(T_{\sF}^*)\big)$ by \eqref{eqn-2-8},
	and $\omega=\{(V_i,\omega_i)\} \in H^0(B,\Omega^1_B)$ with $f(U_i) \subseteq V_i$.
	Remark that the above contraction map also makes sense if $\omega$ is a rational one-form on $B$
	(in this case, the image $(v,f^*\omega)$ would be a rational section of $K_{\sF}$ in general).
	To be concrete,
	let $t_i$ be a local coordinate of $B$ on $V_i$.
	Then $\{(V_i,dt_i)\}$ defines a holomorphic section of $\tilde \omega\in H^0(B,\Omega^1_B(-K_B))$, i.e., $\tilde \omega$ is a twist one-form on $B$.
	Hence
	\begin{equation}\label{eqn-4-7}
		\alpha:=(v,f^*\tilde\omega)=\big\{\big(U_i,\, (v_i,f^*dt_i)\big)\big\} \in H^0(S,K_{\sF}\otimes f^*(-K_B)).
	\end{equation}
	In other words,
	\begin{equation*}
		K_{\sF} - f^*K_B = \div(\alpha).
	\end{equation*}
	By construction, $\alpha$ is locally holomorphic, and hence $\Delta=\div(\alpha)$ is effective.
	
	Let $C\subseteq \Delta$ be any irreducible curve mapping surjectively onto the base $B$.
	Our aim is to prove that $C$ is not $\sF$-invariant.
	Suppose on the contrary that $C$ is $\sF$-invariant.
	Let $p\in C$ be a general point.
	Then we may assume that $C$ is smooth at $p$.
	Moreover, there exists a suitable local coordinate $(x,t)$ around $p$,
	such that $C=\{x=0\}$, and that the map $f$ is given by $(x,t) \mapsto t$.
	Suppose that $v=h(x,t)\frac{\partial}{\partial x}+g(x,t)\frac{\partial}{\partial t}$ around $p$.
	Since $C$ is $\sF$-invariant, it follows that $x$ divides $h(x,t)$, i.e.,
	$h(x,t)=x\cdot \tilde h(x,t)$ for some local holomorphic function $\tilde h(x,t)$.
	By the above argument, locally around $p$,
	$$\alpha=(v,dt)=g(x,t).$$
	As $C\subseteq \Delta=\div(\alpha)$,
	it follows that $x$ divides $g(x,t)$, i.e.,
	$g(x,t)=x\cdot \tilde g(x,t)$ for some local holomorphic function $\tilde g(x,t)$.
	This implies that $x$ divides the local field $v$ defining the foliation $\sF$,
	which is a contradiction, as $v$ has at most isolated zeros by definition.
	This completes the proof.
\end{proof}

If $K_{\sF}\cdot F< 2g(F)-2$, we can give a more detailed description on the effective divisor $\Delta$
appearing in \eqref{eqn-4-5}.
\begin{lemma}\label{lem-4-1}
	Let $\sF$ be a foliation of general type on $S$ whose canonical map induces a fibration $f:\, S \to B$ as before.
	Suppose that $K_{\sF}\cdot F< 2g(F)-2$, where $F$ is a general fiber of $f$.
	Let $F_0$ be any fiber of $f:\, S \to B$.
	Then there is at least one component $C\subseteq F_0$ which is not $\sF$-invariant.
	In particular, any smooth fiber of $f$ is not $\sF$-invariant.
\end{lemma}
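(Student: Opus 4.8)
The plan is to argue by contradiction: suppose every irreducible component of $F_0$ is $\sF$-invariant, and show this forces $K_{\sF}\cdot F\ge 2g(F)-2$, contradicting the hypothesis $d=K_{\sF}\cdot F<2g(F)-2$. Write $F_0=\sum_i a_iC_i$ with the $C_i$ the distinct components, $a_i\ge 1$ their multiplicities, and put $(F_0)_{\mathrm{red}}=\sum_i C_i$.

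First I would estimate $K_{\sF}\cdot C_i$ for each component. Since $C_i$ is $\sF$-invariant and $\sF$ is reduced, \autoref{prop-2-2} gives
\[
Z(\sF,C_i)=K_{\sF}\cdot C_i-K_S\cdot C_i-C_i^2\ \ge\ 0 ,
\]
and $Z(\sF,C_i)=\sum_{p}Z(\sF,C_i,p)$ with all summands nonnegative. The crucial refinement is to keep track of the points $p\in C_i$ lying on some other component $C_j$ ($j\ne i$): such a $p$ is a (reduced) singular point of $\sF$ — two $\sF$-invariant curves can meet only at a singularity — and both $C_i$ and $C_j$ are separatrices of $\sF$ there. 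Since a reduced singularity carries at most two separatrices and any two of them are smooth and meet transversally, exactly one $C_j$ passes through $p$, it meets $C_i$ transversally there, and $Z(\sF,C_i,p)\ge 1$. Summing over such points $p$ gives $Z(\sF,C_i)\ge\sum_{j\ne i}(C_i\cdot C_j)=C_i\cdot\bigl((F_0)_{\mathrm{red}}-C_i\bigr)$, hence
\[
K_{\sF}\cdot C_i\ \ge\ K_S\cdot C_i+C_i\cdot(F_0)_{\mathrm{red}} .
\]

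Then I would combine these, weighted by the multiplicities $a_i$. All fibers are algebraically equivalent, so $K_{\sF}\cdot F=K_{\sF}\cdot F_0=\sum_i a_i\,(K_{\sF}\cdot C_i)$, and therefore
\[
d=K_{\sF}\cdot F\ \ge\ K_S\cdot F_0+F_0\cdot(F_0)_{\mathrm{red}}\ =\ \bigl(2g(F)-2\bigr)+0 ,
\]
using $K_S\cdot F_0=K_S\cdot F=2g(F)-2$ (adjunction on a general fiber, $F^2=0$) and $F_0\cdot(F_0)_{\mathrm{red}}=\sum_j F_0\cdot C_j=0$. This contradicts $d<2g(F)-2$, so at least one component of $F_0$ is not $\sF$-invariant. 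The last assertion is then immediate: a smooth fiber is connected and smooth, hence irreducible, so it is its own unique component.

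The step I expect to be the main obstacle — and the only place where reducedness of $\sF$ is genuinely used — is the local input that at a reduced singularity a separatrix contributes $Z(\sF,\cdot,p)\ge 1$ and that two separatrices there are smooth and transverse. Without this refinement one only gets $d\ge 2g(F)-2+\sum_i a_iC_i^2$, which is vacuous since $\sum_i a_iC_i^2\le 0$; so the argument really hinges on upgrading $Z(\sF,C_i)\ge 0$ to $Z(\sF,C_i)\ge C_i\cdot((F_0)_{\mathrm{red}}-C_i)$. These are standard facts from Seidenberg's reduction theory (cf.\ \cite{bru-04}, and compare the definition of an $\sF$-chain used above, where each node of the chain contributes $Z=1$); in writing this up I would either cite them or record the short local normal-form computation in the non-degenerate and saddle-node cases. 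Note that this argument does not use \autoref{prop-4-2} or the hypothesis $\wt\sF\neq\mathcal G$, which is automatic in this subsection.
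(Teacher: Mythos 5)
Your proof is correct and uses essentially the same argument as the paper: assume all components of $F_0$ are $\sF$-invariant, lower-bound $Z(\sF,C_i)$ by the number of intersection points with other components (each a reduced singularity contributing $Z\geq 1$), combine with \autoref{prop-2-2}, weight by multiplicities, and derive $K_{\sF}\cdot F\geq 2g(F)-2$ from $F_0\cdot C_i=0$. The only difference is organizational — you package the intermediate bound as $K_{\sF}\cdot C_i\geq K_S\cdot C_i+C_i\cdot(F_0)_{\mathrm{red}}$ before summing, whereas the paper sums first and rearranges — and you are slightly more explicit than the paper about why each intersection point yields $Z(\sF,C_i,p)\geq 1$.
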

\begin{proof}
	We prove by contradiction. Suppose that there exists a fiber $F_0=\sum\limits_{j=1}^{r} m_{j}C_{j}$, such that every component $C_j$ is $\sF$-invariant.
	Since $\sF$ is reduced, the fiber $F_0$ is normal crossing.
	According to \autoref{prop-2-2}, for any component $C_j \subseteq F_0$,
	\begin{equation}\label{eqn-4-10}
		Z(\sF,C_j)=K_{\sF}C_j-K_{S}C_j-C_j^2.
	\end{equation}
	By definition, if $p\in C_j$ is a singularity of $\sF$, then $Z(\sF,C_j,p)\geq 1$.
	Hence
	$$Z(\sF,C_j)\geq \sum_{i\neq j} C_iC_j.$$
	Combining this with \eqref{eqn-4-10},
	$$\sum_{i\neq j} C_iC_j \leq K_{\sF}C_j-K_{S}C_j-C_j^2,\qquad \text{for any fixed~}1\leq j \leq r.$$
	It follows that
	$$\begin{aligned}
		\sum_{i=1}^{r}\sum_{j\neq i} m_jC_iC_j=\sum_{j=1}^{r}\sum_{i\neq j} m_jC_iC_j &\,\leq \sum_{j=1}^{r}\left(m_jK_{\sF}C_j-m_jK_{S}C_j-m_jC_j^2\right)\\
		&\,= (K_{\sF}-K_S)\cdot \sum_{j=1}^{r}m_jC_j - \sum_{j=1}^{r}m_jC_j^2\\
		&\,= (K_{\sF}-K_S)\cdot F_0 - \sum_{i=1}^{r}m_iC_i^2 \\
		&\,=K_{\sF}\cdot F-(2g(F)-2)- \sum_{i=1}^{r}m_iC_i^2.
	\end{aligned}$$
	Since $K_{\sF}\cdot F< 2g(F)-2$ by assumption, it follows that
	$$0>K_{\sF}\cdot F-(2g(F)-2)\geq \sum_{i=1}^{r}\left(\sum_{j\neq i} m_jC_iC_j+m_iC_i^2\right)=\sum_{i=1}^{r} F_0\cdot C_i=0.$$
	This gives a contradiction.
	The proof is complete.
\end{proof}

\begin{proposition}\label{prop-4-3}
	Let $\sF$ be a foliation of general type on $S$ whose canonical map induces a fibration $f:\, S \to B$ as before.
	Suppose that $K_{\sF}\cdot F< 2g(F)-2$, where $F$ is a general fiber of $f$. Let $\Delta$ be the effective divisor as in \eqref{eqn-4-5}.
	Then
	\begin{equation}\label{eqn-4-6}
		\Delta=\Delta_h+\sum_{i=1}^{\ell}\sum_{j=1}^{r_i} n_{ij}C_{ij},
	\end{equation}
	where $\Delta_h\subseteq \Delta$ consists of the horizontal components of $\Delta$,
	and $C_{ij}$'s (pairwise different) are all contained in fibers of $f$.
	Moreover, the coefficients $n_{ij}$'s satisfy the following property:
	if $F_i=\sum\limits_{j=1}^{r_i} m_{ij}C_{ij}$ is a fiber of $f$,
	then there exists at least one $1\leq j \leq r_i$ such that $n_{ij}<m_{ij}$.
\end{proposition}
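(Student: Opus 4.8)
The plan is to read off the decomposition \eqref{eqn-4-6} from the structure of $\Delta$ already established in \autoref{prop-4-2}, and then to compute the vertical coefficients $n_{ij}$ by a local calculation with the section $\alpha$ of \eqref{eqn-4-7}, using \autoref{lem-4-1} as the one genuinely fibre-by-fibre input. Recall from the proof of \autoref{prop-4-2} that $\Delta=\div(\alpha)$ for the section $\alpha\in H^0\big(S,K_{\sF}\otimes f^*(-K_B)\big)$ of \eqref{eqn-4-7}. Splitting $\Delta=\Delta_h+\Delta_v$, with $\Delta_h$ the sum of the components dominating $B$ and $\Delta_v=\sum_i\sum_j n_{ij}C_{ij}$ the sum of the vertical components, grouped so that $C_{i1},\dots,C_{ir_i}$ are the components of $\Delta_v$ lying in one common fibre, is precisely \eqref{eqn-4-6}; what remains is the inequality on the $n_{ij}$.

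To prove it, fix a fibre $F_i=\sum_{j=1}^{r_i}m_{ij}C_{ij}$ of $f$, over a point $b_i\in B$, all of whose components occur among the $C_{ij}$. By the standing hypothesis $d=K_{\sF}\cdot F<2g(F)-2$, \autoref{lem-4-1} supplies a component $C_{ij_0}\subseteq F_i$ that is not $\sF$-invariant, and it is enough to show $n_{ij_0}=m_{ij_0}-1$. First I would pick a general point $p\in C_{ij_0}$, so that $p$ is a smooth point of $F_i$ lying on no other component of $F_i$ and, in particular, $p\notin\mathrm{Sing}(\sF)$. Choosing a coordinate $\tau$ on $B$ centred at $b_i$ and coordinates $(x,t)$ on $S$ centred at $p$ with $C_{ij_0}=\{x=0\}$, the fact that $C_{ij_0}$ appears with multiplicity $m_{ij_0}$ in the fibre $f^*b_i$ lets us arrange $f^*\tau=x^{m_{ij_0}}$ after absorbing a unit into $x$. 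If $v=h(x,t)\tfrac{\partial}{\partial x}+g(x,t)\tfrac{\partial}{\partial t}$ is a local vector field defining $\sF$ near $p$, then by the definition of $\alpha$ in \eqref{eqn-4-7} one has $\alpha=(v,f^*d\tau)=v(f^*\tau)=m_{ij_0}\,x^{m_{ij_0}-1}\,h(x,t)$ near $p$. Since $C_{ij_0}=\{x=0\}$ is not $\sF$-invariant we have $x\nmid h$, hence $\order_{C_{ij_0}}(h)=0$ and therefore $n_{ij_0}=\order_{C_{ij_0}}(\alpha)=m_{ij_0}-1<m_{ij_0}$, which is exactly the assertion.

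The routine points, which I do not expect to cause trouble, are that a general point of $C_{ij_0}$ is indeed a smooth point of $F_i$ on no other component (so that the local normalisation $f^*\tau=x^{m_{ij_0}}$ is legitimate), and that $\sF$-invariance of the curve $\{x=0\}$ is equivalent to $x\mid h$. The only substantive ingredient is \autoref{lem-4-1}, and this is precisely where the hypothesis $K_{\sF}\cdot F<2g(F)-2$ enters: without it a whole fibre could be $\sF$-invariant, and then $\Delta_v$ really could contain that fibre. Thus the hard work has already been done in \autoref{lem-4-1}, and \autoref{prop-4-3} is then just this one-point local computation layered on top of \autoref{prop-4-2}.
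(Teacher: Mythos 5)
Your proposal is correct and follows essentially the same route as the paper's own proof: both express $\Delta=\div(\alpha)$ using the section $\alpha$ of \eqref{eqn-4-7}, perform the local computation $\alpha = m_{ij}\,x^{m_{ij}-1}h(x,y)$ at a general point of a fibre component $\{x=0\}$ with $f$ normalised to $t=x^{m_{ij}}$, and invoke \autoref{lem-4-1} to produce a non-$\sF$-invariant component (equivalently $x\nmid h$), yielding multiplicity $m_{ij}-1<m_{ij}$. The only cosmetic difference is that the paper records the biconditional $n_{ij}\geq m_{ij}\Leftrightarrow C_{ij}$ is $\sF$-invariant for every component before applying \autoref{lem-4-1}, whereas you go straight to the component supplied by the lemma.
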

\begin{proof}
	Since $K_{\sF}\cdot F< 2g(F)-2$, by \autoref{thm-1-3}
	the foliation $\sF$ is different from the foliation $\mathcal{G}$ defined by taking the saturation of $\ker(\df:\,T_{S} \to f^*T_{B})$ in $T_{S}$.
	Let $F_i=\sum\limits_{j=1}^{r_i} m_{ij}C_{ij}$ be any fiber of $f$, and let
	\begin{equation*}
		\Delta=\div(\alpha)=\sum_{j=1}^{r_i} n_{ij}C_{ij}+\Delta',
	\end{equation*}
	where $\alpha$ is given in \eqref{eqn-4-7},
	and the support of $\Delta'$ does not contain any component of $F_i$.
	Then we have to show that
	\begin{equation}\label{eqn-4-11}
		\text{there exists at least one $1\leq j \leq r_i$ such that $n_{ij}<m_{ij}$.}
	\end{equation}
	
	Let $p_{ij} \in C_{ij}$ be a general point of $C_{ij}$.
	As $p_{ij} \in C_{ij}$ is general, there exist a local coordinate $(x,y)$ around $p_{ij}$ such that $C_{ij}=\{x=0\}$, and a local coordinate $t$ around $q=f(F_i)$ such that the map $f$ is defined by $t=x^{m_{ij}}$.
	Suppose that $v=h(x,y)\frac{\partial}{\partial x}+g(x,y)\frac{\partial}{\partial y}$ around $p_{ij}$.
	Then around the point $p_{ij}$, $f^*dt=m_{ij}x^{m_{ij}-1}dx$, and hence
	$$\alpha=\Big(h(x,y)\frac{\partial}{\partial x}+g(x,y)\frac{\partial}{\partial y},f^*dt\Big)=m_{ij}h(x,y)x^{m_{ij}-1}.$$
	Therefore,
	$$n_{ij}=m_{ij}-1+\text{order}_{x}\big(h(x,y)\big),$$
	where $$\text{order}_{x}\big(h(x,y)\big)=\max\left\{k~\big|~x^k\text{~divides~}h(x,y)\right\}.$$
	It follows that
	$$n_{ij} \geq m_{ij}, 
	\quad\Longleftrightarrow\quad x\text{~divides~}h(x,y),
	\quad\Longleftrightarrow\quad\text{$C_{ij}$ is $\sF$-invariant}.$$
	Hence \eqref{eqn-4-11} follows immediately from \autoref{lem-4-1}.
	This completes the proof.
\end{proof}

\begin{proposition}\label{prop-4-4}
	Let $\sF$ be a foliation of general type on $S$ whose canonical map induces a fibration $f:\, S \to B$ as before.
	Suppose that $K_{\sF}\cdot F< 2g(F)-2$, where $F$ is a general fiber of $f$.
	Then either $g(B)=0$, or $g(B)\geq 2$.
	Moreover, if $g(B) \geq 2$, then
	\begin{equation}\label{eqn-4-3}
		p_g(\sF)=g(B).
	\end{equation}   
	More precisely, if $g(B)\geq 2$, then
	\begin{equation}\label{eqn-4-4}
		|K_{\sF}| = |f^*K_B|+\Delta,
	\end{equation}
	such that $|f^*K_B|$ (resp. $\Delta$) is the moving part (resp. fixed part) of $|K_{\sF}|$,
	where $\Delta$ is effective as in \eqref{eqn-4-5}.
\end{proposition}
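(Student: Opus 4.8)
The plan is to play the two available descriptions of the canonical divisor against each other. On one side we have the fibred form $K_{\sF}=f^*K_B+\Delta$ with $\Delta$ effective (\autoref{prop-4-2}), together with the structure of the vertical coefficients of $\Delta$ supplied by \autoref{prop-4-3}; on the other we have the moving/fixed decomposition $|K_{\sF}|=|M|+Z$, in which (after the reduction at the beginning of \autoref{sec-foliation-different}, and as in \eqref{eqn-4-0}) the moving part is base-point-free of the form $M=f^*L$ with $h^0(B,L)=p_g(\sF)$ and $\deg L\ge p_g(\sF)-1$. The goal is to force $L$ to sit inside $K_B$.

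First I would record the easy half $g(B)\le p_g(\sF)$: since $f$ has connected fibres, $f_*\calo_S=\calo_B$, so $h^0(S,f^*K_B)=h^0(B,K_B)=g(B)$, and multiplication by the section $\alpha$ with $\div(\alpha)=\Delta$ (from the proof of \autoref{prop-4-2}) embeds $H^0(S,f^*K_B)\hookrightarrow H^0(S,K_{\sF})$. The core step is the reverse estimate, valid whenever $g(B)\ge 1$: fix an effective canonical divisor $K_B=\sum_b\kappa_b\,b$ on $B$ and write $M=f^*L$ with $L=\sum_b\ell_b\,b$ effective (possible as $g(B)\ge 1$ and $L$ is effective). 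Comparing the coefficient of a fibre component $C_{bj}$ of a fibre $F_b=\sum_j m_{bj}C_{bj}$ on both sides of the divisor identity $f^*K_B+\Delta=K_{\sF}=M+Z$ gives $(Z)_{C_{bj}}=(\kappa_b-\ell_b)m_{bj}+n_{bj}$, which is $\ge 0$ because $Z$ is effective. By \autoref{prop-4-3}, every fibre has a component $C_{bj_0}$ with $n_{bj_0}<m_{bj_0}$; substituting this forces $\kappa_b-\ell_b>-1$, hence $\ell_b\le\kappa_b$ for all $b$. Thus $L\le K_B$, so $\deg L\le\deg K_B=2g(B)-2$ and $h^0(B,L)\le h^0(B,K_B)=g(B)$.

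The statement of the proposition should then follow quickly. If $g(B)=1$ then $\deg L\le 0$, contradicting $\deg L\ge p_g(\sF)-1\ge 1$ (recall $p_g(\sF)\ge 2$ since the canonical image is a curve); hence $g(B)=0$ or $g(B)\ge 2$. When $g(B)\ge 2$, the chain $g(B)\le p_g(\sF)=h^0(B,L)\le g(B)$ yields $p_g(\sF)=g(B)$, which is \eqref{eqn-4-3}. For \eqref{eqn-4-4}, note that now $\cdot\,\alpha\colon H^0(S,f^*K_B)\to H^0(S,K_{\sF})$ is an injection between spaces of equal dimension $g(B)$, hence an isomorphism, so every section of $K_{\sF}$ is $\alpha$ times a section of $f^*K_B$; therefore $|K_{\sF}|=|f^*K_B|+\Delta$. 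Since $g(B)\ge 2$ the canonical system $|K_B|$ is base-point-free, so $|f^*K_B|=f^*|K_B|$ is base-point-free, which identifies it as the moving part of $|K_{\sF}|$ and $\Delta$ as the fixed part.

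The step I expect to be the main obstacle is the componentwise comparison producing $\ell_b\le\kappa_b$: it uses the divisor equality $K_{\sF}=f^*K_B+\Delta$ taken literally, the effective normalisations of $L$ and $K_B$, and, crucially, the exact per-fibre input $n_{bj_0}<m_{bj_0}$ from \autoref{prop-4-3} applied to every fibre of $f$ (not only the finitely many that carry vertical components of $\Delta$, where the inequality is trivial). Everything afterwards is routine bookkeeping with linear systems on the curve $B$.
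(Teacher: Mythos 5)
Your coefficient-chasing argument is a genuinely different route from the paper's, which instead pushes the vertical part $\Delta_v$ forward and shows $f_*\mathcal{O}_S(\Delta_v)=\mathcal{O}_B$ via the same per-fibre input $n_{bj_0}<m_{bj_0}$, then concludes $p_g(\sF)=h^0\bigl(B,\mathcal{O}_B(K_B)\otimes f_*\mathcal{O}_S(\Delta_v)\bigr)=g(B)$. Your version replaces the direct-image computation with a direct comparison of divisors, which is arguably more elementary and isolates the inequality $h^0(B,L)\le g(B)$ cleanly.

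However, there is a gap at exactly the step you flag as ``the main obstacle''. You fix effective representatives $K_B=\sum_b\kappa_b\,b\in|K_B|$ and $L=\sum_b\ell_b\,b\in|L|$ \emph{independently}, and then read coefficients off ``the divisor identity $f^*K_B+\Delta=K_{\sF}=M+Z$''. But $K_{\sF}$ is a divisor class, and $f^*K_B+\Delta$ and $f^*L+Z$ are merely linearly equivalent effective divisors; for arbitrary choices of the two representatives the equation $(Z)_{C_{bj}}=(\kappa_b-\ell_b)m_{bj}+n_{bj}$ is false, the two sides differing by $\mathrm{ord}_{C_{bj}}$ of a rational function. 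The fix is not hard but is the real content of the step: fix $K_B^{\mathrm{eff}}\in|K_B|$; the effective divisor $f^*K_B^{\mathrm{eff}}+\Delta\in|K_{\sF}|$ dominates the fixed part $Z$, so $f^*K_B^{\mathrm{eff}}+\Delta-Z$ is an effective member of the moving part $|M|=f^*|L|$ (using $f_*\mathcal{O}_S=\mathcal{O}_B$), hence equals $f^*L^{\mathrm{eff}}$ for a \emph{uniquely determined effective} $L^{\mathrm{eff}}\in|L|$, and it is with this $L^{\mathrm{eff}}$ — not an arbitrary one — that the divisor equality, and hence your coefficient computation, actually holds. With that insertion the rest of your argument goes through: $g(B)=1$ forces $K_B^{\mathrm{eff}}=0$, hence $L^{\mathrm{eff}}=0$, contradicting $h^0(B,L)=p_g(\sF)\ge 2$; and for $g(B)\ge 2$ one obtains $L^{\mathrm{eff}}\le K_B^{\mathrm{eff}}$ and thus $h^0(B,L)\le g(B)$, and your closing identification $|K_{\sF}|=|f^*K_B|+\Delta$ via the isomorphism $\cdot\,\alpha$ is correct.
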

\begin{proof}
	Since $K_{\sF}\cdot F< 2g(F)-2$, by \autoref{thm-1-3}
	the foliation $\sF$ is different from the foliation $\mathcal{G}$ defined by taking the saturation of $\ker(\df:\,T_{S} \to f^*T_{B})$ in $T_{S}$.
	In particular, $g(F)\geq 2$, since $K_{\sF}\cdot F\geq 1$ by \eqref{eqn-41-1}.
	According to \autoref{prop-4-2} and \autoref{prop-4-3},
	\begin{equation}\label{eqn-4-8}
		K_{\sF}=f^*K_B+\Delta,\qquad \text{with~}\Delta=\Delta_h+\Delta_v=\Delta_h+\sum_{i=1}^{\ell}\sum_{j=1}^{r_i} n_{ij}C_{ij},
	\end{equation}
	where $\Delta_h\subseteq \Delta$ consists of the horizontal components, and $C_{ij}$'s are all contained in fibers of $f$.
	In particular,
	$$p_g(\sF)=h^0(S, K_{\sF}) \geq h^0(B,K_B)=g(B).$$
	On the other hand, since the canonical map $\varphi$ induces the fibration $f:\,S \to B$,
	it follows that
	$$|K_{\sF}|=|f^*(L)|+Z,$$
	where $Z$ is the fixed part, and $L$ is some divisor on $B$ with $h^0(B,L)=h^0(S,K_{\sF})=p_g(\sF)$.
	Therefore, for any effective divisor $D\in |K_{\sF}|$ and any horizontal component $C\subseteq D$ (i.e., $C\cdot F>0$),
	it holds  that $C\subseteq Z$, i.e., the horizontal part of $D$ is contained in the fixed part $Z$ of $K_{\sF}$.
	In particular, if $g(B)\geq 1$, then $\Delta_h\subseteq Z$,
	since $f^*(D_0)+\Delta_h+\Delta_v$ is an effective divisor in $|K_{\sF}|$ for any effective divisor $D_0 \in |K_B|$.
	Therefore, if $g(B)\geq 1$, then
	$$p_g(\sF)=h^0(S,K_{\sF})=h^0(S,f^*K_B+\Delta_v)=h^0\big(B,\mathcal{O}_B(K_B)\otimes f_*\mathcal{O}_S(\Delta_v)\big).$$
	According to \autoref{prop-4-3}, the coefficients $n_{ij}$'s satisfy the following property:
	if $F_i=\sum\limits_{j=1}^{r_i} m_{ij}C_{ij}$ is a fiber of $f$,
	then there exists at least one $1\leq j \leq r_i$ such that $n_{ij}<m_{ij}$.
	This means that $h^0(S,\,\Delta_v)=1$, and that	
	$$f_*\mathcal{O}_{S}(\Delta_v) = \mathcal{O}_B.$$
	Therefore,
	$$p_g(\sF)=h^0(S,K_{\sF})=h^0\big(B,\mathcal{O}_B(K_B)\otimes f_*\mathcal{O}_S(\Delta_v)\big)=h^0\big(B,\mathcal{O}_B(K_B)\big)=g(B).$$
	Since $\varphi$ induces the fibration $f:S \to B$,
	it follows that $g(B)=p_g(\sF)\geq 2$, and the decomposition \eqref{eqn-4-4} holds.		 
\end{proof}

We can now prove \autoref{thm-1-4}.
\begin{proof}[Proof of \autoref{thm-1-4}]
	As explained at the beginning of this subsection, we may replace $(S,\sF)$ by
	$(\wt S, \wt \sF)$.
	Hence we will write $\wt S$ (resp. $\wt \sF$) as $S$ (resp. $\wt \sF$) in the following.
	
	(i) The formula \eqref{eqn-1-7} is proved in \autoref{prop-4-2}.
	Note also that $d=K_{\sF}\cdot F\geq 1$ by \eqref{eqn-41-1}.
	It remains to prove \eqref{eqn-1-8}.
	Let $(S_0,\sF_0)$ be the relatively minimal model of $(S,\sF)$.
	Denote by $\tau:\,S \to S_0$ the contraction of $\sF$-exceptional curves,
	and by $K_{\sF_0}$ the canonical divisor of $\sF_0$.
	Based on the decomposition in \eqref{eqn-2-22},
	one has
	$$K_{\sF_0}=M_0+Z_0,$$
	where $M_0=\tau_*(M)$ and $Z_0=\tau_*(Z)$.
	By \eqref{eqn-4-0},
	\begin{equation}\label{eqn-41-3}
		M_0 \sim \deg(L) A,\qquad \text{with $\deg(L)\geq p_g(\sF)-1$},
	\end{equation}
	where $A=\tau_*(F)$ and $F$ is a general fiber of $f$.
	Let
	$$Z_0=\sum_{j=1}^{m}a_jD_j+Z_{0v},$$
	where $D_j$'s are all the components of $Z_0$ intersecting positively with $A$;
	namely, $d_j:=D_j\cdot A>0$ for any $1\leq j \leq m$,
	and $D\cdot A=0$ for any component $D\subseteq Z_{0v}$.
	Let
	\begin{equation}\label{eqn-41-7}
		K_{\sF_0}=P_0+N_0
	\end{equation} be the Zariski decomposition of $K_{\sF_0}$.
	It is clear that $M_0\subseteq P_0$, and hence $N_0\subseteq Z_0$.
	Let
	$$N_0=\sum_{j=1}^{m}b_jD_j+N_{0v},$$
	where the support of $N_{0v}$ is contained in that of $Z_{0v}$.
	\begin{lemma}\label{lem-41-1}
		For any $1\leq j \leq m$, it holds $b_j\leq \frac{a_j}{p_g(\sF)}$.
	\end{lemma}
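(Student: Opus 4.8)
The plan is to prove the sharper statement that $b_j=0$ for every $1\le j\le m$, which of course implies $b_j\le a_j/p_g(\sF)$. The two inputs are \autoref{prop-4-2}, which identifies the horizontal non-invariant part of $K_{\sF}$, and \autoref{thm-3-4}, which says that $\mathrm{Supp}(N_0)$ is a disjoint union of maximal $\sF_0$-chains and hence consists entirely of $\sF_0$-invariant curves. So it suffices to show that each $D_j$ fails to be $\sF_0$-invariant; then $D_j\not\subseteq\mathrm{Supp}(N_0)$ and $b_j=0$.

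First I would pin down the horizontal part of $K_{\sF}$ on $S$, working after the replacement $(S,\sF)\leftarrow(\wt S,\wt\sF)$ made at the start of the subsection. The moving part of $|K_{\sF}|$ equals $f^*(L)$, which is $f$-vertical, so the horizontal components of $K_{\sF}$ are exactly those of the fixed part $Z$; and since $f^*K_B$ is $f$-vertical as well (when $g(B)=0$ it is not effective, but its class is $-2F$, which is still $f$-vertical and so causes no difficulty), every horizontal component of $Z$ occurs as a component of the effective divisor $\Delta=K_{\sF}-f^*K_B$ of \autoref{prop-4-2}, hence is non-$\sF$-invariant by the last assertion of that proposition. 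Equivalently, every $\sF$-invariant component of $Z$ is $f$-vertical. Next I would check that the exceptional curves of $\tau\colon S\to S_0$ are $f$-vertical: each is an $\sF$-exceptional $(-1)$-curve, hence $\sF$-invariant (otherwise contracting it would produce a dicritical singularity, contradicting the reducedness of $\sF_0$, as in the proof of \autoref{thm-1-3}), and it lies in the base locus of $|K_{\sF}|$ (a standard property of $\sF$-exceptional curves, $K_{\sF}\cdot E=-1$, so no effective member of $|K_{\sF}|$ can avoid it); thus it is an $\sF$-invariant component of $Z$ and therefore $f$-vertical by the preceding remark.

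Putting this together: for each $j$ let $C_j\subseteq S$ be the strict transform of $D_j$. Since $\tau$ contracts only $f$-vertical curves, $C_j\cdot F=\tau^*D_j\cdot F=D_j\cdot A=d_j>0$, so $C_j$ is a horizontal component of $Z$, hence not $\sF$-invariant; as $\tau$ is a composition of blow-downs, and thus an isomorphism off a finite set, $\sF$-invariance of $C_j$ is equivalent to $\sF_0$-invariance of $D_j$, so $D_j$ is not $\sF_0$-invariant, whence $b_j=0$ by \autoref{thm-3-4}. The part I expect to be delicate is purely the bookkeeping in the middle paragraph: making sure the horizontal components of $K_{\sF}$, of $Z$ and of $\Delta$ genuinely coincide and are unaffected by the birational modification and by $\tau$, and handling $g(B)=0$ cleanly. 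A purely numerical alternative—intersecting the nef part $P_0$ with $D_{j_0}$ when $b_{j_0}>0$ and combining with the tangency inequality $K_{\sF_0}\cdot D_{j_0}+D_{j_0}^2\ge 0$—only yields $(a_{j_0}-b_{j_0})(-D_{j_0}^2)\ge\big(p_g(\sF)-1\big)d_{j_0}$, and converting this into the stated bound would require an a priori estimate $a_{j_0}(-D_{j_0}^2)\le p_g(\sF)\,d_{j_0}$ that I do not see how to obtain directly; so I would commit to the structural argument above.
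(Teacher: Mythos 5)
Your proposal takes a genuinely different route from the paper. The paper does \emph{not} show $b_j=0$: it allows $b_j>0$ (i.e., $D_j$ lying inside a maximal $\sF_0$-chain, hence $\sF_0$-invariant), and bounds $b_j$ by an inductive analysis of the whole chain $C_1,\dots,C_r$ containing $D_j=C_{i_0}$, combining $K_{\sF_0}\cdot C_1=-1$, $K_{\sF_0}\cdot C_i=0$ for $i\ge2$, the estimate $M_0\cdot C_{i_0}\ge d_j(p_g(\sF)-1)$, the defining equations of the negative part of the Zariski decomposition, and a negativity argument from \cite{km-98}. Your structural route, if it closed, would be cleaner and sharper, but it has a genuine gap when $g(B)=0$, a case \autoref{thm-1-4}(i) must cover. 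The load-bearing step is that every horizontal component of $Z$ is a component of $\Delta$. For $g(B)\ge1$ this is fine: $f^*K_B+\Delta$ is an effective member of $|K_{\sF}|$, hence dominates the fixed part $Z$, and $f^*K_B$ is vertical. For $g(B)=0$ the divisor $f^*K_B+\Delta$ is not an effective member of $|K_{\sF}|$, and linear equivalence $Z-\Delta\sim f^*(K_B-L)$ does not control components: two linearly equivalent effective divisors can have entirely different horizontal supports, so the remark that the class $-2F$ "causes no difficulty" is not an argument. Without this you cannot conclude that $D_j$ is non-$\sF_0$-invariant, and so cannot exclude $D_j\subseteq\mathrm{Supp}(N_0)$; the paper's willingness to allow $b_j>0$ is a signal that this possibility is not so easily dismissed.

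Two further points. First, the claim that every $\tau$-exceptional curve $E$ satisfies $K_{\sF}\cdot E=-1$ and therefore lies in $Z$ is not correct: $\tau$ is a composition of contractions, the $\tau$-exceptional curves on $S$ are not all $(-1)$-curves, and since $K_{\sF}=\tau^*K_{\sF_0}+\sum t_i\mathcal{E}_i$ with $t_i\in\{0,1\}$ one can have $K_{\sF}\cdot\mathcal{E}_i=0$. This is harmless when $g(B)\ge1$ (rational exceptional curves are then automatically $f$-vertical), but when $g(B)=0$ your only remaining route to verticality was the preceding remark, which is itself the gap. Second, the numerical alternative you mention and discard is flawed on separate grounds: the inequality $K_{\sF_0}\cdot D_{j_0}+D_{j_0}^2\ge0$ is the tangency formula and is available only when $D_{j_0}$ is \emph{not} $\sF_0$-invariant; but $b_{j_0}>0$ is precisely the case where $D_{j_0}$ lies in an $\sF_0$-chain and hence is invariant. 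This is why the paper instead works with $Z(\sF_0,C_i)\ge0$ along the whole chain rather than with the tangency of $D_{j_0}$ alone.
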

\begin{proof}[Proof of \autoref{lem-41-1}]
	To prove the lemma, we may assume that $b_j\neq 0$;
	namely $D_j$ is contained in some $\sF_0$-chain $C=C_1+\cdots C_r$.
	Suppose that $D_j=C_{i_0}$, and that
	$$N_0=\sum_{i=1}^{r}y_iC_i+N',\qquad Z_0=\sum_{i=1}^{r}x_iC_i+Z',$$
	where both $N'$ and $Z'$ consist of components different from $C_i$'s.
	As $D_j=C_{i_0}$ by our assumption, it follows that $x_{i_0}=a_j$ and $y_{i_0}=b_j$.
	Let $e_i=-C_i^2 \geq 2$ for $1\leq i \leq r$.
	By \eqref{eqn-41-3},
	$$M_0\cdot C_{i_0}=M_0\cdot D_j\geq d_j\big(p_g(\sF)-1\big).$$
	It follows that
	\begin{equation}\label{eqn-41-4}
		\hspace{-5mm}\left\{\begin{aligned}
			&-1=K_{\sF_0}\cdot C_1=(M_0+Z_0)\cdot C_1\geq -e_1x_1+x_2+\delta_{1,i_0}d_j\big(p_g(\sF)-1\big),\\
			&0=K_{\sF_0}\cdot C_i=(M_0+Z_0)\cdot C_i\geq -e_ix_i+x_{i-1}+x_{i+1}+\delta_{i,i_0}d_j\big(p_g(\sF)-1\big),~\, \forall\,2\leq i \leq r,
		\end{aligned}\right.
	\end{equation}
	where we set $x_{r+1}=0$, and
	$$\delta_{i,i_0}=0\text{\,~if~\,}i\neq i_0,\qquad\text{and}\qquad \delta_{i,i_0}=0\text{\,~if~\,}i= i_0.$$
	Let
	$$\tilde x_i=\left\{\begin{aligned}
		&x_i, &\quad&\text{if~}i\geq i_0,\\
		&x_i+d_j\big(p_g(\sF)-1\big), &&\text{if~}i=i_0-1,\\
		&e_{i+1}\tilde x_{i+1}-\tilde x_{i+2}, &&\text{if~}1\leq i \leq i_0-2.
	\end{aligned}\right.$$
	Then
	\begin{equation}\label{eqn-41-6}
		-e_i\tilde x_i+ \tilde x_{i-1}+ \tilde x_{i+1}=0,\qquad \forall\,2\leq i \leq r.
	\end{equation}
	If $i_0=1$, then $\tilde x_i=x_i$ by definition, and hence by \eqref{eqn-41-4} the following inequality holds
	\begin{equation}\label{eqn-41-5}
		-p_g(\sF)\geq -1-d_j\big(p_g(\sF)-1\big) \geq -e_1\tilde x_1+\tilde x_2.
	\end{equation}
	We want to show that the above inequality also holds if $i_0\geq 2$.
	To this aim, one can first prove inductively that
	$$\tilde x_{i-1}-x_{i-1} \geq \tilde x_{i}-x_{i}, \qquad \forall\,i\geq 2.$$
	Indeed, $\tilde x_{r+1}-x_{r+1}=\tilde x_{r}-x_{r}=0$ by definition.
	Hence by the second inequality in \eqref{eqn-41-4}, for any $2\leq i \leq r$, 
	$$\begin{aligned}
		\tilde x_{i-1}-x_{i-1} &\,\geq e_i(\tilde x_i-x_i)-(\tilde x_{i+1}-x_{i+1})\\
		&\,\geq e_i(\tilde x_i-x_i)-(\tilde x_{i}-x_{i})\\
		&\,=(e_i-1)(\tilde x_i-x_i)\geq \tilde x_i-x_i,\quad\text{since~}e_i\geq 2.
	\end{aligned}$$
	Note that $\tilde x_{i_0-1}-x_{i_0-1}=d_j\big(p_g(\sF)-1\big)$ by definition.
	Hence
	$$\tilde x_1-x_1\geq \tilde x_{i_0-1}-x_{i_0-1}=d_j\big(p_g(\sF)-1\big).$$
	Combining with the first inequality in \eqref{eqn-41-4},
	$$\begin{aligned}
		e_1\tilde x_1-\tilde x_2&\,=e_1(\tilde x_1-x_1)-(\tilde x_2-x_2)+e_1x_1-x_2\\
		&\,\geq e_1(\tilde x_1-x_1)-(\tilde x_1-x_1)+1\\
		&\,=(e_1-1)(\tilde x_1-x_1)+1\geq (\tilde x_1-x_1)+1\\
		&\,\geq d_j\big(p_g(\sF)-1\big)+1 \geq p_g(\sF).
	\end{aligned}$$
	Hence in any case, the equality \eqref{eqn-41-5} holds.
	On the other hand, by the property of the Zariski decomposition,
	$$\left\{\begin{aligned}
		-1&\,=N_0\cdot C_1= -e_1y_1+y_2,\\
		0&\,=N_0\cdot C_i= -e_iy_i+y_{i-1}+y_{i+1},\quad \forall\,2\leq i \leq r,
	\end{aligned}\right.$$
	Here we set $y_{r+1}=0$.
	Combining this with \eqref{eqn-41-6} and \eqref{eqn-41-5},
	$$\left\{\begin{aligned}
		0&\,\geq  -e_1z_1+z_2,\\
		0&\,\geq  -e_iz_i+z_{i-1}+z_{i+1},\quad \forall\,2\leq i \leq r,
	\end{aligned}\right.$$
	where $z_i=\tilde x_i-y_ip_g(\sF)$ for any $i\geq 1$.
	Note that the intersection matrix of the curve $C=C_1+\cdots+C_r$ is negative definite.
	Hence by \cite[Lemma\,4.1]{km-98}, $z_i\geq 0$ for any $i\geq 1$.
	In particular,
	$$b_j=y_{i_0}\leq \frac{\tilde x_{i_0}}{p_g(\sF)} = \frac{x_{i_0}}{p_g(\sF)} =\frac{a_j}{p_g(\sF)}.$$
	This completes the proof.
\end{proof}

Come back to the proof of \autoref{thm-1-4}\,(i).
By \autoref{lem-41-1},
$$(Z_0-N_0)\cdot A =\sum_{j=1}^{m}(a_j-b_j)D_j\cdot A \geq \sum_{j=1}^{m}\frac{a_j(p_g(\sF)-1)}{p_g(\sF)}D_j\cdot A=\frac{Z_0\cdot A\,(p_g(\sF)-1)}{p_g(\sF)}.$$
Combining this with \eqref{eqn-41-3},
\begin{equation}\label{eqn-41-8}
	(Z_0-N_0)\cdot M_0 \geq \frac{(p_g(\sF)-1)\,Z_0\cdot M_0}{p_g(\sF)}.
\end{equation}
Let
$$F=\tau^*(A)-\sum s_i\mathcal{E}_i,\qquad K_{\sF}=\tau^*(K_{\sF_0})+\sum t_i\mathcal{E}_i,$$
where  $s_i\geq 0$, and $t_i=0$ or $1$, since $\sF_0$ is reduced.
In particular,
$$d= K_{\sF}\cdot F=K_{\sF_0}\cdot A+\sum t_is_i \leq K_{\sF_0}\cdot A+\sum s_i^2=K_{\sF_0}\cdot A+A^2.$$
Hence
$$\begin{aligned}
\vol(\sF)&\,=\vol(\sF_0)=P_0^2\geq P_0\cdot M_0=\big(M_0+Z_0-N_0\big)\cdot M_0\\
&\,\geq M_0^2 +\frac{(p_g(\sF)-1)\,Z_0\cdot M_0}{p_g(\sF)}\\
&\,\geq (p_g(\sF)-1)\,M_0\cdot A+\frac{(p_g(\sF)-1)^2\,Z_0\cdot A}{p_g(\sF)}\\
&\,\geq \frac{(p_g(\sF)-1)^2\,(M_0+A)\cdot A}{p_g(\sF)}+\frac{(p_g(\sF)-1)^2\,Z_0\cdot A}{p_g(\sF)}\\
&\,=\frac{(p_g(\sF)-1)^2\,(K_{\sF_0}+A)\cdot A}{p_g(\sF)}\geq \frac{d\,(p_g(\sF)-1)^2}{p_g(\sF)}
\end{aligned}$$
This proves \eqref{eqn-1-8}.

\vspace{2mm}
(ii) Let $\epsilon:\,S \to X$ be the contraction of all components of the negative part $N$ in the Zariski decomposition of $K_{\sF}$, and let $\ol K=\epsilon_*K_{\sF}$.
Since $g(B)\geq 1$, the support of $N$ is contained in fibers of $f$.
Hence there is an induced fibration $\bar f:\,X \to B$.
Since the canonical map $\varphi$ induces the fibration $f:\, S \to B$,
one write $K_{\sF}=M+Z$, where
$$M=f^*(L),\qquad\text{with~}~h^0(B,L)=p_g(\sF).$$
Let $\Delta$ be the effective divisor in \eqref{eqn-4-5},
and $\Delta_h\subseteq \Delta$ (resp. $\Delta_v\subseteq \Delta$) consists of the horizontal (resp. vertical) components.
Since $f^*(D_0)+\Delta_h+\Delta_v\in |K_{\sF}|$ for any effective divisor $D_0 \in |K_B|$, it follows that $\Delta_h\subseteq Z$ if $g(B)\geq 1$.
Hence
$$\ol K=\epsilon_*K_{\sF}=\bar f^*(L)+\epsilon_*(\Delta)=\bar f^*(L)+\sum_{j=1}^{m}a_j\ol D_j+ \ol \Delta_v,$$
where $\ol \Delta_v=\epsilon_*(\Delta_v)$ consists of curves in fibers of the induced fibration $\bar f:\,X \to B$, and $\ol D_j$ maps surjectively onto $B$ for $1\leq j \leq m$.
As $p_g(\sF)=h^0(B,L)\geq g(B)$ by (i) and $p_g(\sF)\geq 2$ by assumption,
by the Riemann-Roch theorem for curves one has
\begin{equation}\label{eqn-4-40}
	\deg(L)\geq p_g(\sF)-1+g(B).
\end{equation}
Since $g(B)\geq 1$, 
it follows that $\ol D_{j}$ is not $\sF$-invariant for any $1\leq j \leq m$.
Thus by \cite[proposition\,2.13]{cf-18} (see also \cite[Lemma III.1.1]{mcq-08}),
$$0\leq (\ol K+\ol D_{j_0}) \cdot \ol D_{j_0}=(a_{j_0}+1)\ol D_{j_0}^2+\Big(\bar f^*(L)+\sum_{j\neq j_0}a_j \ol D_j+ \ol \Delta_v\Big)\cdot \ol D_{j_0},\quad\forall\,1\leq j_0 \leq m.$$
Hence
$$\begin{aligned}
\ol K \cdot \ol D_{j_0}\,&=a_{j_0} \ol D_{j_0}^2+\Big(\bar f^*(L)+\sum_{j\neq j_0}a_j \ol D_j+ \ol \Delta_v\Big) \cdot \ol D_{j_0}\\
&\geq -\frac{a_{j_0}}{a_{j_0}+1}\Big(\bar f^*(L)+\sum_{j\neq j_0}a_j \ol D_j+ \ol \Delta_v\Big) \cdot \ol D_{j_0}+\Big(\bar f^*(L)+\sum_{j\neq j_0}a_j \ol D_j+ \ol \Delta_v\Big) \cdot \ol D_{j_0}\\
&= \frac{1}{a_{j_0}+1}\Big(\bar f^*(L)+\sum_{j\neq j_0}a_j \ol D_j+ \ol \Delta_v\Big)\cdot \ol D_{j_0} \\
&\geq \frac{\bar f^*(L)\cdot \ol D_{j_0}}{a_{j_0}+1}= \frac{\ol D_{j_0} \cdot F}{a_{j_0}+1}\,\deg(L).
\end{aligned}$$
By the Negativity Lemma, $P=\epsilon^*(\ol K)$, where $P$ is the positive part in the Zariski decomposition of $K_{\sF}$ as in \eqref{eqn-zariski}.
Therefore,
$$\begin{aligned}
\vol(\sF)\,&=P^2=\ol K^2=\ol K\cdot \Big(\bar f^*(L)+\sum_{j=1}^{m}a_j \ol D_j+ \ol \Delta_v\Big)\\
&\geq \ol K \cdot \bar f^*(L)+ \sum_{j=1}^{m} a_j\ol K\cdot \ol D_j\\
&\geq d\,\deg(L)+\sum_{j=1}^{m}\frac{a_j\ol D_{j}\cdot F}{a_{j}+1}\,\deg(L)\\
&=\frac{d(d+1)}{d+1}\,\deg(L)\geq \frac{d(d+1)}{d+1}\big(p_g(\sF)-1+g(B)\big).
\end{aligned}$$
We used the equality $\sum\limits_{j=1}^{m}a_j\ol D_{j}\cdot F=K_{\sF}\cdot F=d$ above,
and the last inequality follows from \eqref{eqn-4-40}.
This proves \eqref{eqn-1-9}.

\vspace{2mm}
(iii) Since $d=K_{\sF}\cdot F<2g(F)-2$,
by \autoref{prop-4-4}, either $g(B)=0$ or $p_g(\sF)=g(B)\geq 2$.
It remains to prove \eqref{eqn-1-10} if $p_g(\sF)=g(B)\geq 2$.
The proof is complete the same as that of \eqref{eqn-1-9} above.
It suffices to note that in this case $L=K_B$ by \eqref{eqn-4-4},
and hence
$$\deg(L)=2\big(g(B)-1\big)=2\big(p_g(\sF)-1\big).$$
This proves \eqref{eqn-1-10}.
\end{proof}

To end this subsection, we prove some related inequalities when
the foliation is different from the fibration induced by the canonical map.
\begin{lemma}\label{lem-41-2}
	Let $\sF$ be a foliation of general type on $S$ whose canonical map induces a fibration $f:\, S \to B$ as before.
	Suppose that $h^0(F,\mathcal{O}_F(K_{\sF}))=1$ for a general fiber $F$ of $f$.
	Then $\Delta_h$ is contained in the fixed part $Z$ of the complete linear system $|K_{\sF}|$,
	where $\Delta_h\subseteq \Delta$ consists of horizontal components (i.e., intersect positively with $F$),
	and $\Delta$ is the effective divisor in \eqref{eqn-4-5}.
\end{lemma}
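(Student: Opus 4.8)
The plan is to prove the sharper statement $\Delta_h=Z_h$, where $Z=Z_h+Z_v$ (resp.\ $\Delta=\Delta_h+\Delta_v$) is the decomposition of the fixed part $Z$ of $|K_{\sF}|$ (resp.\ of the divisor $\Delta$ of \eqref{eqn-4-5}) into horizontal and vertical parts. Note first that the hypothesis $h^0\big(F,\mathcal{O}_F(K_{\sF})\big)=1$ forces $\sF$ to be different from $\mathcal{G}$: if $\sF=\mathcal{G}$, then contracting the finitely many vertical $\sF$-exceptional curves and using adjunction on a general fibre gives $\mathcal{O}_F(K_{\sF})\cong\mathcal{O}_F(K_F)$, so $h^0\big(F,\mathcal{O}_F(K_{\sF})\big)=g(F)\geq 2$ as $\sF$ is of general type, a contradiction. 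Hence \autoref{prop-4-2} applies and $K_{\sF}=f^*K_B+\Delta$ with $\Delta$ effective. Moreover, since the canonical map induces the fibration $f$, the moving part of $|K_{\sF}|$ is a pullback $f^*L$ from $B$, i.e.\ $|K_{\sF}|=|f^*L|+Z$ with $h^0(B,L)=p_g(\sF)$; in particular every member of $|K_{\sF}|$ has purely vertical moving part, so all members share the same horizontal part $Z_h$.

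Next I would restrict everything to a sufficiently general fibre $F=f^{-1}(b)$. Choosing $b$ outside the finitely many points over which $Z_v$ or $\Delta_v$ live, outside the finite set where some prime component of $Z_h$ or $\Delta_h$ fails to meet the fibre transversally in reduced points, and such that the finitely many curves in $\mathrm{Supp}(Z_h)\cup\mathrm{Supp}(\Delta_h)$ meet $F$ in pairwise disjoint point sets, one obtains two facts. On the one hand, taking any $L'\in|L|$ with $b\notin\mathrm{Supp}(L')$ (possible since $|f^*L|$ is base-point-free), the member $D=f^*L'+Z\in|K_{\sF}|$ restricts to $D|_F=Z_h|_F$. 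On the other hand, from $K_{\sF}=f^*K_B+\Delta$ and $\Delta_v|_F=0$ we get $\mathcal{O}_F(K_{\sF})\cong\mathcal{O}_F(\Delta_h|_F)$, so $\Delta_h|_F$ is an effective divisor in $\big|\mathcal{O}_F(K_{\sF})\big|$. Since this linear system has $h^0=1$, it contains a unique effective divisor, whence $Z_h|_F=\Delta_h|_F$ as $0$-cycles on $F$. By the transversality arranged above, reading off multiplicities at each intersection point shows that $Z_h$ and $\Delta_h$ have the same prime components with the same coefficients, i.e.\ $\Delta_h=Z_h\subseteq Z$.

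The only place the hypothesis $h^0\big(F,\mathcal{O}_F(K_{\sF})\big)=1$ is genuinely used is this identification, on a general fibre, of the restriction of an arbitrary member of $|K_{\sF}|$ with the restriction of the a priori unrelated divisor $\Delta$ coming from the contraction $\alpha$ of \eqref{eqn-4-7}; this is the main point of the proof. When $g(B)\geq 1$ the hypothesis is not needed, since then $f^*D_0+\Delta\in|K_{\sF}|$ for $D_0\in|K_B|$ already exhibits $\Delta_h$ in the fixed part (the argument used in the proof of \autoref{thm-1-4}\,(ii)); so the real content of the lemma is the case $g(B)=0$, which the uniqueness forced by $h^0=1$ settles. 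The passage from ``$Z_h|_F=\Delta_h|_F$ on a general fibre'' to ``$Z_h=\Delta_h$ on $S$'' is then a routine genericity argument, the only care being to pick $F$ transverse to the finitely many horizontal curves in play.
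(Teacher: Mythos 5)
Your proof is correct and follows essentially the same route as the paper's (restrict to a general fiber $F$; since $\mathcal{O}_F(K_{\sF})\cong\mathcal{O}_F(\Delta_h|_F)$ and $h^0(F,\mathcal{O}_F(K_{\sF}))=1$, the restriction $D|_F$ of any $D\in|K_{\sF}|$ coincides with $\Delta_h|_F$, whence $\Delta_h\subseteq Z$). You flesh out the transversality argument more explicitly, pre-emptively verify $\sF\neq\mathcal{G}$ (which the paper leaves implicit as part of the Section~4.2 setup underpinning \eqref{eqn-4-5}), and in fact obtain the slightly sharper equality $Z_h=\Delta_h$, but the underlying idea is the same.
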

\begin{proof}
	By \eqref{eqn-4-5}, we have decomposition
	$$K_{\sF}=f^*K_B+\Delta=f^*K_B+\Delta_h+\Delta_v,$$
	where $\Delta_v\subseteq \Delta$ consists of the vertical components,
	i.e., $C\cdot F=0$ for any component $C\subseteq \Delta_v$.
	Note that $D|_F \in \big|K_{\sF}|_F\big|$ for any effective divisor $D\in |K_{\sF}|$.
	Since $h^0(F,\mathcal{O}_F(K_{\sF}))=1$ for a general fiber $F$ of $f$,
	it follows that $\Delta_h\subseteq Z$ as required.
\end{proof}

\begin{remark}\label{rem-41-1}
	Suppose that $h^0(F,\mathcal{O}_F(K_{\sF}))=1$. Then $g(F)\geq 1$, since $d=K_{\sF}\cdot F \geq 1$ by \eqref{eqn-41-1}.
	If $g(F)\geq 3$, then by the Riemann-Roch theorem, one checks easily that
	$d<2g(F)-2$. Hence \autoref{prop-4-3} and \autoref{prop-4-4} apply in this case. 
	If $g(F)=1$, then $d=1$ again by the Riemann-Roch theorem.
\end{remark}

\begin{corollary}\label{cor-41-1}
	Let $\sF$ be a foliation of general type on $S$ whose canonical map induces a fibration $f:\, S \to B$ as before.
	Suppose that $h^0(F,\mathcal{O}_F(K_{\sF}))=1$ for a general fiber $F$ of $f$.
	\begin{enumerate}[(i)]
		\item If $g(F)\geq 3$, then $p_g(\sF)=g(B)\geq 2$ and
		\begin{equation}\label{eqn-41-9}
			\vol(\sF) \geq \frac{2d(d+2)}{d+1}\big(p_g(\sF)-1\big).
		\end{equation}
		\item If $g(F)=2$, then $d=K_{\sF}\cdot F=1$ or $2$, and
		\begin{equation}\label{eqn-41-9-1}
			\vol(\sF) \geq \left\{\begin{aligned}
			&3\big(p_g(\sF)-1\big), &\quad&\text{if~}d=1;\\[2pt]
			&\frac83\big(p_g(\sF)-1\big), &\quad&\text{if~}d=2.
			\end{aligned}\right.
		\end{equation}
		\item If $g(F)=1$, then $d=K_{\sF}\cdot F=1$ and
		\begin{equation}\label{eqn-41-10}
			\vol(\sF) \geq \frac{3}{2}\big(p_g(\sF)-1\big).
		\end{equation}
	\end{enumerate}
\end{corollary}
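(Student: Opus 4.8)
The plan is to deduce all three estimates from the structure results of \autoref{sec-foliation-different} (Propositions~\ref{prop-4-2}, \ref{prop-4-3}, \ref{prop-4-4}, \autoref{lem-41-2}, \autoref{rem-41-1}) together with the arguments already developed for \autoref{thm-1-3} and \autoref{thm-1-4}. First I would observe that in all three cases $\wt\sF\neq\mathcal{G}$: if $g(F)\geq 2$ then $h^0\big(F,\mathcal{O}_F(K_\sF)\big)=1<g(F)=h^0(F,K_F)$ excludes $K_\sF|_F\cong K_F$, and if $g(F)=1$ then $\wt\sF=\mathcal{G}$ would force $d=2g(F)-2=0$, contradicting \eqref{eqn-41-1}; hence \autoref{prop-4-2} applies, giving $K_\sF=f^*K_B+\Delta$ with $\Delta$ effective. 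Next I would pin down $d$ on a general fibre: Riemann--Roch on an elliptic curve gives $d=1$ when $g(F)=1$ (case (iii)), and when $g(F)=2$, $d\geq 3$ would give $\deg(K_\sF|_F)\geq 2g(F)-1$, hence $h^0\geq 2$, so $d\leq 2$ and $d\in\{1,2\}$ by \eqref{eqn-41-1} (case (ii)); in case (i), \autoref{rem-41-1} gives $d<2g(F)-2$.

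The key new input is that $h^0\big(F,\mathcal{O}_F(K_\sF)\big)=1$ forces the horizontal part $Z_h$ of the fixed divisor $Z$ of $|K_\sF|$ to equal $\Delta_h$. Indeed \autoref{lem-41-2} gives $\Delta_h\subseteq Z$, hence $\Delta_h\leq Z_h$; conversely, for a general fibre $F$ the system $|K_\sF|_F|$ is $0$-dimensional and equal to $\{\Delta_h|_F\}$ (since $\Delta|_F\sim K_\sF|_F$ and $\Delta_v|_F=0$), so every $D\in|K_\sF|$ satisfies $Z_h|_F\leq D|_F=\Delta_h|_F$, whence $Z_h\leq\Delta_h$. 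Thus $Z_h=\Delta_h$, and by \autoref{prop-4-2} no component of it is $\sF$-invariant. With this, the computation in the proof of \autoref{thm-1-3} can be rerun: contracting the negative part $N$ of the Zariski decomposition of $K_\sF$ (it is vertical, because its components are $\sF$-invariant while $Z_h=\Delta_h$ is not) one obtains $\bar f:X\to B$ and a nef divisor $\ol K$ with $\ol K^2=\vol(\sF)$ and $\ol K=\bar f^*\mathcal{M}+\sum_j a_j\ol D_j+(\text{vertical})$, the $\ol D_j$ not $\sF$-invariant with $\sum_j a_j(\ol D_j\cdot F)=d$; here $M=f^*\mathcal{M}$ is the moving part of $|K_\sF|$, with $\deg\mathcal{M}\geq p_g(\sF)-1$. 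The index inequality $(\ol K+\ol D_{j_0})\cdot\ol D_{j_0}\geq 0$ (as in the proofs of \autoref{thm-1-3} and \autoref{thm-1-4}) gives $\ol K\cdot\ol D_{j_0}\geq\frac{\deg\mathcal{M}}{a_{j_0}+1}(\ol D_{j_0}\cdot F)$, and then, using $a_j\leq a_j(\ol D_j\cdot F)$ and the subadditivity of $t\mapsto t/(t+1)$,
\begin{equation*}
\vol(\sF)=\ol K^2\geq \deg(\mathcal{M})\,d+\deg(\mathcal{M})\sum_j\frac{a_j(\ol D_j\cdot F)}{a_j+1}\geq \frac{d(d+2)}{d+1}\,\deg(\mathcal{M}).
\end{equation*}

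To finish, in cases (i) and (ii) with $d=1$ we have $d<2g(F)-2$, so \autoref{prop-4-3} gives $f_*\mathcal{O}_S(\Delta_v)=\mathcal{O}_B$; together with $Z_h=\Delta_h$ this yields $p_g(\sF)=h^0(S,K_\sF-\Delta_h)=h^0\big(B,\mathcal{O}_B(K_B)\otimes f_*\mathcal{O}_S(\Delta_v)\big)=g(B)$, which rules out $g(B)=0$ (else $p_g(\sF)=0$); hence by \autoref{prop-4-4} we are in the case $p_g(\sF)=g(B)\geq 2$, and \autoref{thm-1-4}(iii) applies directly, giving \eqref{eqn-41-9} and the $d=1$ part of \eqref{eqn-41-9-1}. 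In the remaining cases, (ii) with $d=2$ and (iii), if $g(B)\geq 1$ then \autoref{thm-1-4}(ii) gives $\vol(\sF)\geq\frac{d(d+2)}{d+1}\big(p_g(\sF)-1+g(B)\big)$, which already exceeds the claimed bound; if $g(B)=0$ then $\deg\mathcal{M}=p_g(\sF)-1$ and the displayed inequality gives $\vol(\sF)\geq\frac{d(d+2)}{d+1}\big(p_g(\sF)-1\big)$, i.e. \eqref{eqn-41-10} and the $d=2$ part of \eqref{eqn-41-9-1}.

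I expect the main obstacle to be making the displayed volume estimate fully rigorous in the residual case $g(B)=0$: one must either pass to the relatively minimal model (controlling the possible drop of $K_\sF\cdot F$) or argue directly on $S$ that the negative part of the Zariski decomposition is vertical, and in either situation verify that the horizontal curves $\ol D_j$ remain non-$\sF$-invariant after the contraction. This amounts to repeating, and slightly adapting, the most delicate bookkeeping in the proofs of \autoref{thm-1-3} and \autoref{thm-1-4}, now with the extra ingredient $Z_h=\Delta_h$.
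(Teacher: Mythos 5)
Your proposal is correct and follows essentially the same route as the paper: reduce to the structural results of Section~4.2 (Propositions~\ref{prop-4-2}--\ref{prop-4-4}, \autoref{lem-41-2}, \autoref{rem-41-1}), use Lemma~\ref{lem-41-2} to identify the horizontal part of the fixed divisor with $\Delta_h$, deduce $p_g(\sF)=g(B)\geq 2$ from $f_*\mathcal{O}_S(\Delta_v)=\mathcal{O}_B$ when $d<2g(F)-2$, and otherwise rerun the Zariski-decomposition/index computation of \autoref{thm-1-4}(ii) with $\deg L\geq p_g(\sF)-1$ in place of $\deg L\geq p_g(\sF)-1+g(B)$. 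The caveat you raise at the end about the residual case $g(B)=0$ (verticality of the negative part, behaviour under contraction to a relatively minimal model) is a real technicality, but note it afflicts the paper's own proof equally, which simply says ``by a similar argument\ldots we leave the details to the readers''; your explicit identification $Z_h=\Delta_h$ together with the fact that no component of $\Delta_h$ is $\sF$-invariant (\autoref{prop-4-2}) is exactly the ingredient needed to resolve it, since $\sF$-chains in the negative part are $\sF$-invariant.
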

\begin{proof}
	(i) By \autoref{lem-41-2}, one has
	$$p_g(\sF)=h^0(S,K_{\sF})=h^0(S,f^*K_B+\Delta_v)=h^0\big(B,\mathcal{O}_B(K_B)\otimes f_*\mathcal{O}_S(\Delta_v)\big).$$
	If $g(F)\geq 3$, then $d=K_{\sF}\cdot F < 2g(F)-2$ by \autoref{rem-41-1}.
	Hence according to \autoref{prop-4-3}, \autoref{prop-4-4} and their proofs,
	$f_*\mathcal{O}_S(\Delta_v) =\mathcal{O}_B$.
	Therefore,
	$$p_g(\sF)=h^0(S,K_{\sF})=h^0\big(B,\mathcal{O}_B(K_B)\otimes f_*\mathcal{O}_S(\Delta_v)\big)=h^0\big(B,\mathcal{O}_B(K_B)\big)=g(B).$$
	In particular, $p_g(\sF)=g(B)\geq 2$,
	and \eqref{eqn-41-9} holds by \autoref{thm-1-4}\,(ii).
	
	(ii) Since $g(F)=2$, according to the Riemann-Roch theorem, $d=K_{\sF}\cdot F=1$ or $2$.
	If $d=1$, then $d<2g(F)-2=2$. Hence by a similar argument as in (i), one shows that
	$$\vol(\sF) \geq \frac{2d(d+2)}{d+1}\big(p_g(\sF)-1\big)=3\big(p_g(\sF)-1\big).$$
	Next we assume that $d=2$.
	Note that any component in the horizontal part $\Delta_h$ is not $\sF$-invariant by \autoref{prop-4-2}.
	Hence by a similar argument as the proof of \eqref{eqn-1-9} in \autoref{thm-1-4},
	one proves that
	$$\vol(\sF) \geq \frac{d(d+2)}{d+1}\big(p_g(\sF)-1\big)=\frac{8}{3}\big(p_g(\sF)-1\big).$$
	We leave the details to the readers.
	
	(iii) If $g(F)=1$, then $d=K_{\sF}\cdot F=1$ by \autoref{rem-41-1},
	i.e., $\Delta_h=C$ is a section of $f$.
	Hence
	$$K_{\sF}=f^*K_B+C+\Delta_v.$$
	Moreover, $\Delta_h=C$ is not $\sF$-invariant by \autoref{prop-4-2}.
	Hence again by a similar argument as the proof of \eqref{eqn-1-9} in \autoref{thm-1-4},
	one proves that
	\[\vol(\sF) \geq \frac{d(d+2)}{d+1}\big(p_g(\sF)-1\big)=\frac{3}{2}\big(p_g(\sF)-1\big).\]
	This completes the proof.
\end{proof}

A special case which satisfies the assumption that $h^0(F,\mathcal{O}_F(K_{\sF}))=1$ is when $g(F)\geq 1$ and $d=K_{\sF}\cdot F=1$.
\begin{corollary}\label{cor-41-2}
	Let $\sF$ be a foliation of general type on $S$ whose canonical map induces a fibration $f:\, S \to B$ as before.
	Suppose that $g(F)\geq 1$ and $d=K_{\sF}\cdot F=1$ for a general fiber $F$ of $f$.
	\begin{enumerate}[(i)]
		\item If $g(F)\geq 2$, then
		\begin{equation*}
		\vol(\sF) \geq 3\big(p_g(\sF)-1\big).
		\end{equation*}
		\item If $g(F)=1$, then
		\begin{equation*}
		\vol(\sF) \geq \frac{3}{2}\big(p_g(\sF)-1\big).
		\end{equation*}
	\end{enumerate}
\end{corollary}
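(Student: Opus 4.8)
The plan is to derive this corollary directly from \autoref{cor-41-1}: it is precisely the special case $d=K_{\sF}\cdot F=1$, so the only thing to verify is that the hypothesis $h^0\bigl(F,\mathcal{O}_F(K_{\sF})\bigr)=1$ of \autoref{cor-41-1} holds automatically under the present assumptions, after which the stated bounds fall out of the three estimates of \autoref{cor-41-1}.

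First I would check that $h^0\bigl(F,\mathcal{O}_F(K_{\sF})\bigr)=1$. On the one hand, $\deg\mathcal{O}_F(K_{\sF})=K_{\sF}\cdot F=d=1$, and since $g(F)\geq 1$ a degree-one line bundle on $F$ has at most one section: two independent sections would define a degree-one morphism $F\to\bbp^1$, forcing $g(F)=0$. On the other hand, since the canonical map induces the fibration $f$, we may write $K_{\sF}=f^*(L)+Z$ with $Z$ the fixed part of $|K_{\sF}|$; restricting to a general fiber $F$, which is not a component of $Z$, gives $\mathcal{O}_F(K_{\sF})\cong\mathcal{O}_F(Z|_F)$ with $Z|_F\geq 0$, so $h^0\geq 1$. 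Hence $h^0\bigl(F,\mathcal{O}_F(K_{\sF})\bigr)=1$, as needed. (Note also that $\sF$ is necessarily different from the fibration foliation $\mathcal{G}$ here, since by \autoref{thm-1-3} the equality $\wt\sF=\mathcal{G}$ would force $d=2g(F)-2$, which is incompatible with $g(F)\geq 1$ and $d=1$; this is consistent with the running assumption of this subsection.)

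It then remains to unwind the cases of \autoref{cor-41-1}. For part (i), assume $g(F)\geq 2$. If $g(F)\geq 3$, then \autoref{cor-41-1}(i) gives
$$\vol(\sF)\geq\frac{2d(d+2)}{d+1}\bigl(p_g(\sF)-1\bigr)=3\bigl(p_g(\sF)-1\bigr),$$
using $d=1$. If $g(F)=2$, then $d=1$ places us in the first line of \autoref{cor-41-1}(ii), giving again $\vol(\sF)\geq 3\bigl(p_g(\sF)-1\bigr)$. This proves (i). For part (ii), if $g(F)=1$ then $d=1$, and \autoref{cor-41-1}(iii) yields directly $\vol(\sF)\geq\tfrac{3}{2}\bigl(p_g(\sF)-1\bigr)$.

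There is essentially no obstacle: the only delicate point is excluding the degenerate possibility $h^0\bigl(F,\mathcal{O}_F(K_{\sF})\bigr)=0$, which is ruled out above by the fact that the canonical map factors through $f$, so that $K_{\sF}$ restricts to an effective divisor on a general fiber; the rest is a routine matching of cases with \autoref{cor-41-1}.
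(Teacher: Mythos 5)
Your proposal is correct and takes essentially the same route the paper intends: the remark just before \autoref{cor-41-2} indicates that $g(F)\geq 1$ and $d=1$ force $h^0(F,\mathcal{O}_F(K_{\sF}))=1$, and the corollary is then obtained by substituting $d=1$ into the three cases of \autoref{cor-41-1}. You have spelled out both halves of the $h^0=1$ verification (the upper bound via $g(F)\geq 1$ and the lower bound via effectivity of $Z|_F$ after restricting $K_{\sF}=f^*(L)+Z$ to a general fiber), and your case-matching with \autoref{cor-41-1} is accurate.
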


\subsection{Examples of foliations with canonical fibrations}\label{sec-exam-4}
In this subsection, we construct examples of foliations whose canonical maps induce fibrations.
\autoref{exam-6-4} provides a reduced foliated surface $(S,\sF)$ of general type
whose canonical map induces a fibration $f:\, S \to B$ such that $\sF$ is the same as the foliation $\mathcal{G}$ defined by taking the saturation of $\ker(\df:\,T_{S} \to f^*T_{B})$ in $T_{S}$;
while \autoref{exam-6-5} provides a reduced foliated surface $(S,\sF)$ of general type
whose canonical map induces a fibration $f:\, S \to B$ with $\sF\neq \mathcal{G}$.
\begin{example}\label{exam-6-4}
	For any genus $g\geq 2$, we construct a foliated surface $(S,\sF)$ of general type such that
	\begin{enumerate}[(i)]
		\item $\vol(\sF)=2(g-1)gn$ and $p_g(\sF)=n$ for some even integer $n\geq 2$;
		\item the canonical map $\varphi$ induces a fibration
		$f:\,S \to B$;
		\item the foliation $\sF$ is the same as the foliation defined by taking the saturation of $\ker(\df:\,T_{S} \to f^*T_{B})$ in $T_{S}$.
	\end{enumerate} 
\end{example}

Let $n\geq 2$ be an even number, and $Y=\mathbb{P}_{\mathbb{P}^1}\big(\mathcal{O}_{\mathbb{P}^1} \oplus \mathcal{O}_{\mathbb{P}^1}(n)\big)$ be the Hirzebruch surface with the ruling $h:\,Y \to \mathbb{P}^1$.
Denote by $\Gamma$ (resp. $C_0$) a general fiber of $h$ (resp. the unique section of $h$ with $C_0^2=-n$).
Then the linear systemn $|C_0+n\Gamma|$ is base-point-free, cf. \cite[\S\,V.2]{har-77}.
Let
$$C_1,\cdots,C_{2g+1} \in |C_0+n\Gamma|,$$
be $(2g+1)$ general elements.
Note that none of $C_i$'s intersects $C_0$ for $1\leq i \leq 2g+1$, and we may assume that the divisor $R_0=C_0+C_1+\cdots+C_{2g+1}$ is normal crossing.
Let $\Gamma_1,\cdots,\Gamma_m$ be $m$ fibers of $g$, none of which passing through the singularities of $R_0$,
where $m=(2g-3)n+2$.
Then the divisor
$$R:=R_0+\Gamma_1+\cdots+\Gamma_m ~\sim~ 2\Big((g+1)C_0+\big((2g-1)n+1\big)\Gamma\Big).$$
Hence it defines a double cover $\pi_0:\, S_0 \to Y$ branched exactly over $R=R_0+\Gamma_1+\cdots+\Gamma_m$.
Let $\rho:\,S \to S_0$ be the minimal desingularization, and $f:\,S \to \bbp^1$ be the induced fibration:
$$\xymatrix{S \ar[rr]_-{\rho} \ar@/^4mm/"1,5"^-{\pi} \ar[drr]_-{f} && S_0 \ar[d] \ar[rr]_-{\pi_0} && Y \ar[dll]^-{h}\\
	&&\bbp^1 &&}$$
The fibration $f$ induces a foliation $\sF$ on $S$ by taking the saturation of $\ker(\df:\,T_{S} \to f^*T_{\bbp^1})$ in $T_{S}$.
We want to show that the foliation $\sF$ satisfies our requirements.

By the construction of the branch divisor $R$ above,
it has only nodes as its singularities.
It follows that
$$K_{S/\bbp^1}=\pi^*(K_{Y/\bbp^1}+R/2)\sim \pi^*\Big((g-1)C_0+\big((g-1/2)n+m/2\big)\Gamma\Big).$$
Moreover, by the construction,
all the fibers of $f$ are semi-stable except these fibers over $p_i=h(\Gamma_i)$'s,
and the fiber $F_i=f^{-1}(p_i)$ has the following dual graph for any $1\leq i \leq m$:
{\upshape
	\begin{center}
		\begin{tikzpicture}
			[  place/.style={circle,draw,inner sep=0.5mm},
			place2/.style={circle,draw,fill,inner sep=0.2mm},]
			
			\node[place] (v1) at (0,1.5)  [label=left:{$F_{i,0}$}]  {};
			\node[place] (v2)  at (2,0)   {};
			\node[place] (v3) at (2,2)    {};
			\node[place] (v4) at (2,3)    {};
			
			\node[place2]   at (2,1) {};
			\node[place2]   at (2,0.7) {};
			\node[place2] at (2,1.3)    {};
			
			\node[right=2pt] at (v2)  { ${F_{i,2g+2}}$};
			\node[right=2pt] at (v3)   { ${F_{i,2}}$};
			\node[right=2pt] at (v4)   {${F_{i,1}}$};
			
			\draw (v1)--(v2); \draw (v1)--(v3); \draw (v1)--(v4);
		\end{tikzpicture}
\end{center}}
\vspace{-2mm}
\noindent
In the above, $F_{i,0}$ is of multiplicity two, and the other $F_{i,j}$'s are all of multiplicity one for $1\leq j \leq 2g+2$.
Moreover, $F_{i,0}^2=-(g+1)$ and $F_{i,j}^2=-2$ for $1\leq j \leq 2g+2$.
It follows that
$$K_{\sF}=K_{S/\bbp^1}-\sum_{i=1}^{m}F_{i,0}.$$
One checks directly that the Zariski decomposition of $K_{\sF}$ is as follows,
$$K_{\sF}=\bigg(K_{S/\bbp^1}-\sum_{i=1}^{m}\Big(F_{i,0}+\sum_{j=1}^{2g+2}\frac{F_{i,j}}{2}\Big)\bigg)+\frac12\sum_{i=1}^{m}\sum_{j=1}^{2g+2}F_{i,j}.$$
Note that
$$P:=K_{S/\bbp^1}-\sum_{i=1}^{m}\Big(F_{i,0}+\sum_{j=1}^{2g+2}\frac{F_{i,j}}{2}\Big)=K_{S/\bbp^1}-\sum_{i=1}^{m}\frac{F_i}{2}
\sim \pi^*\Big((g-1)C_0+(g-1/2)n\Gamma\Big),$$
where $F_i=f^{-1}(p_i)$ is the fiber over $p_i$.
Hence
$$\vol(\sF)=2\Big((g-1)C_0+(g-1/2)n\Gamma\Big)^2=2(g-1)gn.$$
We next compute the space $H^0(S,K_{\sF})$.
Since each $F_{i,j}$ is contained in the support of the negative part of $K_{\sF}$,
it is contained in the fixed part of $|K_{\sF}|$,
for any $1\leq i \leq m$ and $1\leq j \leq 2g+2$.
Hence
$$|K_{\sF}|=\Big|K_{\sF}-\sum_{i=1}^{m}\sum_{j=1}^{2g+2}F_{i,j}\Big|+\sum_{i=1}^{m}\sum_{j=1}^{2g+2}F_{i,j}
=\Big|K_{S/\bbp^1}-\sum_{i=1}^{m}\sum_{j=0}^{2g+2}F_{i,j}\Big|+\sum_{i=1}^{m}\sum_{j=1}^{2g+2}F_{i,j}.$$
For any $1\leq i \leq m$,
$$F_{i,0}\cdot \Big(K_{S/\bbp^1}-\sum_{i=1}^{m}\sum_{j=0}^{2g+2}F_{i,j}\Big)=
F_{i,0}\cdot K_{S/\bbp^1} -F_{i,0}^2-\sum_{j=1}^{2g+2}F_{i,0}\cdot F_{i,j}=-2<0.$$
It follows that each $F_{i,0}$ is contained in the fixed part of the complete system
$\Big|K_{S/\bbp^1}-\sum\limits_{i=1}^{m}\sum\limits_{j=0}^{2g+2}F_{i,j}\Big|$
for $1\leq i \leq m$.
Therefore,
$$\begin{aligned}
	|K_{\sF}|\,&=\Big|K_{S/\bbp^1}-\sum_{i=1}^{m}\sum_{j=0}^{2g+2}F_{i,j}\Big|+\sum_{i=1}^{m}\sum_{j=1}^{2g+2}F_{i,j}\\
	&=\Big|K_{S/\bbp^1}-\sum_{i=1}^{m}\sum_{j=0}^{2g+2}F_{i,j}-\sum_{i=1}^{m}F_{i,0}\Big|+\sum_{i=1}^{m}\sum_{j=1}^{2g+2}F_{i,j}+\sum_{i=1}^{m}F_{i,0}\\
	&=\Big|K_{S/\bbp^1}-\sum_{i=1}^{m}F_i\Big|+\sum_{i=1}^{m}\sum_{j=0}^{2g+2}F_{i,j}\\
	&=\Big|\pi^*\Big((g-1)C_0+\big((g-1/2)n-m/2\big)\Gamma\Big)\Big|+\sum_{i=1}^{m}\sum_{j=0}^{2g+2}F_{i,j}\\
	&=\Big|\pi^*\Big((g-1)C_0+(n-1)\Gamma\Big)\Big|+\sum_{i=1}^{m}\sum_{j=0}^{2g+2}F_{i,j}.
\end{aligned}$$
Note that
$$\pi_*\mathcal{O}_S=(\pi_0)_*\mathcal{O}_{S_0}=\mathcal{O}_Y \oplus \mathcal{O}_Y\Big(-(g+1)C_0-\big((2g-1)n+1\big)\Gamma\Big).$$
Hence
$$\begin{aligned}
	p_g(\sF)=h^0(S,K_{\sF})&\,=h^0\Big(S,\mathcal{O}_S\Big(\pi^*\big((g-1)C_0+(n-1)\Gamma\big)\Big)\\
	&\,=h^0\Big(Y,\mathcal{O}_Y\big((g-1)C_0+(n-1)\Gamma\big)
	\otimes\pi_*\mathcal{O}_S\Big)\\
	&\,=h^0\Big(Y,\mathcal{O}_Y\big((g-1)C_0+(n-1)\Gamma\big)\Big)\\
	&\,=h^0\Big(Y,\mathcal{O}_Y\big((n-1)\Gamma\big)\Big)=n.
\end{aligned}$$
In particular,
$$\begin{aligned}
	|K_{\sF}|&\,=\Big|\pi^*\Big((g-1)C_0+(n-1)\Gamma\Big)\Big|+\sum_{i=1}^{m}\sum_{j=0}^{2g+2}F_{i,j}\\
	&\,=\pi^*\big|(g-1)C_0+(n-1)\Gamma\big|+\sum_{i=1}^{m}\sum_{j=0}^{2g+2}F_{i,j}\\
	&\,=\pi^*\big|(n-1)\Gamma\big|+\pi^*\big((g-1)C_0\big)+\sum_{i=1}^{m}\sum_{j=0}^{2g+2}F_{i,j}.
\end{aligned}$$
This means that the canonical map factorizes through $\pi$ as $\varphi=\psi\circ\pi$,
where $\psi:=\varphi_{|(n-1)\Gamma|}$ is the map defined by the complete linear system $|(n-1)\Gamma|$, which is the same as the fibration $h:\,Y \to \bbp^1$.
In other words, the canonical map $\varphi$ induces the fibration $f:\,S \to \bbp^1$ as required.

\begin{example}\label{exam-6-5}
	In this example, we construct a foliated surface $(S,\sF)$ of general type,
	whose canonical map $\varphi$ induces a fibration $f:\, S \to B$
	such that
	\begin{enumerate}[(i)]
		\item $\sF$ is algebraically integrable, and $S$ is also of general type;
		\item $g(B)\geq 2$, $g(F)=g\geq 2$, and (where $F$ is a general fiber of $f$);
		\item $\sF$ is different from the foliation defined by taking the saturation of $\ker(\df:\,T_{S} \to f^*T_{B})$ in $T_{S}$;
		\item $K_{\sF}\cdot F=1$, and
		\begin{equation}\label{eqn-6-5}
				\vol(\sF) = 4p_g(\sF)-4=4g(B)-4.
		\end{equation}
	\end{enumerate}

	\vspace{2mm}
	Let $B$ be a curve of genus $g(B)\geq 2$, and $\psi:\,B \to \bbp^1$ is a finite cover of degree $2m$ with only simply ramified points, i.e., the ramification indices are all equal to $2$.
	Let $\hat{Y}=\bbp^1 \times B$, with two projections $\hat{\gamma}_1,\hat{\gamma}_2$.
	The curve $B$ can be embedded into $\hat{Y}$ by mapping $x\in B$ to $(\psi(x),x)\in \hat{Y}$ as a section of $\hat{\gamma}_2$:
	$$\xymatrix{B\, \ar@{^(->}[rr]^{i} \ar[drr]_-{\psi} &&\hat{Y} \ar[rr]^-{\hat{\gamma}_2} \ar[d]^-{\hat{\gamma}_1} && B\\
		&&\bbp^1}$$
	
	Denote by $\hat{D}=i(B)\subseteq \hat{Y}$, and let $\hat\Gamma_i=\hat{\gamma}_1^{-1}(p_i)$'s ($1\leq i \leq 2g+1$) be $2g+1$ general fibers of $\hat{\gamma}_1$, where $g\geq 0$.
	As $\hat{Y}=\bbp^1 \times B$ is a ruled surface, one checks easily that
	$\hat{R}=\hat{D}+\sum\limits_{i=1}^{2g+1}\hat\Gamma_i$ is an even divisor, and hence one can construct a double cover
	$\hat{\pi}:\, \hat{S} \to \hat{Y}$ branched exactly over $\hat{R}$.
	The surface $\hat{S}$ is singular.
	In fact, each $\hat\Gamma_i$ intersects $\hat{D}$ transversely at $2m$ points, since $\hat\Gamma_i$ is general.
	One can perform a canonical resolution to resolve the singularities on $\hat{S}$, cf. \cite[\S\,V.22]{bhpv}.
	Let $\sigma:\, Y \to \hat{Y}$ be the birational map by blowing up these intersection points $\hat{D}\bigcap \sum\limits_{i=1}^{2g+1}\hat\Gamma_i$, and let $\pi:\,S \to Y$ be the induced double cover.
	Then $S$ would be a smooth surface with following diagram.
	$$\xymatrix{S \ar@/^6mm/"1,5"^-{f} \ar[d]^-{\rho} \ar@/_12mm/"3,2"_-{h} \ar[rr]_-{\pi} && Y \ar[d]_-{\sigma} \ar[rr]_-{\gamma_2}&&B\\
		\hat{S} \ar[rr]^-{\hat{\pi}} \ar[dr]^-{\gamma_1} && \hat{Y}  \ar[rru]_-{\hat{\gamma}_2} \ar[dl]^-{\hat{\gamma}_1} \\
		&\bbp^1}$$		
	The fibration $h:\,S \to \bbp^1$ defines a foliation $\sF$ on $S$ by
	taking the saturation of $\ker(\mathrm{d}h:\,T_{S} \to h^*T_{\bbp^1})$ in $T_{S}$.
	We want to check that the foliation $\sF$ satisfies our requirements.
	
	\vspace{2mm}	
	Let $D=\sigma_*^{-1}(\hat D)$ and $C=(\sigma\circ\pi)_*^{-1}(\hat D)$ be the strict transforms of $\hat{D}$ in $Y$ and $S$ respectively.
	Denote by $\Gamma_i\subseteq Y$ (resp. $\Delta_i\subseteq S$) the strict transform of $\hat\Gamma_i$ in $Y$ (resp. $S$).
	Let $\mathcal{E}_i$ be the union of exceptional curves intersecting $\Gamma_i$
	($\mathcal{E}_i$ consists of $2m$ components).
	Then
	$$K_Y=\sigma^*(K_{\hat{Y}})+\sum_{i=1}^{2g+1}\mathcal{E}_i,\qquad
	\Gamma_i=\sigma^*(\hat\Gamma_i)-\mathcal{E}_i,\qquad
	D=\sigma^*(\hat{D})-\sum_{i=1}^{2g+1}\mathcal{E}_i.$$
	Hence
	\begin{equation}\label{eqn-6-7}
		K_S=\pi^*(K_Y)+C+\sum_{i=1}^{2g+1}\Delta_i=(\sigma\circ\pi)^*\Big(K_{\hat{Y}}+\frac{1}{2}\big(\hat{D}+\sum\limits_{i=1}^{2g+1}\hat\Gamma_i\big)\Big).
	\end{equation}
	By construction, any singular fiber of $h$ is normal crossing.
	Hence the foliation $\sF$ defined by
	taking the saturation of $\ker(\mathrm{d}h:\,T_{S} \to h^*T_{\bbp^1})$ in $T_{S}$ is reduced.
	Moreover, the curves $\Delta_i$'s are of multiplicity equal to two in fibers of $h$,
	and all the other components in fibers of $h$ are of multiplicity one.
	It follows that
	$$\begin{aligned}
		K_{\sF}=K_{S/\bbp^1}-\sum_{i=1}^{2g+1}\Delta_i=\pi^*(K_{Y/\bbp^1})+C
		&\,=(\sigma\circ\pi)^*(K_{\hat{Y}/\bbp^1})+\sum_{i=1}^{2g+1}\ol{\mathcal{E}}_i+C\\
		&\,=(\hat \gamma_2\circ\sigma\circ\pi)^*(K_B)+\sum_{i=1}^{2g+1}\ol{\mathcal{E}}_i+C\\
		&\,=f^*K_B+\sum_{i=1}^{2g+1}\ol{\mathcal{E}}_i+C,
	\end{aligned}$$
	where $\ol{\mathcal{E}}_i\subseteq S$ is the strict transform of $\mathcal{E}_i$.
	Let $F$ be a general fiber $f=f_2:\,S \to B$.
	Then $\pi(F)\cong \bbp^1$ and $F$ is double cover of $\pi(F)$ branched over $2g+2$ points
	(the branched divisor is $\big(D+\sum\limits_{i=1}^{2g+1}\Gamma_i\big) \cap \pi(F)$).
	By the Hurwitz formula, one obtains
	$$g(F)=g.$$
	Moreover,
	$$K_{\sF} \cdot F=\Big(f^*K_B+\sum_{i=1}^{2g+1}\ol{\mathcal{E}}_i+C\Big)\cdot F = C \cdot F=1.$$
	Since $C\cdot F=1$ and $g(F)=g\geq 2$, the curve $C$ must be contained in the fixed part of the linear system $|K_{\sF}|$.
	It follows that
	$$p_g(\sF)=h^0(S,K_{\sF})=h^0\Big(S,f^*K_B+\sum_{i=1}^{2g+1}\ol{\mathcal{E}}_i\Big)=h^0(B,K_B)=g(B).$$
	In fact, one proves moreover that
	$$|K_{\sF}| = |f^*K_B|+\Delta,$$
	where $\Delta=\sum\limits_{i=1}^{2g+1}\ol{\mathcal{E}}_i+C$ is the fixed part of $|K_{\sF}|$.
	In particular, the canonical map $\varphi=\varphi_{|K_{\sF}|}$ is nothing but the same as the map
	$\varphi_{|f^*K_B|}$ defined by the linear system $|f^*K_B|$.
	Hence the canonical map $\varphi$ induces the fibration $f:\,S \to B$ as required.
	It remains to check the equality
	\begin{equation}\label{eqn-6-6}
		\vol(\sF) = 4g(B)-4.
	\end{equation}
	To this purpose, we first compute the Zariski decomposition of $K_{\sF}$.
	By construction, every irreducible component in $\sum\limits_{i=1}^{2g+1}\ol{\mathcal{E}}_i$ is a smooth rational curve with self-intersection $-2$.
	Moreover,
	$$(\sigma\circ\pi)^*(\hat D)=\pi^*\Big(D+\sum_{i=1}^{2g+1}\mathcal{E}_i\Big)=2C+\sum_{i=1}^{2g+1}\ol{\mathcal{E}}_i.$$
	Hence
	$$K_{\sF}=f^*K_B+\sum_{i=1}^{2g+1}\ol{\mathcal{E}}_i+C=P+N,$$
	where
	$$P=(\sigma\circ\pi)^*\Big(\hat \gamma_2^*(K_B)+\frac{\hat D}{2}\Big),\qquad N=\frac{1}{2}\sum_{i=1}^{2g+1}\ol{\mathcal{E}}_i.$$
	One checks easily that this is the Zariski decomposition of $K_{\sF}$.
	Hence
	$$\vol(\sF)=P^2=2\Big(\hat \gamma_2^*(K_B)+\frac{\hat D}{2}\Big)^2=2(2g(B)-2).$$
	This proves \eqref{eqn-6-6}.
	Finally, By \eqref{eqn-6-7}, one computes that
	$$K_S^2=4(g-1)\big(2g(B)-2+m\big)>0.$$
	In particular, $S$ is of general type.
\end{example}

\begin{remark}\label{rem-4-1}
	(i) By a suitable base change of the above example, one can
	one obtains a foliated surface $(S_d,\sF_d)$ whose canonical map induces
	a fibration $f_d:\, S_d \to B$ such that
	$\sF_d$ is different from the foliation defined by taking the saturation of $\ker\big(\df_d:\,T_{S_d} \to f_d^*(T_{B})\big)$ in $T_{S_d}$,
	and that $K_{\sF_d}\cdot F_d=d$ for any $d\geq 2$,
	where $F_d$ is a general fiber of $f_d$.
	
	Indeed, let $\call=\mathcal{O}_{\bbp^1}(\delta_d)$ be a line bundle with $\delta_d$ sufficiently large, and $R_d\in |\mathcal{L}^{\otimes d}|$ be a reduced divisor
	such that the fibers of $h$ over $R_d$ are all smooth.
	Then one can construct a cyclic cover $\psi_d:\,C_d \to \bbp^1$ branched exactly over $R_d$.
	Let $S_d=S \times_{\bbp^1} C_d$ be the fiber-product with the following diagram
	$$\xymatrix{
		S_d \ar@/^6mm/"1,5"^-{f_d} \ar[rr]_-{\Psi_d} \ar[d]_-{h_d} && S \ar[d]^-{h} \ar[rr]_-{f} && B\\
	C_d \ar[rr]^-{\psi_d} && \bbp^1}$$
Since the foliation $\sF$ is induced by the fibration $h:\, S \to \bbp^1$,
it follows that the pulling-back foliation $\sF_d=\Psi_d^*\sF$ is still reduced with
$K_{\sF_d}=\Psi_d^*(K_{\sF})$.
Hence
$$\vol(\sF_d)=d\vol(\sF),\qquad K_{\sF_d} \cdot F_d=d,\quad\text{where $F_d$ is a general fiber of $f_d$}.$$
Moreover, by the construction,
$$p_g(\sF_d)=h^0(S_d,K_{\sF_d})=h^0\big(S,K_{\sF}\otimes (\Psi_d)_*\mathcal{O}_{S_d}\big)=h^0(S,K_{\sF})=p_g(\sF).$$
In other words, the canonical map $\varphi_{|K_{\sF_d}|}=\varphi_{|K_{\sF}|}\circ \Psi_d$,
and hence it induces the fibration $f_d=f\circ \Psi_d:\, S_d \to B$ as required.

\vspace{2mm}
(ii) The foliated surfaces in \cite[Example\,9.2]{lt-24} provide examples reaching the equality in \eqref{eqn-1-8} for $d=K_{\wt\sF}\cdot F=1$.
\end{remark}

\section{The Noether type inequalities}\label{sec-noether}
The main purpose of this section is to prove the Noether type inequalities for a foliated surface of general type as stated in \autoref{thm-1-5}.
\begin{proof}[Proof of \autoref{thm-1-5}]
	(i) This has been proved in \cite[Theorem\,1.9(i)]{lt-24}.
	One can also deduce it from our study on the canonical map of a foliated
	surface of general type.
	Indeed, when the canonical map $\varphi$ induces a fibration, then
	by \eqref{eqn-1-6} and \eqref{eqn-1-8},
	$$\vol(\sF)\geq \frac{\big(p_g(\sF)-1\big)^2}{p_g(\sF)}=p_g(\sF)-2+\frac{1}{p_g(\sF)}>p_g(\sF)-2.$$
	When the canonical map $\varphi$ is generically finite,
	then by \eqref{eqn-1-3},
	$$\vol(\sF) \geq p_g(\sF)-2.$$
	Moreover, if the equality holds, then $\deg(\varphi)=1$, and
	the image $\Sigma$ is a surface of minimal degree in $\bbp^{p_g(\sF)-1}$.
	
	\vspace{2mm}
	(ii) Consider first the case when the canonical map $\varphi$ is generically finite.
	If $\deg(\varphi)\geq 2$, then by \eqref{eqn-1-3},
	$$\vol(\sF) \geq 2\big(p_g(\sF)-2\big).$$
	If $\deg(\varphi)=1$, then $(S,\sF)$ is a canonical foliated surface.
	As $S$ is not ruled, it follows from \eqref{eqn-1-5} that
	$$\vol(\sF) \geq 2p_g(\sF)-4.$$
	
	Consider next the case when the canonical map $\varphi$ induces a fibration $f:\, \wt S \to B$.
	Let $\sigma:\,\wt S \to S$ be the induced birational map as above \autoref{thm-1-3}.
	Since $S$ is not ruled, the genus $g(F)\geq 1$, where $F$ is a general fiber of $f$.
	If the pulling-back foliation $\wt \sF=\sigma^*\sF$ coincides with the foliation by taking the saturation of $\ker(\df:\,T_{\wt S} \to f^*T_{B})$ in $T_{\wt S}$,
	then $g(F)\geq 2$ since $\sF$ is of general type, and by \eqref{eqn-1-6},
	$$\vol(\sF)\geq \frac{4\big(g(F)-1\big)g(F)}{2g(F)-1}\big(p_g(\sF)-1\big)\geq \frac{8}{3}\big(p_g(\sF)-1\big).$$
	Suppose next that $\wt \sF=\sigma^*\sF$ is different from the foliation by taking the saturation of $\ker(\df:\,T_{\wt S} \to f^*T_{B})$ in $T_{\wt S}$.
	If $d=K_{\sF}\cdot F\geq 2$,  then by \eqref{eqn-1-8},
	$$\vol(\sF)\geq \frac{2\big(p_g(\sF)-1\big)^2}{p_g(\sF)}=2p_g(\sF)-4+\frac{2}{p_g(\sF)}>2p_g(\sF)-4.$$
	If $d=K_{\sF}\cdot F=1$,  then by \autoref{cor-41-2},
	$$\vol(\sF) \geq \frac{3}{2}\big(p_g(\sF)-1\big).$$
	This completes the proof of \eqref{eqn-1-13}.
	
	\vspace{2mm}
	(iii) Suppose first that the canonical map $\varphi$ induces a fibration $f:\, \wt S \to B$.
 	Let $\sigma:\,\wt S \to S$ be the induced birational map as above \autoref{thm-1-3}.
	Since the surface $S$ is of general type, the genus $g(F)\geq 2$, where $F$ is a general fiber of $f$.
	If the pulling-back foliation $\wt \sF=\sigma^*\sF$ coincides with the foliation by taking the saturation of $\ker(\df:\,T_{\wt S} \to f^*T_{B})$ in $T_{\wt S}$,
	then by \eqref{eqn-1-6},
	$$\vol(\sF)\geq \frac{4\big(g(F)-1\big)g(F)}{2g(F)-1}\big(p_g(\sF)-1\big)\geq \frac{8}{3}\big(p_g(\sF)-1\big)>2p_g(\sF)-4.$$
	Suppose next that $\wt \sF=\sigma^*\sF$ is different from the foliation by taking the saturation of $\ker(\df:\,T_{\wt S} \to f^*T_{B})$ in $T_{\wt S}$.
	If $d=K_{\sF}\cdot F\geq 2$,  then by \eqref{eqn-1-8},
	$$\vol(\sF)\geq \frac{2\big(p_g(\sF)-1\big)^2}{p_g(\sF)}=2p_g(\sF)-4+\frac{2}{p_g(\sF)}>2p_g(\sF)-4.$$
	If $d=K_{\sF}\cdot F=1$,  then by \autoref{cor-41-2}\,(i),
	$$\vol(\sF) \geq 3\big(p_g(\sF)-1\big)>2p_g(\sF)-4.$$
	
	Consider next the case when the canonical map $\varphi$ is generically finite.
	If $\deg(\varphi)=1$, then $(S,\sF)$ is a canonical foliated surface.
	As $S$ is of general type, by \autoref{thm-1-2}, one has
	$$\vol(\sF) > 2p_g(\sF)-4.$$
	If $\deg(\varphi)\geq 2$, then by \eqref{eqn-1-3},
	$$\vol(\sF)\geq \deg(\varphi)\big(p_g(\sF)-2\big)\geq 2\big(p_g(\sF)-2\big).$$
	Moreover, if the equality holds, then $\deg(\varphi)= 2$, and
	the image $\Sigma$ is a surface of minimal degree in $\bbp^{p_g(\sF)-1}$ by \autoref{thm-1-1}.
	This completes the proof.
\end{proof}

As mentioned in \autoref{rem-1-1}, the inequalities \eqref{eqn-1-12} and \eqref{eqn-1-14} are both sharp.
However, it is not clear whether \eqref{eqn-1-13} is sharp or not.
In fact, we can improve \eqref{eqn-1-13} if $\sF$ is algebraically integrable.
To this aim, we first show the following lemma, which is an analogy to \autoref{lem-4-1}.

\begin{lemma}\label{lem-5-1}
	Let $\sF$ be a reduced foliation of general type on $S$ whose canonical map induces a fibration $f:\, \wt S \to B$ after a sequence of possible blowing-ups $\sigma:\,\wt S \to S$.
	Suppose that $\sF$ is algebraically integrable and that the induced foliation $\wt\sF=\sigma^*\sF$ is different from the foliation defined by taking the saturation of $\ker(\df:\,T_{\wt S} \to f^*T_{B})$ in $T_{\wt S}$.
	Then for any fiber $F_0$ of $f$, there is at least one component $C\subseteq F_0$ which is not $\sF$-invariant.
\end{lemma}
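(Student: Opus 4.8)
The plan is to argue by contradiction, in the same spirit as the proof of \autoref{lem-4-1}, but replacing the numerical hypothesis $K_{\sF}\cdot F<2g(F)-2$ used there by the structural input coming from algebraic integrability. The first, and really the only substantial, step is to observe that since $\wt\sF=\sigma^*\sF$ is reduced and algebraically integrable, it is itself the foliation associated to an honest fibration $g:\,\wt S\to B'$ on the surface $\wt S$ (and not merely on some birational model): a reduced foliation has no dicritical singularities, so the rational ``space of leaves'' map $\wt S\dashrightarrow B'$ has empty indeterminacy locus, hence is a morphism, whose general fibre is the closure of a general leaf of $\wt\sF$. Moreover the hypothesis $\wt\sF\neq\mathcal{G}$, with $\mathcal{G}$ the foliation by fibres of $f$, forces $g$ and $f$ to be genuinely distinct fibrations: if the general leaf of $\wt\sF$ lay in a fibre of $f$, then $f$ would be constant on every leaf and, comparing rank-one saturated subsheaves of $T_{\wt S}$, we would get $\wt\sF=\mathcal{G}$. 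Hence a general fibre $G$ of $g$ (the closure of a general leaf) is not contained in any fibre of $f$; being complete, $f|_{G}:G\to B$ is surjective, so $G$ meets every fibre of $f$.

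Now I would suppose, for contradiction, that for some fibre $F_0=\sum_j m_jC_j$ of $f$ every component $C_j$ is $\wt\sF$-invariant. Since $\wt\sF$ is the foliation by fibres of $g$, an irreducible $\wt\sF$-invariant curve is necessarily $g$-vertical: if it dominated $B'$, then at a general point it would be simultaneously tangent to the leaf direction $T_{\wt\sF}=\ker(\mathrm{d}g)$ and transverse to it, since $\mathrm{d}g$ is an isomorphism along an unramified multisection. Therefore each $C_j$ is contracted by $g$, and since $\mathrm{Supp}(F_0)$ is connected all the $C_j$ are contracted to one and the same point $b_0'\in B'$; thus $F_0\subseteq g^{-1}(b_0')$. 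Picking a general fibre $G=g^{-1}(b_1)$ of $g$ with $b_1\neq b_0'$, distinct fibres of the morphism $g$ are disjoint, so $G\cap F_0=\emptyset$; this contradicts the fact established above that $G$ meets every fibre of $f$, in particular $F_0$. Hence $F_0$ has at least one component which is not $\wt\sF$-invariant (equivalently, not $\sF$-invariant), as asserted.

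The main obstacle will be justifying the first step, namely the passage from ``reduced and algebraically integrable'' to ``the foliation by the fibres of a morphism $g:\wt S\to B'$''; the clean justification rests on the fact that reduced singularities are non-dicritical, together with the standard theory of algebraically integrable foliations (cf.\ \cite{bru-04}). Should one prefer to avoid invoking this, one can instead resolve the rational fibration of leaves, obtaining $\tau:\,V\to\wt S$ with $g:\,V\to B'$ a genuine fibration and $\tau^*\wt\sF$ its fibre foliation, and then repeat the disjointness argument on $V$ with strict transforms: there one only needs that every $\tau$-exceptional curve is $g$-vertical, which holds because blowing up a point of a reduced foliation yields an invariant exceptional curve. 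A more computational alternative — trying to show $d=K_{\wt\sF}\cdot F<2g(F)-2$ and then invoking \autoref{lem-4-1} directly — does not work in general, since it fails when $g(B')=0$; this is why the geometric disjointness argument is the one to pursue.
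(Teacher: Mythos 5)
Your proposal is correct and follows the same overall strategy as the paper: from reducedness and algebraic integrability of $\wt\sF$, produce an honest fibration $g:\,\wt S\to B'$ whose fibre foliation is $\wt\sF$, observe that $\wt\sF$-invariant curves are precisely $g$-vertical curves, and derive a contradiction from assuming some fibre $F_0$ of $f$ is entirely $\wt\sF$-invariant. The only difference is the final step: the paper, after concluding that $F_0$ is contained in a single fibre of $\tilde f=g$, invokes the rigidity lemma (\cite[Lemma\,1.6]{km-98}) to conclude that $g$ then contracts \emph{every} fibre of $f$, contradicting $f\neq g$; you instead argue directly that a general fibre $G$ of $g$ dominates $B$ (since $g\neq f$) and so must meet $F_0$, while disjointness of distinct $g$-fibres forbids this. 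Your disjointness argument is a slightly more elementary, self-contained substitute for the rigidity lemma in this two-dimensional setting, at the cost of one extra line showing the general $g$-fibre is $f$-horizontal; the paper's version is a touch shorter but imports a general lemma. Both are valid, and your caveat at the end — that a purely numerical reduction to \autoref{lem-4-1} via $d<2g(F)-2$ would not cover the case $g(B')=0$ — is a sound observation.
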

\begin{proof}
	Since $\sF$ is reduced and algebraically integrable,
	so is $\wt \sF$.
	Hence there is a fibration $\tilde f:\, \wt S \to \wt B$, such that
	every fiber of $\tilde f$ is normal crossing, and that $\wt \sF$ coincides with
	the foliation defined by taking the saturation of $\ker(d\tilde f:\,T_{\wt S} \to \tilde f^*T_{\wt B})$ in $T_{\wt S}$.
	As $\wt \sF$ is algebraically integrable, the $\wt \sF$-invariant curves are just those curves contained in fibers of $\tilde f$.
	Suppose there is some fiber $F_0$ of $f$, such that every component of $F_0$ is $\wt \sF$-invariant.
	Then $F_0$ is contained in some fiber $\wt F_0$ of $\tilde f$.
	This implies that $\tilde f$ contracts $F_0$ to a point, and hence every fiber of $f$ would be contracted to a point by $\tilde f$ by the rigidity lemma, cf. \cite[Lemma\,1.6]{km-98}.
	This is impossible, since the two fibrations  $f$ and $\tilde f$ are different from each other by assumption.
\end{proof}

Along the ideas of \autoref{prop-4-3}, \autoref{prop-4-4} and \autoref{lem-41-2}, based on the above lemma, one obtains the following.
\begin{corollary}\label{cor-5-1}
	Let $\sF$ be a reduced foliation of general type on $S$ whose canonical map induces a fibration $f:\, \wt S \to B$ after a sequence of possible blowing-ups $\sigma:\,\wt S \to S$.
	Suppose that $\sF$ is algebraically integrable, $d=K_{\wt\sF}\cdot F=1$, and $g(F)=1$, where $F$ is a general fiber of $f$. 
	Then $p_g(\sF)=g(B)\geq 2$, and
	$$|K_{\wt \sF}|=|f^*K_B|+\Delta.$$
\end{corollary}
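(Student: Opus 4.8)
The plan is to mimic the chain of arguments used for \autoref{prop-4-3}, \autoref{prop-4-4} and \autoref{lem-41-2}, replacing the hypothesis ``$K_{\wt\sF}\cdot F<2g(F)-2$'' (which fails when $g(F)=1$) with the input provided by \autoref{lem-5-1}. First I would reduce to the situation already set up for \autoref{thm-1-4}: after the sequence of blowing-ups $\sigma$ we may assume $\wt\sF$ is reduced, the moving part of $|K_{\wt\sF}|$ is base-point-free, and the canonical map coincides with the fibration $f:\,\wt S\to B$; I will then simply write $S$ for $\wt S$ and $\sF$ for $\wt\sF$. Since $\sF\neq\mathcal G$ by assumption, \autoref{prop-4-2} gives the decomposition $K_{\sF}=f^*K_B+\Delta$ with $\Delta$ effective, and every horizontal component of $\Delta$ is non-$\sF$-invariant.

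The key step is to run the analogue of \autoref{prop-4-3}: for an arbitrary fiber $F_i=\sum_j m_{ij}C_{ij}$ of $f$, writing $\Delta=\sum_j n_{ij}C_{ij}+\Delta'$ with $\Delta'$ sharing no component with $F_i$, the same local computation as in \autoref{prop-4-3} (using the twisted one-form $\alpha$ of \eqref{eqn-4-7} and a local coordinate with $f$ given by $t=x^{m_{ij}}$) yields
\[
n_{ij}\ge m_{ij}\iff C_{ij}\text{ is }\sF\text{-invariant}.
\]
Here is precisely where \autoref{lem-5-1} is used in place of \autoref{lem-4-1}: because $\sF$ is algebraically integrable and $\sF\neq\mathcal G$, \autoref{lem-5-1} guarantees that each fiber $F_i$ has at least one component that is not $\sF$-invariant, hence there is at least one index $j$ with $n_{ij}<m_{ij}$. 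This is the property that makes the vertical part $\Delta_v$ ``small''.

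With this in hand the rest is bookkeeping. Since $g(F)=1$ and $d=K_{\sF}\cdot F=1$, a general fiber has $h^0(F,\mathcal O_F(K_{\sF}))=1$, so by \autoref{lem-41-2} the horizontal part $\Delta_h$ lies in the fixed part of $|K_{\sF}|$ (provided $g(B)\ge1$, which we will also have to check at the end). Then
\[
p_g(\sF)=h^0(S,K_{\sF})=h^0(S,f^*K_B+\Delta_v)=h^0\big(B,\mathcal O_B(K_B)\otimes f_*\mathcal O_S(\Delta_v)\big),
\]
and the property $n_{ij}<m_{ij}$ for some $j$ in each fiber forces $f_*\mathcal O_S(\Delta_v)=\mathcal O_B$, exactly as in \autoref{prop-4-4}. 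Hence $p_g(\sF)=h^0(B,K_B)=g(B)$; since $\varphi$ induces the fibration $f$ onto $B$, this is $\ge 2$, which in particular gives $g(B)\ge1$ and retroactively justifies the use of \autoref{lem-41-2}. Finally, because $|f^*(L)|$ is the moving part of $|K_{\sF}|$ with $h^0(B,L)=p_g(\sF)=g(B)$ and $h^0(B,K_B)=g(B)$, comparing degrees forces $L=K_B$, so $|K_{\sF}|=|f^*K_B|+\Delta$ as claimed.

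The main obstacle is making sure the local computation in the style of \autoref{prop-4-3} goes through unchanged when $g(F)=1$: one must confirm that the argument identifying $n_{ij}$ with $m_{ij}-1+\mathrm{order}_x(h)$ never used $g(F)\ge2$, and that \autoref{lem-5-1} (rather than \autoref{lem-4-1}) supplies exactly the missing ``some component of each fiber is not $\sF$-invariant''. A secondary subtlety is the small circularity around $g(B)\ge 1$: I would resolve it by first treating the case $g(B)=0$ separately --- there $\varphi$ factors through a rational pencil, and since $h^0(B,L)=p_g(\sF)\ge 2$ already on $\bbp^1$ one checks directly $\deg L\ge p_g(\sF)-1$, but then the conclusion $p_g(\sF)=g(B)$ would give $p_g(\sF)=0$, contradicting $p_g(\sF)\ge2$; hence $g(B)\ge1$ automatically and the rest proceeds as above.
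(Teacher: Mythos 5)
Your proof is correct and follows essentially the same route as the paper: both deduce $\wt\sF\neq\mathcal G$ from the hypotheses, use \autoref{lem-5-1} in place of \autoref{lem-4-1} to supply a non-$\wt\sF$-invariant component in every fiber, run the \autoref{prop-4-3}/\autoref{prop-4-4} mechanism to obtain $f_*\mathcal O_{\wt S}(\Delta_v)=\mathcal O_B$, and invoke \autoref{lem-41-2} (via $d=1$, $g(F)=1$, hence $h^0(F,\mathcal O_F(K_{\wt\sF}))=1$) to put $\Delta_h$ in the fixed part, giving $|K_{\wt\sF}|=|f^*K_B|+\Delta$ and $p_g(\sF)=g(B)\ge 2$.

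Two small remarks. First, ``$\sF\ne\mathcal G$'' is not a hypothesis of the corollary; it must be derived, and it is, immediately: if $\wt\sF=\mathcal G$ then $K_{\wt\sF}\cdot F=2g(F)-2=0$, contradicting $d=1$ (equivalently, a foliation induced by an elliptic fibration cannot be of general type). Second, the circularity you worry about around $g(B)\ge 1$ is illusory: \autoref{lem-41-2} makes no assumption on $g(B)$ --- it needs only the decomposition $K_{\wt\sF}=f^*K_B+\Delta$ of \autoref{prop-4-2} (available once $\wt\sF\ne\mathcal G$) together with $h^0(F,\mathcal O_F(K_{\wt\sF}))=1$ --- so $\Delta_h\subseteq Z$ holds unconditionally, after which $p_g(\sF)=h^0\bigl(B,K_B\otimes f_*\mathcal O_{\wt S}(\Delta_v)\bigr)=g(B)$ and $p_g(\sF)\ge 2$ give $g(B)\ge 2$ directly. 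Your separate discussion of $g(B)=0$ is harmless but unnecessary; likewise the final ``comparing degrees'' step is superfluous, since the isomorphism $H^0(\wt S,f^*K_B+\Delta_v)\cong H^0(B,K_B)$ already exhibits $\Delta_v$ as part of the fixed divisor.
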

\begin{proof}
	Since $\sF$ is of general type and $g(F)=1$,
	it follows that the induced foliation $\wt\sF=\sigma^*\sF$ is different from the foliation defined by taking the saturation of $\ker(\df:\,T_{\wt S} \to f^*T_{B})$ in $T_{\wt S}$.
	Hence by \autoref{lem-5-1} above,
	for any fiber $F_0$ of $f$, there is at least one component $C\subseteq F_0$ which is not $\sF$-invariant.
	Therefore, similar to \autoref{prop-4-3} and \autoref{prop-4-4},
	one shows that the vertical part $\Delta_v$ of $\Delta$ is contained in the fixed part of the complete linear system $|K_{\wt \sF}|$;
	namely
	$$|K_{\wt \sF}|=|f^*K_B+\Delta_h|+\Delta_v.$$
	On the other hand, since $d=K_{\wt\sF}\cdot F=1$,
	it follows that $\Delta_h=C_0$ is a section of $f$,
	and hence $$h^0(F,\mathcal{O}_F(K_{\wt \sF}))=h^0(F,\mathcal{O}_F(C_0))=1.$$
	Hence by \autoref{lem-41-2}, $\Delta_h$ is also contained in the fixed part of $|K_{\wt \sF}|$.
	Hence
	$$|K_{\wt \sF}|=|f^*K_B|+\Delta.$$
	Thus $g(B)=p_g(\sF)\geq 2$ as required.
\end{proof}

\begin{proposition}\label{prop-5-1}
	Let $\sF$ be a reduced foliation of general type on a smooth projective surface $S$. Suppose that the surface $S$ is not ruled, and that $\sF$ is algebraically integrable.
	Then
	$$\vol(\sF)\geq 2\big(p_g(\sF)-2\big).$$
\end{proposition}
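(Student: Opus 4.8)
The plan is to run a case analysis according to the behaviour of the canonical map $\varphi$ of $(S,\sF)$, in the spirit of the proof of \autoref{thm-1-5}, reading off the relevant estimate from \autoref{sec-gen-finite} and \autoref{sec-fibration} in each case, and invoking the extra structure provided by algebraic integrability only in the one case where it is genuinely needed. If $p_g(\sF)\le 2$ there is nothing to prove, since then $2\big(p_g(\sF)-2\big)\le 0\le \vol(\sF)$; so I may assume $p_g(\sF)\ge 3$, and the canonical map $\varphi:\,S\dashrightarrow\Sigma\subseteq\bbp^{p_g(\sF)-1}$ is defined.

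Suppose first that $\Sigma$ is a surface, i.e.\ $\varphi$ is generically finite. Then \eqref{eqn-1-3} gives $\deg(\varphi)\le \vol(\sF)/\big(p_g(\sF)-2\big)$; if $\deg(\varphi)\ge 2$ we are done, while if $\deg(\varphi)=1$ then $(S,\sF)$ is a canonical foliated surface, so \eqref{eqn-1-5} (applicable since $S$ is not ruled) gives $\vol(\sF)\ge 2p_g(\sF)-4$. Suppose next that $\Sigma$ is a curve; then, as in \autoref{sec-fibration}, after blowing up $\sigma:\,\wt S\to S$ the canonical map induces a fibration $f:\,\wt S\to B$, and I set $\wt\sF=\sigma^*\sF$ and let $F$ be a general fibre of $f$. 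Since $S$ is not ruled, $g(F)\ge 1$. If $\wt\sF$ coincides with the foliation $\mathcal G$ obtained by saturating $\ker(\df:\,T_{\wt S}\to f^*T_B)$ in $T_{\wt S}$, then $\sF$ being of general type forces $g(F)\ge 2$, hence $d=K_{\wt\sF}\cdot F=2g(F)-2\ge 2$, and \eqref{eqn-1-6} gives $\vol(\sF)\ge \tfrac{8}{3}\big(p_g(\sF)-1\big)\ge 2\big(p_g(\sF)-2\big)$. If $\wt\sF\ne\mathcal G$, then $d\ge 1$ by \autoref{thm-1-4}\,(i); when $d\ge 2$, \eqref{eqn-1-8} gives $\vol(\sF)\ge \tfrac{2(p_g(\sF)-1)^2}{p_g(\sF)}=2p_g(\sF)-4+\tfrac{2}{p_g(\sF)}>2\big(p_g(\sF)-2\big)$, and when $d=1$ with $g(F)\ge 2$, \autoref{cor-41-2}\,(i) gives $\vol(\sF)\ge 3\big(p_g(\sF)-1\big)>2\big(p_g(\sF)-2\big)$.

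The only remaining case is $\wt\sF\ne\mathcal G$ with $d=K_{\wt\sF}\cdot F=1$ and $g(F)=1$; this is where algebraic integrability must be used, and I expect it to be the main obstacle, since \autoref{cor-41-2}\,(iii) alone only yields $\vol(\sF)\ge \tfrac{3}{2}\big(p_g(\sF)-1\big)$, which is too weak once $p_g(\sF)\ge 6$. Here the idea is to invoke \autoref{cor-5-1}: because $\sF$ is algebraically integrable and $d=1$, $g(F)=1$, one obtains $p_g(\sF)=g(B)\ge 2$ and $|K_{\wt\sF}|=|f^*K_B|+\Delta$. Feeding $g(B)=p_g(\sF)\ge 1$ and $d=1$ into \autoref{thm-1-4}\,(ii) then gives
\[
\vol(\sF)\ \ge\ \frac{d(d+2)}{d+1}\big(p_g(\sF)-1+g(B)\big)\ =\ \tfrac{3}{2}\big(2p_g(\sF)-1\big)\ =\ 3p_g(\sF)-\tfrac{3}{2}\ \ge\ 2\big(p_g(\sF)-2\big),
\]
which completes the proof. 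The real content of this last case is the verification — carried out in \autoref{lem-5-1} and \autoref{cor-5-1} — that algebraic integrability of $\sF$ is inherited by $\wt\sF$ (as a foliation with normal-crossing fibres), so that the tangency and $\sF$-invariance arguments of \autoref{lem-4-1}, \autoref{prop-4-3} and \autoref{prop-4-4} can be imitated to pin down the fixed part $\Delta$ and force $g(B)=p_g(\sF)$; granting this, the numerics above are routine.
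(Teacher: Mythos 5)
Your proposal is correct and follows the paper's proof very closely: the same case analysis according to the image of $\varphi$, and the same key input (\autoref{cor-5-1}) in the only case requiring algebraic integrability, namely $\wt\sF\ne\mathcal G$, $d=1$, $g(F)=1$. The one deviation is in how you close that last case: the paper computes directly with the Zariski decomposition $K_{\wt\sF}=P+N$, observing that the horizontal section $C_0$ has $g(C_0)=g(B)\ge 2$ and hence is not a component of $N$, so $P\cdot F=1$ and $\vol(\sF)=P^2\ge P\cdot f^*K_B=\big(2g(B)-2\big)P\cdot F=2p_g(\sF)-2$; you instead feed $g(B)=p_g(\sF)$ into the already-proved \autoref{thm-1-4}\,(ii), obtaining the sharper bound $\vol(\sF)\ge \tfrac{3}{2}\big(2p_g(\sF)-1\big)=3p_g(\sF)-\tfrac32$. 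Both routes are valid and rest on the same Zariski-decomposition mechanism (yours via the proof of \autoref{thm-1-4}\,(ii)), so the difference is one of packaging; your version has the small advantage of a cleaner numeric bound at the cost of one more layer of dependence.
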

\begin{proof}
	Following the proof of \autoref{thm-1-5}\,(ii) above,
	it is enough to consider the case when the canonical map $\varphi$
	induces a fibration $f:\, \wt S \to B$ with $d=K_{\wt \sF}\cdot F=1$,
	where $F$ is a general fiber of $f$.
	But in this case, by the \autoref{cor-5-1} above, it holds $p_g(\sF)=g(B)\geq 2$, and
	$$|K_{\wt \sF}|=|f^*K_B|+\Delta.$$
	Let $K_{\wt \sF}=P+N$ be the Zariski decomposition.
	Then the horizontal part $\Delta_h=C_0$ is not contained in the negative part $N$,
	or equivalently $\Delta_h=C_0\subseteq P$, since $g(C_0)=g(B)\geq 2$.
	It follows that $P\cdot F=K_{\wt\sF}\cdot F=1$.
	Hence 
	\[\vol(\sF)=P^2\geq P\cdot f^*K_B=\big(2g(B)-2\big)\,P\cdot F=\big(2p_g(\sF)-1\big)>2\big(p_g(\sF)-2\big).\qedhere\]
\end{proof}

\begin{remark}\label{rem-5-1}
	Taking $d=2$ in \autoref{exam-3-1}, one obtains a reduced foliated surface $(S,\sF)$ of general type with
	$$\vol(\sF) = 2p_g(\sF)-4.$$
	Moreover, if one choose $m>\frac{3n+4}{2}$ in \autoref{exam-3-1}, then $$K_{S}^2=K_{S/\bbp^1}^2-8=4m-6n-8>0.$$
	Hence $S$ would be a surface of general type.
	This provides an example of reduced foliated surface reaching the equality in \eqref{eqn-1-14}.
\end{remark}

\vspace{3mm}
\appendix
\numberwithin{equation}{section}
\renewcommand{\theequation}{\Alph{section}.\arabic{equation}}
\section{The Riemann-Hurwitz formula for foliated surfaces}\label{sec-appendix}
In the appendix, we present the Riemann-Hurwitz formula for foliated surfaces.
\begin{theorem}\label{thm-a-1}
	Let $\sF$ be a foliation on a smooth projective surface $S$,
	and $\Pi:\,\wt{S} \to S$ be a generically finite map with $\wt{S}$ being smooth.
	Denote by $\wt\sF:=\Pi^*\sF$ the induced foliation on $\wt{S}$.
	\begin{enumerate}[$(i).$]
		\item The canonical divisor of $\wt\sF$ can be expressed as
		\begin{equation}\label{eqn-A-1}
		K_{\wt \sF} =\Pi^*(K_{\sF}) + \sum (t_i-1)R_i + \mathcal{E},
		\end{equation}
		where the sum is taken over all ramified curves $R_i$'s which are not $\wt{\sF}$-invariant,
		$t_i$ is the ramification index of $R_i$,
		and $\mathcal{E}$ is supported on the set of curves contracted by $\Pi$.\vspace{2mm}
		
		\item If moreover $\sF$ is reduced, then the divisor $\mathcal{E}$ in \eqref{eqn-A-1} is effective.
	\end{enumerate}
\end{theorem}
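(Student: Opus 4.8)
The plan is to compare $K_{\wt{\sF}}$ with $\Pi^*K_{\sF}$ by routing through the normal bundles and the ordinary ramification divisor of $\Pi$, and then to pin down the discrepancy along each prime divisor by a purely local computation. First I would record three identities. Taking determinants of the conormal sequence $0\to N_{\sF}^*\to\Omega^1_S\to\mathcal{I}(K_{\sF})\to 0$ gives $K_{\sF}=K_S+N_{\sF}$ as divisor classes, and likewise $K_{\wt{\sF}}=K_{\wt S}+N_{\wt{\sF}}$. The Riemann--Hurwitz formula for the generically finite morphism $\Pi$ of smooth surfaces gives $K_{\wt S}=\Pi^*K_S+R_\Pi$ with $R_\Pi\geq 0$ effective, and along an honest ramification curve $R_i$ the coefficient of $R_\Pi$ is $t_i-1$ (we work in characteristic zero, so the ramification is tame); thus $R_\Pi=\sum(t_i-1)R_i+\mathcal{E}_0$ with $\mathcal{E}_0\geq 0$ supported on the curves contracted by $\Pi$. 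Finally, comparing the local $1$-forms defining $\wt{\sF}=\Pi^*\sF$ with the pullbacks of those defining $\sF$ (cf. the description of $\Pi^*\sF$ in \autoref{sec-pre}) realizes $\Pi^*N_{\sF}^*$ as a subsheaf of $N_{\wt{\sF}}^*$ whose cokernel is torsion supported on an effective divisor $D=\sum_R \mathrm{ord}_R(\text{divisorial part of }\Pi^*\omega)\cdot R$; hence $N_{\wt{\sF}}=\Pi^*N_{\sF}-D$. Combining the three yields $K_{\wt{\sF}}=\Pi^*K_{\sF}+R_\Pi-D$, so everything reduces to computing $\mathrm{ord}_R D$ for each prime divisor $R$.

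For the local computation, fix a general point of a prime divisor $R$ on $\wt S$ that is not contracted by $\Pi$, put $R'=\Pi(R)$, and choose analytic coordinates in which $\Pi$ has the form $(u,v)\mapsto(u^{t},v^{s})$ with $R=\{u=0\}$ and $R'=\{x=0\}$ (with $t=1$ when $\Pi$ is \'etale there, and $t=t_i$ when $R=R_i$). Writing a local generator of $N_{\sF}^*$ as $\omega=A\,dx+B\,dy$ with $A,B$ having no common factor, $R'$ is $\sF$-invariant if and only if $x\mid B$. If $x\nmid B$, then $\Pi^*\omega$ has trivial divisorial part along $R$, so $\mathrm{ord}_R D=0$, and one checks that $R$ is then \emph{not} $\wt{\sF}$-invariant; if $x\mid B$, then $\Pi^*\omega=u^{t-1}\wt\omega$ with $\wt\omega$ having isolated zeros, so $\mathrm{ord}_R D=t-1$, and $R$ \emph{is} $\wt{\sF}$-invariant. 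Consequently, along a ramification curve $R_i$ the coefficient of $R_\Pi-D$ is $t_i-1$ when $R_i$ is not $\wt{\sF}$-invariant and $0$ when it is, along every other non-contracted curve it is $0$, and the remainder is supported on the contracted curves; setting $\mathcal{E}:=R_\Pi-D-\sum_{R_i\text{ not }\wt{\sF}\text{-inv}}(t_i-1)R_i$ gives \eqref{eqn-A-1}. This proves (i), and does not use reducedness.

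For (ii) I would factor $\Pi=\mu\circ g$ with $\mu\colon S^*\to S$ a composition of blow-ups and $g\colon\wt S\to S^*$ finite (such a factorization of a generically finite morphism of smooth projective surfaces is standard; in particular the curves contracted by $\Pi$ are exactly the components of $g^{-1}$ of the $\mu$-exceptional locus). Since $\sF$ is reduced, so is $\mu^*\sF$: reducedness of a foliation is preserved under blowing up, the center of each blow-up being always either a smooth point or a reduced — hence non-dicritical — singularity. By Brunella's formula for a single blow-up (cf. \cite{bru-04}), $K_{\mu^*\sF}=\mu^*K_{\sF}+\mathcal{E}^*$ where $\mathcal{E}^*$ is a \emph{non-negative} combination of $\mu$-exceptional curves, each blow-up contributing the coefficient $1-\nu$ with $\nu\in\{0,1\}$ the foliation multiplicity of its center. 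Applying part (i) to the finite morphism $g$, which has no contracted curves, gives $K_{\wt{\sF}}=g^*K_{\mu^*\sF}+\sum(t_j-1)R_j$ with all $R_j\geq 0$; substituting, $K_{\wt{\sF}}=\Pi^*K_{\sF}+g^*\mathcal{E}^*+\sum(t_j-1)R_j$, and the part of the right-hand side supported on the curves contracted by $\Pi$ — which is precisely the divisor $\mathcal{E}$ of \eqref{eqn-A-1} — equals $g^*\mathcal{E}^*$ together with those $(t_j-1)R_j$ for which $R_j$ is contracted by $\Pi$, hence is effective.

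The main obstacle is part (ii): part (i) is essentially bookkeeping once the three global identities and the normal-form computation are in place, but (ii) genuinely uses the geometry of reduced singularities. Two points need care there: the reduction to the two building blocks — a composition of blow-ups and a finite morphism — via the factorization $\Pi=\mu\circ g$ and the observation that it is compatible with "being contracted by $\Pi$"; and the input from Brunella's theory that a reduced foliation has multiplicity $\leq 1$ and no dicritical singularity at every point, and that this property is inherited by $\mu^*\sF$ — which is exactly what forces each blow-up's contribution $1-\nu$ to $\mathcal{E}^*$ to be non-negative (it would be $1-\nu<0$ at a point of multiplicity $\nu\geq 2$, which is why the hypothesis that $\sF$ is reduced cannot be dropped).
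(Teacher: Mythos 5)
Your argument for part (i) essentially reproduces the paper's proof: both split the discrepancy into the Riemann--Hurwitz ramification term for $\Pi$ minus the divisorial part of the pulled-back defining one-forms, and both pin down the coefficient along each ramified curve $R_i$ by a local normal-form computation at a general point of $R_i$, with the dichotomy governed by whether $R_i$ (equivalently $\Pi(R_i)$) is invariant. That part is fine.

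Part (ii) is where you genuinely diverge, and there is a gap. You factor $\Pi=\mu\circ g$ with $\mu\colon S^*\to S$ a composition of blow-ups and $g\colon\wt S\to S^*$ \emph{finite}, calling this standard. It is not: the Stein factorization runs in the opposite order (a birational contraction $\wt S\to Z$ followed by a finite map $Z\to S$, with $Z$ normal but in general singular), and the lift of $\Pi$ to a blow-up of $S$ typically has indeterminacy along the contracted curves. Concretely, if $\Pi$ is given near a contracted curve $E=\{u=0\}$ by $(u,v)\mapsto(u^2v,\,u^3)$, the induced rational map to $\mathrm{Bl}_0$ is $(u,v)\mapsto[v:u]$ on the exceptional $\mathbb{P}^1$, which is undefined at $(0,0)$; since any factorization $\Pi=\mu\circ g$ with $g$ finite forces $\mu$ to blow up the image point, no such $g$ can be a morphism from this $\wt S$. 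The Gruson--Raynaud flattening theorem would give you the factorization only after modifying the \emph{source} $\wt S$. Your argument is salvageable — pass to $\pi\colon\wt S'\to\wt S$ so that $\Pi\circ\pi$ factors as desired, prove effectivity of the contracted part upstairs, and then push forward by $\pi$ (using that $\pi_*$ preserves effectivity and that the non-exceptional ramification curves of $\Pi\circ\pi$ push forward to the $R_i$) — but that step carries real content and is not in your write-up. The paper sidesteps the issue entirely, proving (ii) by a direct local estimate $v_E(\Pi^*\omega)\le v_E\big(\det(\mathrm{Jac})\big)$ at a general point of each contracted curve $E$, via the adapted-coordinate lemma \autoref{claim-2-1} and a case analysis on the orders $k_1,k_2$. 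Your remark that reducedness is exactly what makes the per-blow-up discrepancy $1-\nu$ nonnegative captures the right conceptual reason, but the reduction to blow-ups is the part that has to be earned.
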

\begin{proof}
	(i).
	The first statement might be well-known to the experts; see for instance \cite[pp. 19\,-\,20]{bru-04} for the ramified covering case.
	We include a proof here to be self-contained.
	
	Let $\{(U_i,\omega_i)\}_{i\in I}$ be a collection of local one-forms defining the foliation $\sF$.
	Then the foliation $\wt{\sF}$ is defined by the data
	$$\{(V_{ij}, \wt\omega_{ij})\},~i\in I, j\in J,$$
	where $V_{ij}$ is an open covering of $\wt{S}$ with $\Pi(V_{ij}) \subseteq U_i$,
	and $\wt\omega_{ij}=\frac{\Pi^*(\omega_i)}{h_{ij}}$ with $h_{ij}$ being some holomorphic function over $V_{ij}$ satisfying $\div(h_{ij})=\div\big(\Pi^*(\omega_i)|_{V_{ij}}\big)$.
	From \eqref{eqn-2-9}, it follows that
	\begin{equation}\label{eqn-2-4}
	N_{\wt\sF}^*=\Pi^*(N_{\sF}^*)+\div(\Pi^*\omega),
	\end{equation}
	where $N_{\wt\sF}^*$ (resp. $N_{\sF}^*$) is the conormal bundle of $\wt\sF$ (resp. $\sF$),
	and $\div(\Pi^*\omega)$ is the divisor defined by $\big\{\big(\Pi^{-1}U_i, \div(\Pi^*\omega_i)\big)\big\}_{i\in I}$.
	On the other hand, by the Riemann-Hurwitz formula for surfaces \cite{bhpv}, we have
	\begin{equation}\label{eqn-2-5}
	K_{\wt{S}} = \Pi^*K_S+ \div\big(\hspace{-1mm}\det(\mathrm{Jac})\big),
	\end{equation}
	where $\mathrm{Jac}$ is the Jacobian matrix of $\Pi$.
	Locally, suppose $\Pi$ is defined by
	$$x=\alpha(u,v),\qquad y=\beta(u,v),$$
	where $(x,y)$ and $(u,v)$ are respectively local coordinates of $S$ and $\wt{S}$.
	Then $$\mathrm{Jac}=\left(\begin{aligned}
	\alpha_u~&~\alpha_v \\
	\beta_u~&~\beta_v
	\end{aligned}\right),$$
	where $\alpha_u=\frac{\partial \alpha}{\partial u}$, and similar for other notations.
	Combing \eqref{eqn-2-4} with \eqref{eqn-2-5}, it follows that
	\begin{equation}\label{eqn-2-6}
	K_{\wt\sF}=\Pi^*(K_{\sF}) + \div\big(\hspace{-1mm}\det(\mathrm{Jac})\big)-\div(\Pi^*\omega).
	\end{equation}
	For any irreducible curve $C$ contained in $\div\big(\hspace{-1mm}\det(\mathrm{Jac})\big)$ or $\div(\Pi^*\omega)$,
	it is clear that $C$ is either contained in the ramified divisor of $\Pi$ or contracted by $\Pi$.
	
	Let $C=R_i$ is contained in the ramified divisor of $\Pi$ with ramification index $t=t_i$.
	Take a general point $p\in C$, and let $q=\Pi(p)$.
	Then locally we may assume that $C$ is defined by $u=0$, and that $\Pi$ is given by
	$$x=u^t, \qquad y=v,$$
	where $(x,y)$ and $(u,v)$ are respectively local coordinates of $S$ and $\wt{S}$.
	Hence around a neighborhood of $p$,
	$$\div\big(\hspace{-1mm}\det(\mathrm{Jac})\big)=(t-1)C.$$
	As $p$ is general, $q=\Pi(p)$ is not a singularity of $\sF$.
	Locally around $q$, we may assume that $\sF$ is defined by $\omega=m_1dx+m_2dy$, where the two complex number $m_1(q),m_2(q)$ are not simultaneously zero.
	It follows that $$\Pi^*\omega=tu^{t-1}m_1du+m_2dv.$$	
	If the image $\Pi(C)$ is not $\sF$-invariant, then $m_2(q)\neq 0$ since $q$ is general, and hence
	$\div(\Pi^*\omega)=0$ around a neighborhood of $p$;
	if the image $\Pi(C)$ is $\sF$-invariant, then we may assume that $u$ divides $m_2$, and that $m_1(q)\neq 0$ since $q$ is general (it is not a singularity of $\sF$). Hence
	$\div(\Pi^*\omega)=(t-1)C$ around a neighborhood of $p$.
	This completes the proof in view of \eqref{eqn-2-6}.
	
	\vspace{2mm}
	(ii).
	Take any curve $E$ contracted by $\Pi$, and denote by $q=\Pi(E)\in S$.
	Let $t_1=v_E\big(\det(\mathrm{Jac})\big)$ (resp. $t_2=v_E(\Pi^*\omega)$) be the multiplicity of $\det(\mathrm{Jac})$ (resp. $\Pi^*\omega$) along $E$,
	where $\mathrm{Jac}$ is the Jacobian matrix of $\Pi$ and $\omega$ is a local one-form defining $\sF$.
	By \eqref{eqn-2-6}, it suffices to prove that $t_1\geq t_2$ if the foliation $\sF$ is reduced.
	
	Let $p\in E$ be a general point, $(u,v)$ be a local coordinate of $\wt{S}$ such that $E=\{u=0\}$, and $(x,y)$ be a local coordinate of $S$ around $q$.
	Since $E$ is contracted by $\Pi$, we may assume that the map $\Pi$ is defined by
	\begin{equation}\label{eqn-2-7}
	x=u^{k_1}\alpha(u,v),\qquad y=u^{k_2}\beta(u,v),
	\end{equation}
	where $k_1, k_2$ are positive integers, and $u$ divides neither $\alpha(u,v)$ nor $\beta(u,v)$.
	Then
	\begin{equation*} 
	\mathrm{Jac}=\left(\begin{aligned}
	u^{k_1-1}(k_1\alpha+u\alpha_u)~\,~&~\,~u^{k_1}\alpha_v \\
	u^{k_2-1}(k_2\beta+u\beta_u)~\,~&~\,~u^{k_2}\beta_v
	\end{aligned}\right).
	\end{equation*}
	It follows that
	\begin{equation*}
	\det(\mathrm{Jac})=u^{k_1+k_2-1}\big((k_1\alpha+u\alpha_u)\beta_v-(k_2\beta+u\beta_u)\alpha_v\big).
	\end{equation*}
	This implies that
	\begin{equation}\label{eqn-a-5}
	t_1=v_E\big(\det(\mathrm{Jac})\big) \geq k_1+k_2-1.
	\end{equation}
	Let $\omega=f(x,y)dx+g(x,y)dy$ around $q$.
	Then
	\begin{equation}\label{eqn-a-6}
	\Pi^*\omega =\big(f(x,y)~\,~g(x,y)\big)\cdot \mathrm{Jac}\cdot {du \choose dv}
	=F(u,v) du+G(u,v)dv,
	\end{equation}
	where
	$$\begin{aligned}
	F(u,v)&\,=f(u^{k_1}\alpha,u^{k_2}\beta)u^{k_1-1}(k_1\alpha+u\alpha_u)+g(u^{k_1}\alpha,u^{k_2}\beta)u^{k_2-1}(k_2\beta+u\beta_u),\\
	G(u,v)&=\,f(u^{k_1}\alpha,u^{k_2}\beta)u^{k_1}\alpha_v+g(u^{k_1}\alpha,u^{k_2}\beta)u^{k_2}\beta_v.
	\end{aligned}$$
	
	\vspace{2mm}
	We consider first the case when $q=\Pi(E)$ is not a singular point of $\sF$.
	Then either $f(q)\neq 0$ or $g(q)\neq 0$.
	Let $\mathrm{Jac}^*$ be the adjoint matrix of $\mathrm{Jac}$.
	Viewing $(F,\,G)$ as a row vector, one gets
	$$(F,~G) \cdot \mathrm{Jac}^*= (f,~g) \cdot \mathrm{Jac} \cdot \mathrm{Jac}^* 
	=\big(f\cdot \det(\mathrm{Jac}),~g\cdot \det(\mathrm{Jac})\big).$$
	Hence in this case,
	$$t_2= v_E(\Pi^*\omega)=\min\big\{v_E(F),\,v_E(G)\big\} \leq v_E\big(\det(\mathrm{Jac})\big)=t_1.$$
	
	\vspace{2mm}
	Consider next the case when $q=\Pi(E)$ is a singularity of $\sF$, which is reduced by assumption.
	In this case, we claim that
	\begin{claim}\label{claim-2-1}
		Let $\Pi:\,\wt{S} \to S$ be generically finite map as in \autoref{thm-a-1},
		and $E$ be a curve contracted by $\Pi$.
		Let $p\in E$ be a general point, and $(u,v)$ be a local coordinate around $p$ with $E=\{u=0\}$.
		Suppose that $q=\Pi(E)$ is a reduced singularity of $\sF$, and $\omega$ is a local one-form defining $\sF$ around $q$. 
		There exists a local coordinate $(x,y)$ around $q$
		satisfying the following.
		\begin{enumerate}[(i).]
			\item Write $\omega=f(x,y)dx+g(x,y)dy$. Then
			\begin{equation}\label{eqn-a-1}
			f(x,y)=-\lambda_2y+f_2(x,y),\qquad g(x,y)=\lambda_1x+g_2(x,y),
			\end{equation}
			where $\lambda_1,\lambda_2$ are the two eigenvalues of $\sF$ at $p$,
			and monomials in $f_2(x,y), g_2(x,y)$ are of degree at least two.
			\item There exist integers $k_2\geq k_1 \geq 1$ such that
			$\Pi$ is locally defined by
			\begin{equation}\label{eqn-a-2}
			x=u^{k_1}\alpha(u,v),\qquad y=u^{k_2}\beta(u,v).
			\end{equation}
			\item If $k=k_2/k_1$ is an integer, then $u$ does not divide $\beta(u,v)-c\alpha(u,v)^{k}$ for any constant $c\in \mathbb{C}$.
		\end{enumerate}
	\end{claim}
	%
	%
	
	We postpone the proof of the above claim.
	Let $(x,y)$ be a local coordinate satisfying the three conditions in \autoref{claim-2-1}.
	Let $\omega=f(x,y)dx+g(x,y)dy$.
	Then we can write $f(x,y)$ as $$f(x,y)=-\lambda_2y+f_2(x,y)=-\lambda_2y+h_1(x)+yh_2(x,y),$$
	where $h_2(0,0)=0$, and $h_1(x)=\sum\limits_{j\geq j_0} a_jx^j$ with $j_0\geq 2$ and $a_{j_0}\neq 0$.
	It follows that
	$$\begin{aligned}
	f(u^{k_1}\alpha,u^{k_2}\beta)=\,&-\lambda_2u^{k_2}\beta+\sum_{j\geq 2} a_ju^{jk_1}\alpha^j+u^{k_2}\beta\cdot h_2(u^{k_1}\alpha,u^{k_2}\beta)\\
	=\,&\left\{\begin{aligned}
	&u^{k_2}\big(-\lambda_2\beta+u\,\varphi_1(u,v)\big), &\quad&\text{if~}k_2<j_0k_1;\\[2mm]
	&u^{k_2}\big(-\lambda_2\beta+a_{j_0}\alpha^{j_0}+u\,\varphi_2(u,v)\big), &\quad&\text{if~}k_2=j_0k_1;\\[2mm]
	&u^{j_0k_1}\big(a_{j_0}\alpha^{j_0}+u\,\varphi_3(u,v)\big), &\quad&\text{if~}k_2>j_0k_1,
	\end{aligned}\right.
	\end{aligned}$$
	where $\varphi_i(u,v)$'s are some holomorphic functions for $i=1,2,3$.
	Note also that
	$$g(u^{k_1}\alpha,u^{k_2}\beta)=\lambda_1u^{k_1}\alpha+g_2(u^{k_1}\alpha,u^{k_2}\beta)=u^{k_1}\big(\lambda_1\alpha+\varphi_4(u,v)\big),$$
	where $\varphi_4(u,v)$ is holomorphic.
	
	Suppose that $k_2<j_0k_1$. Then (where $F(u,v)$ is given in \eqref{eqn-a-6})
	$$\begin{aligned}
	F(u,v)=\,&f(u^{k_1}\alpha,u^{k_2}\beta)u^{k_1-1}(k_1\alpha+u\alpha_u)+g(u^{k_1}\alpha,u^{k_2}\beta)u^{k_2-1}(k_2\beta+u\beta_u)\\
	=\,&u^{k_1+k_2-1}\big((-\lambda_2\beta+u\,\varphi_1)(k_1\alpha+u\alpha_u)+(\lambda_1\alpha+\varphi_4(u,v))(k_2\beta+u\beta_u)\big)\\
	=\,&u^{k_1+k_2-1}\big((-\lambda_2k_1+k_2\lambda_1)\alpha\beta+u\, \psi_1(u,v)\big),
	\end{aligned}$$
	where $\psi_1(u,v)$ is some holomorphic function.
	Since $q$ is a reduced singularity of $\sF$, the complex number $-\lambda_2k_1+k_2\lambda_1$ can never be zero.
	Hence
	$$v_E(F(u,v))=k_1+k_2-1.$$
	
	Suppose that $k_2=j_0k_1$. Then
	$$\begin{aligned}
	F(u,v)=\,&f(u^{k_1}\alpha,u^{k_2}\beta)u^{k_1-1}(k_1\alpha+u\alpha_u)+g(u^{k_1}\alpha,u^{k_2}\beta)u^{k_2-1}(k_2\beta+u\beta_u)\\
	=\,&u^{k_1+k_2-1}\big((-\lambda_2\beta+a_{j_0}\alpha^{j_0}+u\,\varphi_2)(k_1\alpha+u\alpha_u)+(\lambda_1\alpha+\varphi_4(u,v))(k_2\beta+u\beta_u)\big)\\
	=\,&u^{k_1+k_2-1}\Big(\alpha\big((-\lambda_2k_1+k_2\lambda_1)\beta+k_1a_{j_0}\alpha^{j_0}\big)+u\, \psi_2(u,v)\Big),
	\end{aligned}$$
	where $\psi_2(u,v)$ is some holomorphic function.
	As $q$ is a reduced singularity of $\sF$, $-\lambda_2k_1+k_2\lambda_1\neq 0$.
	By \autoref{claim-2-1},
	$u$ does not divide $(-\lambda_2k_1+k_2\lambda_1)\beta+k_1a_{j_0}\alpha^{j_0}$,
	and hence
	$$v_E(F(u,v))=k_1+k_2-1.$$
	
	Finally, we assume that $k_2>j_0k_1$.
	Then
	$$\begin{aligned}
	F(u,v)=\,&f(u^{k_1}\alpha,u^{k_2}\beta)u^{k_1-1}(k_1\alpha+u\alpha_u)+g(u^{k_1}\alpha,u^{k_2}\beta)u^{k_2-1}(k_2\beta+u\beta_u)\\
	=\,&u^{j_0k_1+k_1-1}\big(a_{j_0}\alpha^{j_0}+u\,\varphi_3(u,v)\big)(k_1\alpha+u\alpha_u)+u^{k_1+k_2-1}\big(\lambda_1\alpha+\varphi_4(u,v)\big)(k_2\beta+u\beta_u)\\
	=\,&u^{j_0k_1+k_1-1}\big(k_1a_{j_0}\alpha^{j_0+1}+u\, \psi_3(u,v)\big),
	\end{aligned}$$
	where $\psi_2(u,v)$ is some holomorphic function.
	Hence
	$$v_E(F(u,v))=j_0k_1+k_1-1<k_1+k_2-1.$$
	In any case, we have proved that $v_E(F(u,v)) \leq k_1+k_2-1$.
	Therefore,
	$$t_2=v_E(\Pi^*\omega) =\min\big\{v_E(F),\,v_E(G)\big\}\leq v_E(F(u,v)) \leq k_1+k_2-1\leq t_1.$$
	This completes the proof.
\end{proof}

\begin{proof}[Proof of \autoref{claim-2-1}]
	Since $q$ is a reduced singularity of $p$,
	the two eigenvalues $\{\lambda_1,\lambda_2\}$ (cf. \autoref{def-2-1}) are different.
	In particular, the corresponding matrix $(Dv)(q)$ at the singularity $q$ is diagonalizable.
	In other word, one can choose a local coordinate $(x,y)$ around $q$ such that
	the one form $\omega=f(x,y)dx+g(x,y)dy$ satisfies \eqref{eqn-a-1}.
	
	Since $E$ is contracted by $\Pi$, there exist positive integers $k_1,k_2$ such that $\Pi$ is locally defined by
	\begin{equation*}
	x=u^{k_1}\alpha(u,v),\qquad y=u^{k_2}\beta(u,v),
	\end{equation*}
	where $\alpha(u,v), \beta(u,v)$ are holomorphic such that $u$ divides neither $\alpha(u,v)$ nor $\beta(u,v)$.
	By exchanging $\{x,y\}$ if necessary we may assume that $k_1 \leq k_2$.
	Note that
	\begin{equation*} 
	\mathrm{Jac}=\left(\begin{aligned}
	u^{k_1-1}(k_1\alpha+u\alpha_u)~\,~&~\,~u^{k_1}\alpha_v \\
	u^{k_2-1}(k_2\beta+u\beta_u)~\,~&~\,~u^{k_2}\beta_v
	\end{aligned}\right).
	\end{equation*}
	It follows that
	$$\det(\mathrm{Jac})=u^{k_1+k_2-1}\big((k_1\alpha+u\alpha_u)\beta_v-(k_2\beta+u\beta_u)\alpha_v\big).$$
	This implies that 
	\begin{equation}\label{eqn-a-4}
	k_1+k_2-1\leq v_E\big(\det(\mathrm{Jac})\big).
	\end{equation}
	
	If either $k=k_2/k_1$ is not an integer, or $k$ is an integer but $u$ does not divide $\beta(u,v)-c\alpha(u,v)^{k}$ for any constant $c\in \mathbb{C}$,
	then we are done.
	Suppose now that
	$k=k_2/k_1$ is an integer, and $u$ divides $\beta(u,v)-c_0\alpha(u,v)^{k}$ for some constant $c_0\in \mathbb{C}$.
	Let $\beta(u,v)-c_0\alpha(u,v)^{k}=u^\ell \tilde \beta(u,v)$ such that $u$ does not divide $\tilde\beta(u,v)$.
	Let
	$$\tilde x=x,\qquad \tilde y=y-c_0x^k.$$
	It is clear that $(\tilde x, \tilde y)$ is again a local coordinate of $q$,
	and \eqref{eqn-a-1} still holds.
	Moreover,
	$$\begin{aligned}
	&\tilde x=x=u^{k_1}\alpha(u,v),\\
	&\tilde y=u^{k_2}\beta(u,v)-c_0u^{kk_1}\alpha(u,v)^k=u^{k_2}(\beta-c_0\alpha(u,v)^k)=u^{\tilde k_2} \tilde \beta(u,v),
	\end{aligned}$$
	where $\tilde k_2=k_2+\ell>k_2$.
	This shows that \eqref{eqn-a-2} also hold.
	By \eqref{eqn-a-4}, $\tilde k_2\leq v_E\big(\det(\mathrm{Jac})\big)+1-k_1$.
	Hence the above process must stop.
	Equivalently, we will finally find a local coordinate $(x,y)$ around $q$ satisfying the three conditions.
\end{proof}

\begin{corollary}\label{cor-a-1}
	Let $\Pi:\, \wt S \to S$ be as in \autoref{thm-a-1}, and $\sF$ be a reduced foliation on $S$ such that $\wt \sF=\Pi^*\sF$ is also reduced.
	Suppose that the ramified divisor of $\Pi$ consists of curves which are all $\wt\sF$-invariant. Then
	$$\vol(\wt\sF)=\deg(\Pi)\cdot \vol(\sF).$$ 
\end{corollary}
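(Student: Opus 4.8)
The plan is to derive \autoref{cor-a-1} as a quick consequence of the Riemann--Hurwitz formula \autoref{thm-a-1}, once the ramification term has been disposed of and the volume has been tracked through the Stein factorization of $\Pi$. The two external inputs I would use are the standard facts that the volume of a line bundle is multiplied by the degree under a finite pullback, and that it is unchanged by adding an effective exceptional divisor along a birational morphism.

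First I would use the hypothesis to read off the shape of $K_{\wt\sF}$. As $\sF$ is reduced, \autoref{thm-a-1} applies and gives
\[
K_{\wt\sF}=\Pi^{*}(K_{\sF})+\sum(t_i-1)R_i+\mathcal{E},
\]
where the sum is over the ramified curves $R_i$ of $\Pi$ that are \emph{not} $\wt\sF$-invariant, $t_i$ is the ramification index of $R_i$, the divisor $\mathcal{E}$ is supported on the curves contracted by $\Pi$, and $\mathcal{E}\geq 0$ by part (ii). By assumption every ramified curve of $\Pi$ is $\wt\sF$-invariant, so the middle sum is empty and
\[
K_{\wt\sF}=\Pi^{*}(K_{\sF})+\mathcal{E},\qquad \mathcal{E}\geq 0 \ \text{ and $\Pi$-exceptional.}
\]

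Next I would pass to divisors. Since $\sF$ and $\wt\sF$ are reduced, $(S,\sF)$ and $(\wt S,\wt\sF)$ are themselves reduced models, so by \autoref{def-birational} the quantities $\vol(\sF)$ and $\vol(\wt\sF)$ are computed directly on $S$ and $\wt S$ as the volumes of $K_{\sF}$ and $K_{\wt\sF}$; in particular $K_{\sF}$ is big. Writing $\Pi=g\circ\mu$ for the Stein factorization, with $\mu:\wt S\to S^{\flat}$ proper birational and $g:S^{\flat}\to S$ finite of degree $\deg(\Pi)$, and using that a finite morphism contracts no curve, the $\Pi$-exceptional curves are precisely the $\mu$-exceptional ones; hence $\mathcal{E}$ is $\mu$-exceptional and
\[
K_{\wt\sF}=\mu^{*}\big(g^{*}K_{\sF}\big)+\mathcal{E}.
\]
Adding the effective $\mu$-exceptional divisor $\mathcal{E}$ to the $\mu$-pullback $\mu^{*}(g^{*}K_{\sF})$ leaves the volume unchanged (concretely, $\mu_{*}\mathcal{O}_{\wt S}(n\mathcal{E})=\mathcal{O}_{S^{\flat}}$ for every $n\geq 0$, so the relevant spaces of sections agree), and the volume of the finite pullback $g^{*}K_{\sF}$ equals $\deg(g)$ times that of $K_{\sF}$. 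Combining,
\[
\vol(\wt\sF)=\vol\big(K_{\wt\sF}\big)=\vol\big(g^{*}K_{\sF}\big)=\deg(g)\cdot\vol(K_{\sF})=\deg(\Pi)\cdot\vol(\sF),
\]
which is the assertion.

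The only substantive point is the pair of volume facts invoked in the last step; both are classical and can also be obtained from the Zariski decomposition, which is compatible with finite pullbacks (scaling the positive part by $g^{*}$ and its self-intersection by $\deg g$) and with adding exceptional divisors along a birational morphism. I expect that verification --- in particular, confirming that the Zariski negative part of $K_{\wt\sF}$ absorbs $\mathcal{E}$ together with the $\mu$-pullback of the negative part of $K_{\sF}$, i.e.\ that the resulting exceptional configuration is still negative definite --- to be the place requiring the most care, which is exactly why I would route the argument through the Stein factorization and work with $h^{0}$ directly rather than with intersection numbers. Steps one and two are immediate from \autoref{thm-a-1} and the definitions.
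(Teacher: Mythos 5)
Your argument and the paper's both begin identically: invoke \autoref{thm-a-1}, note that the hypothesis kills the ramification sum, and arrive at $K_{\wt\sF}=\Pi^*K_{\sF}+\mathcal{E}$ with $\mathcal{E}\geq 0$ supported on $\Pi$-contracted curves. Where you differ is in the final volume comparison. The paper writes the Zariski decomposition $K_{\sF}=P+N$ and asserts that $K_{\wt\sF}=\Pi^*P+\bigl(\Pi^*N+\mathcal{E}\bigr)$ is again the Zariski decomposition, then reads off $\vol(\wt\sF)=(\Pi^*P)^2=\deg(\Pi)\,P^2=\deg(\Pi)\,\vol(\sF)$; the checks are that $\Pi^*P$ is nef and orthogonal to each component of $\Pi^*N+\mathcal{E}$ (such a component is either $\Pi$-contracted, whence the projection formula gives zero, or dominates some $N_i$, whence the same formula reduces to $P\cdot N_i=0$). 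You instead factor $\Pi$ through its Stein factorization $\Pi=g\circ\mu$ and compare $h^0$ directly: $\mu_*\mathcal{O}_{\wt S}(n\mathcal{E})=\mathcal{O}_{S^{\flat}}$ so $\mathcal{E}$ contributes nothing, and the finite part $g$ multiplies the volume by $\deg(g)$. Both routes are sound, and you correctly flag what the paper's ``one checks easily'' leaves implicit, namely the negative definiteness of $\mathrm{Supp}(\Pi^*N+\mathcal{E})$; note, however, that for the volume identity negative \emph{semi}-definiteness already suffices (combine orthogonality with maximality of the nef part), and semi-definiteness is immediate from the Hodge index theorem once $(\Pi^*P)^2>0$. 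Your route trades that subtlety for another: $S^{\flat}$ need not be smooth, so you are invoking multiplicativity of volume under finite pullback on a possibly singular normal surface --- still standard, but an extra ingredient the Zariski argument avoids. One minor slip: the aside ``in particular $K_{\sF}$ is big'' is neither implied by the hypotheses of the corollary nor used in your argument, and should simply be dropped; the chain of equalities holds just as well when $\vol(\sF)=0$.
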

\begin{proof}
	By \autoref{thm-a-1},
	$$K_{\wt \sF}=\Pi^*(K_{\sF})+\mathcal{E},$$
	where $\mathcal{E}\geq 0$ is supported on the curves which are contracted by $\Pi$.
	Let $$K_{\sF}=P+N,$$ be the Zariski decomposition with $P$ and $N$ being respectively the nef and negative parts.
	Then one checks easily that the Zariski decomposition of $K_{\wt\sF}$ is as follows,
	$$K_{\wt \sF}=\wt P+\wt N,$$
	where $\wt P:=\Pi^*(P)$ and $\wt N:=\big(\Pi^*(N)+\mathcal{E}\big)$ are respectively the nef and negative parts.
	In particular,
	\[\vol(\wt\sF)=\wt P^2=\deg(\Pi)\cdot P^2=\deg(\Pi)\cdot \vol(\sF).\qedhere\]
\end{proof}

\begin{remark}\label{rem-a-1}
	(1). If $\Pi$ is generically one-to-one, i.e., $\Pi$ is a birational morphism,
	then $\Pi$ consists of a sequence of blowing-ups and contains no ramified divisor.
	In this case,
	$$K_{\wt\sF} = \Pi^*(K_{\sF}) + \mathcal{E},$$
	where $\mathcal{E}$ is supported over exceptional divisors of $\Pi$.
	If $\mathcal{E}$ is always effective for any such birational morphism $\Pi$,
	then $\sF$ is said to admit canonical singularities \cite{mcq-08}.
	The conclusion (ii) in the above proposition in this case just says that a reduced foliation contains only canonical singularities.
	
	(2). If the foliation $\sF$ is not reduced, then the divisor $\mathcal{E}$ in \eqref{eqn-A-1} may fail to be effective.
	For instance, let $p\in S$ be singularity of $\sF$, and suppose that $\sF$ is locally defined by
	$\omega=xdy-ydx$ around $p$.
	Let $\sigma:\,\wt S \to S$ be the blowing-up centered at $p$ with $\mathcal{E}$ being the exceptional curve, and $\wt\sF$ be the induced foliation on $\wt{S}$.
	Then
	$$K_{\wt\sF} =\sigma^*K_{\sF} -E.$$
\end{remark}


\begin{thebibliography}{ACGH85}
	
	\bibitem[ACGH85]{acgh}
	E.~Arbarello, M.~Cornalba, P.~A. Griffiths, and J.~Harris.
	\newblock {\em Geometry of algebraic curves. {V}ol. {I}}, volume 267 of {\em
		Grundlehren der mathematischen Wissenschaften [Fundamental Principles of
		Mathematical Sciences]}.
	\newblock Springer-Verlag, New York, 1985.
	
	\bibitem[Bea79]{bea-79}
	A.~Beauville.
	\newblock {\em L'application canonique pour les surfaces de type
		g\'{e}n\'{e}ral}.
	\newblock Invent. Math., 55(2):\,121--140, 1979.
	
	\bibitem[BHPV04]{bhpv}
	W.~P. Barth, K.~Hulek, C.~A.~M. Peters, and A.~{Van de Ven}.
	\newblock {\em Compact complex surfaces}, volume~4 of {\em Ergebnisse der
		Mathematik und ihrer Grenzgebiete. 3. Folge. A Series of Modern Surveys in
		Mathematics [Results in Mathematics and Related Areas. 3rd Series. A Series
		of Modern Surveys in Mathematics]}.
	\newblock Springer-Verlag, Berlin, second edition, 2004.
	
	\bibitem[BM16]{bm-16}
	F.~Bogomolov and M.~McQuillan.
	\newblock {\em Rational curves on foliated varieties}.
	\newblock In {\em Foliation theory in algebraic geometry}, Simons Symp., pages
	21--51. Springer, Cham, 2016.
	
	\bibitem[Bom73]{bom-73}
	E.~Bombieri.
	\newblock {\em Canonical models of surfaces of general type}.
	\newblock Inst. Hautes \'{E}tudes Sci. Publ. Math., (42):\,171--219, 1973.
	
	\bibitem[Bru97]{bru-97}
	M.~Brunella.
	\newblock {\em Some remarks on indices of holomorphic vector fields}.
	\newblock Publ. Mat., 41(2):\,527--544, 1997.
	
	\bibitem[Bru99]{bru-99}
	M.~Brunella.
	\newblock {\em Minimal models of foliated algebraic surfaces}.
	\newblock Bull. Soc. Math. France, 127(2):\,289--305, 1999.
	
	\bibitem[Bru04]{bru-04}
	M.~Brunella.
	\newblock {\em Birational geometry of foliations}.
	\newblock Publica\c{c}\~{o}es Matem\'{a}ticas do IMPA. [IMPA Mathematical
	Publications]. Instituto de Matem\'{a}tica Pura e Aplicada (IMPA), Rio de
	Janeiro, 2004.
	
	\bibitem[Cas21]{cas-21}
	P.~Cascini.
	\newblock {\em New directions in the minimal model program}.
	\newblock Boll. Unione Mat. Ital., 14(1):\,179--190, 2021.
	
	\bibitem[CCJ20a]{ccj-202}
	J.~A. Chen, M.~Chen, and C.~Jiang.
	\newblock {\em Addendum to ``{T}he {N}oether inequality for algebraic
		3-folds''}.
	\newblock Duke Math. J., 169(11):\,2199--2204, 2020.
	
	\bibitem[CCJ20b]{ccj-201}
	J.~A. Chen, M.~Chen, and C.~Jiang.
	\newblock {\em The {N}oether inequality for algebraic 3-folds}.
	\newblock Duke Math. J., 169(9):\,1603--1645, 2020.
	\newblock With an appendix by J\'{a}nos Koll\'{a}r.
	
	\bibitem[CF18]{cf-18}
	P.~Cascini and E.~Floris.
	\newblock {\em On invariance of plurigenera for foliations on surfaces}.
	\newblock J. Reine Angew. Math., 744:\,201--236, 2018.
	
	\bibitem[Che17]{chen-17}
	X.~Chen.
	\newblock {\em Xiao's conjecture on canonically fibered surfaces}.
	\newblock Adv. Math., 322:\,1033--1084, 2017.
	
	\bibitem[CHL{\etalchar{+}}24]{chlmssx}
	P.~Cascini, J.~Han, J.~Liu, F.~Meng, C.~Spicer, R.~Svaldi, and L.~Xie.
	\newblock {\em Minimal model program for algebraically integrable adjoint foliated
	structures}.
	\newblock Arxiv:\,2408.14258, 2024.
	
	\bibitem[CJ17]{cj-17}
	M.~Chen and Z.~Jiang.
	\newblock {\em A reduction of canonical stability index of 4 and 5 dimensional
		projective varieties with large volume}.
	\newblock Ann. Inst. Fourier (Grenoble), 67(5):\,2043--2082, 2017.
	
	\bibitem[CP19]{cp-19}
	F.~Campana and M.~P\u{a}un.
	\newblock {\em Foliations with positive slopes and birational stability of
		orbifold cotangent bundles}.
	\newblock Publ. Math. Inst. Hautes \'{E}tudes Sci., 129:\,1--49, 2019.
	
	\bibitem[EV92]{ev-92}
	H.~Esnault and E.~Viehweg.
	\newblock {\em Lectures on vanishing theorems}, volume~20 of {\em DMV Seminar}.
	\newblock Birkh\"{a}user Verlag, Basel, 1992.
	
	\bibitem[GH78]{gh-78}
	P.~Griffiths and J.~Harris.
	\newblock {\em Principles of algebraic geometry}.
	\newblock Pure and Applied Mathematics. Wiley-Interscience [John Wiley \&
	Sons], New York, 1978.
	
	\bibitem[Har77]{har-77}
	R.~Hartshorne.
	\newblock {\em Algebraic geometry}, volume No. 52 of {\em Graduate Texts in
		Mathematics}.
	\newblock Springer-Verlag, New York-Heidelberg, 1977.
	
	\bibitem[HM06]{hm-06}
	C.~D. Hacon and J.~McKernan.
	\newblock {\em Boundedness of pluricanonical maps of varieties of general
		type}.
	\newblock Invent. Math., 166(1):\,1--25, 2006.
	
	\bibitem[Hor76]{hor-76-2}
	E.~Horikawa.
	\newblock {\em Algebraic surfaces of general type with small {$c^{2}_{1}$}.
		{II}}.
	\newblock Invent. Math., 37(2):\,121--155, 1976.
	
	\bibitem[KM98]{km-98}
	J.~Koll\'ar and S.~Mori.
	\newblock {\em Birational geometry of algebraic varieties}, volume 134 of {\em
		Cambridge Tracts in Mathematics}.
	\newblock Cambridge University Press, Cambridge, 1998.
	\newblock With the collaboration of C. H. Clemens and A. Corti, Translated from
	the 1998 Japanese original.
	
	\bibitem[Liu17]{liu-19}
	J.~Liu.
	\newblock {\em On invariance of plurigenera for foliated surface pairs}.
	\newblock arXiv:1707.07092, 2017.
	
	\bibitem[LS20]{ls-20}
	A.~{Lins Neto} and B.~Sc\'{a}rdua.
	\newblock {\em Complex algebraic foliations}, volume~67 of {\em De Gruyter
		Expositions in Mathematics}.
	\newblock De Gruyter, Berlin, [2020] \copyright 2020.
	
	\bibitem[LT24]{lt-24}
	X.~L\"{u} and S.~Tan.
	\newblock {\em The poincar\'e problem for a foliated surface}.
	\newblock arXiv:2404.16293v2, 2024.
	
	\bibitem[L{\"u}20]{lu-20}
	X.~L{\"u}.
	\newblock {\em Canonically fibered surfaces of general type}.
	\newblock J. Inst. Math. Jussieu, 19(1):\,209--229, 2020.
	
	\bibitem[McQ08]{mcq-08}
	M.~McQuillan.
	\newblock {\em Canonical models of foliations}.
	\newblock Pure Appl. Math. Q., 4(3, Special Issue: In honor of Fedor Bogomolov.
	Part 2):\,877--1012, 2008.
	
	\bibitem[Men00]{men-00}
	L.~G. Mendes.
	\newblock {\em Kodaira dimension of holomorphic singular foliations}.
	\newblock Bol. Soc. Brasil. Mat. (N.S.), 31(2):\,127--143, 2000.
	
	\bibitem[Miy87]{miy-87}
	Y.~Miyaoka.
	\newblock {\em Deformations of a morphism along a foliation and applications}.
	\newblock In {\em Algebraic geometry, {B}owdoin, 1985 ({B}runswick, {M}aine,
		1985)}, volume~46 of {\em Proc. Sympos. Pure Math.}, pages 245--268. Amer.
	Math. Soc., Providence, RI, 1987.
	
	\bibitem[Noe70]{noether}
	M.~Noether.
	\newblock {\em Zur {T}heorie des eindeutigen {E}ntsprechens algebraischer
		{G}ebilde von beliebig vielen {D}imensionen}.
	\newblock Math. Ann., 2(2):\,293--316, 1870.
	
	\bibitem[Rei88]{rei-88}
	I.~Reider.
	\newblock {\em Vector bundles of rank {$2$} and linear systems on algebraic
		surfaces}.
	\newblock Ann. of Math. (2), 127(2):\,309--316, 1988.
	
	\bibitem[Rit22]{rito-22}
	C.~Rito.
	\newblock {\em Surfaces with canonical map of maximum degree}.
	\newblock J. Algebraic Geom., 31(1):\,127--135, 2022.
	
	\bibitem[SS23]{ss-23}
	C.~Spicer and R.~Svaldi.
	\newblock {\em Effective generation for foliated surfaces: results and
		applications}.
	\newblock J. Reine Angew. Math., 795:\,45--84, 2023.
	
	\bibitem[Sun94]{sun-94}
	X.~T. Sun.
	\newblock {\em On canonical fibrations of algebraic surfaces}.
	\newblock Manuscripta Math., 83(2):\,161--169, 1994.
	
	\bibitem[Tak06]{tak-06}
	S.~Takayama.
	\newblock {\em Pluricanonical systems on algebraic varieties of general type}.
	\newblock Invent. Math., 165(3):\,551--587, 2006.
	
	\bibitem[Xia85a]{xiao-85-1}
	G.~Xiao.
	\newblock {\em L'irr\'{e}gularit\'{e} des surfaces de type g\'{e}n\'{e}ral dont
		le syst\`eme canonique est compos\'{e} d'un pinceau}.
	\newblock Compositio Math., 56(2):\,251--257, 1985.
	
	\bibitem[Xia85b]{xiao-85}
	G.~Xiao.
	\newblock {\em Finitude de l'application bicanonique des surfaces de type
		g\'en\'eral}.
	\newblock Bull. Soc. Math. France, 113(1):\,23--51, 1985.
	
	\bibitem[Xia86]{xiao-86}
	G.~Xiao.
	\newblock {\em Algebraic surfaces with high canonical degree}.
	\newblock Math. Ann., 274(3):\,473--483, 1986.
	
	\bibitem[Yeu17]{yeung-17}
	S.-K. Yeung.
	\newblock {\em A surface of maximal canonical degree}.
	\newblock Math. Ann., 368(3-4):\,1171--1189, 2017.
	
\end{thebibliography}

\newcommand{\etalchar}[1]{$^{#1}$}

\end{document}